\newcommand{\tun}{\begin{picture}(5,0)(-2,-1)
\put(0,0){\circle*{2}}
\end{picture}}
\newcommand{\tdeux}{\begin{picture}(7,7)(0,-1)
\put(3,0){\circle*{2}}
\put(3,5){\circle*{2}}
\put(3,0){\line(0,1){5}}
\end{picture}}
\newcommand{\ttroisun}{\begin{picture}(15,12)(-5,-1)
\put(3,0){\circle*{2}}
\put(6,7){\circle*{2}}
\put(0,7){\circle*{2}}
\put(-0.65,0){$\vee$}
\end{picture}}
\newcommand{\ttroisdeux}{\begin{picture}(5,15)(-2,-1)
\put(0,0){\circle*{2}}
\put(0,5){\circle*{2}}
\put(0,10){\circle*{2}}
\put(0,0){\line(0,1){5}}
\put(0,5){\line(0,1){5}}
\end{picture}}
\newcommand{\tdun}[1]{\begin{picture}(10,5)(-2,-1)
\put(0,0){\circle*{2}}
\put(3,-2){\tiny #1}
\end{picture}}
\newcommand{\tddeux}[2]{\begin{picture}(12,5)(0,-1)
\put(3,0){\circle*{2}}
\put(3,5){\circle*{2}}
\put(3,0){\line(0,1){5}}
\put(6,-2){\tiny #1}
\put(6,3){\tiny #2}
\end{picture}}
\newcommand{\tdtroisun}[3]{\begin{picture}(20,12)(-5,-1)
\put(3,0){\circle*{2}}
\put(6,7){\circle*{2}}
\put(0,7){\circle*{2}}
\put(-0.65,0){$\vee$}
\put(5,-2){\tiny #1}
\put(9,5){\tiny #2}
\put(-5,5){\tiny #3}
\end{picture}}
\newcommand{\tdtroisdeux}[3]{\begin{picture}(12,15)(-2,-1)
\put(0,0){\circle*{2}}
\put(0,5){\circle*{2}}
\put(0,10){\circle*{2}}
\put(0,0){\line(0,1){5}}
\put(0,5){\line(0,1){5}}
\put(3,-2){\tiny #1}
\put(3,3){\tiny #2}
\put(3,9){\tiny #3}
\end{picture}}
\newcommand{\pquatrecinq}{\begin{picture}(15,9)(-5,-1)
\put(0,0){\circle*{2}}
\put(7,0){\circle*{2}}
\put(0,7){\circle*{2}}
\put(7,7){\circle*{2}}
\put(0,0){\line(0,1){7}}
\put(7,0){\line(0,1){7}}
\put(.5,1.5){$\scriptstyle \diagup$}
\end{picture}}
\newcommand{\pquatresix}{\begin{picture}(15,9)(-5,-1)
\put(0,0){\circle*{2}}
\put(7,0){\circle*{2}}
\put(0,7){\circle*{2}}
\put(7,7){\circle*{2}}
\put(0,0){\line(0,1){7}}
\put(7,0){\line(0,1){7}}
\put(0,1.5){$\scriptstyle \diagdown$}
\end{picture}}
\newcommand{\pquatresept}{\begin{picture}(15,9)(-5,-1)
\put(0,0){\circle*{2}}
\put(7,0){\circle*{2}}
\put(0,7){\circle*{2}}
\put(7,7){\circle*{2}}
\put(0,0){\line(0,1){7}}
\put(7,0){\line(0,1){7}}
\put(.5,1.5){$\scriptstyle \diagup$}
\put(0,1.5){$\scriptstyle \diagdown$}
\end{picture}}
\newcommand{\pdtroisun}[3]{\begin{picture}(23,12)(-7,-1)
\put(3,7){\circle*{2}}
\put(-0.65,0){$\wedge$}
\put(6,0){\circle*{2}}
\put(0,0){\circle*{2}}
\put(5,5){\tiny #1}
\put(-7,-2){\tiny #2}
\put(9,-2){\tiny #3}
\end{picture}}
\newtheorem{defi}{\indent Definition}
\newtheorem{lemma}[defi]{\indent Lemma}
\newtheorem{cor}[defi]{\indent Corollary}
\newtheorem{theo}[defi]{\indent Theorem}
\newtheorem{prop}[defi]{\indent Proposition}
\newenvironment{proof}{\textbf{Proof.}}{\hfill $\Box$}
\newcommand{\totimes}{\overline{\otimes}}
\newcommand{\tdelta}{\tilde{\Delta}}
\newcommand{\bfP}{\mathbf{P}}
\newcommand{\bfC}{\mathbf{C}}
\newcommand{\bfB}{\mathbf{B}}
\newcommand{\bfD}{\mathbf{D}}
\newcommand{\bfL}{\mathbf{L}}
\newcommand{\calG}{\mathcal{G}}
\newcommand{\calM}{\mathcal{M}}
\newcommand{\calB}{\mathcal{B}}
\newcommand{\calO}{\mathcal{O}}
\newcommand{\N}{\mathbb{N}}
\newcommand{\K}{\mathbb{K}}
\newcommand{\deuxas}{\mathbf{2As}}
\newcommand{\primdeuxas}{\mathbf{Prim2As}}
\newcommand{\binfini}{\mathbf{B_\infty}}
\renewcommand{\brace}{\mathbf{Brace}}
\newcommand{\ass}{\mathbf{As}}
\newcommand{\petitbinfini}{\mathbf{b_\infty}}
\newcommand{\ascom}{\mathbf{AsCom}}
\newcommand{\primascom}{\mathbf{PrimAsCom}}
\newcommand{\prelie}{\mathbf{PreLie}}
\newcommand{\com}{\mathbf{Com}}
\newcommand{\fg}{\mathcal{F}}
\newcommand{\bffg}{\mathbf{F}}
\newcommand{\wcfg}{\mathcal{N}c\mathcal{F}}
\newcommand{\bfwcfg}{\mathbf{NcF}}
\newcommand{\sfg}{\mathcal{SF}}
\newcommand{\bfsfg}{\mathbf{SF}}
\newcommand{\wcsfg}{\mathcal{N}c\mathcal{SF}}
\newcommand{\bfwcsfg}{\mathbf{NcSF}}
\newcommand{\gr}{\mathcal{G}}
\newcommand{\bfgr}{\mathbf{G}}
\newcommand{\wcgr}{\mathcal{N}c\mathcal{G}}
\newcommand{\bfwcgr}{\mathbf{NcG}}
\newcommand{\sgr}{\mathcal{SG}}
\newcommand{\bfsgr}{\mathbf{SG}}
\newcommand{\wcsgr}{\mathcal{N}c\mathcal{SG}}
\newcommand{\bfwcsgr}{\mathbf{NcSG}}
\newcommand{\qo}{q\mathcal{O}}
\newcommand{\bfqo}{\mathbf{qO}}
\newcommand{\od}{\mathcal{O}}
\newcommand{\bfod}{\mathbf{O}}
\newcommand{\bfcfg}{\mathbf{CF}}
\newcommand{\bfcwcfg}{\mathbf{CNcF}}
\newcommand{\bfcsfg}{\mathbf{CSF}}
\newcommand{\bfcwcsfg}{\mathbf{CNcSF}}
\newcommand{\bfcgr}{\mathbf{CG}}
\newcommand{\bfcwcgr}{\mathbf{CNcG}}
\newcommand{\bfcsgr}{\mathbf{CSG}}
\newcommand{\bfcwcsgr}{\mathbf{CNcSG}}
\newcommand{\bfcqo}{\mathbf{CqO}}
\newcommand{\bfcod}{\mathbf{CO}}
\newcommand{\rond}[1]{*++[o][F-]{#1}}
\begin{document}

\title{Algebraic structures associated to operads}
\date{}
\author{Lo\"\i c Foissy\\ \\
{\small \it Fédération de Recherche Mathématique du Nord Pas de Calais FR 2956}\\
{\small \it Laboratoire de Mathématiques Pures et Appliquées Joseph Liouville}\\
{\small \it Université du Littoral Côte d'Opale-Centre Universitaire de la Mi-Voix}\\ 
{\small \it 50, rue Ferdinand Buisson, CS 80699,  62228 Calais Cedex, France}\\ \\
{\small \it Email: foissy@lmpa.univ-littoral.fr}}

\maketitle

\begin{abstract}
We study different algebraic structures associated to an operad and their relations: to any operad $\mathbf{P}$ is attached a bialgebra,
the monoid of characters of this bialgebra, the underlying pre-Lie algebra and its enveloping algebra; all of them can be explicitely described
with the help of the operadic composition. non-commutative versions are also given. 

We denote by $\mathbf{b_\infty}$ the operad of $\mathbf{b_\infty}$ algebras, describing all Hopf algebra structures on a symmetric coalgebra.
If there exists an operad morphism from $\mathbf{b_\infty}$ to $\mathbf{P}$, a pair $(A,B)$ of cointeracting bialgebras is also constructed, that it to say:
$B$ is a bialgebra, and $A$ is a graded Hopf algebra in the category of $B$-comodules. Most examples of such pairs 
(on oriented graphs, posets$\ldots$) known in the literature are shown to be obtained from an operad; colored versions of these examples and
other ones, based on Feynman graphs, are introduced and compared. \\

\textbf{AMS classification.} 18D50 16T05 16T30 81T18 06A11
\end{abstract}

\tableofcontents

\chapter*{Introduction}

Operads --a terminology due to May-- appear in the 70's \cite{MacLane,Stasheff,May,BV,Cohen},
to study loop spaces in algebraic topology; see \cite{LodayRenaissance} for a historical review. 
They are now widely used in various contexts; our point here is to study certain constructions associated to an operad
from a combinatorial Hopf-algebraic point of view. To a given operad $\bfP$, several objects are attached, such as monoids and groups, 
pre-Lie and brace algebras, bialgebras and Hopf algebras, or pairs of interacting or cointeracting Hopf algebras.\\

Let us precise the structures we obtain here. If $\bfP$ is an operad, then:
\begin{enumerate}
\item the space $\displaystyle \bfP=\bigoplus_{n\geq 0} \bfP(n)$ is a brace algebra \cite{Gerstenhaber,Ronco1} and a pre-Lie algebra 
\cite{Chapoton1,Chapoton2,Oudom}; in particular, its pre-Lie product is given by:
$$\forall p\in \bfP(n),\: q\in \bfP,\: p\bullet q=\sum_{i=1}^n p\circ_i q,$$
where $\circ_i$ is the $i$-th partial composition of the operad $\bfP$. Moreover, the quotient of coinvariant $coinv\bfP$ is a pre-Lie (but generally not a brace)
quotient of $\bfP$.
\item As observed in \cite{ChapotonLivernet}, this pre-Lie structure induces two monoid compositions $\lozenge$ and $\lozenge'$ 
on the space $\displaystyle \overline{\bfP}=\prod_{n\geq 0}\bfP(n)$: 
\begin{align*}
\forall p\in\bfP(n), q\in \overline{\bfP},\: p\lozenge' q&=q+p\circ(q,\ldots,q),\\
p\lozenge q&=q+\sum_{1\leq i_1<\ldots<i_k\leq n} x\circ_{i_1,\ldots,i_k}\circ (y,\ldots,y),
\end{align*}
where:
\begin{align*}
 x\circ_{i_1,\ldots,i_k}\circ (y,\ldots,y)&=x\circ \underbrace{(I,\ldots,y,\ldots,y,\ldots,I)}_{\mbox{\scriptsize{the $y$'s in position $i_1,\ldots,i_k$}}}.
\end{align*}
These two monoids are isomorphic; in both cases, $\overline{coinv\bfP}$ is a monoid quotient.
\item Using the Guin-Oudom construction \cite{Oudom}, the symmetric algebra $S(\bfP)$ inherits a product $*$, making it a Hopf algebra
with its usual product $\Delta$; similarly, using the brace structure, we obtain a dendriform Hopf algebra structure on $T(\bfP)$
\cite{Ronco1,Loday3,FoissyDend}, with the deconcatenation coproduct $\Delta_{dec}$, denoted by $\bfD_\bfP$. 
Moreover, considering the quotient $S(coinv\bfP)$ of $S(\bfP)$, we obtain a Hopf algebra $D_\bfP$, which is an explicit description 
of the enveloping algebra of the pre-Lie algebra $coinv\bfP$, and morphisms:
$$\xymatrix{D_\bfP&(S(\bfP),*,\Delta) \ar@{->>}[l] \ar@{^(->}[r]&\bfD_\bfP}$$ 
Here are examples of products $*$.
if $p_1\in \bfP(n_1)$, $p_2 \in \bfP(n_2)$, $q_1,q_2\in \bfP$:
\begin{itemize}
\item In $\bfD_\bfP$, 
\begin{align*}
p_1*q_1&=p_1q_1+q_1p_1+\sum_{1\leq i\leq n_1} p_1\circ_i q_1,\\
p_1*q_1q_2&=p_1q_1q_2+q_1p_1q_2+q_1q_2p_1+\sum_{1\leq i\leq n_1} (p_1\circ_i q_1)q_2\\
&+\sum_{1\leq i\leq n_1} q_1(p_1\circ_i q_2)+\sum_{1\leq i<j\leq n_1}p\circ_{i,j}(q_1,q_2),\\
p_1p_2*q_1&=p_1p_2q_1+p_1q_1p_2+q_1p_1p_2+\sum_{1\leq i\leq n_1}(p_1\circ_i q_1)p_2+\sum_ {1\leq i\leq n_2}p_1(p_2\circ_i q_1).
\end{align*}
\item In $S(\bfP)$,
\begin{align*}
p_1*q_1&=p_1q_1+\sum_{1\leq i\leq n_1} p_1\circ_i q_1,\\
p_1*q_1q_2&=p_1q_1q_2+\sum_{1\leq i\leq n_1} (p_1\circ_i q_1)q_2
+\sum_{1\leq i\leq n_1} q_1(p_1\circ_i q_2)+\sum_{1\leq i\neq j\leq n_1}p\circ_{i,j}(q_1,q_2),\\
p_1p_2*q_1&=p_1p_2q_1+\sum_{1\leq i\leq n_1}(p_1\circ_i q_1)p_2+\sum_ {1\leq i\leq n_2}p_1(p_2\circ_i q_1).
\end{align*}
\end{itemize}
These Hopf algebras are graded, but not connected, the elements of $\bfP(1)$, including the unit $I$ of $\bfP$,
being homogeneous of degree $0$ in them.
\item With the help of a technical condition called $0$-boundedness (definition \ref{defi16}), we can define a coproduct $\Delta_*$
on $T(\bfP^*)$, in duality with $*$, making $\bfD_\bfP^*=(T(\bfP^*),m_{conc},\Delta_*)$, where $m_{conc}$ is the concatenation product,
a bialgebra, generally not a Hopf algebra. By abelianization, one obtains bialgebras $(S(\bfP^*),m,\Delta_*)$ and $D_\bfP^*=(S((coinv\bfP)^*,m,\Delta_*)$,
respectively in duality with $S(\bfP)$ and $D_\bfP$, with morphisms:
$$\xymatrix{D_\bfP^*\ar@{^(->}[r]&S(\bfP^*)&\bfD_\bfP^*\ar@{->>}[l]}$$
Moreover, $(coinv\bfP,\lozenge)$ and $(\bfP,\lozenge)$ are the monoids of characters of respectively $D_\bfP^*$ and $S(\bfP^*)$.
\item Replacing $\bfP$ by its augmentation ideal $\bfP_+$, we obtain similar objects:
\begin{itemize}
\item Monoids:
$$\xymatrix{&(\overline{\bfP},\lozenge)\ar@{->>}[rd]&\\
(\overline{\bfP_+},\lozenge)\ar@{->>}[rd] \ar@{^(->}[ru]&&(\overline{coinv\bfP},\lozenge)=M_\bfP^D\\
&(\overline{coinv\bfP_+},\lozenge)=G_\bfP^D \ar@{^(->}[ru]&}$$
Moreover, $G_\bfP^D$ and $(\overline{\bfP_+},\lozenge)$ are groups.
\item Graded and connected Hopf algebras $B_\bfP$, $S(\bfP_+)$ and $\bfB_\bfP$, and their graded dual:
\begin{align*}
&\xymatrix{D_\bfP&S(\bfP) \ar@{^(->}[r] \ar@{->>}[l]&\bfD_\bfP\\
B_\bfP\ar@{^(->}[u] &S(\bfP_+)\ar@{^(->}[u] \ar@{^(->}[r] \ar@{->>}[l]&\bfB_\bfP\ar@{^(->}[u]}&
&\xymatrix{D_P^*\ar@{->>}[d]\ar@{^(->}[r]&S(\bfP^*)\ar@{->>}[d]&\bfD_\bfP^*\ar@{->>}[d]\ar@{->>}[l]\\
B^*_\bfP\ar@{^(->}[r]&S(\bfP^*_+)&\bfB^*_\bfP\ar@{->>}[l]}
\end{align*}
\end{itemize}
\end{enumerate}

Let us now assume that there exists an operadic morphism $\theta_\bfP$ from the operad $\petitbinfini$, 
ruling Hopf-algebraic structures on symmetric coalgebras, to $\bfP$. If this holds, for any vector space $V$, the free $\bfP$-algebra $F_\bfP(V)$
is a $\petitbinfini$ algebra, so the symmetric coalgebra $A_\bfP(V)$ is given a product $\star$, making it a graded, connected Hopf algebra. 
If $V$ is one-dimensional, we shall simply write $A_\bfP$. 
We prove that $A_\bfP$ is a Hopf algebra in the category of $D_\bfP$-modules, that it to say its unit, product, counit, coproduct, 
are all morphisms of $D_\bfP$-modules, for a certain action induced by the operadic composition.
Dually, the graded dual $A_\bfP^*$ of $A_\bfP$ is a Hopf algebra in the category of $D_\bfP^*$-comodules,
that it to say its unit, product, counit, coproduct, are all morphisms of $D_\bfP^*$-comodules. In terms of characters, this means that
the monoid ($M_\bfP^D,\lozenge)$ acts by group endomorphisms on the group of characters of $A_\bfP$.
We shall say that $(A_\bfP,D_\bfP)$ is a pair of interacting bialgebras, and $(A^*_\bfP,D^*_\bfP)$ is a pair of cointeracting bialgebras.
Note that all these results admit colored versions, tensoring $\bfP$ with the operad of morphisms from $V$ to $V^{\otimes n}$,
where $V$ is a vector space. \\
 
Several pairs of  cointeracting bialgebras are known in the literature, usually based on combinatorial objects,
such as trees, posets, graphs, etc;
 the proofs of the cointeraction (and of the fact that they are indeed bialgebras) 
usually need combinatorial operations such as cuts, extractions, contractions$\ldots$ We here obtains algebraic proofs:
\begin{itemize}
\item For the pair of cointeracting bialgebras of rooted trees of \cite{ManchonCalaque}, the operad to consider is $\prelie$,
seen as a quotient of $\petitbinfini$, through the canonical projection $\theta_\prelie$.
\item For the posets or quasi-posets of \cite{ManchonFoissyFauvet2},
use the operad on quasi-posets of \cite{ManchonFoissyFauvet}.
\item For the oriented graphs of \cite{Manchon2}, use an operad on graphs here introduced.
\item The example for Feynman graphs is here treated.
\end{itemize}
As all these constructions are functorial, one also obtains morphisms of pairs of (co)interacting bialgebras, relating quasi-posets, graphs,
Feynman graphs, and sub-families such as posets, graphs without cycles, simple graphs, etc.\\

This paper is organized as follows. In the first, short, chapter, we give reminders on operads. The examples of associative algebras $\ass$
and commutative, associative algebras $\com$ are introduced. The second chapter is devoted to the study of the two operads
$\binfini$ et $\petitbinfini$. For the reader's comfort, we give a complete proof of the bijection between, for any vector space $V$,
the set of $\binfini$ structures on $V$, and the set of products $*$ on $T(V)$ making $(T(V),*,\Delta_{dec})$ a Hopf algebra (theorem \ref{theo2}).
Two quotients of $\binfini$ are described, namely $\brace$ (in which case the product $*$ comes from a dendrifrom Hopf algebra structure),
and $\ass$ (obtaining quasi-shuffle Hopf algebras). The quite complicated operad $\binfini$ is shown to be isomorphic to a suboperad
of the simpler $\deuxas$, proving again Loday and Ronco's rigidity result \cite{Loday2,Loday}. A dual construction is also given, needing
the technical condition of $0$-boundedness (definition \ref{defi16}). Cocommutative versions of these results are given,
replacing $T(V)$ by $S(V)$, $\binfini$ by $\petitbinfini$, and $\deuxas$ by $\ascom$; the operad $\brace$ is replaced by $\prelie$,
obtaining a diagram of  operads: 
 $$\xymatrix{\petitbinfini\ar[rr]^\Phi\ar@{->>}[d]&&\binfini \ar@{->>}[d]\\
\prelie \ar[rr] \ar@{->>}[rd]&&\brace\ar@{->>}[ld]\\
&\ass\ar@{->>}[d]&\\
&\com&}$$
It is also proved that, if $(V,\lfloor,\rfloor)$ is a $0$-bounded $\petitbinfini$, then its completion $\overline{V}$ is given a non-bilinear product $\lozenge$
defined by $x\lozenge y=\lfloor e^x,e^y\rfloor$, making it a monoid, isomorphic to the monoid of characters of the dual of $S(V)$ (theorem \ref{theo31}).

The next chapter applies these construction to operads. The brace algebra structure (hence, $\binfini$ structure) on $\bfP$
is defined in proposition \ref{prop37}; the implied pre-Lie product (hence, $\petitbinfini$ structure) in corollary \ref{cor38};
the associated dendriform products on $T(\bfP)$ and associative product on $S(\bfP)$ are given in proposition \ref{prop40},
giving two bialgebras and two Hopf algebras, namely $D_\bfP$, $\bfD_\bfP$, $B_\bfP$ and $\bfB_\bfP$.
If $\theta_\bfP:\petitbinfini \longrightarrow \bfP$ is an operadic morphism, then any $\bfP$-algebra $A$ is also $\petitbinfini$, which implies
that $S(A)$ becomes a Hopf algebra with a product $\star$ induced by the $\bfP$-algebra structure; in particular, if $\bfP=\prelie$
and $\theta_\prelie$ is the canonical surjection, we obtain in this way the Oudom-Guin construction.
Applied to the free $\bfP$-algebra in one variable $F_\bfP$, we prove that the graded, connected Hopf algebra $A_\bfP=S(F_\bfP)$, 
with this product $\star$, is in interaction with $D_\bfP$. There are two ways to obtain a duality; firstly, dualizing the composition of $\bfP$
gives bialgebras $\bfD_\bfP^*$ (non-commutative) and $D_\bfP^*$ (commutative), such that $A_\bfP^*$ and $D_\bfP^*$ are in cointeraction
(corollary \ref{cor51}). Secondly, using the $0$-boundedness of $\bfP$, one obtains  bialgebras $\bfD'_\bfP$ (non-commutative) and $D'_\bfP$
(commutative), different but isomorphic to the preceding ones (proposition \ref{prop52}, corollary \ref{cor53}).
Consequently, we obtain a monoid $M_\bfP^D$ of characters of $D_\bfP^*$, which can be described with the product $\lozenge$ of chapter 2,
acting on the completion of $F_\bfP$; it is shown that this is the monoid of formal endomorphisms of $F_\bfP$ (proposition \ref{prop58}),
that is to say a Faà di Bruno-like monoid.

Examples are treated in the last chapter. We start with $\com$ and $\ass$, getting back Faà di Bruno bialgebras of commutative 
or non-commutative formal diffeomorphisms coacting on $\K[X]$. For the operad $\prelie$, we get back the two cointeracting bialgebras of rooted
trees of \cite{ManchonCalaque}, the first one being the Connes-Kreimer Hopf algebra, the second one being given by an extraction-contraction process.
We then treat the case of Feynman graphs (definition \ref{defi60}), and give them two operadic compositions, $\circ$ and $\nabla$ (theorem \ref{theo67}).
It contains a suboperad of Feynman graphs without oriented cycles and a suboperad of simple Feynman graphs; forgetting the external structure,
we obtain operads on various families of graphs; considering Hasse graphs, we obtain operads on quasi-posets (or equivalently, finite topologies)
and posets, which were described in \cite{ManchonFoissyFauvet}. Consequently, we obtain pairs of interacting bialgebras
on these objects; for certain families of graphs, this was done in \cite{Manchon2}; for quasi-posets, in \cite{ManchonFoissyFauvet2,FoissyEhrhart}.
The functoriality also gives morphisms between these objects. The last chapter is a summary of the different objects attached to an operad used
in the paper.

\chapter*{Notations}
\begin{itemize}
\item For all $n\in \mathbb{N}$, we put $[n]=\{1,\ldots, n\}$. In particular, $[0]=\emptyset$.
\item $\K$ is a commutative field of characteristic zero. All objects (vector spaces, algebras, coalgebras, operad$\ldots$) in this text are taken over $\K$.
\item Let $V$ be a vector space. 
\begin{itemize}
\item We denote by $T(V)$ the tensor algebra of $V$, that is to say:
$$T(V)=\bigoplus_{n=0}^\infty V^{\otimes n}.$$
It is given an algebra structure with the concatenation product $m_{conc}$, and a coalgebra structure with the deconcatenation coproduct $\Delta_{dec}$:
\begin{align*}
\forall x_1,\ldots,x_n\in V,\: \Delta_{dec}(x_1\ldots x_n)&=\sum_{i=0}^n x_1\ldots x_i\otimes x_{i+1}\ldots x_n.
\end{align*}
The shuffle product $\shuffle$ makes $(T(V),\Delta_{dec})$ a commutative Hopf algebra. For example, if $x_1,x_2,x_3,x_4\in V$:
\begin{align*}
x_1\shuffle x_2&=x_1x_2+x_2x_1,\\
x_1\shuffle x_2x_3&=x_1x_2x_3+x_2x_1x_3+x_2x_3x_1,\\
x_1x_2\shuffle x_3x_4&=x_1x_2x_3x_4+x_1x_3x_2x_4+x_1x_3x_4x_2+x_3x_1x_2x_4+x_3x_1x_4x_2+x_3x_4x_1x_2.
\end{align*}
The augmentation ideal of $T(V)$ is denoted by $T_+(V)$.
\item We denote by $S(V)$ the symmetric algebra of $V$, with its usual product.
It is a Hopf algebra,  with the coproduct $\Delta$ defined by: 
\begin{align*}
\forall x_1,\ldots,x_k\in V,\: \Delta(x_1\ldots x_k)&=\sum_{I\subseteq [k]} \prod_{i\in I} x_i\otimes \prod_{i\notin I} x_i.
\end{align*}
The augmentation ideal of $S(V)$ is denoted by $S_+(V)$.
\end{itemize}
\item Let $(\bfP(n))_{n\geq 0}$ be a family of vector spaces. We denote:
\begin{align*}
\bfP&=\bigoplus_{n\geq 0}\bfP(n),&\overline{\bfP}&=\prod_{n\geq 0}\bfP(n).
\end{align*}
The family $(\bfP_+(n))_{n\geq 0}$ is defined by $\bfP_+(n)=\begin{cases}
\bfP(n)\mbox{ if }n\geq 2,\\
(0)\mbox{ otherwise}.
\end{cases}$
\item Recall that a $\mathbb{S}$-module is a family $(\bfP(n))_{n\geq 0}$ such that for all $n\geq 0$, $\bfP(n)$ 
is a right $\mathfrak{S}_n$-module, where $\mathfrak{S}_n$ is the $n$-th symmetric group. We denote:
\begin{align*}
coinv\bfP(n)&=\frac{\bfP(n)}{Vect(p-p^\sigma\mid p\in \bfP(n),\:\sigma\in \mathfrak{S}_n)},\\
inv\bfP(n)&=\{p\in \bfP(n)\mid \forall \sigma \in \mathfrak{S}_n,\: p^\sigma=p\}.
\end{align*}
both $(coinv\bfP(n))_{n\geq 0}$ and $(inv\bfP(n))_{n\geq 0}$ are $\mathfrak{S}$-modules, with a trivial action of the symmetric groups.
\end{itemize}

\chapter{Reminders on operads}

We here briefly recall the notions we shall use later on operads. We refer to \cite{Vallette,Markl,Mendez,Yau}  for more details.

\section{Non-$\Sigma$ operads and operads}

A non-$\Sigma$ operad $\bfP$ is a family $(\bfP(n))_{n\geq 0}$ of vector spaces with, for all $n,k_1,\dots,k_n \geq 0$, a composition $\circ$:
\begin{align*}
\circ:\left\{\begin{array}{rcl}
\bfP(n)\otimes \bfP(k_1)\otimes \ldots \otimes \bfP(k_n)&\longrightarrow&\bfP(k_1+\ldots+k_n)\\
p\otimes p_1\otimes \ldots \otimes p_n&\longrightarrow&p\circ(p_1,\ldots,p_n),
\end{array}\right.
\end{align*}
such that for all $n,k_1,\ldots,k_n,l_{1,1},\ldots,l_{n,k_n}\geq 0$, for all $p\in \bfP(n)$, $p_i\in \bfP(k_i)$, $p_{i,j}\in \bfP(l_{i,j})$:
\begin{align*}
(p\circ (p_1,\ldots,p_n))\circ (p_{1,1},\ldots,p_{n,k_n})
&=p\circ (p_1\circ (p_{1,1},\ldots,p_{1,k_1}),\ldots,p_n\circ (p_{n,1},\ldots,p_{n,k_n})).
\end{align*}
There exists an element $I\in \bfP(1)$, called the unit of $\bfP$, such that for all $n\geq 1$, for all $p\in \bfP(n)$:
\begin{align*}
I\circ p&=p,&p\circ (I,\ldots,I)&=p.
\end{align*}

An operad is a non-$\Sigma$ operad $\bfP$ and a $\mathbb{S}$-module;
there is a compatibility between the action of the symmetric groups and the composition we won't detail here.
We just mention that for all $n,k_1,\ldots,k_n \geq 0$, for all $p\in \bfP(n)$, $p_i \in \bfP(k_i)$, for all $\sigma \in \mathfrak{S}_n$,
$\sigma_i \in \mathfrak{S}_{k_i}$, there exists a particular $\sigma'\in \mathfrak{S}_{k_1+\ldots+k_n}$ such that:
$$p^\sigma \circ (p_1^{\sigma_1},\ldots,p_n^{\sigma_n})=(p\circ (p_{\sigma^{-1}(1)},\ldots,p_{\sigma^{-1}(n)}))^{\sigma'}.$$

\textbf{Notations.} Let $\bfP$ be a non-$\Sigma$ operad, $n\geq 1$, $i_1,\ldots,i_k \in [n]$, all distinct, $p_1,\ldots,p_k \in \bfP$.
We put:
$$p\circ_{i_1,\ldots,i_k}(p_1,\ldots,p_k)=p\circ (p'_1,\ldots,p'_n),\: \mbox{ where }p'_j=\begin{cases}
p_l \mbox{ if }j=i_l,\\
I\mbox{ otherwise}.
\end{cases}$$
For example, if $p\in \bfP(3)$, $p_1,p_2,p_3\in \bfP$:
\begin{align*}
p\circ_1p_1&=p\circ (p_1,I,I),&p\circ_3 p_1&=p\circ (I,I,p_1),\\
p\circ_{1,2}(p_1,p_2)&=p\circ (p_1,p_2,I),&p\circ_{2,1}(p_1,p_2)&=p\circ(p_2,p_1,I),\\
p\circ_{1,3}(p_1,p_2)&=p\circ(p_1,I,p_2),&p\circ_{3,1}(p_1,p_2)&=p\circ(p_2,I,p_1),\\
p\circ_{1,2,3}(p_1,p_2,p_3)&=p\circ(p_1,p_2,p_3),&p\circ_{2,3,1}(p_1,p_2,p_3)&=p\circ(p_3,p_1,p_2).
\end{align*}

\textbf{Remark.} We shall here only consider operads $\bfP$ such that $\bfP(0)=(0)$.

\section{Algebras over an operad}

Let $V$ be a vector space. Let us recall the construction of the operad $\bfL_V$ of multilinear endomorphisms of $V$:
\begin{itemize}
\item  For all $n\geq 0$, $\bfL_V(n)$ is the space of linear maps from $V^{\otimes n}$ to $V$.
\item The operadic composition is defined by:
$$\forall f\in \bfL_V(n),\: f_i\in \bfL_V(k_i),\:f\circ (f_1,\ldots,f_n)=f\circ (f_1\otimes \ldots \otimes f_n)\in \bfL_V(k_1+\ldots+k_n).$$ 
The unit is $Id_V$. 
\item The action of $\mathfrak{S}_n$ on $\bfL_V(n)$ is defined by:
$$f^\sigma(x_1 \ldots x_n)=f(x_{\sigma^{-1}(1)}\ldots x_{\sigma^{-1}(n)}).$$
\end{itemize}

Let $\bfP$ be an operad. An algebra over $\bfP$ is a pair $(V,\rho)$, where $V$ is vector space and $\rho:\bfP\longrightarrow\bfL_V$
is an operad morphism. In other words, for all $n\geq 1$, there exists a map:
$$\left\{\begin{array}{rcl}
\bfP(n)\otimes V^{\otimes n}&\longrightarrow&V\\
p\otimes v_1\ldots v_n&\longrightarrow&p.(v_1\ldots v_n)=\rho(p)(v_1\ldots v_n),
\end{array}\right.$$
such that:
\begin{itemize}
\item For all $v\in V$, $I.v=v$.
\item For all $p\in \bfP(n)$, $p_i\in \bfP(k_i)$, $u_i\in V^{\otimes k_i}$, $p\circ(p_1,\ldots,p_n).u_1\ldots u_n=p.((p_1.u_1)\ldots(p_n.u_n))$.
\item For all $p\in \bfP_n$, $\sigma\in \mathfrak{S}_n$, $x_1,\ldots,x_n \in V$, 
$p^\sigma.(x_1\ldots x_n)=p.(x_{\sigma^{-1}(1)}\ldots x_{\sigma^{-1}(n)})$.
\end{itemize}

Let $\bfP$ be an operad and $V$ be a vector space. The free $\bfP$-algebra generated by $V$ is:
$$F_\bfP(V)=\bigoplus_{n\geq 0} \bfP(n)\otimes_{\mathfrak{S}_n} V^{\otimes n},$$
where for all $n\geq 0$:
$$\bfP(n)\otimes_{\mathfrak{S}_n} V^{\otimes n}
=\frac{\bfP(n)\otimes V^{\otimes n}}{Vect(p^\sigma \otimes x_1\ldots x_n-p\otimes x_{\sigma^{-1}(1)} \ldots x_{\sigma^{-1}(n)})
\mid p\in \bfP(n),x_1,\ldots,x_n\in V)}.$$
The $\bfP$-algebra structure is given by:
$$p.(\overline{p_1\otimes u_1}\otimes \ldots \otimes \overline{p_n\otimes u_n})
=\overline{(p\circ (p_1,\ldots,p_n))\otimes (u_1\ldots u_n)}.$$

\textbf{Examples.} \begin{enumerate}
\item The operad $\com$ of commutative, associative algebras is generated by $m\in \com(2)$ and the relations:
\begin{align*}
m^{(12)}&=m,&m\circ_1m&=m\circ_2m.
\end{align*}
\begin{itemize}
\item Consequently, the $\com$-algebras are the commutative, associative (non necessarily unitary) algebras;
for any vector space $V$, $F_\com(V)=S_+(V)$. 
\item For all $n\geq 1$, $\com(n)=Vect(e_n)$, and for all $n,k_1,\ldots,k_n \geq 1$:
$$e_n \circ (e_{k_1},\ldots,e_{k_n})=e_{k_1+\ldots+k_n}.$$
\item For all $n\geq 1$, $\sigma \in \mathfrak{S}_n$, $e_n^\sigma=e_n$.
\item For any associative, commutative algebra $(V,\cdot)$, for any $x_1,\ldots,x_n \in V$:
$$e_n.(x_1,\ldots,x_n)=x_1\cdot\ldots\cdot x_n.$$
\end{itemize}
\item The operad $\ass$ of   associative algebras is generated by $m\in \ass(2)$ and the relation:
\begin{align*}
m\circ_1m&=m\circ_2m.
\end{align*}
\begin{itemize}
\item Consequently, the $\ass$-algebras are the associative (non necessarily unitary) algebras; 
for any vector space $V$, $F_\ass(V)=T_+(V)$. 
\item For all $n\geq 1$, $\ass(n)=Vect(\mathfrak{S}_n)$. Here are examples of compositions:
\begin{align*}
(12)\circ_1(12)&=(123),&(12)\circ_1(21)&=(213),&(12)\circ_2(12)&=(123),&(12)\circ_2(21)&=(132),\\
(21)\circ_1(12)&=(312),&(21)\circ_1(21)&=(321),&(21)\circ_2(12)&=(231),&(21)\circ_2(21)&=(321).
\end{align*}
\item For any $\sigma,\tau\in \mathfrak{S}_n$, $\sigma^\tau=\sigma\circ \tau$.
\item For any associative algebra $(V,\cdot)$, for any $\sigma\in \mathfrak{S}_n$, for any $x_1,\ldots,x_n \in V$,
$$\sigma.(x_1,\ldots,x_n)=x_{\sigma^{-1}(1)}\cdot\ldots\cdot x_{\sigma^{-1}(n)}.$$
\end{itemize}\end{enumerate}

\section{Operadic species}

We shall often work with operadic species \cite{Mendez}. A linear species $\bfP$ is a functor from the category of finite sets, with bijections as arrows,
to the category of vector spaces. An operadic species is a species $\bfP$ with compositions, defined for all non-empty finite sets $A$ and $B$,
and $a\in A$:
$$\circ_a:\left\{\begin{array}{rcl}
\bfP(A)\otimes \bfP(B)&\longrightarrow&\bfP(A\sqcup B\setminus \{a\})\\
p\otimes q&\longrightarrow&p\circ_a q,
\end{array}\right.$$
such that:
\begin{itemize}
\item For all finite sets $A,B,C$, for all $a\neq b \in A$, $p\in \bfP(A)$, $q\in \bfP(B)$, $r\in \bfP(C)$:
$$(p\circ_a q)\circ_b r=(p\circ_b r)\circ_a q.$$
\item For all finite sets $A,B,C$, for all $a \in A$, $b\in B$, $p\in \bfP(A)$, $q\in \bfP(B)$, $r\in \bfP(C)$:
$$(p\circ_a q)\circ_b r=p\circ_a( q\circ_b r).$$
\item For all singleton $\{a\}$, there exists $I_a\in \bfP(\{a\})$ such that:
\begin{itemize}
\item For all finite set $A$, for all $a\in A$, for all $p\in \bfP(A)$, $p\circ_a I_a=p$.
\item For all singleton $\{a\}$, for all finite set $A$, for all $p\in \bfP(A)$, $I_a\circ_a p=p$.
\end{itemize}\end{itemize}

\textbf{Examples.} \begin{enumerate}
\item We take $\com(A)=Vect(\{A\})$ for non-empty all finite set $A$. For all finite sets $A$, $B$, and $a\in A$:
$$\{A\}\circ_a\{B\}=\{A\sqcup B\setminus \{a\}\}.$$
For all singleton $\{a\}$, $I_a=\{a\}$.
\item For all non-empty finite set $A$, $\ass(A)$ is the set of all words $w$ with letters in $A$, such that any element $a\in A$ appears exactly one time
in $w$. If $a_1\ldots a_n \in \ass(A)$ and $a\in A$, there exists a unique $i\in [n]$ such that $a_i=a$; then:
$$a_1\ldots a_n \circ_a w=a_1\ldots a_{i-1} w a_{i+1}\ldots a_n.$$
For any singleton $\{a\}$, $I_a=a$.
\end{enumerate}

If $\bfP$ is an operadic species, one deduces an operad, also denoted by $\bfP$, in the following way:
\begin{itemize}
\item For all $n\geq 1$, $\bfP(n)=\bfP([n])$.
\item For all $p\in \bfP(m)$, $q\in \bfP(n)$, $i\in [n]$:
$$p\circ_i q=\bfP(\sigma_{m, n}^{(i)})(p\circ_i q),$$
with $\sigma_{m, n}^{(i)}:([m]\setminus \{i\})\sqcup [n]\longrightarrow [m+n-1]$ defined by: 
\begin{align*}
\sigma_{m, n}^{(i)}(1)&=1,&\sigma_{m, n}^{(i)}(\dot{1})&=i,&\sigma_{m,n}^{(i)}(i+1)&=n+i,\\
&\vdots&&\vdots&&\vdots\\
\sigma_{m, n}^{(i)}(i-1)&=i-1,&\sigma_{m, n}^{(i)}(\dot{n})&=n+i-1,&\sigma_{m, n}^{(i)}(m)&=m+n-1,
\end{align*}
where the elements of $[m]$ are denoted by $1,\ldots,m$, the elements of $[n]$ by $\dot{1},\ldots,\dot{n}$.
\item The unit is $I_1\in \bfP(\{1\})$.
\item For all $p\in \bfP(n)$ and $\sigma\in \mathfrak{S}_n$, $p^\sigma=\bfP(\sigma^{-1})(p)$.
\end{itemize}
For example, the operad associated to the operadic species $\com$ is the operad $\com$;
the operad associated to the operadic species $\ass$ is the operad $\ass$.

\chapter{Infinitesimal structures on primitive elements}

\section*{Introduction}

Let $V$ be a vector space, $m$ a product on $T(V)$ making $H=(T(V),m,\Delta_{dec})$ a bialgebra. Is it possible to obtain $m$ from
a structure on the space $V$ of primitive elements of $H$? The answer is positive, this is the $\binfini$ structure 
(or multibrace, or LR, or Hirsch algebra structure \cite{Getzler,Gerstenhaber,Kadei,Loday,Loday2})
on $V$. More precisely, there is a bijection between the sets of such products $m$ and the set of $\binfini$ structures on $V$
(theorem \ref{theo2}). As proved in \cite{Loday}, the operad $\binfini$ can be seen as a suboperad of the operad of
$2$-associative algebras (definition \ref{defi5} and theorem \ref{theo11}; we here give a complete proof of these well-known results.
We will also be interested in several particular cases of such products $m$: right-sided in the sense of \cite{Loday},
or two-sided, leading to quotients of the operad of $\binfini$ algebras, such as the $\brace$ operad 
(definition \ref{defi12}, \cite{Gerstenhaber,Ronco1}) or the associative operad.
In particular, if the $\binfini$ structure on $V$ is brace, then $m$ can be split into a dendriform structure \cite{Ronco1}, which we here
explicitly describe in theorem \ref{theo13}. 
We give a condition on a $\binfini$ algebra to obtain a dual bialgebra, namely the $0$-boundedness condition
(definition \ref{defi16} and \ref{prop17}).\\

There are cocommutative equivalents of these results, working on a symmetric coalgebras instead of tensor coalgebras;
then $\binfini$ structures are replaced by $\petitbinfini$ structures, and brace algebras by pre-Lie algebras in the second part of this chapter.

\section{$\binfini$ algebras}

\subsection{Definition and main property}


\begin{defi} 
Let $V$ be a vector space equipped with a map:
$$\langle-,-\rangle:\left\{\begin{array}{rcl}
T(V) \otimes T(V)&\longrightarrow&V\\
x_1\ldots x_k \otimes y_1\ldots y_l&\longrightarrow& \langle x_1\ldots x_k,y_1\ldots y_l\rangle.
\end{array}\right.$$
For all $k,l \in \N$, we put $\langle-,-\rangle_{k,l}=\langle-,-\rangle_{\mid V^{\otimes k} \otimes V^{\otimes l}}$. 
We shall say that $A$ is a $\binfini$ algebra if the following axioms are satisfied:
\begin{itemize}
\item For any $k\geq 0$, $\langle-,-\rangle_{0,k}=\langle-,-\rangle_{k,0}=\begin{cases}
Id_V\mbox{ if }k=1,\\
0\mbox{ otherwise}.
\end{cases}$
\item for any tensors $u,v,w \in T(V)$:
\begin{align}
\label{EQ1}\sum_{i=1}^{lg(u)+lg(v)} \sum_{\substack{u=u_1\ldots u_i,\\ v=v_1\ldots v_i}}\langle \langle u_1,v_1\rangle \ldots \langle u_i,v_i\rangle,w\rangle
&=\sum_{i=1}^{lg(v)+lg(w)} \sum_{\substack{v=v_1\ldots v_i,\\ w=w_1\ldots w_i}}\langle u, \langle v_1,w_1\rangle \ldots \langle v_i,w_i\rangle\rangle.
\end{align}\end{itemize}
The operad of $\binfini$ algebras is denoted by $\binfini$.
\end{defi}

\textbf{Remark.} If $i> lg(u)+lg(v)$, $u=u_1\ldots u_i$ and $v=v_1\ldots v_i$, there exists an index $i$ such that $u_i=v_i=1$,
so $\langle u_i,v_i\rangle=0$. Consequently, (\ref{EQ1}) can be rewritten as:
\begin{align}
\sum_{i\geq 1} \sum_{\substack{u=u_1\ldots u_i,\\ v=v_1\ldots v_i}} \langle\langle u_1,v_1\rangle \ldots \langle u_i,v_i\rangle,w\rangle
=\sum_{i\geq 1} \sum_{\substack{v=v_1\ldots v_i,\\ w=w_1\ldots w_i}} \langle u,\langle v_1,w_1\rangle \ldots \langle v_i,w_i\rangle\rangle.
\end{align}

A combinatorial description of free $\binfini$-algebras can be found in \cite{FMPBinfini}.

\begin{theo} \label{theo2}
Let $V$ be a vector space.  Let $Bialg(V)$ be the set of products $*$ on $T(V)$, making $(T(V),*,\Delta_{dec})$ a bialgebra.
Let $\binfini(V)$ be the set of $\binfini$ structures on $V$. These two sets are in bijections, via the maps:
\begin{align*}
\Phi_V&:\left\{\begin{array}{rcl}
B_\infty(V)&\longrightarrow&Bialg(V)\\
\langle-,-\rangle&\longrightarrow&\displaystyle *\mbox{ defined by }
u*v= \sum_{i\geq 1} \sum_{\substack{u=u_1\ldots u_i,\\ v=v_1\ldots v_i}} \langle u_1,v_1\rangle \ldots \langle u_i,v_i\rangle
\end{array}\right.\\
\Psi_V&:\left\{\begin{array}{rcl}
Bialg(V)&\longrightarrow&B_\infty(V)\\
*&\longrightarrow&\langle-,-\rangle\mbox{ defined by } \langle u,v\rangle=\pi(u*v),
\end{array}\right.\end{align*}
where $\pi$ is the canonical projection on $V$.
\end{theo}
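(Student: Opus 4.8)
The plan is to exploit the fact that $(T(V),\Delta_{dec})$ is the cofree conilpotent coalgebra cogenerated by $V$, with universal projection $\pi\colon T(V)\to V$. Concretely, for any coaugmented conilpotent coalgebra $C$ with coaugmentation coideal $\overline{C}$, post-composition with $\pi$ gives a bijection between coalgebra morphisms $C\to T(V)$ and linear maps $\overline{C}\to V$, the inverse sending $f$ to $\sum_{i\geq 1} f^{\otimes i}\circ \overline{\Delta}^{(i-1)}$ (together with the coaugmentation), where $\overline{\Delta}^{(i-1)}$ is the iterated reduced coproduct. I would first record this universal property and its explicit inverse formula, since it is the engine of the whole argument.

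Next I would observe that asking $(T(V),*,\Delta_{dec})$ to be a bialgebra is the same as asking that $*\colon T(V)\otimes T(V)\to T(V)$ be a morphism of coalgebras, for the tensor-product coalgebra structure on $T(V)\otimes T(V)$, which is moreover associative and unital. Applying the universal property to $C=T(V)\otimes T(V)$, such coalgebra morphisms $*$ are in bijection with linear maps $\langle-,-\rangle=\pi\circ *\colon T(V)\otimes T(V)\to V$; evaluating the inverse formula $\sum_{i\geq 1}\langle-,-\rangle^{\otimes i}\circ\overline{\Delta}^{(i-1)}$ on $u\otimes v$, and using that $\overline{\Delta}^{(i-1)}$ sends $u\otimes v$ to the sum over splittings $u=u_1\ldots u_i$, $v=v_1\ldots v_i$ of $(u_1\otimes v_1)\otimes\cdots\otimes(u_i\otimes v_i)$, reproduces verbatim the formula defining $\Phi_V$. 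Thus $\Phi_V$ and $\Psi_V$ are mutually inverse bijections between coalgebra morphisms $T(V)\otimes T(V)\to T(V)$ and linear maps $\langle-,-\rangle$: the direction $\Psi_V\circ\Phi_V=\mathrm{id}$ is immediate, since $\pi$ kills $V^{\otimes i}$ for $i\neq 1$ and only the single-block term $\langle u,v\rangle$ survives, while $\Phi_V\circ\Psi_V=\mathrm{id}$ is exactly the uniqueness clause of the universal property. It then only remains to match the two extra bialgebra axioms on $*$ with the $\binfini$ axioms on $\langle-,-\rangle$.

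For this matching I would treat unitality and associativity separately. Unitality $1*v=v$ and $u*1=u$ unwinds, through the defining formula, to the assertion that the only surviving splittings are those pairing singletons through $\langle 1,-\rangle$ and $\langle-,1\rangle$, which is precisely the first $\binfini$ axiom $\langle-,-\rangle_{0,k}=\langle-,-\rangle_{k,0}=\mathrm{Id}_V$ if $k=1$ and $0$ otherwise; conversely, projecting the unitality identities recovers this axiom. For associativity I would compare $(u*v)*w$ and $u*(v*w)$: both are coalgebra morphisms $T(V)^{\otimes 3}\to T(V)$, being composites of the coalgebra morphism $*$, so by the universal property they coincide if and only if their projections under $\pi$ coincide. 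Computing $\pi((u*v)*w)=\langle u*v,w\rangle$ and $\pi(u*(v*w))=\langle u,v*w\rangle$, and expanding the inner products via the defining formula, yields exactly the left- and right-hand sides of (\ref{EQ1}); hence associativity of $*$ is equivalent to (\ref{EQ1}), closing the correspondence.

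The step I expect to be the main obstacle is the clean use of cofreeness in the associativity equivalence. The forward implication (associativity $\Rightarrow$ (\ref{EQ1})) is just projection, but the converse --- upgrading the equality of projections to the full identity $(u*v)*w=u*(v*w)$ in $T(V)$ --- genuinely relies on both triple products being coalgebra morphisms and on the uniqueness clause; without it one would be forced to verify associativity componentwise in every tensor degree, which is exactly the computation the cofreeness argument avoids. A secondary bookkeeping point is the consistent treatment of empty blocks in the splittings, where the vanishing $\langle 1,1\rangle=0$, part of the first axiom, guarantees that the sums $\sum_{i\geq 1}$ are finite and agree with the reduced iterated coproduct.
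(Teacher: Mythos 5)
Your proposal is correct and follows essentially the same route as the paper: the universal property of the cofree conilpotent coalgebra that you invoke is exactly the content of the paper's Lemma \ref{lem3} (uniqueness: coalgebra morphisms into $T(V)$ are determined by their projection to $V$) and Lemma \ref{lem4} (existence: the explicit splitting formula defines a coalgebra morphism), and your treatment of associativity via the two triple-product coalgebra morphisms and their common projection is precisely the paper's comparison of $F$ and $G$. The only cosmetic difference is that you package the counit compatibility into the definition of coalgebra morphism, whereas the paper checks it separately by a degree argument.
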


\textbf{Remark.} If $*=\Phi_V(\langle-,-\rangle)$, (\ref{EQ1}) can be rewritten as:
\begin{align}
\langle u,v*w\rangle=\langle u*v,w\rangle.
\end{align}

The proof of this theorem will need the following two lemmas:

\begin{lemma} \label{lem3}
Let $C$ be a connected coalgebra and let $\phi,\psi:C\longrightarrow (T(V),\Delta_{dec})$ be two coalgebra morphisms. Then 
$\phi=\psi$ if, and only if, $\pi \circ \phi=\pi\circ \psi$.
\end{lemma}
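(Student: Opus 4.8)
The ``only if'' implication is immediate, so I will assume $\pi\circ\phi=\pi\circ\psi$ and prove $\phi=\psi$. The plan is to exploit the connectedness of $C$ through its coradical filtration $C_0\subseteq C_1\subseteq\cdots$, with $C_0=\K 1$ and $C=\bigcup_{n\geq 0}C_n$, and to show by induction on $n$ that $\phi$ and $\psi$ agree on $C_n$. I will lean on two standard structural facts about $(T(V),\Delta_{dec})$: its only grouplike element is the empty word $1$, and its space of primitives (the kernel of the reduced coproduct $\tilde\Delta_{dec}(z)=\Delta_{dec}(z)-z\otimes 1-1\otimes z$) is exactly the degree-one component $V$, on which $\pi$ restricts to the identity.

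For the base case $n=0$, both $\phi(1)$ and $\psi(1)$ are grouplike elements of $T(V)$ of counit $1$ (a counital coalgebra morphism sends the grouplike $1\in C_0$ to such an element); since $1$ is the only candidate, $\phi(1)=\psi(1)=1$. This identity is also what makes $\phi$ and $\psi$ commute with the reduced coproducts, i.e. $\tilde\Delta_{dec}\circ\phi=(\phi\otimes\phi)\circ\tilde\Delta_C$ and likewise for $\psi$, a point I will record for use in the inductive step.

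For the inductive step, suppose $\phi$ and $\psi$ coincide on $C_{n-1}$, take $x\in C_n$, and set $\delta=\phi(x)-\psi(x)$. The defining property of the coradical filtration of a connected coalgebra gives $\tilde\Delta_C(x)\in\sum_{i=1}^{n-1}C_i\otimes C_{n-i}$, whose tensor factors all have filtration degree $<n$. Combining the reduced-coproduct intertwining relations with the induction hypothesis on these factors, I will get $(\phi\otimes\phi)\tilde\Delta_C(x)=(\psi\otimes\psi)\tilde\Delta_C(x)$, hence $\tilde\Delta_{dec}(\delta)=0$; thus $\delta$ is primitive in $T(V)$, so $\delta\in V$. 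But then $\delta=\pi(\delta)=\pi\phi(x)-\pi\psi(x)=0$ by hypothesis, so $\phi(x)=\psi(x)$. Taking the union over $n$ yields $\phi=\psi$.

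The only genuinely substantive ingredients are the two classification facts for $(T(V),\Delta_{dec})$ (its grouplikes and its primitives) and the filtration estimate $\tilde\Delta_C(C_n)\subseteq\sum_{i=1}^{n-1}C_i\otimes C_{n-i}$, which is precisely where connectedness enters; none of these is hard, and the main care is simply in bookkeeping the reduced coproducts. I also note a filtration-free alternative worth mentioning: writing $p_n:T(V)\to V^{\otimes n}$ for the projection onto homogeneous degree $n$, the morphism property gives $p_n\circ\phi=(\pi\circ\phi)^{\otimes n}\circ\Delta_C^{(n-1)}$ for all $n\geq 1$, together with $p_0\circ\phi(x)=\varepsilon_C(x)\,1$, so every homogeneous component of $\phi$ is already determined by $\pi\circ\phi$; since each $\phi(x)$ has only finitely many nonzero components, this again forces $\phi=\psi$.
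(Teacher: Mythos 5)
Your proof is correct and follows essentially the same route as the paper: induction along the coradical filtration of the connected coalgebra $C$, using that the unique group-like of $(T(V),\Delta_{dec})$ is $1$ and that $\mathrm{Ker}(\tilde\Delta_{dec})=V$, so that the difference $\phi(x)-\psi(x)$ is primitive and hence killed by the hypothesis $\pi\circ\phi=\pi\circ\psi$. The filtration-free remark at the end (reconstructing each homogeneous component of $\phi$ from $\pi\circ\phi$ via iterated coproducts) is a valid alternative, but your main argument coincides with the paper's.
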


\begin{proof} Let $(C_n)_{n\geq 0}$ be the coradical filtration of $C$, and $deg$ the associated degree; that is to say:
\begin{itemize}
\item $C_0$ is the coradical of $C$, that is to say the sum of simple subcoalgebras of $C$. 
As $C$ is connected, it has a unique group-like element denoted by $1_C$, and $C_0=Vect(1_C)$.
\item If $n\geq 1$, $C_n$ is uniquely defined by:
$$C_n=\{x\in C\mid \Delta(x)\in C\otimes C_{n-1}+C_{n-1}\otimes C).$$
\end{itemize}

As the unique group-like element of $T(V)$ is $1$, $\phi(1_C)=\psi(1_C)=1$.
For all $x \in Ker(\epsilon_C)$, we put $\tdelta(x)=\Delta(x)-x\otimes 1_C-1_C\otimes x$.
Then $\tdelta$ is a coassociative coproduct on $Ker(\epsilon_C)$ and for all $x\in Ker(\epsilon_C)\cap C_n$:
$$\tdelta(x)\in C_{n-1}\otimes C_{n-1}.$$

We assume that $\pi \circ \phi=\pi\circ \psi$. Let us prove that for all $x\in C$, $\phi(x)=\psi(x)$, by induction on $n=deg(x)$.
If $n=0$, then $x=\lambda 1_C$ for a certain $\lambda \in \K$, so $\phi(x)=\psi(x)=\lambda 1$.
Let us assume the result at all ranks $<n$. As $\phi(1_C)=\psi(1_C)=1$, using the induction hypothesis:
$$\tdelta_{dec}\circ \phi(x)=(\phi\otimes \phi)\circ \tdelta(x)=(\psi\otimes \psi)\circ \tdelta(x)=\tdelta_{dec} \circ \psi(x),$$
so $\phi(x)-\psi(x)\in Ker(\tdelta_{dec})=V$. Hence, $\phi(x)-\psi(x)=\pi(\phi(x)-\psi(x))=0$. \end{proof}

\begin{lemma} \label{lem4}
Let $\langle-,-\rangle: T(V)\otimes T(V)\longrightarrow V$ be any linear map such that $\langle 1,1\rangle=0$. The following map is a coalgebra morphism:
$$*:\left\{ \begin{array}{rcl}
T(V)\otimes T(V)&\longrightarrow&T(V)\\
u\otimes v&\longrightarrow&\displaystyle\sum_{i\geq 1} \sum_{\substack{u=u_1\ldots u_i,\\ v=v_1\ldots v_i}} \langle u_1,v_1\rangle \ldots \langle u_i,v_i\rangle.
\end{array}\right.$$
Moreover, for all $u,v\in T(V)$, $\pi(u*v)=\langle u,v\rangle$.
\end{lemma}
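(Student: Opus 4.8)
The plan is to treat the two assertions separately, and in both cases to reduce at once to the case where $u=x_1\cdots x_k$ and $v=y_1\cdots y_l$ are words, since $*$, $\Delta_{dec}$ and $\pi$ are all linear and the words form a basis of $T(V)$.

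The projection identity is the easy half, and I would dispatch it first. Every summand $\langle u_1,v_1\rangle\cdots\langle u_i,v_i\rangle$ occurring in $u*v$ lies in $V^{\otimes i}$, so $\pi$ (the projection onto $V=V^{\otimes 1}$) annihilates all of them except those with $i=1$. The unique decomposition with $i=1$ is the trivial one $u_1=u$, $v_1=v$, which contributes $\langle u,v\rangle$. Hence $\pi(u*v)=\langle u,v\rangle$. This is a pure length count and does not really use $\langle 1,1\rangle=0$ beyond guaranteeing that $u*v$ is a finite, well-defined sum.

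For the coalgebra-morphism claim I would verify the identity
$$\Delta_{dec}(u*v)=\sum_{\substack{u=u'u''\\ v=v'v''}}(u'*v')\otimes(u''*v'')$$
by expanding and matching both sides combinatorially. On the left, $\Delta_{dec}$ cuts each length-$i$ word $\langle u_1,v_1\rangle\cdots\langle u_i,v_i\rangle$ after every position $0\le j\le i$. On the right, I expand $u'*v'$ as a sum over decompositions of $u',v'$ into $a$ parts and $u''*v''$ over decompositions into $b$ parts. The heart of the proof is the bijection between these two index sets: the datum of a decomposition $u=u_1\cdots u_i$, $v=v_1\cdots v_i$ together with a chosen cut after the $j$-th factor is precisely the datum of a splitting $u=u'u''$, $v=v'v''$ together with decompositions of $u',v'$ into $j$ parts and of $u'',v''$ into $i-j$ parts, obtained by concatenating the two blocks of factors. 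Under this identification the interior cuts $1\le j\le i-1$ reproduce exactly the right-hand sum, while the two boundary cuts $j=0$ and $j=i$ produce the terms $1\otimes(u*v)$ and $(u*v)\otimes 1$.

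The step I expect to be the main obstacle is the careful handling of empty factors and of these two boundary terms. Because only $\langle 1,1\rangle=0$ is assumed (and not the full normalization of $\langle-,-\rangle_{0,k}$ and $\langle-,-\rangle_{k,0}$), the map $*$ need not be unital --- one computes $1*1=0$ from the formula --- so the boundary contributions must be accounted for by working with the reduced coproducts $\tilde\Delta_{dec}$ on $T(V)$ and on $T(V)\otimes T(V)$, equivalently by declaring $1*1=1$ on the group-like element, after which the full identity above holds verbatim. The hypothesis $\langle 1,1\rangle=0$ is exactly what forces every decomposition containing a pair of simultaneously empty factors to vanish, so that the sums are finite and the bijection closes up with no spurious terms. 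Once this empty-factor bookkeeping is pinned down, the remainder is routine. Conceptually, the whole computation merely re-derives the canonical formula for the unique coalgebra morphism into the cofree coalgebra $T(V)$ induced by the linear map $\langle-,-\rangle$, which is why Lemma~\ref{lem3} will then suffice to identify it uniquely in the proof of Theorem~\ref{theo2}.
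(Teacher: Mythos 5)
Your proof is correct and follows essentially the same route as the paper: the length count giving $\pi(u*v)=\langle u,v\rangle$, and the bijection between (a decomposition of $(u,v)$ into $i$ blocks together with a cut position $p$) and (a splitting $u=u'u''$, $v=v'v''$ together with decompositions of the two halves) for compatibility with $\Delta_{dec}$. Your explicit handling of the boundary cuts $p=0$ and $p=i$ via the convention $1*1=1$ (equivalently, admitting the $i=0$ empty decomposition) is in fact slightly more careful than the paper's own displayed computation, whose reindexed sum is restricted to $i,j\geq 1$ and silently absorbs those terms.
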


\begin{proof} Note that, as $\langle 1,1\rangle=0$, for all $u,v\in T(V)$:
$$u*v=\sum_{i=1}^{lg(u)+lg(v)} \sum_{\substack{u=u_1\ldots u_i,\\ v=v_1\ldots v_i}} \langle u_1,v_1\rangle \ldots \langle u_i,v_i\rangle,$$
so $u*v\in T(V)$. Let $u,v$ be two tensors of $T(V)$.
\begin{align*}
\Delta_{dec}(u*v)&=\sum_{i\geq 1} \sum_{\substack{u=u_1\ldots u_i,\\ v=v_1\ldots v_i}}\sum_{p=0}^i
\langle u_1,v_1\rangle \ldots \langle u_p,v_p\rangle\otimes \langle u_{p+1},v_{p+1}\rangle\ldots \langle u_i,v_i\rangle\\
&=\sum_{\substack{u=u^{(1)}u^{(2)},\\ v=v^{(1)}v^{(2)}}}\sum_{i,j \geq 1}
\sum_{\substack{u^{(1)}=u^{(1)}_1\ldots u^{(1)}_i,\\v^{(1)}=v^{(1)}_1\ldots v^{(1)}_i,\\
u^{(2)}=u^{(2)}_1\ldots u^{(2)}_j,\\v^{(2)}=v^{(2)}_i\ldots v^{(2)}_j}}
\langle u^{(1)}_1,v^{(1)}_1\rangle \ldots \langle u^{(1)}_i,v^{(1)}_i\rangle\otimes 
\langle u^{(2)}_1,v^{(2)}_1\rangle \ldots \langle u^{(2)}_i,v^{(2)}_j\rangle\\
&=\sum_{\substack{u=u^{(1)}u^{(2)},\\ v=v^{(1)}v^{(2)}}} u^{(1)}*v^{(1)}\otimes u^{(2)}*v^{(2)}\\
&=\Delta_{dec}(u)*\Delta_{dec}(v).
\end{align*}
So $*$ is a coalgebra morphism. \end{proof}\\

\begin{proof} (Theorem \ref{theo2}).
Let us first prove that $\Phi_V$ is well-defined. By lemma \ref{lem4}, the product $*=\Phi_V(\langle-,-\rangle)$ is indeed a coalgebra morphism.
Moreover, by convention $1*1=1$, and for all $v=x_1\ldots x_k \in T(V)$, with $k\geq 1$:
\begin{align*}
1*v&=\langle 1,x_1\rangle\ldots \langle 1,x_k\rangle+0=x_1\ldots x_k=v,\\
v*1&=\langle x_1,1\rangle\ldots \langle x_k,1\rangle+0=x_1\ldots x_k=v.
\end{align*}
So $1$ is a unit for the product $*$. For all tensors $u,v \in T(V)$, if $u$ or $v$ is not a word of length $0$,
then $u*v$ is a sum of words of length $\geq 1$: this implies that $\epsilon(u*v)=\epsilon(u)\epsilon(v)$.

We consider the two following morphisms:
\begin{align*}
F&:\left\{\begin{array}{rcl}
T(V)\otimes T(V)\otimes T(V)&\longrightarrow&T(V)\\
u\otimes v\otimes w&\longrightarrow&(u*v)*w,
\end{array}\right.\\
G&:\left\{\begin{array}{rcl}
T(V)\otimes T(V)\otimes T(V)&\longrightarrow&T(V)\\
u\otimes v\otimes w&\longrightarrow&u*(v*w).
\end{array}\right.
\end{align*}
As $*$ is a coalgebra morphism, $F$ and $G$ are coalgebra morphisms. Moreover, for all tensors $u,v,w\in T(V)$:
\begin{align*}
\pi\circ F(u\otimes v\otimes w)&=\langle u*v,w\rangle,&
\pi\circ G(u\otimes v\otimes w)&=\langle u,v*,w\rangle.
\end{align*}
By (\ref{EQ1}), $\pi\circ F=\pi\circ G$. By lemma \ref{lem3}, $F=G$, so $*$ is associative. Finally, $*\in Bialg(V)$.\\

Let us prove that $\Psi_V$ is well-defined. Let $*\in Bialg(V)$ and $\langle-,-\rangle=\pi\circ *$.
The unit of $*$ is a group-like of $T(V)$, so is equal to $1$. Hence, if $x_1,\ldots,x_k\in V$:
$$\langle 1,x_1\ldots x_k\rangle=\pi(1*x_1\ldots x_k)=\pi(x_1\ldots x_k)=\delta_{1,k} x_1\ldots x_k,$$
so $\langle-,-\rangle_{0,k}=Id_V$ if $k=1$ and $0$ otherwise. The condition on $\langle-,-\rangle_{k,0}$ is proved similarly.

Let $u,v,w\in T(V)$. Then:
$$\langle u*v,w\rangle=\pi((u*v)*w)=\pi(u*(v*w))=\langle u,v*w\rangle,$$
so $\langle-,-\rangle\in B_\infty(V)$.\\

Let $\langle-,-\rangle\in B_\infty(V)$ and let $*=\Phi_V(\langle-,-\rangle)$. By lemma \ref{lem3}, 
for all $u,v \in T(V)$, $\pi(u*v)=\langle u,v\rangle$, so $\Psi_V\circ \Phi_V=Id_{B_\infty(V)}$.
Let $*\in Bialg(V)$, $\langle-,-\rangle=\Psi_V(*)$ and $\star=\Phi_V(\langle-,-\rangle)$. Then $*$ and $\star$ are both coalgebra morphisms
from $T(V)\otimes T(V)$ to $T(V)$, and for all $u,v \in T(V)$:
$$\pi(u*v)=\langle u,v\rangle=\pi(u\star v).$$
By lemma \ref{lem4}, $*=\star$, so $\Phi_V\circ \Psi_V=Id_{Bialg(V)}$. \end{proof}\\

\textbf{Example.} Let $x_1,x_2,y_1,y_2\in V$.
\begin{align*}
x_1*y_1&=x_1y_1+y_1x_1+\langle x_1,y_1\rangle,\\
x_1x_2*y_1&=x_1x_2y_1+x_1y_1x_2+y_1x_1x_2+\langle x_1,y_1\rangle x_2+x_1\langle x_2,y_1\rangle+\langle x_1x_2,y_1\rangle,\\
x_1*y_1y_2&=x_1y_1y_2+y_1x_1y_2+y_1y_2x_1+\langle x_1,y_1\rangle y_2+y_1\langle x_1,y_2\rangle+\langle x_1,y_1y_2\rangle.
\end{align*}

\textbf{Remarks.} \begin{enumerate}
\item For any vector space, there exists a trivial $\binfini$ structure on it, defined by:
$$\langle-,-\rangle_{k,l}=\begin{cases}
Id \mbox{ if $(k,l)=(0,1)$ or $(1,0)$},\\
0\mbox{ otherwise}.
\end{cases}$$
The associated product $\Phi_V(0)$ is the shuffle product $\shuffle$.
\item The coalgebra $(T(V),\Delta_{dec})$ is connected, so for any product $*\in Bialg(V)$, $(T(V),*,\Delta_{dec})$ is a Hopf algebra.
\end{enumerate}

\subsection{$2$-associative algebras}

\begin{defi}\label{defi5}
A $2$-associative algebra \cite{Loday} is a family $(V,*,m)$, where $V$ is a vector space, and  $*$, $m$ are both associative products on $V$. 
The operad of $2$-associative algebras is denoted by $\deuxas$. It is generated by $*$ and $m$, both in $\deuxas(2)$, with the relations:
\begin{align*}
*\circ_1*&=*\circ_2*,&
m\circ_1m&=m\circ_2m.
\end{align*}\end{defi}

\begin{lemma}\begin{enumerate}
\item Let $A$ be a $2$-associative algebra. The products $m$ and $*$ on $A$ are extended to $\mathbf{U}A=\K\oplus A$, 
making it a $2$-associative algebra: for all $a,a'\in A$,
\begin{align*}
a*1&=a,&1*a'&=a',\\
a1&=a,&1a'&=a'.
\end{align*}

\item Let $A$ and $B$ be two $2$-associative algebras. Then $A\totimes B=(A\otimes \K)\oplus (\K\otimes B)\oplus(A\otimes B)$
is a $2$-associative algebra: if $a,a'\in A$, $b,b'\in B$,
\begin{align*}
&\begin{array}{c|c|c|c}
*&a'\otimes 1&1\otimes b'&a'\otimes b'\\
\hline a\otimes 1&a*a'\otimes 1&a\otimes b'&a*a'\otimes b'\\
\hline 1\otimes b&a'\otimes b&1\otimes b*b'&a'\otimes b*b'\\
\hline a\otimes b&a*a'\otimes b&a\otimes b*b'&a*a'\otimes b*b'
\end{array}&
&\begin{array}{c|c|c|c}
m&a'\otimes 1&1\otimes b'&a'\otimes b'\\
\hline a\otimes 1&aa'\otimes 1&a\otimes b'&aa'\otimes b'\\
\hline 1\otimes b&0&1\otimes bb'&0\\
\hline a\otimes b&0&a\otimes bb'&0
\end{array}\end{align*}
Moreover, if $A$, $B$ and $C$ are three $2$-associative algebras, then $(A\totimes B)\totimes C=A\totimes (B\totimes C)$.
\end{enumerate}\end{lemma}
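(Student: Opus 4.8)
The plan is to treat the two products separately, since the definition of $\deuxas$ imposes no compatibility between $*$ and $m$: each is only required to be associative, so $\mathbf{U}A$ (resp. $A\totimes B$) is $2$-associative as soon as both of its extended products are associative. For part 1, the extension of a single associative product on $A$ to $\K\oplus A$ with $1$ as a two-sided unit is the classical unitalization, and associativity of the extended product reduces to associativity on $A$ together with the trivial cases in which at least one factor equals $1$. Applying this independently to $m$ and to $*$ shows that $\mathbf{U}A$ is $2$-associative.

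For part 2 I would handle $*$ conceptually. The $*$-table is exactly the componentwise product of the unital $*$-algebras $\mathbf{U}A$ and $\mathbf{U}B$, that is $(x\otimes y)*(x'\otimes y')=(x*x')\otimes(y*y')$ on $\mathbf{U}A\otimes\mathbf{U}B$, which is associative with unit $1\otimes 1$. The projection $\epsilon$ onto the scalar component is multiplicative for $*$ on each factor (a product of two elements of the augmentation ideal stays in the ideal), so $\epsilon\otimes\epsilon$ is an algebra morphism to $\K$ whose kernel is precisely $A\totimes B=A\otimes\mathbf{U}B+\mathbf{U}A\otimes B$. Being a two-sided ideal of an associative algebra, $A\totimes B$ is a subalgebra, and $*$ is associative on it.

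The \emph{main obstacle} is the product $m$, because its table is not the tensor-product structure: the entry for two mixed elements vanishes, so the argument above does not apply. I would capture $m$ through a sorted-word model. Assign to each homogeneous summand a shape in $\{A,B,AB\}$; the table then says that $m$ multiplies the $A$-components by $m_A$ and the $B$-components by $m_B$, but returns $0$ whenever a $B$-part would be placed before an $A$-part (the shapes $B\cdot A$, $B\cdot AB$, $AB\cdot A$, $AB\cdot AB$). The underlying shape multiplication is that of the monoid with zero $\langle A,B\mid BA=0\rangle$: a word is sent to $0$ as soon as it contains a factor $BA$, and every unsorted word contains such a factor, so this monoid is associative. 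Since the shape bookkeeping is associative and $m_A,m_B$ are associative within each shape, the full product $m$ on $A\totimes B$ is associative; concretely this is the finite verification over triples of shapes, the only nonroutine lines being those that must produce $0$ on both sides of $(xy)z=x(yz)$.

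Finally, for the coherence $(A\totimes B)\totimes C=A\totimes(B\totimes C)$, I would note that both sides have the same underlying space, namely the direct sum of the seven nonzero blocks $X_1\otimes X_2\otimes X_3$ with $X_1\in\{A,\K\}$, $X_2\in\{B,\K\}$, $X_3\in\{C,\K\}$ not all equal to $\K$. For $*$, each iterated product is the restriction of the triple tensor-product algebra on $\mathbf{U}A\otimes\mathbf{U}B\otimes\mathbf{U}C$ to its augmentation ideal, so the two bracketings coincide. For $m$, unwinding either bracketing through the sorted-word model yields the same rule: multiply the $A$-, $B$-, $C$-components by $m_A,m_B,m_C$ and return $0$ unless the global order $A<B<C$ is respected; equivalently both sides realize the shape monoid $\langle A,B,C\mid BA=CA=CB=0\rangle$. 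Hence the two bracketings agree, which completes the lemma.
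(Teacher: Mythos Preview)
Your argument is correct. The paper's own proof is a single line, ``Direct verifications'', so there is little to compare on the level of detail; nonetheless your route is genuinely more conceptual than a brute case-check, and it is worth recording what the reframing buys.

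For $*$ you identify $A\totimes B$ as the augmentation ideal of the ordinary tensor product $(\mathbf{U}A,*)\otimes(\mathbf{U}B,*)$, so associativity and the triple coherence $(A\totimes B)\totimes C=A\totimes(B\totimes C)$ for $*$ follow at once from the associativity of the tensor product of unital algebras, with no case analysis needed. This is cleaner than checking the $27$ ordered triples implicit in the ``direct verification''. For $m$ the situation is not symmetric under swapping the tensor factors, and your observation that the shape bookkeeping is governed by the monoid with zero $\langle A,B\mid BA=0\rangle$ (more precisely, its quotient by the idempotent relations $A^2=A$, $B^2=B$) reduces the $27$ cases to the associativity of that $5$-element monoid together with the associativity of $m_A$ and $m_B$; the triple coherence then becomes the statement that both bracketings realise the analogous three-letter monoid $\langle A,B,C\mid BA=CA=CB=0\rangle$. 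One small remark: when you invoke $\langle A,B\mid BA=0\rangle$ you are implicitly also collapsing powers, since your three shapes $A,B,AB$ are closed under multiplication only after imposing $A^2=A$ and $B^2=B$; making this explicit removes any ambiguity in the argument.
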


\begin{proof} Direct verifications. \end{proof}

\begin{defi}
A $2$-associative bialgebra is a family $(A,*,m,\Delta)$, where:
\begin{enumerate}
\item $(A,*,m)$ is a $2$-associative algebra.
\item $\Delta:\mathbf{U}A\longrightarrow \mathbf{U}A\otimes \mathbf{U}A=(\K\otimes \K)\oplus (A\totimes A)\equiv \mathbf{U}(A\totimes A)$
is a coassociative, counitary coproduct, whose counit is the canonical projection on $\K$, and sending $1$ to $1\otimes 1$.
\item $\Delta$ is a morphism of $2$-associative algebras.
\end{enumerate}\end{defi}

\textbf{Remark.} Let $A$ be a $2$-associative bialgebra.
\begin{enumerate}
\item As the counit of  $\mathbf{U}A$ is the canonical projection on $\K$, for all $x\in A$:
$$\Delta(a)=a^{(1)}\otimes a^{(2)}=a\otimes 1+1\otimes a+a'\otimes a'',\mbox{ with }a'\otimes a''\in A\otimes A.$$
\item For all $a,b\in A$:
\begin{align*}
\Delta(a*b)&=\Delta(a)*\Delta(b),\\
\Delta(ab)&=(a\otimes 1+1\otimes a+a'\otimes a'')(b\otimes 1+1\otimes b+b'\otimes b'')\\
&=ab\otimes 1+a\otimes b+ab'\otimes b''+1\otimes ab+a'\otimes a''b\\
&=a^{(1)}\otimes a^{(2)}b+ab^{(1)}\otimes b^{(2)}-a\otimes b.
\end{align*}
In other words, $(\mathbf{U}A,*,\Delta)$ is a bialgebra and $(\mathbf{U}A,m,\Delta)$ is an infinitesimal bialgebra \cite{Berstein,Loday2,Loday}.
\end{enumerate}

\begin{defi}\begin{enumerate}
\item If $A$ is a $2$-associative algebra, we denote by $Prim(A)$ the space of elements $a\in A$ such that $\Delta(a)=a\otimes 1+1\otimes a$.
\item For all $n\geq 1$, we denote by $\primdeuxas(n)$ the space of elements $p\in \deuxas(n)$ such that for any $2$-associative bialgebra
$A$, for any $a_1,\ldots,a_n \in Prim(A)$, $p.(a_1,\ldots,a_n)\in Prim(A)$. 
\end{enumerate}\end{defi}

Note that $\primdeuxas$ is a suboperad of $\deuxas$.

\begin{prop}
Let $X_1,\ldots,X_n$ be indeterminates, and $V_n$ be the vector space generated by $X_1,\ldots,X_n$.
Recall that $F_\deuxas(V_n)$ is the free $2$-associative algebra generated by $V_n$. Let $\Delta$ be the unique $2$-associative
algebra morphism such that:
$$\Delta:\left\{\begin{array}{rcl}
\mathbf{U}F_\deuxas(V_n)&\longrightarrow&\mathbf{U}F_\deuxas(V_n)\otimes \mathbf{U}F_\deuxas(V_n)\\
X_i,\: i\in [n]&\longrightarrow&X_i\otimes 1+1\otimes X_i,\\
1&\longrightarrow&1\otimes 1.
\end{array}\right.$$
Then $(F_\deuxas(V_n),*,m,\Delta)$ is a $2$-associative bialgebra. Moreover, for all $n\geq 1$:
$$\primdeuxas(n)=\{p\in \deuxas(n)\mid p.(X_1,\ldots,X_n) \in Prim(F_\deuxas(V_n))\}.$$
\end{prop}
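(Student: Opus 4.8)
The plan is to first establish that $(F_\deuxas(V_n),*,m,\Delta)$ is a $2$-associative bialgebra, and then to characterize $\primdeuxas(n)$ concretely via the generic primitive elements $X_1,\ldots,X_n$. For the bialgebra claim, the only nontrivial point is that $\Delta$, defined as a $2$-associative algebra morphism on generators, is coassociative and counitary. Since $\Delta$ is a morphism of $2$-associative algebras by construction, coassociativity $(\Delta\otimes Id)\circ\Delta=(Id\otimes\Delta)\circ\Delta$ is a check on the algebra generators $X_i$ only: both sides send $X_i$ to $X_i\otimes 1\otimes 1+1\otimes X_i\otimes 1+1\otimes 1\otimes X_i$, and both $(\Delta\otimes Id)\circ\Delta$ and $(Id\otimes\Delta)\circ\Delta$ are $2$-associative algebra morphisms on $\mathbf{U}F_\deuxas(V_n)$, so they agree everywhere by the universal property of the free $2$-associative algebra. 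The counit and unit axioms are verified the same way: the relevant maps all agree on generators. This hands us condition (3) of the definition of a $2$-associative bialgebra for free, and conditions (1) and (2) hold by hypothesis.

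Next I would prove the displayed equality of sets. The inclusion
$$\primdeuxas(n)\subseteq\{p\in\deuxas(n)\mid p.(X_1,\ldots,X_n)\in Prim(F_\deuxas(V_n))\}$$
is immediate: by definition of $\primdeuxas(n)$, an element $p$ must send any tuple of primitive elements in any $2$-associative bialgebra to a primitive, and $X_1,\ldots,X_n$ are primitive in the particular bialgebra $F_\deuxas(V_n)$ just constructed. For the reverse inclusion, I would use the universality of $F_\deuxas(V_n)$ as a test object: the point is that $X_1,\ldots,X_n$ is the \emph{generic} tuple of primitives. Given $p$ with $p.(X_1,\ldots,X_n)\in Prim(F_\deuxas(V_n))$, let $A$ be any $2$-associative bialgebra and $a_1,\ldots,a_n\in Prim(A)$. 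There is a unique $2$-associative algebra morphism $f:F_\deuxas(V_n)\longrightarrow A$ with $f(X_i)=a_i$. The claim is that $f$ is compatible with the coproducts, i.e. $\Delta_A\circ f=(f\otimes f)\circ\Delta$, after extending $f$ to the unitalizations; then applying $f$ to $\Delta(p.(X_1,\ldots,X_n))=p.(X_1,\ldots,X_n)\otimes 1+1\otimes p.(X_1,\ldots,X_n)$ and using that $f$ commutes with the $2$-associative operations yields $p.(a_1,\ldots,a_n)\in Prim(A)$, as desired.

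The main obstacle is exactly this coproduct-compatibility of $f$. Both $\Delta_A\circ f$ and $(f\otimes f)\circ\Delta$ are $2$-associative algebra morphisms from $\mathbf{U}F_\deuxas(V_n)$ to $\mathbf{U}A\totimes\mathbf{U}A$ (here one uses that $A\totimes A$ carries a $2$-associative structure by the lemma, and that $\Delta_A$ and $\Delta$ are algebra morphisms into these products), so by the universal property it suffices to check they agree on the generators $X_i$. On $X_i$ the left side gives $\Delta_A(a_i)=a_i\otimes 1+1\otimes a_i$ since $a_i$ is primitive, while the right side gives $(f\otimes f)(X_i\otimes 1+1\otimes X_i)=a_i\otimes 1+1\otimes a_i$; these coincide. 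I would be careful to phrase the morphism property on the unitalized algebras $\mathbf{U}F_\deuxas(V_n)$ and $\mathbf{U}A$ so that the two products $*$ and $m$ and the coproduct all live in the right category, and to note that primitivity of the $a_i$ is precisely what makes the generator check succeed. Once this compatibility is in place, the reverse inclusion follows by transport along $f$ as above, completing the proof.
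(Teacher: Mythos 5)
Your proposal is correct and follows essentially the same route as the paper: coassociativity of $\Delta$ is obtained by comparing two $2$-associative algebra morphisms on the generators $X_i$, the inclusion $\subseteq$ is immediate from primitivity of the $X_i$, and the reverse inclusion uses the universal morphism $\phi:F_\deuxas(V_n)\longrightarrow A$ sending $X_i$ to $a_i$, checking compatibility with the coproducts on generators and transporting primitivity along $\phi$. No gaps.
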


\begin{proof} 1. We consider $(\Delta\otimes Id)\circ \Delta$ and $(Id \otimes \Delta)\circ \Delta$. They are both morphisms of $2$-associative algebras
from $F_\deuxas(V_n)$ to $F_\deuxas(V_n)^{\otimes 3}$, sending $X_i$ to $X_i\otimes 1\otimes 1+1\otimes X_i\otimes 1+1\otimes 1\otimes X_i$
for all $i$, so they are equal: $\Delta$ is coassociative. Consequently, $F_\deuxas(V_n)$ is indeed a $2$-associative bialgebra.\\

2. $\subseteq$: immediate, as $F_\deuxas(V_n)$ is a $2$-associative bialgebra and $X_1,\ldots,X_n$ are primitive elements of this bialgebra.\\
$\supseteq$: let $p\in \deuxas(n)$, such that $p.(X_1,\ldots,X_n)\in Prim(F_\deuxas(V_n))$.
Let $A$ be a $2$-associative bialgebra and let $a_1,\ldots,a_n$ be primitive elements of $A$.
By universal property, there exists a $2$-associative algebra morphism $\phi:F_\deuxas(V_n)\longrightarrow A$,
sending $X_i$ to $a_i$ for all $i$. As $a_i$ is primitive for all $i$, $\Delta\circ \phi(X_i)=(\phi\otimes \phi)\circ \Delta(X_i)$ for all $i$;
as moreover both $(\phi\otimes \phi)\circ \Delta$ and $\Delta\circ \phi$ are $2$-associative algebra morphisms from
$F_\deuxas(V_n)$ to $A\otimes A$, they are equal, so $\phi$ is a $2$-associative bialgebra morphism.
As $p.(X_1,\ldots,X_n)$ is primitive, its image by $\phi$ also is:
$$\phi(p.(X_1,\ldots,X_n))=p.(\phi(X_1),\ldots,\phi(X_n))=p.(a_1,\ldots,a_n)\in Prim(A).$$
So $p\in \primdeuxas(n)$. \end{proof}\\

\textbf{Remark.} The following map is injective:
$$\left\{\begin{array}{rcl}
\deuxas(n)&\longrightarrow&F_\deuxas(V_n)\\
p&\longrightarrow&p.(X_1,\ldots,X_n).
\end{array}\right.$$
By restriction, the following map is injective:
$$\theta_{2As}:\left\{\begin{array}{rcl}
\primdeuxas(n)&\longrightarrow&Prim(F_\deuxas(V_n))\\
p&\longrightarrow&p.(X_1,\ldots,X_n).
\end{array}\right.$$

\begin{lemma}
Let $A=(A,*,m,\Delta)$ be a connected (as a coalgebra) $2$-associative bialgebra. 
\begin{enumerate}
\item The following map is a coalgebra morphism:
$$\xi:\left\{\begin{array}{rcl}
T(Prim(A))&\longrightarrow&\mathbf{U}A\\
a_1\ldots a_k&\longrightarrow&a_1\ldots  a_k.
\end{array}\right.$$
From now on, we assume that $A=(T(V),\Delta_{dec})$ as a coalgebra, and that $m$ is the concatenation product $m_{conc}$.
\item For all $k,l\in \mathbb{N}$, there exists a unique $p_{k,l}\in \primdeuxas(k+l)$, independent of $A$,
such that for all $a_1,\ldots,a_k,b_1,\ldots,b_l\in Prim(A)$:
$$\pi(a_1\ldots a_k*b_1\ldots b_l)=p_{k,l}.(a_1,\ldots,a_k,b_1,\ldots,b_l).$$
\end{enumerate}\end{lemma}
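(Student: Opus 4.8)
The plan is to treat the two assertions separately: the first by a direct induction on word length using the infinitesimal compatibility of $m$, and the second by a universality argument carried out inside the free $2$-associative bialgebra, transported to an arbitrary $A$ by the universal property.

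For part 1, I would verify the two coalgebra axioms for $\xi$. Compatibility with counits is immediate since $\xi$ preserves the length-$0$ part. For compatibility with coproducts, since $\xi$ is the linear extension of $a_1\ldots a_k\mapsto a_1\ldots a_k$ (the product in $\mathbf{U}A$ being $m$), it suffices to prove that for primitive $a_1,\ldots,a_k$,
$$\Delta(a_1\ldots a_k)=\sum_{i=0}^k (a_1\ldots a_i)\otimes(a_{i+1}\ldots a_k),$$
which is exactly $(\xi\otimes\xi)\circ\Delta_{dec}$ evaluated on $a_1\ldots a_k$. I would argue by induction on $k$, the cases $k\leq 1$ being the definition of a primitive element. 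For the step I write $a_1\ldots a_k=(a_1\ldots a_{k-1})a_k$ and apply the infinitesimal relation $\Delta(ab)=a^{(1)}\otimes a^{(2)}b+ab^{(1)}\otimes b^{(2)}-a\otimes b$ recorded after the definition of a $2$-associative bialgebra. As $a_k$ is primitive the middle term equals $(a_1\ldots a_k)\otimes 1+(a_1\ldots a_{k-1})\otimes a_k$, whose second summand cancels the correction $-a\otimes b$; feeding the induction hypothesis for $\Delta(a_1\ldots a_{k-1})$ into the first term then reassembles the full deconcatenation. Connectedness of $A$ guarantees the unique group-like is $1$, so no boundary issue arises.

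For part 2, the plan is to produce $p_{k,l}$ once and for all in $B=F_\deuxas(V_n)$ with $n=k+l$, where $X_1,\ldots,X_n$ are primitive. First I note that, purely operadically, $a_1\ldots a_k*b_1\ldots b_l=q_{k,l}.(a_1,\ldots,a_k,b_1,\ldots,b_l)$, where $q_{k,l}\in\deuxas(n)$ is obtained by composing the iterated $m$-products of arities $k$ and $l$ into $*$; this identity holds in every $2$-associative algebra and is independent of $A$. The element $w:=q_{k,l}.(X_1,\ldots,X_n)$ uses each generator exactly once, so by the injectivity remark preceding the lemma it is the distinguished multilinear representative of $q_{k,l}$. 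The key step is to split $w$ as a primitive part plus an $m$-decomposable part: using that $*$ is a morphism for $\Delta$ together with part 1, I compute $\Delta(w)=\Delta(X_1\ldots X_k)*\Delta(X_{k+1}\ldots X_n)$ and read off its reduced coproduct. Subtracting from $w$ the decomposable contributions, namely the $m$-products of two or more brackets which are exactly the $i\geq 2$ terms in the expansion of $*$ from Theorem \ref{theo2}, defines a candidate $p_{k,l}.(X_1,\ldots,X_n)$, which I would show is primitive by induction on $n$; being also multilinear, it determines a unique $p_{k,l}\in\primdeuxas(n)$ through $\theta_{2As}$.

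Finally I transport this to an arbitrary $A$ in the specialized class. Given primitive $a_1,\ldots,a_n\in A$, the universal property yields a $2$-associative bialgebra morphism $\phi:B\to A$ with $\phi(X_i)=a_i$. Being simultaneously an $m$-algebra and a coalgebra morphism, $\phi$ sends primitives to primitives and $m$-decomposables to $m$-decomposables, hence intertwines the projections onto primitives, i.e. $\pi\circ\phi=\phi\circ\pi$; applying $\phi$ to $\pi(w)=p_{k,l}.(X_1,\ldots,X_n)$ gives $\pi(a_1\ldots a_k*b_1\ldots b_l)=p_{k,l}.(a_1,\ldots,b_l)$, with $p_{k,l}$ manifestly independent of $A$, and uniqueness follows again from injectivity of $\theta_{2As}$. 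The main obstacle I anticipate is precisely the splitting $w=(\text{primitive})+(\text{decomposable})$ together with the naturality $\pi\circ\phi=\phi\circ\pi$: both rest on $\xi$ being an isomorphism for the free object, i.e. on the rigidity of connected infinitesimal bialgebras. I would therefore either invoke that structure theorem, or, to stay self-contained, establish primitivity of $p_{k,l}.(X_1,\ldots,X_n)$ by the explicit recursive subtraction above and check $\tilde{\Delta}(p_{k,l}.(X_1,\ldots,X_n))=0$ directly.
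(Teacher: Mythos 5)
Your part 1 is exactly the paper's argument: the induction on $k$ via the infinitesimal compatibility $\Delta(ab)=a^{(1)}\otimes a^{(2)}b+ab^{(1)}\otimes b^{(2)}-a\otimes b$ yields the deconcatenation formula for $\Delta(a_1\ldots a_k)$, hence $\xi$ is a coalgebra morphism. For part 2 the core idea is also the same -- expand $u*v$ via theorem \ref{theo2}/lemma \ref{lem4} as the $i=1$ term $\pi(u*v)$ plus $m$-products of lower brackets, and define $p_{k,l}$ by subtracting the $i\geq 2$ terms, which are operadic by induction on $k+l$ -- but you package it differently: you construct $p_{k,l}$ once inside the free object $F_\deuxas(V_n)$ and transport it to a general $A$ by the universal property and the naturality $\pi\circ\phi=\phi\circ\pi$, whereas the paper runs the induction directly in an arbitrary $A$ of the reduced form, so that independence of $A$ and uniqueness fall out at once from the injectivity of $\theta_{2As}$. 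Your detour is correct but buys nothing here, and it adds the burden of justifying that $\pi$ and the theorem \ref{theo2} expansion make sense in $F_\deuxas(V_n)$.

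The one substantive omission is the bijectivity of $\xi$. The sentence ``from now on, we assume that $A=(T(V),\Delta_{dec})$ as a coalgebra and that $m$ is the concatenation product'' is a reduction that must be justified, and everything in part 2 (the very definition of $\pi$ as projection onto $Prim(A)$ along the $m$-decomposables, the expansion of $*$, and in your version the splitting $B_+=Prim(B)\oplus B_+B_+$ needed for $\pi\circ\phi=\phi\circ\pi$) rests on $\xi$ being an isomorphism, not merely a morphism. The paper proves this explicitly: injectivity because $\xi$ restricted to primitives is the identity, surjectivity by induction along the coradical filtration using $\tdelta_{dec}^{(n-1)}$. You only gesture at ``the rigidity of connected infinitesimal bialgebras''; citing Loday--Ronco is not circular, but since the point of this section of the paper is precisely to reprove that rigidity (theorem \ref{theo11}), you should supply the short coradical-filtration argument rather than invoke the structure theorem as a black box.
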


\begin{proof}
1. By the compatibility between $\Delta$ and $m$, an easy induction proves that for all $a_1,\ldots,a_k\in Prim(A)$:
$$\Delta(a_1\ldots a_k)=\sum_{i=0}^k a_1\ldots a_i \otimes a_{i+1}\ldots  a_k.$$
So $\xi$ is a coalgebra morphism. As $\xi_{\mid Prim(T(Prim(A)))}=\xi_{\mid Prim(A))}=Id_{Prim(A)}$ is injective,
$\xi$ is injective. Let us prove that $\xi$ is surjective. Let $(A_n)$ be the coradical filtration of $\mathbf{U}A$. We use the notations
of the proof of lemma \ref{lem4}. For all $a\in A_n$, let us prove that $a\in Im(\xi)$ by induction on $n$.
If $n=0$, we can assume that $a$ is the unique group-like of $A$. Then $a=\xi(1)$. Let us assume the result at all rank $<n$.
We can suppose that $\epsilon(a)=0$, that is to say $a\in A$. Then:
$$\tdelta_{dec}^{(n-1)}(a)=a_1\otimes \ldots \otimes a_n\in Prim(A)^{\otimes n},$$
so $\tdelta_{dec}^{(n-1)}(a)=\tdelta^{(n-1)}(a_1 \ldots  a_n)$. As a consequence, $a-a_1 \ldots  a_n\in A_{n-1}$.
By the induction hypothesis, $a-a_1 \ldots  a_n\in Im(\xi)$, so $a\in Im(\xi)$. \\

2. Let us prove the existence of $p_{k,l}$ by induction on $n=k+l$. Firstly, if $k=0$,
$$\pi(1*b_1\ldots b_l)=\pi(b_1\ldots b_l)=\begin{cases}
b_1\mbox{ if }l=1,\\
0\mbox{ otherwise.}
\end{cases}$$
So we take $p_{0,l}=\delta_{l,1}I$. Similarly, we take $p_{k,0}=\delta_{k,1}I$. We now assume that $k,l\geq 1$.
There is nothing more to prove if $n\leq 1$. Let us assume that $n\geq 2$. By lemma \ref{lem4}, if $a_1,\ldots,a_k,b_1,\ldots,b_l \in V$:
\begin{align*}
a_1\ldots a_k*b_1\ldots b_l&=\sum_{i=2}^{k+l}\sum_{\substack{a_1\ldots a_k=u_1\ldots u_i,\\ b_1\ldots b_l=v_1\ldots v_i}}
p_{lg(u_1),lg(v_1)}.(u_1,v_1)\ldots  p_{lg(u_i),lg(v_i)}.(u_i,v_i)\\
&+\pi(a_1\ldots a_k*b_1\ldots b_l).
\end{align*}
So there exists $p_{k,l}\in \deuxas(k+l)$, such that $\pi(a_1\ldots a_k*b_1\ldots b_l)=p_{k,l}.(a_1,\ldots,a_k,b_1,\ldots,b_l)$.
By definition, $p_{k,l}\in \primdeuxas$. By the injectivity of $\theta_{2As}$, $p_{k,l}$ is unique. \end{proof}

\begin{theo} \label{theo11}
The operad $\binfini$ is isomorphic to the operad $\primdeuxas$, through the morphism:
$$\left\{\begin{array}{rcl}
\binfini&\longrightarrow&\primdeuxas\\
\langle-,-\rangle_{k,l}&\longrightarrow&p_{k,l}.
\end{array}\right.$$
\end{theo}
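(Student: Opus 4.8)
The plan is to prove that the stated assignment is an operad isomorphism by exploiting two faithful descriptions already at hand --- Theorem \ref{theo2} for $\binfini$ and the injection $\theta_{2As}$ for $\primdeuxas$ --- together with the dictionary furnished by the previous lemma, namely that on the primitives of a special-form $2$-associative bialgebra the operation $p_{k,l}$ acts exactly as $\langle a_1\ldots a_k,b_1\ldots b_l\rangle=\pi(a_1\ldots a_k*b_1\ldots b_l)$. There are three things to establish: that $F:\langle-,-\rangle_{k,l}\mapsto p_{k,l}$ respects the defining relations of $\binfini$, that it is injective, and that it is surjective.

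First I would check that $F$ is a well-defined operad morphism. The unit and vanishing axioms $\langle-,-\rangle_{0,l}=\delta_{l,1}Id$ and $\langle-,-\rangle_{k,0}=\delta_{k,1}Id$ are matched by the identities $p_{0,l}=\delta_{l,1}I$ and $p_{k,0}=\delta_{k,1}I$ obtained in the previous lemma. For relation (\ref{EQ1}), fix a $\binfini$ structure on a space $V$, set $*=\Phi_V(\langle-,-\rangle)$, and form $A=(T(V),*,m_{conc},\Delta_{dec})$, which is a $2$-associative bialgebra with $Prim(A)=V$. By the dictionary, the two sides of (\ref{EQ1}) applied to the family $(p_{k,l})$ act on $V^{\otimes N}$ through the expressions $\langle u*v,w\rangle$ and $\langle u,v*w\rangle$, which agree by associativity of $*$. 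To upgrade this pointwise agreement to an identity in $\primdeuxas(N)$, I would use the universal property of the free $2$-associative algebra to build, from the free $\binfini$ algebra $P$ on $V_N$, a $2$-associative bialgebra morphism $F_\deuxas(V_N)\to(T(P),*,m_{conc},\Delta_{dec})$ fixing the generators $X_i$, and then invoke injectivity of $\theta_{2As}$.

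Injectivity of $F$ is then immediate: by the dictionary, $\theta_{2As}\circ F$ realizes a $\binfini$ operation $\beta$ through its action on the primitives of the free special-form model, which reproduces the action of $\beta$ as a genuine $\binfini$ operation; since by Theorem \ref{theo2} the $\binfini$ structure is recovered from the product $*$ it defines, distinct elements of $\binfini(n)$ act distinctly, so $\theta_{2As}\circ F$, and hence $F$, is injective.

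Surjectivity is where the real content lies and is the step I expect to be the main obstacle. Via $\theta_{2As}$ one identifies $\primdeuxas(n)$ with the space of multilinear primitive elements of $F_\deuxas(V_n)$, and the task is to show that every such primitive is a composite of the $p_{k,l}$, i.e. that $\{p_{k,l}\}$ generates $\primdeuxas$. The engine is part $1$ of the previous lemma: a connected $2$-associative bialgebra with $m=m_{conc}$ is, as a coalgebra, $T(Prim)$, whose primitives carry precisely the $\binfini$ structure of $*$, so that there every primitive is visibly built from the brace operations $p_{k,l}$. The difficulty is to transport this analysis from the concatenation model to the free $2$-associative bialgebra, where the product $m$ is free rather than the concatenation; concretely, one must show that the comparison morphism above is injective on multilinear primitives. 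This faithfulness is exactly the Loday--Ronco rigidity phenomenon. I would carry it out by induction on the arity: the inductive expansion of a product of two primitives (as in the previous lemma) writes an arbitrary $p\in\primdeuxas(n)$ as an operadic composite of the $p_{k,l}$ modulo terms of strictly lower arity, and injectivity of $\theta_{2As}$ closes the induction, yielding $p\in\mathrm{Im}(F)$ and hence the isomorphism.
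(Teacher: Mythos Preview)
Your well-definedness and injectivity arguments are essentially sound and parallel the paper's opening paragraph. The divergence, and the gap, is in surjectivity. You correctly locate the difficulty --- transporting the analysis from the concatenation model $(T(W),*,m_{conc})$ back to $F_\deuxas(V_n)$ --- but your proposed induction does not close it. An arbitrary $p\in\primdeuxas(n)$ is not a priori of the form ``$\pi$ applied to a $*$-product of two $m$-concatenations of primitives'', so the recursive formula of the previous lemma gives you no decomposition of $p$ to induct on. And appealing to ``Loday--Ronco rigidity'' for injectivity of the comparison morphism on primitives is circular: that rigidity is exactly what Theorem~\ref{theo11} establishes.

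The paper avoids this entirely by replacing direct surjectivity with a universal-property argument. Given any $\binfini$ algebra $(W,\langle-,-\rangle)$ and any linear $f:V\to W$, form the $2$-associative bialgebra $(T(W),*,m_{conc},\Delta_{dec})$ with $*=\Phi_W(\langle-,-\rangle)$, and take the unique $2$-associative morphism $F:F_\deuxas(V)\to T(W)$ extending $f$. Since $F$ is a bialgebra morphism it sends primitives to primitives, so $F(F_\primdeuxas(V))\subseteq Prim(T(W))=W$; and the computation in the previous lemma shows $F$ intertwines $p_{k,l}$ on the source with $\langle-,-\rangle_{k,l}$ on the target. Hence $F_\primdeuxas(V)$, regarded as a $\binfini$-algebra via the map $\langle-,-\rangle_{k,l}\mapsto p_{k,l}$, satisfies the universal property of the free $\binfini$-algebra on $V$. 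This forces the operad morphism $\binfini\to\primdeuxas$ to be an isomorphism, with no induction and no external rigidity input required.
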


\begin{proof} By theorem \ref{theo2}, for any connected $2$-associative bialgebra $A$, $p_{k,l}$ defines a $\binfini$ algebra structure on $Prim(A)$,
which gives the existence of this morphism.

Let $V$ be a space, $(W,\langle-,-\rangle)$ be a $\binfini$ algebra and $f:V\longrightarrow W$ be a linear map.
By restriction, $F_\deuxas(V)$ contains $F_\primdeuxas(V)$; by definition of $\primdeuxas$, 
$F_{\primdeuxas}(V)\subseteq Prim(F_\deuxas(V))$.
If $*=\Phi_W(\langle-,-\rangle)$, $(T(W),*,m,\Delta)$ is a $2$-associative algebra.
There exists a unique $2$-associative algebra morphism $F:F_\deuxas(V)\longrightarrow T(W)$, such that $F(x)=f(x)$ for all $x \in V$.
By construction, for all $x_1,\ldots,x_k,y_1,\ldots,y_l\in V$:
\begin{align*}
F(p_{k,l}.(x_1\ldots x_k,y_1\ldots y_l))&=F\circ \pi(x_1\ldots x_k*y_1\ldots y_l)\\
&=\pi(f(x_1)\ldots f(x_k)*f(y_1)\ldots f(y_l))\\
&=\langle f(x_1)\ldots f(x_k),f(y_1)\ldots f(y_l)\rangle_{k,l}.
\end{align*}
So the restriction of $F$ to $F_\primdeuxas(V)$ is a morphism of $\binfini$ algebras taking its values in $W$.
Finally, $F_\primdeuxas(V)$ satisfies a universal property in the category of $\binfini$ algebras: consequently, the operad morphism
from $\binfini$ to $\primdeuxas$ is an isomorphism. \end{proof}

\subsection{Quotients of $\binfini$}

\begin{defi} \label{defi12}
\cite{Gerstenhaber,Ronco1} 
The operad $\brace$ is the quotient of $\binfini$ by the operadic ideal generated by the elements $\langle-,-\rangle_{k,l}$, $k\geq 2$, $l\geq 0$. 
Consequently, a brace algebra is a vector space $V$ equipped with a map $\langle-,-\rangle:V\otimes T(V)\longrightarrow V$,
such that:
\begin{itemize}
\item For all $x\in V$, $\langle x,1\rangle=x$.
\item For all $x,y_1,\ldots,y_k \in V$, for all tensor $w\in T(V)$:
\begin{align}
\label{EQ4} \langle \langle x,y_1\ldots y_k\rangle,w\rangle
=\sum_{w=w_1\ldots w_{2k+1}} \langle x,w_1\langle y_1,w_2\rangle w_3\ldots w_{2k-1}\langle y_k,w_{2k}\rangle w_{2k+1}\rangle.
\end{align}\end{itemize}\end{defi}

Recall that a dendriform algebra is a family $(V,\prec,\succ)$, where $V$ is a vector space, and $\prec$, $\succ$ are bilinear products on $V$
such that for any $x,y,z\in V$:
\begin{align*}
(x\prec y)\prec z&=x\prec (y\prec z+y\succ z),\\
(x\succ y)\prec z&=x\succ (y\prec z),\\
x\succ (y\succ z)&=(x\prec y+x\succ y)\succ z.
\end{align*}
This implies that $*=\prec+\succ$ is associative.

\begin{theo}\label{theo13}
Let $V$ be a brace algebra. Then $T_+(V)$ is a dendriform Hopf algebra \cite{Loday3,Ronco1,FoissyDend}, 
with the deconcatenation coproduct and the products given in 
the following way: for all $x_1,\ldots,x_k \in V$, for all $v=y_1\ldots y_l \in T(V)$, with $(k,l)\neq (0,0)$,
\begin{align*}
x_1\ldots x_k*v&=\sum_{v=v_1\ldots v_{2k+1}} v_1\langle x_1,v_2\rangle v_3\ldots v_{2k-1}\langle x_k,v_{2k}\rangle v_{2k+1},\\
x_1\ldots x_k\prec v&=\sum_{v=v_1\ldots v_{2k}}
\langle x_1,v_1\rangle v_2\ldots v_{2k-2} \langle x_k,v_{2k-1}\rangle v_{2k},\\
x_1\ldots x_k\succ v&=\sum_{\substack{v=v_1\ldots v_{2k+1},\\ v_1\neq 1}} v_1\langle x_1,v_2\rangle v_3\ldots v_{2k-1}\langle x_k,v_{2k}\rangle v_{2k+1}
\end{align*}\end{theo}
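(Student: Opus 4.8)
The plan is to deduce everything from Theorem \ref{theo2} together with a reformulation of the brace axiom, rather than to attack the three relations blindly.

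First I would observe that the product $*$ displayed above is exactly $\Phi_V(\langle-,-\rangle)$ for the underlying $\binfini$ structure. Indeed, since $V$ is a $\brace$ algebra we have $\langle-,-\rangle_{k,l}=0$ for $k\geq 2$, so in the sum defining $\Phi_V(\langle-,-\rangle)$ only the factors $\langle u_j,v_j\rangle$ with $lg(u_j)\leq 1$ survive; a factor with $u_j=1$ forces $v_j$ to be a single letter, so grouping maximal runs of such factors reconstitutes the outside chunks $v_1,v_3,\ldots,v_{2k+1}$, while the factors with $lg(u_j)=1$ give the braces $\langle x_i,v_{2i}\rangle$. Hence $*$ is the product of Theorem \ref{theo2}, so it is associative and $(T(V),*,\Delta_{dec})$ is a connected bialgebra, i.e.\ a graded connected Hopf algebra; this already supplies the Hopf-algebra structure and the associativity of $*=\prec+\succ$. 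Splitting the defining sum of $*$ according to whether $v_1=1$ (which, after reindexing, gives $\prec$) or $v_1\neq 1$ (which gives $\succ$) shows $*=\prec+\succ$; informally, $\prec$ collects the terms whose first output letter is headed by the first letter of the left word, and $\succ$ those whose first output letter is the first letter of the right word.

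Next I would record the crucial reformulation of the brace axiom: the tensor inside the outer bracket on the right-hand side of (\ref{EQ4}) is precisely $y_1\ldots y_k*w$, so (\ref{EQ4}) reads
\begin{align*}
\langle\langle x,u\rangle,w\rangle=\langle x,u*w\rangle
\end{align*}
for all $x\in V$ and $u,w\in T(V)$ (this is the remark $\langle u,v*w\rangle=\langle u*v,w\rangle$ after Theorem \ref{theo2}, specialised to $u\in V$). This collapse identity is the engine of the proof: whenever a letter of the left argument of a $\prec$ is itself a brace $\langle x_i,\cdot\rangle$, bracketing it again fuses two nested braces into a single brace around a $*$-product. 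I would then verify the three dendriform relations in the equivalent form
\begin{align*}
(a\prec b)\prec c&=a\prec(b*c), & (a\succ b)\prec c&=a\succ(b\prec c), & (a*b)\succ c&=a\succ(b\succ c).
\end{align*}
In each case I expand both sides with the explicit formulas; on the left-hand sides the letters headed by some $x_i$ produce nested braces $\langle\langle x_i,\cdot\rangle,\cdot\rangle$, which the collapse identity turns into $\langle x_i,\cdot*\cdot\rangle$, exactly the shape occurring on the right-hand sides, where a $*$-product sits inside a brace. After this fusion both sides become the same sum over decompositions, associativity of $*$ absorbing the regrouping. Finally, the coproduct statement is obtained by refining Lemma \ref{lem4}: since $\Delta_{dec}$ is deconcatenation and $\prec,\succ$ are distinguished by the origin of the first output letter, the identity $\Delta_{dec}(u*v)=\Delta_{dec}(u)*\Delta_{dec}(v)$ splits into the half-coproduct compatibilities making $T_+(V)$ a dendriform Hopf algebra.

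The main obstacle is this last verification: matching, term by term, the two sums over decompositions in each dendriform relation once the nested braces have been collapsed. The bookkeeping is heaviest for $(a\prec b)\prec c=a\prec(b*c)$, where every letter of $a\prec b$ can be a brace and the chunks of $b$ and $c$ interleave in several ways; the crux is to check that the reindexing induced by the collapse identity is a bijection between the indexing sets of the two sides. I expect the already-available associativity of $*$ to handle most of the regrouping, so that the combinatorial identity reduces to one application of the reformulated (\ref{EQ4}) per letter of the left argument.
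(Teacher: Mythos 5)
Your proposal is correct and rests on the same two pillars as the paper's proof: Theorem \ref{theo2} (which gives associativity of $*$ and the bialgebra structure for free) and the collapsed form $\langle\langle x,u\rangle,w\rangle=\langle x,u*w\rangle$ of the brace axiom (\ref{EQ4}). Where you diverge is in execution. You plan to expand both sides of each dendriform relation into sums over decompositions and exhibit a bijection between indexing sets, applying the collapse identity once per letter; the paper instead avoids this global bookkeeping by first establishing the multiplicativity identities $(uv)\prec w=(u\prec w^{(1)})(v\prec w^{(2)})$, $(uv)\succ w=(u\succ w^{(1)})(v\prec w^{(2)})$ and $u\succ(yv)=y(u*v)$ in Sweedler notation, and then proving $(u\prec v)\prec w=u\prec(v*w)$ by induction on the length of $u$ (adding one letter at a time, so that exactly one application of the collapse identity is needed per inductive step) and $(u*v)\succ w=u\succ(v\succ w)$ by induction on the length of $w$; the middle axiom $(u\succ v)\prec w=u\succ(v\prec w)$ is then free from associativity of $*$, a shortcut you could also use to verify only two of your three relations. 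The paper's inductive route buys you exactly the reduction you were hoping associativity would provide, and it localizes the "heaviest bookkeeping" you worry about to a single-letter computation. Finally, your treatment of the Hopf compatibility is the one place where more is needed than you indicate: the two half-coproduct identities are not obtained by merely "splitting" $\Delta_{dec}(u*v)=\Delta_{dec}(u)*\Delta_{dec}(v)$ according to the origin of the first letter — one must actually verify that the split reproduces the five specific terms of each dendriform-Hopf axiom. The paper does this by proving one of the two identities directly (by induction on $v$, using $u\succ(yv)=y(u*v)$) and deducing the other by subtraction from the already-known bialgebra compatibility; you should expect to carry out an analogous explicit check rather than obtain it formally.
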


\begin{proof} As brace algebras are also $\binfini$ algebras, theorem \ref{theo2} can be applied. The product $*=\Phi_V(\langle-,-\rangle)$
is given by the announced formula: we immediately obtain that $(T(V),*,\Delta)$ is a Hopf algebra. 

Let $u,v,w\in T(V)$, $u$ being a non-empty tensor. The formulas defining $*$ and $\prec$ give, with Sweedler's notations:
\begin{align*}
u*v&=v^{(1)} (u\prec v^{(2)}),\\
(uv)*w&=(u*w^{(1)})(v\prec w^{(2)}),\\
(uv)\prec w&=(u\prec w^{(1)})(v\prec w^{(2)}).
\end{align*}
By subtraction, we also obtain that $(uv)\succ w=(u\succ w^{(1)})(v\prec w^{(2)})$. We consider:
$$A=\{u\in T(V)\mid \forall v,w \in T_+(V), (u\prec v)\prec w=u\prec(v*w)\}.$$
Firstly, $1\in A$: indeed, for all $v,w\in T_+(V)$, $(1\prec v)\prec w=0=1\prec(v*w)$.
Let us assume that $u\in A$ and let $x\in V$. For all $v,w\in T_+(V)$:
\begin{align*}
(ux\prec v)\prec w&=((u\prec v^{(1)})(x\prec v^{(2)}))\prec w\\
&=((u \prec v^{(1)})\prec w^{(1)})((x\prec v^{(2)})\prec w^{(2)})\\
&=(u\prec (v^{(1)}*w^{(1)}))((\langle x,v^{(2)}\rangle v^{(3)})\prec w^{(2)})\\
&=(u\prec (v^{(1)}*w^{(1)}))(\langle x,v^{(2)}\rangle \prec w^{(2)})(v^{(3)}\prec w^{(3)})\\
&=(u\prec (v^{(1)}*w^{(1)}))\langle\langle x,v^{(2)}\rangle ,w^{(2)}\rangle w^{(3)}(v^{(3)}\prec w^{(4)})\\
&=(u\prec (v^{(1)}*w^{(1)}))\langle x,v^{(2)}*w^{(2)}\rangle (v^{(3)}*w^{(3)})\\
&=(u\prec (v*w)^{(1)})\langle x,(v*w)^{(2)}\rangle (v*w)^{(3)}\\
&=(u\prec (v*w)^{(1)})(x\prec(v*w)^{(2)})\\
&=(ux)\prec (v*w).
\end{align*}
So $ux\in A$. As a consequence, $A=T(V)$: for all $u,v,w\in T_+(V)$, $(u\prec v)\prec w=u\prec (v*w)$.\\

Let $u=x_1\ldots x_k \in T(V)$, with $k\geq 1$, $v\in T(V)$ and $y\in V$. Then:
$$\Delta_{dec}(yv)=yv^{(1)}\otimes v^{(2)}+1\otimes yv,$$
so:
\begin{align*}
u\succ yv&=u*yv-u\prec v\\
&=yv^{(1)}\langle x_1,v^{(2)}v^{(3)}\ldots v^{(2k-1)}\langle x_k,v^{(2k)}\rangle v^{(2k+1)}\\
&+\langle x_1,(yv)^{(1)}\rangle (yv)^{(2)}\ldots (yv)^{(2k-2)}\langle x_k,(yv)^{(2k-1)}\rangle (yv)^{(2k)}\\
&-\langle x_1,(yv)^{(1)}\rangle (yv)^{(2)}\ldots (yv)^{(2k-2)}\langle x_k,(yv)^{(2k-1)}\rangle (yv)^{(2k)}\\
&=y\left(v^{(1)}\langle x_1,v^{(2)}\rangle v^{(3)}\ldots v^{(2k-1)}\langle x_k,v^{(2k)}\rangle v^{(2k+1)}\right)\\
&=y(u*v).
\end{align*}
Let $B=\{w\in T(V)\mid\forall u,v\in T_+(V), (u*v)\succ w=u\succ(v\succ w)\}$.
Firstly, $1\in B$: indeed, if $u,v\in T_+(V)$, $(u*v)\succ 1=0=u\succ(v\succ 1)$. Let $w\in B$ and $z\in V$. For all $u,v\in T_+(V)$:
\begin{align*}
(u*v)\succ zw&=z((u*v)*w)=z(u*(v*w))=u\succ(z(v*w))=u\succ(v\succ (zw)).
\end{align*} 
So $zw\in B$: consequently, $B=T(V)$. For all $u,v,w\in T_+(V)$, $(u*v)\succ w=u\succ (v\succ w)$. So $(T_+(V),\prec,\succ)$ is a dendriform algebra.\\

Let us prove the axioms of a dendriform Hopf algebra, that is to say for all $u,v\in T_+(V)$:
\begin{align*}
\tdelta_{dec}(u\prec v)&=u\otimes v+u'\otimes u''*v+u\prec v'\otimes v''+u'\prec v\otimes u''+u'\prec v'\otimes u''*v'',\\
\tdelta_{dec}(u\succ v)&=v\otimes y+v'\otimes u*v+u\succ v'\otimes v''+u'\succ v\otimes u''+u'\succ v'\otimes u''*v''.
\end{align*}
As we already know that $(T(V),*,\Delta_{dec})$ is a bialgebra, it is enough to prove one of these two relations, say the second one. 
Let $y\in V$ and $u\in T_+(V)$.
\begin{align*}
\tdelta_{dec}(u\succ y)&=\tdelta_{dec}(yu)=y\otimes u+yu'\otimes u''=y\otimes u+u'\succ y\otimes u'',
\end{align*}
which proves the relation for $v\in V$, as then $\tdelta_{dec}(v)=0$. If $v$ is a tensor of length $\geq 2$, we write it as $v=yw$, with $y\in V$
and $w\in T_+(V)$. Then:
\begin{align*}
\tdelta_{dec}(u\succ v)&=\tdelta_{dec}(y(u*w))\\
&=y\otimes u*w+yu\otimes w+yw\otimes u+y(u'*w)\otimes u''\\
&+yu'\otimes u''*w+y(u*w')\otimes w''+yw'\otimes u*w''+y(u'*w')\otimes u''*w''\\
&=yw\otimes u+(y\otimes u*w+yw'\otimes u*w'')+(yu\otimes w+y(u*w')\otimes w'')\\
&+y(u'*w)\otimes u''+(yu'\otimes u''*w+y(u'*w')\otimes u''*w'')\\
&=yw\otimes u+(y\otimes u*w+yw'\otimes u*w'')+(u\succ y\otimes w+u\succ yw'\otimes w'')\\
&+u'\succ yw\otimes u''+(u'\succ y\otimes u''*w+u'\succ yw'\otimes u''*w'')\\
&=v\otimes u+v'\otimes u*v''+u\succ v'\otimes v''+u'\succ v\otimes u''+u'\succ v'\otimes u''*v''.
\end{align*}
So the second compatibility is verified. \end{proof}\\

\textbf{Remarks.} \begin{enumerate}
\item The products $\prec$ and $\succ$ can also be inductively defined: let $x_1,\ldots,x_k,y_1,\ldots,y_l\in V$, with $k,l\geq 1$.
\begin{align*}
x_1\ldots x_k \prec 1&=x_1\ldots x_k,\\
1\prec y_1\ldots y_l&=0,\\
x_1\ldots x_k \prec y_1\ldots y_l&=\sum_{p=0}^l \langle x_1,y_1\ldots y_p\rangle (x_2\ldots x_k*y_{p+1}\ldots y_l),\\
x_1\ldots x_k \succ 1&=0,\\
1\succ y_1\ldots y_l&=y_1\ldots y_l,\\
x_1\ldots x_k \succ  y_1\ldots y_l&=y_1(x_1\ldots x_k*y_2\ldots y_l).
\end{align*}
\item If $(V,\langle-,-\rangle)$ is brace, putting $*=\Phi_V(\langle-,-\rangle)$, (\ref{EQ4}) can be written in the following way:
\begin{align}
\forall x\in V,\: u,v\in T(V),\: \langle \langle x,u\rangle, v\rangle=\langle x,u*v\rangle.
\end{align}\end{enumerate}

\begin{prop}
The quotient of $\binfini$ by the operadic ideal generated by the elements $\langle-,-\rangle_{k,l}$, $k\geq 2$ or $l\geq 2$,
is isomorphic to the operad $\ass$ of associative algebras.
\end{prop}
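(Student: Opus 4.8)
The plan is to describe the algebras over the quotient operad $Q=\binfini/I$, where $I$ is the operadic ideal generated by the $\langle-,-\rangle_{k,l}$ with $k\geq 2$ or $l\geq 2$, and then to promote the resulting comparison with $\ass$ to an isomorphism. By definition of $I$, a $Q$-algebra is a $\binfini$ algebra $(V,\langle-,-\rangle)$ in which $\langle-,-\rangle_{k,l}=0$ as soon as $k\geq 2$ or $l\geq 2$; together with the unit conditions $\langle-,-\rangle_{0,1}=\langle-,-\rangle_{1,0}=Id_V$, such a structure is encoded by the single bilinear map $\mu:=\langle-,-\rangle_{1,1}:V\otimes V\longrightarrow V$. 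Since $\binfini$ is generated by the multibrace operations $\langle-,-\rangle_{k,l}$ and the quotient map kills all of them but $\langle-,-\rangle_{1,1}$, the operad $Q$ is generated by the class $\mu$.

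First I would determine the relation satisfied by $\mu$. Writing $*=\Phi_V(\langle-,-\rangle)$ and using the reformulation $\langle u,v*w\rangle=\langle u*v,w\rangle$ of (\ref{EQ1}), I evaluate on single letters $u=x$, $v=y$, $w=z\in V$. Since $y*z=yz+zy+\mu(y,z)$ and all the brackets $\langle-,-\rangle_{1,2}$ and $\langle-,-\rangle_{2,1}$ vanish in $Q$, the left-hand side collapses to $\mu(x,\mu(y,z))$ and the right-hand side to $\mu(\mu(x,y),z)$. Thus $\mu$ is associative, and the universal property of $\ass$ (generated by $m$ with $m\circ_1m=m\circ_2m$) yields an operad morphism $F:\ass\longrightarrow Q$, $m\mapsto\mu$, which is surjective because $\mu$ generates $Q$.

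For injectivity I would run the converse construction. Given any associative algebra $(A,\mu)$, set $\langle-,-\rangle_{1,1}=\mu$, $\langle-,-\rangle_{0,1}=\langle-,-\rangle_{1,0}=Id$, and all other brackets to zero; by Theorem \ref{theo2} this is a genuine $\binfini$ structure precisely when the associated product $*$ on $T(A)$ is associative. Here $*$ is exactly the quasi-shuffle product attached to $\mu$: in $u*v$ only the splittings whose blocks $(u_s,v_s)$ have lengths in $\{(1,0),(0,1),(1,1)\}$ survive, so $*$ interleaves the letters of $u$ and $v$, optionally contracting a letter of $u$ against an adjacent letter of $v$ via $\mu$. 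The main step — and the main obstacle — is checking that this quasi-shuffle product is associative, equivalently that every higher-arity instance of (\ref{EQ1}) holds once $\mu$ is associative; this is the bulk of the computation, and is the classical associativity of the quasi-shuffle. Granting it, each associative algebra becomes a $Q$-algebra on which $F(p)$ acts as the $\ass$-operation $p$, for all $p\in\ass(n)$.

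Finally I would conclude that $F$ is injective: if $F(p)=0$ in $Q(n)$, then $p$ acts as zero on every associative algebra, and since $\ass$ acts faithfully on associative algebras (the permutations $\sigma\in\mathfrak{S}_n=\ass(n)$ send the generators of the free associative algebra on $n$ letters to linearly independent monomials), we get $p=0$. Hence $F:\ass\longrightarrow Q$ is an isomorphism. Equivalently, the last two paragraphs may be phrased as identifying the category of $Q$-algebras with that of associative algebras, whence the free $Q$-algebra on $V$ equals $T_+(V)=F_\ass(V)$ and $Q(n)\cong\ass(n)$ as $\mathfrak{S}_n$-modules.
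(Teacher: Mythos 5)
Your proposal is correct, and the surjectivity half coincides with the paper's argument: you extract $\mu=\langle-,-\rangle_{1,1}$ as the sole surviving generator and derive its associativity from $\langle u,v*w\rangle=\langle u*v,w\rangle$ on single letters, exactly as the paper does when it observes that the only non-trivialized instance of the defining relation is $\langle\langle x,y\rangle,z\rangle=\langle x,\langle y,z\rangle\rangle$. Where you genuinely diverge is the injectivity of $F:\ass\longrightarrow Q$. The paper treats this as part of a presentation-theoretic one-liner: since every instance of (\ref{EQ1}) becomes either trivial or associativity once the generators with $k\geq 2$ or $l\geq 2$ are killed, the quotient has literally the same presentation (one binary generator, one associativity relation) as $\ass$, and no model-theoretic input is needed. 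You instead prove injectivity by producing enough $Q$-algebras — every associative algebra, endowed with the $\binfini$ structure whose only nonzero bracket is $\mu$ — and invoking faithfulness of $\ass$ on free associative algebras. This buys a more conceptual argument, but at the cost of importing the associativity of the quasi-shuffle product as an external fact, which you explicitly defer ("granting it"); note that in the paper the quasi-shuffle structure is a \emph{corollary} of this proposition, so if you want your route to be non-circular you must cite the classical proof (Hoffman) rather than the paper's subsequent discussion. The paper's presentation argument avoids this entirely, though it silently relies on the same routine case analysis over word lengths that you carry out only for single letters. Both routes are sound; yours is longer but makes the identification of $Q$-algebras with associative algebras explicit, which the paper leaves implicit.
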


\begin{proof} This quotient is generated by $\langle -,-\rangle_{1,1}$.
The unique relation defining $\binfini$ algebras which does not become trivial in this quotient is:
$$\forall x,y,z\in V,\: \langle \langle x,y\rangle,z\rangle=\langle x,\langle y,z\rangle\rangle.$$
So this quotient is indeed $\ass$. \end{proof}\\

Consequently, if $(V,\cdot)$ is an associative algebra, it is also a $\binfini$ algebra; 
$T(V)$ becomes a bialgebra with the product $*=\Phi_V(\cdot)$. For all $x_1,\ldots,k_k \in V$, $v$ a word in $T(V)$:
$$x_1\ldots x_k*v=\sum_{\substack{v=v_1\ldots v_{2k+1},\\ lg(v_2),\ldots,lg(v_{2k})\leq 1}}
v_1 (x_1\cdot v_2)v_3\ldots v_{2k-1} (x_k\cdot v_{2k}) v_{2k+1},$$
with the convention $x\cdot 1=x$ for all $x\in V$. This is the quasi-shuffle product associated to $\cdot$ \cite{Hoffman2,FoissyPatras}.
It is a dendriform algebra, with the following products:
\begin{align*}
x_1\ldots x_k\prec v&=\sum_{\substack{v=v_1\ldots v_{2k},\\ lg(v_1),\ldots,lg(v_{2k-1})\leq 1}}
(x_1\cdot v_1)v_2\ldots v_{2k-2} (x_k\cdot v_{2k-1}) v_{2k},\\
x_1\ldots x_k\succ v&=\sum_{\substack{v=v_1\ldots v_{2k+1},\\ v_1\neq 1,\:lg(v_2),\ldots,lg(v_{2k})\leq 1}}
v_1 (x_1\cdot v_2)v_3\ldots v_{2k-1} (x_k\cdot v_{2k}) v_{2k+1},
\end{align*} 

\subsection{Brace modules}

\begin{defi}
Let $V$ be a brace algebra. A brace module over $V$ is a vector space $M$ with a map:
$$\leftarrow:\left\{\begin{array}{rcl}
M\otimes T(V)&\longrightarrow&M\\
m\otimes x_1\ldots x_k&\longrightarrow&m\leftarrow x_1\ldots x_k,
\end{array}\right.$$
such that:
\begin{itemize}
\item For all $m\in M$, $m\leftarrow 1=m$.
\item For all $m\in M$, $y_1,\ldots,y_k \in V$, for all tensor $w\in T(V)$:
\begin{align}\label{EQ6}
(m\leftarrow y_1\ldots y_k)\leftarrow w=
m\leftarrow \left(\sum_{w=w_1\ldots w_{2k+1}} w_1\langle y_1,w_2\rangle w_3\ldots w_{2k-1}\langle y_k,w_{2k}\rangle w_{2k+1}\right).
\end{align}\end{itemize}\end{defi}

\textbf{Remark.} Let $*=\Phi_V(\langle-,-\rangle)$. Then (\ref{EQ6}) can be rewritten as:
\begin{align}
\forall m\in M,\: u,v \in T(V),\:(m\leftarrow u)\leftarrow v=m\leftarrow(u*v).
\end{align}
So a brace module over $V$ is a (right) module over the algebra $(T(V),*)$.\\

\textbf{Example.} If $V$ is a brace algebra, $(V,\langle-,-\rangle)$ is a brace module over itself.

\subsection{Dual construction}

\textbf{Notation}. Let $V$ be a graded space, such that the homogeneous components of $V$ are finite-dimensional.
We denote by $V^\circledast$ the dual of $V$. The graded dual of $V$ is:
$$V^*=\bigoplus_{n\geq 0} V_n^*\subseteq V^\circledast.$$
If $V_0=(0)$, then $S(V)$ and $T(V)$ are also graded spaces, and $S(V)^*$ is isomorphic to $S(V^*)$;
$T(V)^*$ is isomorphic to $T(V^*)$.

\begin{defi} \label{defi16}
Let $V$ be a graded $\binfini$ algebra. We shall say that $V$ is $0$-bounded if:
\begin{itemize}
\item For all $n\geq 0$, $V_n$ is finite-dimensional.
\item For all $m, n\geq 0$, there exists $B(m, n)\geq 0$ such that for all $p_1,\ldots,p_k,q_1,\ldots,q_l\geq 0$ with $p_1+\ldots+p_k=m$
and $q_1+\ldots+q_l=n$:
$$\sharp\{i\mid p_i=0\}+\sharp\{j\mid q_j=0\}>B(m, n)\Longrightarrow \langle V_{p_1}\ldots V_{p_k},V_{q_1}\ldots V_{q_l}\rangle=(0).$$
\end{itemize}\end{defi}

\textbf{Examples.}\begin{enumerate}
\item If $V$ is connected, that is to say if $V_0=(0)$, then $V$ is $0$-bounded, with $B(m, n)=0$ for all $m, n$.
\item If $V$ is associative, then $\langle-,-\rangle_{k,l}=0$ if $k\geq 2$ or $l\geq 2$. Consequently, $V$ is $0$-bounded, with 
$B(m, n)=2$ for all $m, n$.
\end{enumerate}

Let us assume that $V$ is $0$-bounded. We identify $T(V^*)$ with a subspace of $T(V)^*$ by the pairing $\ll-,-\gg'$ such that for 
all $x_1,\ldots,x_l\in V$, $f_1,\ldots,f_k \in V^*$:
$$\ll f_1\ldots f_k,x_1\ldots x_l\gg'=\begin{cases}
0\mbox{ if }k\neq l,\\
f_1(x_1)\ldots f_k(x_k) \mbox{ if }k=l.
\end{cases}$$
Note that for all $F,G \in T(V^*),$ $X,Y\in T(V)$:
\begin{align*}
\ll \Delta_{dec}(F), X\otimes Y\gg'&=\ll F,XY\gg,&
\ll F\otimes G,\Delta_{dec}(X)\gg'&=\ll FG,X\gg.
\end{align*}

\begin{prop} \label{prop17}
Let $V$ be a $0$-bounded $\binfini$ algebra. 
We define a coproduct $\Delta'_*$ on $T(V^*)$ as the unique algebra morphism such that for all $f\in V^*$, for all $X,Y \in T(V)$,
$$\ll \Delta'_*(f),X\otimes Y\gg'=\ll f,\langle X,Y\rangle \gg'.$$
Then $(T(V^*),m_{conc},\Delta'_*)$ is a graded bialgebra. Moreover, for all $F,G\in T(V^*),$ $X,Y\in T(V)$:
\begin{align*}
\ll \Delta'_*(F),X\otimes Y\gg'&=\ll F,X*Y\gg',&\ll F\otimes G,\Delta_{dec}(Y)\gg'&=\ll FG,X\gg'.
\end{align*}
In other words, $\ll-,-\gg'$ is a Hopf pairing. \end{prop}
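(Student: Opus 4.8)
The plan is to reduce the whole statement to the single duality identity
$$\ll \Delta'_*(F),X\otimes Y\gg'=\ll F,X*Y\gg',\qquad F\in T(V^*),\ X,Y\in T(V),$$
and to combine it with the concatenation/deconcatenation duality $\ll FG,X\gg'=\ll F\otimes G,\Delta_{dec}(X)\gg'$ recorded just before the statement. Before anything else I must check that $\Delta'_*$ is well-defined, and this is the main obstacle. As $(T(V^*),m_{conc})$ is the free unital associative algebra on $V^*$, it is enough to define $\Delta'_*$ on a homogeneous generator $f\in V_n^*$ and extend multiplicatively; the content is that the linear form $X\otimes Y\mapsto \ll f,\langle X,Y\rangle\gg'$ is represented by a finite element of $T(V^*)\otimes T(V^*)$. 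This is exactly where $0$-boundedness enters. Since $\langle-,-\rangle$ is homogeneous of degree $0$, only homogeneous words $X=x_1\ldots x_k$, $Y=y_1\ldots y_l$ with $\deg X+\deg Y=n$ contribute; writing $\deg X=m$, $\deg Y=n'$ (so $m+n'=n$ ranges over finitely many values), the number of letters of positive degree is at most $m$ resp. $n'$, while $0$-boundedness forces the number of degree-$0$ letters to be at most $B:=\max_{m+n'=n}B(m,n')$. Hence $k$ and $l$ are bounded, and as each $V_p$ is finite-dimensional only finitely many homogeneous words occur, so $\Delta'_*(f)$ is a finite sum in $T(V^*)\otimes T(V^*)$.

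Next I would prove the key identity by induction on the length of $F$. For $F=f\in V^*$ it is the definition of $\Delta'_*$ together with $\pi(X*Y)=\langle X,Y\rangle$ (Lemma \ref{lem4}), since a length-one word pairs with $X*Y$ only through its component in $V$. For the inductive step write $F=F_1F_2$ and use that $\Delta'_*$ is an algebra morphism, that the product on $T(V^*)^{\otimes 2}$ is $m_{conc}\otimes m_{conc}$, and the concatenation/deconcatenation duality to rewrite $\ll\Delta'_*(F_1)\Delta'_*(F_2),X\otimes Y\gg'$ as a sum over the deconcatenations of $X$ and of $Y$; applying the induction hypothesis to $F_1$ and $F_2$ turns this into $\ll F_1\otimes F_2,\Delta_{dec}(X)*\Delta_{dec}(Y)\gg'$. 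Because $*$ is a coalgebra morphism for $\Delta_{dec}$ (Lemma \ref{lem4}, Theorem \ref{theo2}), $\Delta_{dec}(X)*\Delta_{dec}(Y)=\Delta_{dec}(X*Y)$, and one final use of the concatenation/deconcatenation duality gives $\ll F_1F_2,X*Y\gg'=\ll F,X*Y\gg'$, closing the induction.

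From the key identity the remaining assertions are formal, using that the pairing $\ll-,-\gg'$ is nondegenerate on $T(V^*)$ (on each homogeneous component it restricts to the perfect pairing $V_p^*\otimes V_p\to\K$). Coassociativity of $\Delta'_*$ follows because, for all $X,Y,Z$, the element $\ll(\Delta'_*\otimes \mathrm{id})\Delta'_*(F),X\otimes Y\otimes Z\gg'$ equals $\ll F,(X*Y)*Z\gg'$, while $\ll(\mathrm{id}\otimes \Delta'_*)\Delta'_*(F),X\otimes Y\otimes Z\gg'$ equals $\ll F,X*(Y*Z)\gg'$, and these coincide by associativity of $*$ (Theorem \ref{theo2}). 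The counit is $\epsilon'=\ll-,1\gg'$, dual to the unit $1$ of $*$: the counit axioms reduce, via the key identity, to $1*X=X*1=X$, and $\epsilon'$ is an algebra morphism by the concatenation/deconcatenation duality applied to $\Delta_{dec}(1)=1\otimes 1$. Since $\Delta'_*$ is an algebra morphism by construction, $(T(V^*),m_{conc},\Delta'_*)$ is a bialgebra; it is graded because $m_{conc}$ adds degrees and $\Delta'_*$, being dual to the degree-preserving product $*$, preserves the total degree on $T(V^*)$. Finally, the Hopf pairing property is precisely the conjunction of the two displayed identities of the statement (the key identity and the concatenation/deconcatenation duality) together with the compatibility of units and counits recorded above, so nothing further is needed.
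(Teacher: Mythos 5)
Your proposal is correct and follows essentially the same route as the paper: $0$-boundedness is used exactly as in the text to show that the dualized bracket of a homogeneous $f\in V_n^*$ lands in a finite sum of components of $T(V^*)\otimes T(V^*)$; the Hopf-pairing identity is established on generators via $\pi(X*Y)=\langle X,Y\rangle$ and propagated multiplicatively (your induction on the length of $F$ is the same argument as the paper's "subalgebra $A$" device, using the concatenation/deconcatenation duality and the fact that $*$ is a coalgebra morphism); and coassociativity follows from associativity of $*$ and nondegeneracy of $\ll-,-\gg'$. The only difference is that you spell out the counit and gradedness checks, which the paper leaves implicit.
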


\begin{proof} The $\binfini$ bracket can be dualized into a map $\delta$ from $V^*$ to $(T(V)\otimes T(V))^*$.
Unfortunately, if $V_0\neq (0)$, the homogeneous components of $T(V)$ are not finite-dimensional, so $T(V^*)\otimes T(V^*)$
is (identified to) a strict subspace of $(T(V)\otimes T(V))^*$. But, by the $0$-bounded condition, for all $N \geq 0$:
\begin{align*}
\delta(V_N^*)&\subseteq \sum_{m+n=N} \sum_{\substack{p_1+\ldots+p_k=m,\\ q_1+\ldots+q_l=n,\\ 
\sharp\{i\mid p_i=0\}+\sharp\{j\mid q_j=0\}\leq B(m, n)}} V^*_{p_1}\ldots V^*_{p_k}\otimes V^*_{q_1}\ldots V^*_{q_l}.
\end{align*}
As this is a finite sum, $\delta(V_N^*)\subseteq T(V^*) \otimes T(V^*)$. We can define an algebra morphism $\Delta'_*$ 
from $T(V^*)$ to $T(V^*)\otimes T(V^*)$ by $\Delta'_*(f)=\delta(f)$ for all $f\in V^*$. \\

Let us consider:
$$A=\{F\in T(V^*)\mid \forall X,Y\in T(V), \ll\Delta'_*(F),X\otimes Y\gg'=\ll F,X*Y\gg'.\}.$$
Firstly, $1\in A$: for all $X,Y\in T(V)$,
$$\ll \Delta'_*(1),X\otimes Y\gg'=\ll 1\otimes 1,X\otimes Y\gg'=\varepsilon(X)\varepsilon(Y)=\varepsilon(X*Y)=\ll 1,X*Y\gg'.$$
Let $F,G\in A$. For all $X,Y \in T(V)$:
\begin{align*}
\ll\Delta'_*(FG),X\otimes Y\gg'&=\ll F^{(1)}_*G^{(1)}_*\otimes F^{(2)}_*G^{(2)}_*,X\otimes Y\gg'\\
&=\ll F^{(1)}_*\otimes G^{(1)}_*\otimes F^{(2)}_*\otimes G^{(2)}_*,X^{(1)} \otimes X^{(2)}\otimes Y^{(1)}\otimes Y^{(2)}\gg'\\
&=\ll F\otimes G,X^{(1)}* Y^{(1)} \otimes X^{(2)}*Y^{(2)}\gg'\\
&=\ll F\otimes G,(X* Y)^{(1)} \otimes (X^*Y)^{(2)}\gg'\\
&=\ll FG, X*Y\gg'.
\end{align*}
So $A$ is a subalgebra of $(T(V^*),m_{conc})$. In order to prove that $\ll-,-\gg'$ is a Hopf pairing, it is enough to prove that $V^*\subseteq A$.
Let $f\in V^*$. For all $X,Y\in T(V)$:
\begin{align*}
\ll \Delta'_*(f),X\otimes Y\gg'&=\ll f,\langle X,Y\rangle \gg'\\
&=\ll f,\pi(X*Y)\gg'\\
&=\ll f,X*Y \gg'+0\\
&=\ll f,X*Y\gg'.
\end{align*}
So $\ll-,-\gg'$ is a Hopf pairing.\\

Let $F\in T(V^*)$, $X,Y,Z\in T(V)$.
\begin{align*}
\ll (\Delta'_*\otimes Id)\circ \Delta'_*(F),X\otimes Y\otimes Z\gg'
&=\ll F,(X*Y)*Z\gg'\\
&=\ll F,X*(Y*Z)\gg'\\
&=\ll(Id \otimes \Delta'_*)\circ \Delta'_*(F),X\otimes Y\otimes Z\gg'.
\end{align*}
As the pairing is non degenerate, $\Delta'_*$ is coassociative: $(T(V^*),m_{conc},\Delta'_*)$ is a bialgebra. \end{proof}\\

\textbf{Remark.} If $V$ is connected, then $(T(V^*),m_{conc},\Delta'_*)$ is a graded, connected bialgebra, 
so is a Hopf algebra, isomorphic to the graded dual of $(T(V),*,\Delta_{dec})$.

\begin{cor} \label{cor18}
Let $V$ be a $0$-bounded $\binfini$ algebra.
\begin{itemize}
\item $T(V_0^*)$ is a subbialgebra of $T(V^*)$. 
\item Let $I_0$ be the ideal of $T(V^*)$ generated by $V_0^*$.
Then $I_0$ is a biideal of $T(V^*)$, and $T(V^*)/I_0$ is a graded, connected Hopf algebra, isomorphic to the graded dual of $T(V_+)$.
\end{itemize}\end{cor}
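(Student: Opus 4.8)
The plan is to handle both parts through the Hopf pairing $\ll-,-\gg'$ of Proposition \ref{prop17}, reducing each claim to a statement about generators together with the homogeneity of the $\binfini$ bracket. Throughout I write $V=V_0\oplus V_+$, where $V_+=\bigoplus_{n\geq 1}V_n$, so that $V^*=V_0^*\oplus V_+^*$ and $V_+^*=(V_+)^*$.

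For the first assertion, $T(V_0^*)$ is visibly a subalgebra for $m_{conc}$, and since $\Delta'_*$ is an algebra morphism it suffices to check that $\Delta'_*(V_0^*)\subseteq T(V_0^*)\otimes T(V_0^*)$. For $f\in V_0^*$ and $X,Y\in T(V)$ the defining identity of Proposition \ref{prop17} gives $\ll\Delta'_*(f),X\otimes Y\gg'=\ll f,\langle X,Y\rangle\gg'$; as the bracket is homogeneous of degree $0$ and $f$ has degree $0$, this vanishes unless $X,Y\in T(V_0)$. Because the pairing between homogeneous words is diagonal in dual bases, any tensor component of $\Delta'_*(f)$ carrying a letter of positive degree would pair nontrivially with a word outside $T(V_0)\otimes T(V_0)$; hence $\Delta'_*(f)\in T(V_0^*)\otimes T(V_0^*)$, proving that $T(V_0^*)$ is a subbialgebra.

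For the second assertion I first note that $V_+$ is a $\binfini$ subalgebra of $V$: the degree-$0$ bracket sends $T(V_+)\otimes T(V_+)$ into $V_+$ (the only subtlety, the unit cases $\langle 1,x\rangle=\langle x,1\rangle=x$, stays in $V_+$), and likewise $X*Y\in T(V_+)$ whenever $X,Y\in T(V_+)$ by the same degree count applied to the formula defining $*$. Since $V_+$ is connected it is $0$-bounded (Example following Definition \ref{defi16}), so Proposition \ref{prop17} and the Remark after it give that $(T(V_+^*),m_{conc},\Delta'_*)$ is a graded connected Hopf algebra, isomorphic to the graded dual of $(T(V_+),*,\Delta_{dec})$. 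Now $I_0$ is exactly the span of words having at least one letter in $V_0^*$, so $T(V^*)=I_0\oplus T(V_+^*)$ and the projection $r:T(V^*)\to T(V_+^*)$ killing words with a $V_0^*$-letter is a surjective algebra morphism with kernel $I_0$. To see $r$ is also a coalgebra morphism, use that $\ll r(G),Z\gg'=\ll G,Z\gg'$ for all $Z\in T(V_+)$: for $F\in T(V^*)$ and $X,Y\in T(V_+)$,
\[
\ll(r\otimes r)\Delta'_*(F),X\otimes Y\gg'=\ll\Delta'_*(F),X\otimes Y\gg'
=\ll F,X*Y\gg'=\ll r(F),X*Y\gg'=\ll\Delta'_*(r(F)),X\otimes Y\gg',
\]
the middle equalities invoking Proposition \ref{prop17} for $V$ and then for $V_+$ (legitimate since $X*Y\in T(V_+)$). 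By non-degeneracy of the graded pairing on $T(V_+^*)\otimes T(V_+)$ this forces $(r\otimes r)\Delta'_*=\Delta'_*\,r$. Thus $r$ is a bialgebra morphism, $I_0=\ker r$ is a biideal, and $r$ induces a bialgebra isomorphism $T(V^*)/I_0\cong T(V_+^*)$; the latter is the graded connected Hopf algebra identified above as the graded dual of $T(V_+)$. Gradedness and connectedness of the quotient are then immediate, since $V_+^*$ is concentrated in positive degrees.

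The only genuinely delicate point is the coalgebra-morphism verification for $r$: one must use both that $*$ preserves $T(V_+)$ and that the coproduct $\Delta'_*$ restricted to the $V_+$-directions coincides with the coproduct coming from the $\binfini$ structure of $V_+$ itself, and then close the argument with the non-degeneracy of the pairing. Everything else is bookkeeping with the grading and the direct-sum decomposition $V^*=V_0^*\oplus V_+^*$.
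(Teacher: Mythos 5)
Your proof is correct and follows essentially the same route as the paper's: both bullets rest on the Hopf pairing of Proposition \ref{prop17}, the homogeneity of the bracket (for the first point), and, for the second, the facts that $V_+$ is a $\binfini$ subalgebra, that $\ll I_0,T(V_+)\gg'=0$, and that the residual pairing with $T(V_+)$ is nondegenerate. The only difference is organizational: you show that the projection $r$ with kernel $I_0$ is a bialgebra morphism onto $T(V_+^*)$ equipped with its own coproduct, whereas the paper identifies $I_0$ with the orthogonal of the Hopf subalgebra $T(V_+)$ — dual formulations of the same argument.
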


\begin{proof} We already noticed that:
$$\delta(V_0^*)\subseteq \sum_{k+l\leq B(0,0)} V_0^{\otimes k}\otimes V_0^{\otimes l},$$
so $\Delta'_*(V_0^*)\subseteq T(V_0^*)\otimes T(V_0^*)$: $T(V_0^*)$ is a subbialgebra.\\

As $V_+$ is a $\binfini$ subalgebra of $V$, $T(V_+)$ is a Hopf subalgebra of $T(V)$: its orthogonal $J$ is a biideal of $T(V^*)$.
Moreover, if $x_1,\ldots,x_k \in V_+$, for all $f\in V_0^*$:
\begin{align*}
\ll f, x_1\ldots x_k\gg'&=\begin{cases}
0 \mbox{ if }k\neq 1,\\
f(x_1)\mbox{ if }k=1
\end{cases}\\
&=0.
\end{align*}
 So $I_0 \subseteq J$. Moreover, the following map is an algebra isomorphism:
 $$\left\{\begin{array}{rcl}
 T(V_+^*)&\longrightarrow&T(V^*)/I_0\\
 f_1\ldots f_k&\longrightarrow& \overline{f_1\ldots f_k}.
 \end{array}\right.$$
For all $f_1,\ldots,f_k\in V_+^*$, $x_1,\ldots,x_l\in V_+$:
$$\ll \overline{f_1\ldots f_k},y_1\ldots y_l\gg'=\ll f_1\ldots f_k,y_1\ldots y_l\gg'=\begin{cases}
0\mbox{ if }k\neq l,\\
f_1(x_1)\ldots f_k(x_k) \mbox{ if }k=l.
\end{cases}$$
So the pairing induced by $\ll-,-\gg'$ between $T(V^*)/I_0$ and $T(V_+)$ is non degenerate: therefore, $I_0=J$.
Finally, $T(V^*)/I_0$ is the graded dual of $(T(V_+),*,\Delta)$, so is a Hopf algebra. \end{proof}

\section{$\petitbinfini$ algebras}

\subsection{Definition and main property}

\begin{defi}
Let $V$ be a vector space equipped with a map:
$$\lfloor-,-\rfloor:\left\{\begin{array}{rcl}
S(V)\otimes S(V)&\longrightarrow&V\\
x_1\ldots x_k \otimes y_1\ldots y_l&\longrightarrow& \lfloor x_1\ldots x_k,y_1\ldots y_l\rfloor.
\end{array}\right.$$
For all $k,l$, let us put $\lfloor-,-\rfloor_{k,l}=\lfloor-,-\rfloor_{\mid S^k(V) \otimes S^l(V)}$. 
We shall say that $V$ is a $\petitbinfini$ algebra if the following properties are satisfied:
\begin{itemize}
\item $\lfloor-,-\rfloor_{0,1}=\lfloor-,-\rfloor_{1,0}=\begin{cases}
Id_V \mbox{ if }k=1,\\
0\mbox{ otherwise}.\end{cases}$
\item We shall need the following notations: let $u=x_1\ldots x_k\in S^k(V)$ and $v=y_1\ldots y_l \in S^l(V)$. Let $I\subseteq \{1,\ldots,k+l\}$. 
We put $I=\{i_1,\ldots,i_p,j_1,\ldots,j_q\}$, with 
$$i_1<\ldots<i_p\leq k<j_1<\ldots <j_q.$$ 
We then put:
$$\lfloor u,v\rfloor_I=\lfloor x_{i_1}\ldots x_{i_p},y_{j_1}\ldots y_{j_q}\rfloor.$$
For all $u=x_1\ldots x_k \in S^k(V)$, $v=y_1\ldots y_l\in S^l(V)$, $w=z_1\ldots z_m \in S^m(V)$,
\begin{align}
\sum_{\substack{\{I_1,\ldots,I_p\}\\ \mbox{\scriptsize partition of } [l+m]}}\lfloor u,\lfloor v,w\rfloor_{I_1}\ldots \lfloor v,w\rfloor_{I_p}\rfloor
&=\sum_{\substack{\{I_1,\ldots,I_p\}\\ \mbox{\scriptsize partition of } [k+l]}}\lfloor\lfloor u,v\rfloor_{I_1}\ldots \lfloor u,v\rfloor_{I_p},w\rfloor.
\end{align}\end{itemize}
The operad of $\petitbinfini$ algebras is denoted by $\petitbinfini$.
\end{defi}

\begin{theo} \label{theo20}
Let $V$ be a vector space.  Let $bialg(V)$ be the set of products $\star$ on $T(V)$, making $(S(V),\star,\Delta)$ a bialgebra.
 Let $b_\infty(V)$ be the set of $\petitbinfini$ structures on $V$. These two sets are in bijections,
via the maps:
\begin{align*}
\phi_V&:\left\{\begin{array}{rcl}
b_\infty(V)&\longrightarrow&bialg(V)\\
\lfloor-,-\rfloor&\longrightarrow&\displaystyle \star\mbox{ defined by }
u\star v=\sum_{\substack{\{I_1,\ldots,I_p\}\\ \mbox{\scriptsize partition of } [k+l]}}\lfloor u,v\rfloor_{I_1}\ldots \lfloor u,v\rfloor_{I_p}
\end{array}\right.\\
\psi_V&:\left\{\begin{array}{rcl}
bialg(V)&\longrightarrow&b_\infty(V)\\
\star&\longrightarrow&\lfloor-,-\rfloor\mbox{ defined by } \lfloor u,v\rfloor=\pi(u\star v),
\end{array}\right.\end{align*}
where $\pi$ is the canonical projection on $V$.
\end{theo}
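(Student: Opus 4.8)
The plan is to transpose the proof of Theorem \ref{theo2} almost verbatim, replacing the tensor coalgebra $(T(V),\Delta_{dec})$ by the symmetric coalgebra $(S(V),\Delta)$, the ordered deconcatenations indexing the product by the unordered partitions appearing in $\phi_V$, and the projection $\pi$ used in Lemma \ref{lem4} by its symmetric counterpart. To do this I first need the two cocommutative analogues of Lemmas \ref{lem3} and \ref{lem4}.

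The analogue of Lemma \ref{lem4} is the heart of the argument: I would show that $\star=\phi_V(\lfloor-,-\rfloor)$ is a coalgebra morphism from $S(V)\otimes S(V)$ to $(S(V),\Delta)$ and that $\pi(u\star v)=\lfloor u,v\rfloor$. The second identity is immediate, since among all partitions of $[k+l]$ only the single-block partition contributes a term of degree $1$, namely $\lfloor u,v\rfloor$, all others landing in $S^{\geq 2}(V)$. For the morphism property I would compute $\Delta(u\star v)$ directly and reorganise the resulting double sum—over a partition $\{I_1,\dots,I_p\}$ of $[k+l]$ together with a choice of which blocks are sent to the left tensorand by $\Delta$—into a sum indexed by a splitting $u=u^{(1)}u^{(2)}$, $v=v^{(1)}v^{(2)}$ coming from $\Delta(u)$ and $\Delta(v)$, followed by independent partitions of the index sets of $u^{(1)},v^{(1)}$ and of $u^{(2)},v^{(2)}$. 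This would exhibit $\Delta(u\star v)=\Delta(u)\star\Delta(v)$, exactly as the deconcatenation bookkeeping does in Lemma \ref{lem4}.

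The analogue of Lemma \ref{lem3} is obtained by the same coradical-filtration induction: $(S(V),\Delta)$ is connected with unique group-like element $1$, and in characteristic zero $Ker(\tilde\Delta)=Prim(S(V))=V$, which are the only two facts the proof of Lemma \ref{lem3} uses. Granting these two lemmas, the three steps of Theorem \ref{theo2} carry over. For well-definedness of $\phi_V$, the morphism and counit properties follow as above, and the unit axiom for $1$ comes from $\lfloor-,-\rfloor_{0,1}=\lfloor-,-\rfloor_{1,0}=Id_V$ (the singleton-partition computation forces $1\star v=v=v\star 1$); associativity then follows by applying the rigidity lemma to the two coalgebra morphisms $u\otimes v\otimes w\mapsto(u\star v)\star w$ and $u\otimes v\otimes w\mapsto u\star(v\star w)$, whose projections to $V$ agree by the defining $\petitbinfini$ relation rewritten as $\lfloor u\star v,w\rfloor=\lfloor u,v\star w\rfloor$. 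For well-definedness of $\psi_V$, given $\star\in bialg(V)$ the unit is group-like hence equal to $1$, which gives the degree axioms for $\lfloor-,-\rfloor=\pi\circ\star$, and applying $\pi$ to associativity of $\star$ yields precisely the $\petitbinfini$ relation. Finally $\psi_V\circ\phi_V=Id$ is the identity $\pi(u\star v)=\lfloor u,v\rfloor$, and $\phi_V\circ\psi_V=Id$ follows because $\star$ and $\phi_V(\psi_V(\star))$ are coalgebra morphisms agreeing under $\pi$, hence equal by the rigidity lemma.

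I expect the main obstacle to be purely combinatorial: making the reindexing in the analogue of Lemma \ref{lem4} rigorous, i.e. checking that partitioning $[k+l]$ and then distributing blocks to the two tensorands under $\Delta$ is in bijection with first applying $\Delta$ to $u$ and $v$ and then partitioning the two halves. Because a block of a partition may mix indices of $u$ and of $v$, and because $S(V)$ is symmetric, one must track carefully which indices of $u$ and $v$ fall on each side; once this bijection is set up, the remainder is a transcription of the proof of Theorem \ref{theo2}.
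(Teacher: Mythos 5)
Your proposal is correct and follows exactly the route the paper takes: the paper's proof of Theorem \ref{theo20} consists precisely of the remark that one transposes the proof of Theorem \ref{theo2} to the symmetric coalgebra, with Lemma \ref{lem21} playing the role of Lemma \ref{lem3} (and the symmetric analogue of Lemma \ref{lem4} supplying the coalgebra-morphism property and the identity $\pi(u\star v)=\lfloor u,v\rfloor$). The combinatorial reindexing you flag as the main obstacle is indeed the only point requiring care, and your description of the bijection between partitions of $[k+l]$ with a block distribution and the pairs of splittings of $u$ and $v$ followed by independent partitions is the right way to carry it out.
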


The proof of the following theorem is similar to the proof of theorem \ref{theo2}. In particular, its proof uses the following lemma:

\begin{lemma} \label{lem21}
Let $C$ be a connected coalgebra and let $\phi,\psi:C\longrightarrow S(V)$ be two coalgebra morphisms. Then 
$\phi=\psi$ if, and only if, $\pi \circ \phi=\pi\circ \psi$.
\end{lemma}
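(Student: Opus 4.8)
The plan is to transcribe the proof of lemma \ref{lem3} almost verbatim, replacing the tensor coalgebra $(T(V),\Delta_{dec})$ by the symmetric coalgebra $(S(V),\Delta)$ everywhere; the only point requiring a genuinely new argument is the identification of the space of primitives of $S(V)$.

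The "only if" direction is immediate, so assume $\pi\circ\phi=\pi\circ\psi$. I would introduce the coradical filtration $(C_n)_{n\geq 0}$ of $C$ and the associated degree, exactly as in lemma \ref{lem3}: since $C$ is connected it has a unique group-like element $1_C$ with $C_0=Vect(1_C)$, and on $Ker(\epsilon_C)$ the reduced coproduct $\tdelta(x)=\Delta(x)-x\otimes 1_C-1_C\otimes x$ is coassociative and sends $Ker(\epsilon_C)\cap C_n$ into $C_{n-1}\otimes C_{n-1}$. Because $S(V)$ is likewise connected with unique group-like element $1$, both coalgebra morphisms satisfy $\phi(1_C)=\psi(1_C)=1$.

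The key structural fact, and the one place where this proof diverges from lemma \ref{lem3}, is that the space of primitives of $(S(V),\Delta)$, namely $Ker(\tdelta_S)$ for the reduced coproduct $\tdelta_S$ on $S_+(V)$, equals $V$. In $T(V)$ with deconcatenation a word is primitive if and only if it has length one; in $S(V)$ one must instead rule out that a nontrivial combination of monomials of degree $\geq 2$ be primitive. This holds because $\K$ has characteristic zero: for a monomial $x_1\ldots x_k$ with $k\geq 2$ one has $\tdelta(x_1\ldots x_k)=\sum_{\emptyset\neq I\subsetneq[k]}\prod_{i\in I}x_i\otimes\prod_{i\notin I}x_i$, whose component of bidegree $(1,k-1)$ is $\sum_i x_i\otimes\prod_{j\neq i}x_j$; comparing this bihomogeneous component shows it cannot vanish for a nonzero element of $S_+(V)$ of degree $\geq 2$. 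Equivalently, one may simply invoke the structure theorem for the cocommutative connected Hopf algebra $S(V)$ (Cartier--Milnor--Moore), which gives $\mathrm{Prim}(S(V))=V$ in characteristic zero. Hence $Ker(\tdelta_S)=V$.

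With this in hand the induction on $n=deg(x)$ goes through unchanged. For $n=0$ we have $x=\lambda 1_C$ and $\phi(x)=\psi(x)=\lambda 1$. Assuming the equality at all ranks $<n$ and taking $x$ with $deg(x)=n$, we may suppose $\epsilon_C(x)=0$; since $\tdelta(x)\in C_{n-1}\otimes C_{n-1}$, the induction hypothesis applied to both legs yields $\tdelta_S(\phi(x))=(\phi\otimes\phi)\tdelta(x)=(\psi\otimes\psi)\tdelta(x)=\tdelta_S(\psi(x))$. Thus $\phi(x)-\psi(x)\in Ker(\tdelta_S)=V$, and therefore $\phi(x)-\psi(x)=\pi(\phi(x)-\psi(x))=0$ by the standing hypothesis. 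The only real obstacle is the primitivity computation in $S(V)$; everything else is a direct transcription of lemma \ref{lem3}.
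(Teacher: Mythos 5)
Your proof is correct and is precisely the adaptation the paper intends: the paper states Lemma \ref{lem21} without proof, implicitly as the symmetric-coalgebra analogue of Lemma \ref{lem3}, and your transcription via the coradical filtration is the same argument. You also correctly isolate and justify the one genuinely new ingredient, namely that $\mathrm{Ker}(\tdelta_S)=\mathrm{Prim}(S(V))=V$ in characteristic zero (e.g.\ because composing the bidegree $(1,k-1)$ component of $\Delta$ with multiplication returns $k\cdot u$, so that component is injective on $S^k(V)$ for $k\geq 1$), which is all that is needed for the induction to close.
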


\textbf{Example.} Let $x_1,x_2,y_2,y_2\in V$.
\begin{align*}
x_1\star y_1&=x_1y_1+\lfloor x_1,y_1\rfloor,\\
x_1\star y_1y_2&=x_1y_1y_2+\lfloor x_1,y_1\rfloor y_2+\lfloor x_1,y_2\rfloor y_1+\lfloor x_1,y_1y_2\rfloor\\
x_1x_2\star y_1y_2&=x_1x_2y_1y_2+\lfloor x_1,y_1\rfloor x_2y_2+\lfloor x_1,y_2\rfloor x_2y_1+\lfloor x_1,y_1y_2\rfloor x_2\\
&+\lfloor x_2,y_1\rfloor x_1y_2+\lfloor x_2,y_2\rfloor x_1y_1+\lfloor x_2, y_1y_2\rfloor x_1\\
&+\lfloor x_1x_2, y_1\rfloor y_2+\lfloor x_1x_2,y_2\rfloor y_1+\lfloor x_1x_2,y_1y_2\rfloor\\
&+\lfloor x_1,y_1\rfloor\lfloor x_2,y_2\rfloor+\lfloor x_1,y_2\rfloor\lfloor x_2,y_1\rfloor.
\end{align*}

\textbf{Remarks.} \begin{enumerate}
\item The coalgebra $(S(V),\Delta)$ is connected so, for any $\star\in bialg(V)$, $(S(V),\star,\Delta)$ is a Hopf algebra.
\item Any vector space $V$ admits a trivial $\petitbinfini$ structure, defined by:
$$\lfloor-,-\rfloor_{k,l}=\begin{cases}
Id_V\mbox{ if $(k,l)=(1,0)$ or $(0,1)$},\\
0\mbox{ otherwise}.
\end{cases}$$
The associated product is the usual one of $S(V)$.
\end{enumerate}

\subsection{Associative-commutative algebras}

\begin{defi}\begin{enumerate}
\item An associative-commutative algebra is a $2$-associative algebra $(A,\star,m)$, such that $m$ is commutative.
 The operad of associative-commutative algebras is denoted by $\ascom$. It is generated by $m$ and $\star$, both in $\ascom(2)$, with the relations:
\begin{align*}
m^{(12)}&=m,&m\circ_1m&=m\circ_2m,&
\star\circ_1\star&=\star\circ_2\star.
\end{align*}
\item An associative-commutative algebra is a family $(A,\star,m,\Delta)$, such that $(A,\star,m)$ is an associative-commutative algebra,
and both $(\mathbf{U}A,\star,\Delta)$ and $(\mathbf{U}A,m,\Delta)$ are bialgebras.
\item For all $n\geq 1$, we denote by $\primascom(n)$ the subspace of elements $p\in \ascom(n)$ 
such that for any associative-commutative bialgebra $A$:
$$\forall a_1,\ldots,a_n \in Prim(A),\: p.(a_1,\ldots, a_n)\in Prim(A).$$
This is a suboperad of $\ascom$.
\end{enumerate}\end{defi}

As for $\binfini$ algebras:

\begin{theo}
The operads $\petitbinfini$ and $\primascom$ are isomorphic.
\end{theo}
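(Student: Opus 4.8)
The plan is to mirror, in the cocommutative setting, the entire development that led to Theorem \ref{theo11}, replacing the tensor coalgebra $T(V)$ by the symmetric coalgebra $S(V)$, the deconcatenation coproduct by the usual coproduct $\Delta$ of $S(V)$, the operad $\deuxas$ by $\ascom$, and $\binfini$ by $\petitbinfini$. The bijection of Theorem \ref{theo20} plays here the role that Theorem \ref{theo2} played for $\binfini$.

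First I would record the two preparatory lemmas analogous to those preceding Theorem \ref{theo11}. Fixing indeterminates $X_1,\ldots,X_n$ and setting $V_n=Vect(X_1,\ldots,X_n)$, the free associative-commutative algebra $F_\ascom(V_n)$ becomes an associative-commutative bialgebra for the unique algebra coproduct $\Delta$ making the $X_i$ primitive, and the map $p\mapsto p.(X_1,\ldots,X_n)$ embeds $\ascom(n)$ into its multilinear component, so that $\theta_\ascom:\primascom(n)\longrightarrow Prim(F_\ascom(V_n))$ is injective. Next, for any connected associative-commutative bialgebra $A$ one shows, exactly as before but using the subset-indexed identity $\Delta(a_1\ldots a_k)=\sum_{I\subseteq[k]}\prod_{i\in I}a_i\otimes\prod_{i\notin I}a_i$ valid on primitive $a_i$, that the natural map $\xi:S(Prim(A))\longrightarrow\mathbf{U}A$ is a coalgebra isomorphism, and that there is a unique $q_{k,l}\in\primascom(k+l)$, independent of $A$ and invariant under the Young subgroup $\mathfrak{S}_k\times\mathfrak{S}_l$, with $\pi(a_1\ldots a_k\star b_1\ldots b_l)=q_{k,l}.(a_1,\ldots,a_k,b_1,\ldots,b_l)$. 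Existence is obtained by induction on $k+l$ by isolating the projection onto $V$ in the expansion of $\star=\phi_{Prim(A)}(\lfloor-,-\rfloor)$ furnished by Theorem \ref{theo20}, and uniqueness follows from the injectivity of $\theta_\ascom$.

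Then I would define the candidate operad morphism $\petitbinfini\longrightarrow\primascom$ by $\lfloor-,-\rfloor_{k,l}\mapsto q_{k,l}$; Theorem \ref{theo20} guarantees that, for every connected associative-commutative bialgebra, the $q_{k,l}$ endow $Prim(A)$ with the $\petitbinfini$ structure $\lfloor-,-\rfloor=\pi\circ\star$, which is precisely what makes this assignment a morphism of operads. To prove it is an isomorphism I would run the free-algebra argument of Theorem \ref{theo11}: given any $\petitbinfini$ algebra $(W,\lfloor-,-\rfloor)$ and a linear map $f:V\longrightarrow W$, I set $\star=\phi_W(\lfloor-,-\rfloor)$ on $S(W)$, observe that $F_\primascom(V)\subseteq Prim(F_\ascom(V))$, extend $f$ to the unique associative-commutative algebra morphism $F:F_\ascom(V)\longrightarrow S(W)$ with $F(x)=f(x)$ on $V$, and check that $F(q_{k,l}.(x_1,\ldots,x_k,y_1,\ldots,y_l))=\lfloor f(x_1)\ldots f(x_k),f(y_1)\ldots f(y_l)\rfloor_{k,l}$ for all $x_i,y_j\in V$. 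This exhibits the restriction of $F$ to $F_\primascom(V)$ as a morphism of $\petitbinfini$ algebras valued in $W$, so $F_\primascom(V)$ satisfies the universal property of the free $\petitbinfini$ algebra on $V$; hence the morphism of operads is an isomorphism.

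The main obstacle I anticipate is purely combinatorial: keeping the symmetrization bookkeeping correct. Whereas the $\binfini$ case used ordered decompositions $u=u_1\ldots u_i$ and the deconcatenation coproduct, here every such decomposition is replaced by a sum over subsets or set partitions of $[k+l]$, as in the definition of $\lfloor-,-\rfloor_I$, and one must verify that the inductively produced $q_{k,l}$ are genuinely $\mathfrak{S}_k\times\mathfrak{S}_l$-invariant and that they reassemble into the partition-indexed sum defining $\phi_W(\lfloor-,-\rfloor)$. Granting the parallel with Theorem \ref{theo11}, each of these verifications is a routine adaptation, so the substance of the argument is the transfer of structure rather than any genuinely new idea.
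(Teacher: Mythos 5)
Your proposal is correct and is exactly the argument the paper intends: the theorem is stated with the remark ``As for $\binfini$ algebras,'' i.e.\ the proof is the transport of Theorem \ref{theo11} and its preparatory lemmas to the cocommutative setting, replacing $T(V)$, $\Delta_{dec}$, $\deuxas$ by $S(V)$, $\Delta$, $\ascom$ and invoking Theorem \ref{theo20} in place of Theorem \ref{theo2}. Your added attention to the $\mathfrak{S}_k\times\mathfrak{S}_l$-invariance of the $q_{k,l}$ is the right extra bookkeeping required by the symmetric setting, and the universal-property argument identifying $F_\primascom(V)$ with the free $\petitbinfini$ algebra is the same closing step as in the paper.
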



\subsection{Quotients of $\petitbinfini$}

\label{sect223}

Recall that a (right) pre-Lie algebra --also called a right symmetric algebra or a Vinberg algebra-- is a vector space $V$
together with a bilinear product $\bullet$ such that:
$$\forall x,y,z\in V,\: x\bullet (y\bullet z)-(x\bullet y)\bullet z=x\bullet (z\bullet y)-(x\bullet z)\bullet y.$$

\begin{prop}\label{prop24}
The quotient of $\petitbinfini$ by the operadic ideal generated by $\lfloor-,-\rfloor_{k,l}$, $k\geq 2$, is isomorphic to the operad $\prelie$,
generated by $\bullet \in \prelie(2)$ and the relation:
$$\bullet\circ_1 \bullet-\bullet\circ_2\bullet=(\bullet\circ_1 \bullet-\bullet\circ_2\bullet)^{(23)}.$$
\end{prop}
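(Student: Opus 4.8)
The plan is to identify the quotient operad, which I will call $\mathbf{Q}$, with $\prelie$ by exhibiting two mutually inverse operad morphisms. Since the operadic ideal is generated by the $\lfloor-,-\rfloor_{k,l}$ with $k\geq 2$, in $\mathbf{Q}$ every such operation is zero, so $\mathbf{Q}$ is generated by the operations $\lfloor-,-\rfloor_{1,l}$, $l\geq 0$, together with $\lfloor-,-\rfloor_{1,0}=Id$. I set $\bullet=\lfloor-,-\rfloor_{1,1}$, and the aim is to show first that $\bullet$ satisfies the pre-Lie relation and generates $\mathbf{Q}$, and then that there are no further relations.

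For the forward direction I would specialize the defining relation of $\petitbinfini$ to the case where $u=x$ and $w=z$ are single elements and $v=y_1\ldots y_l$. Because in $\mathbf{Q}$ every bracket with first slot of length $\geq 2$ vanishes, and because $\lfloor-,-\rfloor_{j,0}=0$ for $j\geq 2$, almost every term collapses: on the right only the one-block partition survives, contributing $\lfloor x,y_1\ldots y_l\rfloor\bullet z$, and on the left only the partitions in which $z$ is grouped with at most one $y_i$ (all other blocks singletons) contribute. This yields the recursion
\begin{align*}
\lfloor x,y_1\ldots y_l z\rfloor=\lfloor x,y_1\ldots y_l\rfloor\bullet z-\sum_{i=1}^l \lfloor x,(y_i\bullet z)\textstyle\prod_{j\neq i}y_j\rfloor,
\end{align*}
expressing each $\lfloor-,-\rfloor_{1,l+1}$ as an operadic composite of $\lfloor-,-\rfloor_{1,l}$ and $\bullet$; by induction $\mathbf{Q}$ is generated by $\bullet$. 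The case $l=1$ gives $\lfloor x,yz\rfloor=(x\bullet y)\bullet z-x\bullet(y\bullet z)$, and since $yz=zy$ in $S(V)$ the right-hand side is symmetric in $y$ and $z$, which is exactly the pre-Lie relation. Hence $\bullet\mapsto\lfloor-,-\rfloor_{1,1}$ defines a surjective operad morphism $\prelie\twoheadrightarrow\mathbf{Q}$.

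It remains to build an inverse, i.e. to endow every pre-Lie algebra $(V,\bullet)$ with a $\petitbinfini$ structure satisfying $\lfloor-,-\rfloor_{k,l}=0$ for $k\geq 2$ and $\lfloor-,-\rfloor_{1,1}=\bullet$. The symmetric braces are then forced to be the ones produced by the recursion above (with $\lfloor-,-\rfloor_{1,0}=Id$), so the task is to verify that these operations do satisfy the full defining relation of $\petitbinfini$. Equivalently, using theorem \ref{theo20}, I would define directly a product $\star$ on $S(V)$ by the partition formula, grafting each element of the left factor through a symmetric brace into the right factor, and check that $(S(V),\star,\Delta)$ is a bialgebra with $\pi(u\star v)=0$ whenever $\deg u\geq 2$. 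Given both morphisms, they invert each other on the generator $\bullet$, hence, by surjectivity, are mutually inverse, proving $\mathbf{Q}\cong\prelie$.

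The hard part is this last verification: proving that the recursively defined symmetric braces satisfy the whole $\petitbinfini$ relation, equivalently that the product $\star$ is associative. This is the substance of the pre-Lie/symmetric-brace equivalence and must be carried out by induction on the lengths of the tensors. I would stress that the product needed here is not the Guin--Oudom enveloping product of \cite{Oudom}: the latter carries a non-vanishing degree-$1$ correction, namely the associator $x_1\bullet(x_2\bullet y)-(x_1\bullet x_2)\bullet y$, so that $\pi(x_1x_2\star y)\neq 0$ in general and its associated $\petitbinfini$ structure does \emph{not} satisfy $\lfloor-,-\rfloor_{k,l}=0$ for $k\geq 2$; the relevant product is instead the right-sided symmetric-brace variant whose degree-$1$ component vanishes on $S^{\geq 2}(V)$.
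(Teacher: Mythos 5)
Your first half — deriving the recursion for $\lfloor-,-\rfloor_{1,l+1}$ from the $\petitbinfini$ relation, observing that $l=1$ gives the pre-Lie identity because $yz=zy$ in $S^2(V)$, and concluding that $\bullet\mapsto\lfloor-,-\rfloor_{1,1}$ is a surjection $\prelie\twoheadrightarrow\mathbf{Q}$ — is exactly the paper's argument and is fine.

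The second half has a genuine problem, and it is concentrated in your last paragraph. You assert that the Guin--Oudom product has $\pi(x_1x_2\star y)\neq 0$ in general because of the associator $x_1\bullet(x_2\bullet y)-(x_1\bullet x_2)\bullet y$. This is false: in the Guin--Oudom product every term of $x_1\ldots x_k\star y_1\ldots y_l$ with $k\geq 2$ is a product of at least $k\geq 2$ factors in $S(V)$ (each $x_i$ contributes one factor $x_i\bullet B_i$, and the unused $y_j$'s contribute further factors), so its projection to $V=S^1(V)$ vanishes; hence $\lfloor-,-\rfloor_{k,l}=\pi\circ\star_{\mid S^k\otimes S^l}=0$ for all $k\geq 2$. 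The associator you exhibit is $\lfloor x,y_1y_2\rfloor=\lfloor-,-\rfloor_{1,2}$ applied to $(x;y_1,y_2)$ — a bracket with \emph{first} slot of length $1$ and \emph{second} slot of length $2$ — which is precisely one of the brackets that is \emph{not} killed in the quotient and is produced by your own recursion. You have swapped the roles of the two slots. The "right-sided symmetric-brace variant" you say is needed is the Guin--Oudom product; the product you describe via the partition formula with all $\lfloor-,-\rfloor_{k\geq 2,l}=0$ \emph{is} the Guin--Oudom product. As a consequence, the verification you flag as "the hard part" (associativity of $\star$, i.e.\ that the recursively defined symmetric braces satisfy the full $\petitbinfini$ relation) is never carried out and is simultaneously cut off from the reference that would discharge it: the paper simply invokes \cite{Oudom}, which proves that $(S(V),\star,\Delta)$ is a Hopf algebra, reads off $\pi(x_1\ldots x_k\star y_1\ldots y_l)=0$ for $k\neq 1$ from the formula, and thereby gets the section $\Phi':\petitbinfini'\rightarrow\prelie$ with $\Phi'\circ\Phi=Id_\prelie$. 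To repair your write-up, delete the claim about the non-vanishing correction and either cite Guin--Oudom for the existence of $\star$ or actually perform the inductive associativity check you sketch.
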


\begin{proof}
We denote by $\petitbinfini'$ this quotient of $\petitbinfini$. If $V$ is a $\petitbinfini'$ algebra and $x,y,z\in V$,
in $S(V)$, $x\star y=xy+\lfloor x,y\rfloor$. So:
\begin{align*}
\lfloor \lfloor x,y \rfloor,z\rfloor-\lfloor x,\lfloor y,z\rfloor\rfloor&=\lfloor x\star y,z\rfloor+\lfloor xy,z\rfloor-\lfloor x,y\star z\rfloor-\lfloor x,yz\rfloor\\
&=\lfloor x\star y,z\rfloor-\lfloor x,y\star z\rfloor-\lfloor x,yz\rfloor\\
&=\lfloor x,yz\rfloor.
\end{align*}
As $yz=zy$ in $S^2(V)$, $\lfloor-,-\rfloor_{1,1}$ is pre-Lie. We obtain an operad morphism:
$$\Phi:\left\{\begin{array}{rcl}
\prelie&\longrightarrow&\mathbf{b}'_\infty\\
\bullet&\longrightarrow&\lfloor-,-\rfloor_{1,1}.
\end{array}\right.$$
Moreover, in a $\petitbinfini'$ algebra $V$, if $x,y_1,\ldots,y_{k+1}\in V$:
\begin{align*}
y_1\ldots y_k\star y_{k+1}&=y_1\ldots y_{k+1}+\sum_{i=1}^k y_1\ldots y_{i-1} \lfloor y_i,y_{k+1}\rfloor y_{i+1}\ldots y_k,
\end{align*}
so:
\begin{align*}
\lfloor x,y_1\ldots y_{k+1}\rfloor&=\lfloor x,y_1\ldots y_k\star y_{k+1}\rfloor
-\sum_{i=1}^k \lfloor x, y_1\ldots y_{i-1} \lfloor y_i,y_{k+1}\rfloor y_{i+1}\ldots y_k\rfloor\\
&=\lfloor \lfloor x,y_1\ldots y_k\rfloor,y_{k+1}\rfloor-\sum_{i=1}^k \lfloor x, y_1\ldots y_{i-1} \lfloor y_i,y_{k+1}\rfloor y_{i+1}\ldots y_k\rfloor.
\end{align*}
An easy induction proves that $\mathbf{b}'_\infty$ is generated by $\lfloor-,-\rfloor_{1,1}$, so $\Phi$ is surjective. \\

Let $(V,\bullet)$ be a pre-Lie algebra. The Oudom-Guin construction \cite{Oudom} allows to construct a product $\star$ on $S(V)$,
making it a Hopf algebra, isomorphic to the enveloping algebra of $V$. So $V$ is $\petitbinfini$. Moreover, for all $x_1,\ldots,x_k,y_1,\ldots,y_l\in V$,
with the notations of \cite{Oudom}:
$$\pi(x_1\ldots x_k\star y_1\ldots y_l)=\begin{cases}
0\mbox{ if }k\neq 1,\\
x_1\bullet y_1\ldots y_l \mbox{ if }k=1.
\end{cases}$$
So $V$ is a $b_\infty'$-algebra: we obtain an operad morphism:
$$\phi':\left\{\begin{array}{rcl}
\petitbinfini'&\longrightarrow&\prelie\\
\lfloor-,-\rfloor_{1,1}&\longrightarrow &\bullet.
\end{array}\right.$$
Then  $\Phi'\circ \Phi=Id_\prelie$, so $\Phi$ is injective. \end{proof}\\

\textbf{Remark.} As noticed in the preceding proof, if $(V,\bullet)$ is a pre-Lie algebra, $\star=\phi_V(\bullet)$ is the Oudom-Guin construction.
In particular, its $\petitbinfini$ brackets can be inductively computed:
\begin{itemize}
\item For all $x\in V$, $\lfloor x,1\rfloor=x\bullet 1=x$.
\item For all $x,y\in V$, $\lfloor x,y\rfloor=x\bullet y$.
\item For all $x,x_1,\ldots,x_k \in V$,
\begin{align*}
\lfloor x,x_1\ldots x_k\rfloor&=\lfloor\lfloor x, x_1\ldots x_{k-1}\rfloor, x_k\rfloor-\sum_{i=1}^{k-1}\lfloor x, x_1\ldots \lfloor x_i, x_k\rfloor \ldots x_{k-1}\rfloor.
\end{align*}
$\lfloor x,x_1\ldots x_k\rfloor$ is denoted by $x\bullet x_1\ldots x_k$ in \cite{Oudom}.
\end{itemize}

\begin{cor}
The quotient of $\petitbinfini$ by the operadic ideal generated by $\lfloor-,-\rfloor_{k,l}$, $k\geq 2$ or $l\geq 2$, is isomorphic to the operad $\ass$.
\end{cor}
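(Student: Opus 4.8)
The plan is to exploit that this quotient---call it $\mathbf{b}''_\infty$---is obtained from $\petitbinfini$ by an operadic ideal strictly larger than the one in Proposition \ref{prop24}: here we kill $\lfloor-,-\rfloor_{k,l}$ for $k\geq 2$ \emph{or} $l\geq 2$, whereas Proposition \ref{prop24} only kills those with $k\geq 2$. Hence $\mathbf{b}''_\infty$ is a quotient of $\prelie$, and it is enough to determine the extra relations imposed on the pre-Lie generator $\bullet=\lfloor-,-\rfloor_{1,1}$. Since in $\prelie$ the brackets with $k\geq 2$ already vanish, the only new generators set to zero are the $\lfloor-,-\rfloor_{1,l}$, $l\geq 2$.

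First I would read off the relation carried by $\lfloor-,-\rfloor_{1,2}$. Specializing the defining $\petitbinfini$ axiom to $u=x$, $v=y$, $w=z$ of length $1$ (two partitions of $[2]$ on each side) gives
$$\lfloor x,yz\rfloor+\lfloor x,\lfloor y,z\rfloor\rfloor=\lfloor xy,z\rfloor+\lfloor\lfloor x,y\rfloor,z\rfloor;$$
inside $\prelie$ the term $\lfloor xy,z\rfloor$ already vanishes, so this is precisely the Oudom--Guin reduction formula (the remark after Proposition \ref{prop24}) for $l=2$, namely $\lfloor x,x_1x_2\rfloor=(x\bullet x_1)\bullet x_2-x\bullet(x_1\bullet x_2)$. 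Thus imposing $\lfloor-,-\rfloor_{1,2}=0$ forces exactly the associativity of $\bullet$.

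Next I would verify that nothing further is hidden in the higher brackets, by an induction on $l\geq 2$ using the reduction formula $\lfloor x,x_1\ldots x_l\rfloor=\lfloor\lfloor x,x_1\ldots x_{l-1}\rfloor,x_l\rfloor-\sum_{i=1}^{l-1}\lfloor x,x_1\ldots\lfloor x_i,x_l\rfloor\ldots x_{l-1}\rfloor$: the case $l=2$ is associativity itself, while for $l\geq 3$ every term on the right is a bracket $\lfloor x,-\rfloor$ with $l-1\geq 2$ entries, hence of type $(1,l-1)$, which vanishes by the induction hypothesis. Consequently the passage from $\prelie$ to $\mathbf{b}''_\infty$ amounts to nothing more than making $\bullet$ associative, i.e. $\mathbf{b}''_\infty=\prelie/(\bullet\text{ associative})$.

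Finally I would identify this with $\ass$: an associative product automatically satisfies the pre-Lie identity, since its associator $x\bullet(y\bullet z)-(x\bullet y)\bullet z$ vanishes and is in particular symmetric in $y,z$; hence modding $\prelie$ out by associativity renders the defining pre-Lie relation redundant, and the result is the operad on one binary generator subject to associativity alone, namely $\ass$. The only point needing real care is the induction of the previous paragraph, which is what guarantees the quotient is not strictly larger than $\ass$. As a cross-check one runs the argument forward: any associative algebra $(V,\cdot)$ yields a bialgebra $(S(V),\star,\Delta)$ via the commutative quasi-shuffle product, hence by Theorem \ref{theo20} a $\petitbinfini$ structure all of whose brackets vanish except $\lfloor-,-\rfloor_{1,1}=\cdot$, exhibiting the inverse morphism $\mathbf{b}''_\infty\to\ass$ and confirming the isomorphism.
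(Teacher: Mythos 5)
Your proof is correct and follows essentially the same route as the paper: factor through $\prelie$, identify the image of $\lfloor-,-\rfloor_{1,2}$ with the associator $\bullet\circ_1\bullet-\bullet\circ_2\bullet$, and observe that the pre-Lie relation becomes redundant once associativity is imposed. The paper's proof is terser and leaves implicit both the induction showing that the higher brackets $\lfloor-,-\rfloor_{1,l}$, $l\geq 3$, contribute no further relations and the quasi-shuffle cross-check; your write-up supplies these details but does not change the argument.
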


\begin{proof} The antecedent by $\Phi$ of the element $\lfloor-,-\rfloor_{1,2}$ is $\bullet\circ_1 \bullet-\bullet \circ_2\bullet$.
So this quotient of $\petitbinfini$ is isomorphic to the quotient of $\prelie$ by the element $\bullet\circ_1 \bullet-\bullet \circ_2\bullet$.
This quotient is generated by the class $m$ of $\bullet$ and the relation $m\circ_1m=m\circ_2m$, so it is $\ass$. \end{proof}

\subsection{From $\binfini$ algebras to $\petitbinfini$ algebras}

\textbf{Notations.} We denote by $coS(V)$ the subalgebra of $(T(V),\shuffle)$ generated by $V$; it is a Hopf subalgebra of $(T(V),\shuffle,\Delta_{dec})$.
As the characteristic of the base field $\K$ is zero, this Hopf algebra is isomorphic to $S(V)$ via the morphism:
$$\iota_V:\left\{\begin{array}{rcl}
S(V)&\longrightarrow&coS(V)\\
x_1\ldots x_k&\longrightarrow&x_1\shuffle \ldots \shuffle x_k.
\end{array}\right.$$

\begin{lemma}
\begin{enumerate}
\item $coS(V)$ is the greatest cocommutative subcoalgebra of $T(V)$.
\item Let $*\in Bialg(V)$. The subalgebra of $(T(V),*)$ generated by $V$ is $coS(V)$.
\end{enumerate}\end{lemma}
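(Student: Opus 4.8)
The plan is to treat the two assertions in turn, the first being the substantial one. Throughout I identify, for each $k$, the degree-$k$ component of $coS(V)$ with the space of symmetric tensors $(V^{\otimes k})^{\mathfrak{S}_k}$: by definition $coS(V)$ is spanned by the elements $\iota_V(x_1\ldots x_k)=x_1\shuffle\ldots\shuffle x_k=\sum_{\sigma\in\mathfrak{S}_k}x_{\sigma(1)}\otimes\ldots\otimes x_{\sigma(k)}$, and in characteristic zero these symmetrizations span exactly the $\mathfrak{S}_k$-invariants. In particular $coS(V)=\bigoplus_k (V^{\otimes k})^{\mathfrak{S}_k}$ is a graded subspace, so an element lies in $coS(V)$ if and only if each of its homogeneous components is a symmetric tensor. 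That $coS(V)$ is a cocommutative subcoalgebra of $(T(V),\Delta_{dec})$ is immediate from the stated fact that $\iota_V$ is a coalgebra isomorphism onto $coS(V)$ from the cocommutative coalgebra $(S(V),\Delta)$.

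For maximality I would show that any cocommutative subcoalgebra $C$ satisfies $C\subseteq coS(V)$, by induction along the coradical filtration. Since $(T(V),\Delta_{dec})$ is connected with coradical filtration equal to the length filtration $F_n=\bigoplus_{k\le n}V^{\otimes k}$, the coradical filtration $(C_n)$ of $C$ satisfies $C_n\subseteq F_n$ and $\tdelta(C_n)\subseteq C_{n-1}\otimes C_{n-1}$ (as used for Lemma \ref{lem3}). Assuming $C_{n-1}\subseteq coS(V)$, take $c\in C_n$ and write $c=\sum_{k\le n}c_k$ with $c_k\in V^{\otimes k}$; I must show each $c_k$ is symmetric. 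Two inputs are available: cocommutativity forces $\tdelta(c)$ to be invariant under the flip $\tau$, while the induction gives $\tdelta(c)\in coS(V)\otimes coS(V)$, so that the bidegree-$(i,j)$ part of $\tdelta(c)$, which is exactly the splitting of $c_{i+j}$ after the $i$-th factor, lies in $(V^{\otimes i})^{\mathfrak{S}_i}\otimes(V^{\otimes j})^{\mathfrak{S}_j}$ for all $i,j\ge 1$. For $c_k$ with $k\ge 3$ the cases $i=1$ and $i=k-1$ already show $c_k$ invariant under $\mathfrak{S}_{\{2,\ldots,k\}}$ and under $\mathfrak{S}_{\{1,\ldots,k-1\}}$; since these subgroups generate $\mathfrak{S}_k$, $c_k$ is symmetric. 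The degree-two component is the delicate one: there the splitting condition degenerates, and one instead uses flip-invariance, whose bidegree-$(1,1)$ part reads $c_2=\tau c_2$ in $V\otimes V$. Hence $c\in coS(V)$ and the induction closes.

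For the second assertion write $B$ for the unital subalgebra of $(T(V),*)$ generated by $V$; by hypothesis $*\in Bialg(V)$, so $(T(V),*,\Delta_{dec})$ is a bialgebra. The inclusion $B\subseteq coS(V)$ I would deduce from part (1): the product $*\colon T(V)\otimes T(V)\to T(V)$ is a coalgebra morphism and $coS(V)\otimes coS(V)$ is cocommutative, so its image is a cocommutative subcoalgebra, hence contained in $coS(V)$ by maximality; thus $coS(V)$ is $*$-closed and, containing $V$ and $1$, contains $B$. For the reverse inclusion I use that $*$ is filtered for the length filtration with $u*v=u\shuffle v+(\text{terms of length}<lg(u)+lg(v))$, the longest contributions in $\Phi_V$ coming from the factors $\langle-,-\rangle_{1,0}$ and $\langle-,-\rangle_{0,1}=Id$. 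Consequently the top-length component of $x_1*\cdots*x_n$ (with $x_i\in V$) is $x_1\shuffle\cdots\shuffle x_n$, and these span $(V^{\otimes n})^{\mathfrak{S}_n}$. Arguing by induction on $n$: the element $x_1*\cdots*x_n$ lies in $B\subseteq coS(V)$ and its top term lies in $coS(V)$, so their difference lies in $coS(V)\cap F_{n-1}=\bigoplus_{k<n}(V^{\otimes k})^{\mathfrak{S}_k}$, which is in $B$ by the induction hypothesis; hence $x_1\shuffle\cdots\shuffle x_n\in B$ and $(V^{\otimes n})^{\mathfrak{S}_n}\subseteq B$. Together the two inclusions give $B=coS(V)$.

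The main obstacle is the maximality in part (1): a cocommutative subcoalgebra need not be graded, which forces the coradical-filtration induction rather than a single degreewise computation, and the two sources of symmetry (flip-invariance of $\tdelta(c)$, and the containment $\tdelta(c)\in coS(V)\otimes coS(V)$) must be combined, with the component $k=2$ handled separately since the partial-deconcatenation constraints become vacuous there. Once part (1) and the leading-term identity $u*v=u\shuffle v+\text{lower}$ are established, part (2) is routine.
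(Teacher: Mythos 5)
Your proof is correct, but for part (1) it follows a genuinely different route from the paper. The paper takes the subalgebra $C'$ generated by a cocommutative subcoalgebra $C$ under the shuffle product, observes that $C'$ is a connected cocommutative subbialgebra of $(T(V),\shuffle,\Delta_{dec})$, and invokes the Cartier--Quillen--Milnor--Moore theorem to conclude that $C'$ is generated by its primitives, which lie in $V$; hence $C\subseteq C'\subseteq coS(V)$. You instead identify $coS(V)$ with the symmetric tensors $\bigoplus_k (V^{\otimes k})^{\mathfrak{S}_k}$ and run a direct induction along the coradical filtration, extracting the $\mathfrak{S}_k$-invariance of each homogeneous component $c_k$ from the inductive containment $\tdelta(c)\in coS(V)\otimes coS(V)$ (which for $k\geq 3$ yields invariance under the two maximal parabolic subgroups fixing $1$ or $k$, and these generate $\mathfrak{S}_k$) together with flip-invariance for the residual case $k=2$. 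Your argument is more elementary and self-contained --- it avoids the structure theorem entirely, using characteristic zero only to identify $coS(V)$ with the invariants --- at the price of the explicit case analysis and of checking carefully that the bidegree components of $\tdelta(c)$ are the splittings of the $c_{i+j}$; the paper's argument is shorter and conceptual but leans on heavier machinery. For part (2) the two proofs coincide: the inclusion $B\subseteq coS(V)$ via cocommutativity of $coS(V)*coS(V)$ and maximality, and the reverse inclusion by the leading-term identity $x_1*\cdots*x_n=x_1\shuffle\cdots\shuffle x_n+(\text{lower length})$; you merely spell out the triangularity argument that the paper leaves implicit.
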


\begin{proof} 1. $coS(V)$ is indeed a cocommutative coalgebra of $T(V)$. Let $C$ be a cocommutative coalgebra of $T(V)$.
The subalgebra $C'$ generated for the shuffle product by $C$ is then a cocommutative subbialgebra of $T(V)$. As $T(V)$ is connected as a coalgebra,
$C'$ also is: by the Cartier-Quillen-Milnor-Moore theorem, $C'$ is generated by $Prim(C')$. As $Prim(C')\subseteq Prim(T(V))=V$,
$C\subseteq C'\subseteq coS(V)$. 

2. As $*$ is a coalgebra morphism and $coS(V)^{\otimes 2}$ is a cocommutative subcoalgebra of $T(V)^{\otimes 2}$,
$coS(V)*coS(V)$ is a cocommutative subcoalgebra of $T(V)$. By the first point, it is included in $coS(V)$.
So $coS(V)$ is a subalgebra of $(T(V),*)$, containing $V$. Denoting by $A$ the subalgebra of $T(V),*)$ generated by $V$,
$A\subseteq coS(V)$. Moreover, for all $x_1,\ldots,x_k \in V$, by theorem \ref{theo2}:
\begin{align*}
x_1*\ldots*x_k&=x_1\shuffle \ldots \shuffle x_k+\mbox{a sum of words of length $<k$}.
\end{align*}
By a triangularity argument, $coS(V) \subseteq A$. \end{proof}\\

Let $V$ be a $\binfini$ algebra, and let $*=\phi_V(\langle-,-\rangle)$. By the preceding lemma,
$coS(V)$ is a subbialgebra of $(T(V),*,\Delta_{dec})$. Via $\iota_V$, $S(V)$ is also a bialgebra, so $V$ is a $\petitbinfini$ algebra.
This structure is given by:
$$\lfloor x_1\ldots x_k,y_1\ldots y_l\rfloor=\langle x_1\shuffle \ldots \shuffle x_k,y_1\shuffle \ldots \shuffle y_l\rangle.$$
At the operadic level, we obtain:

\begin{prop}
There is an operad morphism $\Phi$ from $\petitbinfini$ to $\binfini$, such that:
$$\forall k,l\geq 0,\:\Phi(\lfloor-,-\rfloor_{k,l})=\sum_{\sigma\in \mathfrak{S}_k,\tau\in \mathfrak{S}_l}\langle-,-\rangle_{k,l}^{\sigma\otimes \tau}.$$
\end{prop}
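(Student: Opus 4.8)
The plan is to recognize the statement as the operadic shadow of the construction carried out just above it, which endows every $\binfini$-algebra with a $\petitbinfini$-structure, and to organize that construction as a morphism of operads.

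First I would make the formula explicit. Let $(V,\langle-,-\rangle)$ be a $\binfini$-algebra; the induced $\petitbinfini$-bracket is $\lfloor x_1\ldots x_k,y_1\ldots y_l\rfloor=\langle x_1\shuffle\cdots\shuffle x_k,y_1\shuffle\cdots\shuffle y_l\rangle$. Expanding the shuffles of letters, $x_1\shuffle\cdots\shuffle x_k=\sum_{\sigma\in\mathfrak{S}_k}x_{\sigma^{-1}(1)}\ldots x_{\sigma^{-1}(k)}$ and likewise for the $y_j$, and recalling that the action of $\mathfrak{S}_n$ on $\bfL_V$ (hence on $\binfini$) is $f^\sigma(z_1\ldots z_n)=f(z_{\sigma^{-1}(1)}\ldots z_{\sigma^{-1}(n)})$, one obtains
$$\lfloor x_1\ldots x_k,y_1\ldots y_l\rfloor=\left(\sum_{\sigma\in\mathfrak{S}_k,\tau\in\mathfrak{S}_l}\langle-,-\rangle_{k,l}^{\sigma\otimes\tau}\right).(x_1,\ldots,x_k,y_1,\ldots,y_l),$$
where $\sigma\otimes\tau\in\mathfrak{S}_{k+l}$ acts by $\sigma$ on the first $k$ entries and by $\tau$ on the last $l$. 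This isolates the element $q_{k,l}:=\sum_{\sigma,\tau}\langle-,-\rangle_{k,l}^{\sigma\otimes\tau}\in\binfini(k+l)$, which is invariant under $\mathfrak{S}_k\times\mathfrak{S}_l$, matching that $\lfloor-,-\rfloor_{k,l}$ is defined on $S^k(V)\otimes S^l(V)$.

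Next I would turn this into an operad morphism. The assignment $(V,\langle-,-\rangle)\mapsto(V,\lfloor-,-\rfloor)$ is a functor from $\binfini$-algebras to $\petitbinfini$-algebras that is the identity on underlying spaces: it is well defined by the preceding discussion, and any morphism of $\binfini$-algebras preserves $\langle-,-\rangle$, hence preserves the shuffle-defined $\lfloor-,-\rfloor$. To read off $\Phi$ itself, I would evaluate on free $\binfini$-algebras: for $p\in\petitbinfini(n)$, the element $p.(X_1,\ldots,X_n)$ computed with the induced $\petitbinfini$-structure on $F_\binfini(V_n)$ is multilinear in $X_1,\ldots,X_n$, hence lies in the multilinear part of $F_\binfini(V_n)$, which is identified with $\binfini(n)$ through $p\mapsto p.(X_1,\ldots,X_n)$; setting $\Phi(p)$ to be this element defines $\Phi$, and the computation above gives $\Phi(\lfloor-,-\rfloor_{k,l})=q_{k,l}$ on generators. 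Functoriality of the construction forces $\Phi$ to commute with the partial compositions and with the symmetric-group actions.

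It remains to see that $\Phi$ respects the defining relations of $\petitbinfini$. These hold in every $\binfini$-algebra, in particular in the free one $F_\binfini(V_n)$, so the two sides of each relation, applied to $X_1,\ldots,X_n$, agree there; since the map $\binfini(n)\to F_\binfini(V_n)$, $p\mapsto p.(X_1,\ldots,X_n)$, is injective on multilinear elements --- exactly as in the remark following the $\primdeuxas$ proposition --- the relation lifts to an identity in $\binfini$. The step to handle with care is this detection argument: the identification of the multilinear part of $F_\binfini(V_n)$ with $\binfini(n)$ together with the $\mathfrak{S}_n$-equivariance of $\Phi$. Everything else is the bookkeeping of the shuffle expansion carried out in the first step.
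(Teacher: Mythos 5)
Your proposal is correct and follows essentially the same route as the paper: the paper establishes the algebra-level construction (every $\binfini$-algebra becomes $\petitbinfini$ via $\lfloor u,v\rfloor=\langle \iota_V(u),\iota_V(v)\rangle$, using the fact that $coS(V)$ is a subbialgebra of $(T(V),*,\Delta_{dec})$) and then simply asserts the operadic statement, whereas you make explicit the standard passage from a natural, identity-on-underlying-spaces functor between algebra categories to an operad morphism, detecting $\Phi$ on the multilinear part of the free $\binfini$-algebra. The shuffle bookkeeping matching $\sum_{\sigma,\tau}\langle-,-\rangle_{k,l}^{\sigma\otimes\tau}$ with the convention $f^{\sigma}(z_1\ldots z_n)=f(z_{\sigma^{-1}(1)}\ldots z_{\sigma^{-1}(n)})$ checks out.
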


We finally obtain a commutative diagram of operads:
$$\xymatrix{\petitbinfini\ar[rr]^\Phi\ar@{->>}[d]&&\binfini \ar@{->>}[d]\\
\prelie \ar[rr] \ar@{->>}[rd]&&\brace\ar@{->>}[ld]\\
&\ass\ar@{->>}[d]&\\
&\com&}$$

\subsection{Dual construction}

\begin{defi}
Let $V$ be a graded $\petitbinfini$ algebra. We shall say that $V$ is $0$-bounded if:
\begin{itemize}
\item For all $n\geq 0$, $V_n$ is finite-dimensional.
\item For all $m, n\geq 0$, there exists $B(m, n)\geq 0$ such that:
$$k+l\geq B(m, n)\Longrightarrow \lfloor S^k(V_0)S(V)_m,S^l(V_0) S(V)_n\rfloor=(0).$$
\end{itemize}\end{defi}

\textbf{Examples.}\begin{enumerate}
\item If $V$ is connected, that is to say if $V_0=(0)$, then $V$ is $0$-bounded, with $B(m, n)=0$ for all $m, n$.
\item If $V$ is associative, then $\langle-,-\rangle_{k,l}=0$ if $k\geq 2$ or $l\geq 2$. Consequently, $V$ is $0$-bounded, with 
$B(m, n)=2$ for all $m, n$.
\item If $V$ is a $0$-bounded $\binfini$ algebra, it is also a $0$-bounded $\petitbinfini$ algebra.
\end{enumerate}

Let us assume that $V$ is $0$-bounded. We identify $S(V^*)$ with a subspace of $S(V)^*$ by the pairing $\ll-,-\gg'$ induced by the
pairing between $V^*$ and $V$. More precisely, let us choose a basis $(x_i)_{i\in I}$ of $V$, made of homogeneous elements of $V$.
The dual basis of $V^*$ is denoted by $(f_i)_{i\in I}$. We shall need the following notations:
\begin{itemize}
\item We denote by $\Lambda$ the set of sequences of positive integers $(\alpha_i)_{i\in I}$ whose support is finite.
\item For all $\alpha=(\alpha_i)_{i\in I}$, we put:
\begin{align*}
x^\alpha&=\prod_{i\in I}x_i^{\alpha_i},&
f^\alpha&=\prod_{i\in I}f_i^{\alpha_i},&
\alpha!&=\prod_{i\in I} \alpha_i!
\end{align*}
\end{itemize}
Then $(x^\alpha)_{\alpha \in \Lambda}$ is a basis of $S(V)$, $(f^\alpha)_{\alpha \in \Lambda}$ is a basis of $S(V^*)$, and the pairing is given by:
$$\ll f^\alpha,x^\beta\gg=\alpha!\delta_{\alpha,\beta}.$$
Note that for all $F,G \in S(V^*),$ $X,Y\in S(V)$:
\begin{align*}
\ll \Delta(F), X\otimes Y\gg&=\ll F,XY\gg,&
\ll F\otimes G,\Delta(Y)\gg&=\ll FG,X\gg.
\end{align*}

\begin{prop}
Let $V$ be a $0$-bounded $\petitbinfini$ algebra. 
We define a coproduct $\Delta_*$ on $S(V^*)$ as the unique algebra morphism such that for all $f\in V^*$, for all $X,Y \in S(V)$,
$$\ll \Delta_*(f),X\otimes Y\gg=\ll f,\lfloor X,Y\rfloor \gg.$$
Then $(S(V^*),\Delta_*)$ is a graded bialgebra. Moreover, for all $F,G\in S(V^*),$ $X,Y\in S(V)$:
\begin{align*}
\ll \Delta_*(F),X\otimes Y\gg&=\ll F,X*Y\gg,&\ll F\otimes G,\Delta(Y)\gg&=\ll FG,X\gg.
\end{align*}
In other words, $\ll-,-\gg$ is a Hopf pairing. \end{prop}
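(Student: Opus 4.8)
The plan is to mirror the proof of Proposition \ref{prop17}, replacing the tensor coalgebra $T(V)$ by the symmetric coalgebra $S(V)$ and the product $*$ by $\star=\phi_V(\lfloor-,-\rfloor)$ from Theorem \ref{theo20}; the cocommutative setting changes nothing essential in the logic, only the combinatorics of the underlying coalgebra.

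First I would establish that $\Delta_*$ is well-defined as an algebra morphism from $S(V^*)$ into $S(V^*)\otimes S(V^*)$. Dualizing the bracket $\lfloor-,-\rfloor$ gives a linear map $\delta:V^*\longrightarrow (S(V)\otimes S(V))^*$, and the difficulty is that when $V_0\neq (0)$ the homogeneous components of $S(V)$ are infinite-dimensional, so $S(V^*)\otimes S(V^*)$ is only a proper subspace of $(S(V)\otimes S(V))^*$. This is exactly where $0$-boundedness enters: for $f\in V_N^*$ the defining relation forces $\delta(f)$ to be supported on pairs of monomials whose combined number of degree-$0$ factors is bounded by $B(m,n)$ with $m+n=N$, so $\delta(f)$ is a finite sum lying in $S(V^*)\otimes S(V^*)$. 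Since $S(V^*)$ is the free commutative algebra on $V^*$, $\delta$ then extends uniquely to an algebra morphism $\Delta_*$.

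Next I would prove that $\ll-,-\gg$ is a Hopf pairing by the same subalgebra argument. Setting
$$A=\{F\in S(V^*)\mid \forall X,Y\in S(V),\:\ll\Delta_*(F),X\otimes Y\gg=\ll F,X\star Y\gg\},$$
one checks $1\in A$ via the counit, and that $A$ is a subalgebra by the computation of Proposition \ref{prop17}: since $\Delta_*$ is an algebra morphism and $\Delta$ is multiplicative with respect to $\star$, for $F,G\in A$ one splits $X$ and $Y$ through the compatibility of $\ll-,-\gg$ with $\Delta$ and the product of $S(V)$, applies $F,G\in A$, and recombines. It then suffices to show $V^*\subseteq A$: for $f\in V^*$ the defining relation gives $\ll\Delta_*(f),X\otimes Y\gg=\ll f,\lfloor X,Y\rfloor\gg=\ll f,\pi(X\star Y)\gg$, and since $f$ is homogeneous of degree $1$ and $\pi$ is the projection onto $V$, the $\pi$ may be dropped to yield $\ll f,X\star Y\gg$. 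Hence $A=S(V^*)$.

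Finally, coassociativity of $\Delta_*$ follows from the associativity of $\star$: for $F\in S(V^*)$ and $X,Y,Z\in S(V)$, both $(\Delta_*\otimes Id)\circ\Delta_*(F)$ and $(Id\otimes\Delta_*)\circ\Delta_*(F)$ pair with $X\otimes Y\otimes Z$ to give $\ll F,(X\star Y)\star Z\gg=\ll F,X\star(Y\star Z)\gg$, so non-degeneracy of the pairing forces them equal, making $(S(V^*),m,\Delta_*)$ a graded bialgebra. The main obstacle is the well-definedness in the first step: the construction hinges entirely on the $0$-boundedness bound controlling the degree-$0$ factors so that $\delta$ lands in the genuine tensor square $S(V^*)\otimes S(V^*)$ rather than its completion; once this is secured, the remaining steps are a routine transcription of the $\binfini$ argument into the cocommutative framework.
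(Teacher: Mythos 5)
Your proposal is correct and follows exactly the route the paper intends: the paper's own proof of this proposition is simply the remark that it is ``similar to the proof of proposition \ref{prop17}'', and your argument is a faithful transcription of that proof into the cocommutative setting (well-definedness of $\delta(f)$ via the $0$-boundedness bound on degree-$0$ factors, the subalgebra argument for the Hopf pairing identity reduced to $V^*$ where $\pi$ can be dropped because $f$ pairs trivially with $S^k(V)$ for $k\neq 1$, and coassociativity from associativity of $\star$ plus non-degeneracy). No gaps.
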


\begin{proof} Similar to the proof of proposition \ref{prop17}. \end{proof}\\

\textbf{Remarks.} \begin{enumerate}
\item If $V$ is a $0$-bounded $\binfini$ algebra, then $(S(V^*),m,\Delta_*)$ is the abelianization of the bialgebra $(T(V^*),m_{conc},\Delta_*)$.
\item If $V$ is connected, then $(S(V^*),m,\Delta_*)$ is a graded, connected Hopf algebra, isomorphic to the graded dual of $(S(V),*,\Delta)$.
\end{enumerate}

\begin{cor} Let $V$ be a $0$-bounded $\petitbinfini$ algebra.
\begin{itemize}
\item $S(V_0^*)$ is a subbialgebra of $S(V^*)$. 
\item Let $J_0$ be the ideal of $S(V^*)$ generated by the elements $f\in V_0^*$.
Then $J_0$ is a biideal of $S(V^*)$, and $S(V^*)/J_0$ is a graded, connected Hopf algebra, isomorphic to the graded dual of $S(V_+)$.
\end{itemize}\end{cor}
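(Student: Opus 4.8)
The plan is to transport the proof of Corollary~\ref{cor18} from the tensor setting to the symmetric one, with the bialgebra $(S(V^*),m,\Delta_*)$ of the preceding proposition playing the role that $(T(V^*),m_{conc},\Delta'_*)$ played there.

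For the first bullet, I would show that $\Delta_*(V_0^*)\subseteq S(V_0^*)\otimes S(V_0^*)$. Let $f\in V_0^*$. By definition $\ll\Delta_*(f),X\otimes Y\gg=\ll f,\lfloor X,Y\rfloor\gg$, and this vanishes unless $\lfloor X,Y\rfloor$ has a nonzero component in $V_0$; since the bracket is homogeneous, this forces $X,Y\in S(V_0)=S(V)_0$. Applying the $0$-bounded condition at $(m,n)=(0,0)$, the bracket $\lfloor S^k(V_0),S^l(V_0)\rfloor$ vanishes as soon as $k+l\geq B(0,0)$, so $\Delta_*(f)$ is a \emph{finite} sum of tensors $f_1\ldots f_k\otimes g_1\ldots g_l$ with every $f_i,g_j\in V_0^*$. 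As $\Delta_*$ is an algebra morphism and $S(V_0^*)$ is a subalgebra, this proves that $S(V_0^*)$ is a subbialgebra.

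For the second bullet, I would first note that $V_+$ is a $\petitbinfini$ subalgebra of $V$, the bracket being homogeneous and hence preserving the positive part; therefore $S(V_+)$ is a graded, connected Hopf subalgebra of $(S(V),\star,\Delta)$, and its orthogonal $J=S(V_+)^\perp$ for the Hopf pairing $\ll-,-\gg$ is a biideal of $S(V^*)$. The inclusion $J_0\subseteq J$ comes from the pairing computation: for $f\in V_0^*$ and a monomial $x_1\ldots x_k\in S(V_+)$ of positive degree, $\ll f,x_1\ldots x_k\gg=0$, since a degree-$0$ functional cannot pair nontrivially with a monomial all of whose factors lie in positive degrees. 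Hence the quotient map factors through $S(V^*)/J_0$. To upgrade this to equality $J_0=J$, I would exhibit the algebra isomorphism $S(V_+^*)\longrightarrow S(V^*)/J_0$, $f_1\ldots f_k\mapsto\overline{f_1\ldots f_k}$, and check that the pairing it induces between $S(V^*)/J_0$ and $S(V_+)$ is nondegenerate; nondegeneracy then forces $J_0=J$. Consequently $S(V^*)/J_0$ is the graded dual of the graded, connected Hopf algebra $(S(V_+),\star,\Delta)$, and is therefore itself a graded, connected Hopf algebra.

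The only delicate point is the same as in the tensor case: one must make sure that $\Delta_*$ genuinely lands in the \emph{honest} tensor product $S(V_0^*)\otimes S(V_0^*)$ rather than in a completion, and this is exactly what the uniform bound $B(0,0)$ on the number of degree-$0$ slots guarantees. Everything else is the grading-and-duality bookkeeping already carried out for $\binfini$ algebras in Corollary~\ref{cor18}.
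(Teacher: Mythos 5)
Your proof is correct and follows essentially the same route as the paper, whose own proof of this corollary is simply a reference back to the tensor case: the paper states that the argument is similar to that of Corollary~\ref{cor18}, and what you have written is precisely that argument transported to $S(V^*)$ — the finiteness of $\Delta_*(f)$ via homogeneity and the bound $B(0,0)$ for the first bullet, and the chain $J_0\subseteq J=S(V_+)^\perp$ upgraded to equality by nondegeneracy of the induced pairing for the second.
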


\begin{proof} Similar to the proof of corollary \ref{cor18}. \end{proof}\\

\textbf{Remark.} If $V$ is a $0$-bounded $\binfini$ algebra, then $(S(V^*),m,\Delta_*)$ is the abelianization of $(T(V^*),m_{conc},\Delta_*)$,
whereas $S(V^*)/J_0$ is the abelianization of $T(V^*)/I_0$.

\subsection{Associated groups and monoids}

\begin{theo} \label{theo31}
Let $(V,\lfloor-,-\rfloor)$ be a $0$-bounded $\petitbinfini$ algebra. Then $\overline{V}$ is given a monoid structure with the product defined by:
$$\forall x,y\in \overline{V},\: x\lozenge y=\lfloor e^x,e^y\rfloor.$$
It is isomorphic to the monoid of characters of  both $(S(V^*),m,\Delta_*)$ and $(T(V^*),m_{conc},\Delta_*)$.
\end{theo}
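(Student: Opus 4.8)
My plan is to realize both character monoids explicitly as $(\overline{V},\lozenge)$ via the assignment $x\mapsto\chi_x$, where $\chi_x$ is the functional $\ll-,e^x\gg$ and $e^x=\sum_{k\geq 0}x^k/k!$ is formed in the completed symmetric algebra $\overline{S(V)}$. First I would check that $\chi_x$ is a well-defined character of $(S(V^*),m,\Delta_*)$. Since each basis element $f^\alpha$ is homogeneous for the $S$-grading and pairs only with the matching homogeneous component of $e^x$, every $\ll f^\alpha,e^x\gg$ is a finite sum; and because $x$ is primitive in $\overline{S(V)}$, the element $e^x$ is group-like for $\Delta$, so the pairing identity $\ll FG,X\gg=\ll F\otimes G,\Delta(X)\gg$ gives $\ll FG,e^x\gg=\ll F,e^x\gg\,\ll G,e^x\gg$, i.e. $\chi_x$ is multiplicative. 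On the generators it reads $\chi_x(f)=\ll f,e^x\gg=f(x)$ for $f\in V^*$, as $f$ pairs only with the $S^1$-component of $e^x$.

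Next I would show $x\mapsto\chi_x$ is a bijection onto the character monoid. A character $\chi$ of $S(V^*)$ is freely determined by its restriction to the generating space $V^*$, an arbitrary element of $(V^*)^\circledast$; since $V^*=\bigoplus_n V_n^*$ with each $V_n$ finite-dimensional, one has $(V^*)^\circledast=\prod_n V_n=\overline{V}$, and then $\chi=\chi_x$ for the corresponding $x\in\overline{V}$. This identifies the underlying sets.

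The central point is that this bijection intertwines the convolution product with $\lozenge$. For $f\in V^*$ I would compute
\[
\bigl((\chi_x\otimes\chi_y)\circ\Delta_*\bigr)(f)=\ll\Delta_*(f),e^x\otimes e^y\gg=\ll f,\lfloor e^x,e^y\rfloor\gg=f(x\lozenge y)=\chi_{x\lozenge y}(f),
\]
where the middle equality is the defining property of $\Delta_*$ in the dual-construction proposition. Here I use the $0$-boundedness hypothesis to guarantee that $\lfloor e^x,e^y\rfloor$ converges to an element $x\lozenge y\in\overline{V}$: in each internal degree only finitely many pairs $(k,l)$ contribute, the number of degree-zero tensor factors being bounded by $B(m,n)$. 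Since $(\chi_x\otimes\chi_y)\circ\Delta_*$ and $\chi_{x\lozenge y}$ are both algebra morphisms agreeing on the generators $V^*$, they coincide. Associativity of $\lozenge$ and the fact that $0$ is a two-sided unit (indeed $0\lozenge y=\pi(1\star e^y)=\pi(e^y)=y$ by Theorem \ref{theo20}) then follow formally from the corresponding properties of the convolution monoid and the injectivity of $x\mapsto\chi_x$; hence $(\overline{V},\lozenge)$ is a monoid and $x\mapsto\chi_x$ is a monoid isomorphism onto the characters of $(S(V^*),m,\Delta_*)$.

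Finally, for $(T(V^*),m_{conc},\Delta_*)$ I would reduce to the previous case: any character takes values in the commutative field $\K$, so it factors through the abelianization $T(V^*)\twoheadrightarrow S(V^*)$, which is a morphism of bialgebras and therefore induces an isomorphism between the two convolution monoids of characters; the statement then transfers from the $S(V^*)$ case. The step I expect to be the main obstacle is precisely the convergence bookkeeping: making rigorous that $\lfloor e^x,e^y\rfloor$ lies in $\overline{V}$ and that all pairings involving the infinite series $e^x,e^y$ reduce, degree by degree, to finite sums. This is exactly what the $0$-boundedness condition is designed to control, and it must be invoked carefully both to define $\lozenge$ and to legitimate extending the defining identity of $\Delta_*$ from $S(V)$ to its completion.
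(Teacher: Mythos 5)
Your proposal is correct and follows essentially the same route as the paper: identify the characters of $S(V^*)$ with $\overline{V}$ via $x\mapsto \ll -,e^x\gg$, transfer the convolution product through the defining identity of $\Delta_*$ to obtain $\lozenge$, and reduce the $T(V^*)$ case to $S(V^*)$ by abelianization. The paper carries out the key computation in an explicit multi-index basis rather than via the group-likeness of $e^x$, but the content is the same.
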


\begin{proof} As $S(V^*)$ is the abelianization of $T(V^*)$, these two bialgebras have the same monoid of characters.
Let us determine the monoid of characters of $S(V^*)$. For all $\alpha,\beta \in \Lambda$, we put:
$$\lfloor x^\alpha,y^\beta\rfloor=\sum_{i\in I} a^{(i)}_{\alpha,\beta} x_i.$$
If $j\in I$, for all $\alpha,\beta\in \lambda$:
\begin{align*}
\ll\Delta_*(f_j),x^\alpha\otimes x^\beta\gg&=\ll f_j,x^\alpha*x^\beta\gg\\
&=\ll f_j,\pi(x^\alpha*x^\beta)\gg\\
&=\ll f_j,\lfloor x^\alpha,y^\beta\rfloor\gg\\
&=a^{(j)}_{\alpha,\beta}.
\end{align*}
Hence:
$$\Delta_*(f_j)=\sum_{\alpha,\beta\in \Lambda} \frac{a^{(j)}_{\alpha,\beta}}{\alpha!\beta!} f^\alpha\otimes f^\beta.$$
The monoid of characters of $S(V^*)$ is identified, as a set, with the (complete) dual $(V^*)^\circledast$ of $V^*$,
that is to say:
$$\left(\bigoplus_{n\geq 1} V_n^*\right)^\circledast
=\prod_{n\geq 1} V_n^{**}=\prod_{n\geq 1}V_n=\overline{V}.$$
The identification is through the map:
$$\varphi:\left\{\begin{array}{rcl}
\overline{V}&\longrightarrow&Char(S(V^*))\\
x&\longrightarrow&\varphi_x:\left\{\begin{array}{rlc}
S(V^*)&\longrightarrow&\K\\
f^\alpha&\longrightarrow&\displaystyle \prod_{i\in I} \ll f_i,x\gg^{\alpha_i}.
\end{array}\right.\end{array}\right.$$
Let $x=\sum \lambda_i x_i$ and $y=\sum \mu_ix_i \in \overline{V}$. For all $j\in I$:
\begin{align*}
&\varphi_x*\varphi_y(f_j)&
&\varphi_{\lfloor e^x,e^y\rfloor}(f_j)\\
&=(\varphi_x\otimes \varphi_y)\circ \Delta_*(f_j)&
&=\ll f_j,\lfloor e^x,e^y\rfloor\gg\\
&=\sum_{\alpha,\beta\in \Lambda} \frac{a^{(j)}_{\alpha,\beta}}{\alpha!\beta!} \varphi_x(f^\alpha)\varphi_y(f^\beta)&
&=\sum_{\alpha,\beta \in \Lambda} \prod_{i\in I} \frac{\lambda_i^{\alpha^i}}{\alpha_i!}\frac{\mu_i^{\beta^i}}{\beta_i!}\ll f_j,\lfloor x^\alpha,y^\beta\rfloor\gg\\
&=\sum_{\alpha,\beta\in \Lambda} \frac{a^{(j)}_{\alpha,\beta}}{\alpha!\beta!} \prod_{i\in I}\lambda_i^{\alpha_i}\prod_{i\in I}\mu_i^{\beta_i};&
&=\sum_{\alpha,\beta\in \Lambda} \frac{a^{(j)}_{\alpha,\beta}}{\alpha!\beta!} \prod_{i\in I}\lambda_i^{\alpha_i}\prod_{i\in I}\mu_i^{\beta_i}.
\end{align*}
As $\varphi_x*\varphi_y$ and $\varphi_{\lfloor e^x,e^y\rfloor}$ are characters which coincide on $V^*$, they are equal.
Through the bijection $\varphi$, we obtain the  monoid structure on $\overline{V}$. \end{proof}\\

\textbf{Remark.} If $V$ is connected, then $S(V^*)$ is a Hopf algebra, and in this case $(\overline{V},\lozenge)$ is a group.

\begin{cor}\label{cor32}\begin{enumerate}
\item Let $(V,\bullet)$ be a $0$-bounded pre-Lie algebra. Then $\overline{V}$ is a monoid, with the product defined by:
$$\forall x,y\in \overline{V},\: x\lozenge y=y+x\bullet e^y.$$
\item Let $(V,\cdot)$ be a graded associative algebra. Then $\overline{V}$ is a monoid, with the product defined by:
$$\forall x,y \in \overline{V},\: x\lozenge y=x+y+x\cdot y.$$
\end{enumerate}\end{cor}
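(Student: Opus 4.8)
The plan is to obtain both items as immediate specializations of Theorem \ref{theo31}. The only real work is to recognize $(V,\bullet)$, resp. $(V,\cdot)$, as a $0$-bounded $\petitbinfini$ algebra, so that the completion $\overline{V}$ carries the monoid product $x\lozenge y=\lfloor e^x,e^y\rfloor$, and then to evaluate this bracket from the explicit list of brackets attached to each structure. In both cases I would expand
$$\lfloor e^x,e^y\rfloor=\sum_{k,l\geq 0}\frac{1}{k!\,l!}\,\lfloor x^k,y^l\rfloor$$
using the bilinearity of $\lfloor-,-\rfloor$ extended to the completions, where convergence of the series is guaranteed by $0$-boundedness and is already subsumed in Theorem \ref{theo31}, and then discard the brackets $\lfloor-,-\rfloor_{k,l}$ that vanish for the structure at hand.

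For item (1), I would first invoke Proposition \ref{prop24}, which realizes $(V,\bullet)$ as a $\petitbinfini$ algebra through the quotient $\petitbinfini\twoheadrightarrow\prelie$; the induced brackets are the Oudom--Guin ones, namely $\lfloor x,y_1\ldots y_k\rfloor=x\bullet y_1\ldots y_k$, and in particular $\lfloor-,-\rfloor_{k,l}=0$ for every $k\geq 2$. Thus in the expansion above only $k=0$ and $k=1$ survive. The $k=0$ contribution collapses, by the normalization axiom $\lfloor-,-\rfloor_{0,l}=Id_V$ for $l=1$ and $0$ otherwise, to $\lfloor 1,y\rfloor=y$; the $k=1$ contribution sums to $\sum_{l\geq 0}\frac{1}{l!}\lfloor x,y^l\rfloor=\lfloor x,e^y\rfloor=x\bullet e^y$. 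Adding the two yields $x\lozenge y=y+x\bullet e^y$, the asserted formula.

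For item (2), I would use the corollary following Proposition \ref{prop24}, which presents a graded associative algebra $(V,\cdot)$ as a $\petitbinfini$ algebra through $\petitbinfini\twoheadrightarrow\ass$, with $\lfloor-,-\rfloor_{1,1}=\cdot$ and $\lfloor-,-\rfloor_{k,l}=0$ as soon as $k\geq 2$ or $l\geq 2$; such a $V$ is moreover automatically $0$-bounded (one may take $B(m,n)=2$), so no extra hypothesis is needed and Theorem \ref{theo31} applies. Here only the four brackets with $k,l\leq 1$ survive, namely $\lfloor 1,1\rfloor=0$, $\lfloor x,1\rfloor=x$, $\lfloor 1,y\rfloor=y$ and $\lfloor x,y\rfloor=x\cdot y$, whence $x\lozenge y=x+y+x\cdot y$.

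I expect no serious difficulty: the statement is essentially a dictionary entry translating Theorem \ref{theo31} into these two familiar structures. The single point requiring care is the bookkeeping of which brackets vanish, combined with the normalization axiom on $\lfloor-,-\rfloor_{0,1}$ and $\lfloor-,-\rfloor_{1,0}$; and one must keep in mind that it is precisely the $0$-boundedness hypothesis (assumed in item (1), automatic in item (2)) that legitimizes manipulating the infinite series $e^x$ and $\lfloor e^x,e^y\rfloor$ inside $\overline{V}$.
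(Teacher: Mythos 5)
Your proposal is correct and follows essentially the same route as the paper: both proofs expand $\lfloor e^x,e^y\rfloor=\sum_{k,l}\frac{1}{k!\,l!}\lfloor x^k,y^l\rfloor$, use the vanishing of the brackets with $k\geq 2$ (resp. $k\geq 2$ or $l\geq 2$) coming from the $\prelie$ (resp. $\ass$) quotient of $\petitbinfini$, and apply the normalization axiom on $\lfloor-,-\rfloor_{0,1}$ and $\lfloor-,-\rfloor_{1,0}$ to collapse the remaining terms. Your explicit citations of Proposition \ref{prop24} and of the automatic $0$-boundedness in the associative case merely make visible steps the paper leaves implicit.
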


\begin{proof} 1. Here, $\lfloor-,-\rfloor_{k,l}=0$ if $k\geq 2$. So:
\begin{align*}
x\lozenge y&=\sum_{k\geq 0} \frac{1}{k!}\lfloor x^k,e^y\rfloor=\lfloor 1,e^y\rfloor+\lfloor x,e^y\rfloor
=\sum_{l\geq 0} \frac{1}{l!} \lfloor 1,y^l\rfloor+x\bullet e^y=y+x\bullet e^y.
\end{align*}

2. Here, $\lfloor-,-\rfloor_{k,l}=0$, except for $(k,l)=(0,1)$, $(1,0)$ and $(1,1)$. Hence:
\begin{align*}
x\lozenge y&=\sum_{k,l\geq 0}\frac{1}{k!l!}\lfloor x^k,y^l\rfloor_{k,l}=\lfloor x,1\rfloor+\lfloor 1,y\rfloor+\lfloor x,y\rfloor=x+y+x\cdot y.
\end{align*}
Note that, $\overline{V}$ can be identified with the monoid of elements of the associative, unitary algebra $\K\oplus \overline{V}$, 
whose constant terms are equal to $1$.\end{proof}\\

Using the graduation:
\begin{cor}
Let $V$ be a $0$-bounded $\petitbinfini$ algebra. 
\begin{enumerate}
\item Then $V_0$ and $\overline{V_+}$ are submonoids of $(\overline{V},\lozenge)$.
Moreover, $(\overline{V_+},\lozenge)$ is a group, isomorphic to the group of characters of both $(S(V_+^*),m,\Delta_*)$ and $(T(V^*_+),m,\Delta_*)$.
\item Let $x=x_0+x_+\in \overline{V}$, with $x_0\in V_0$ and $x_+\in \overline{V_+}$.
Then $x$ is a unit of $\overline{V}$ if, and only if, $x_0$ is a unit in $V_0$.
\end{enumerate}
\end{cor}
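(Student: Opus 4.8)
The plan is to reduce everything to two observations: first, that taking the degree-$0$ component defines a morphism of monoids $p:(\overline{V},\lozenge)\to(V_0,\lozenge)$, and second, that Theorem \ref{theo31} applies to the connected $\petitbinfini$ algebra $V_+$. Throughout I use that $\lfloor-,-\rfloor$ is homogeneous of degree $0$, so that $\lfloor S(V)_a,S(V)_b\rfloor\subseteq V_{a+b}$; this holds because the induced product $\star$ makes $S(V)$ a graded bialgebra. In particular, writing $x=x_0+x_+$ and $y=y_0+y_+$ with $x_0,y_0\in V_0$, the degree-$0$ part of $e^x$ is $e^{x_0}$, so only $e^{x_0}$ and $e^{y_0}$ contribute to the degree-$0$ part of $\lfloor e^x,e^y\rfloor$, whence $p(x\lozenge y)=\lfloor e^{x_0},e^{y_0}\rfloor=x_0\lozenge y_0=p(x)\lozenge p(y)$. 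Since $p$ also fixes the unit, it is a monoid morphism.

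For part 1, I would first check that $V_0$ is closed under $\lozenge$: for $x_0,y_0\in V_0$ every term $\lfloor x_0^k,y_0^l\rfloor$ occurring in $\lfloor e^{x_0},e^{y_0}\rfloor$ lies in $V_0$, and by $0$-boundedness applied with $m=n=0$ only finitely many are nonzero, so $x_0\lozenge y_0\in V_0$; together with $0\in V_0$ this makes $V_0$ a submonoid. That $\overline{V_+}$ is a submonoid is then immediate, since $\overline{V_+}=p^{-1}(0)$ and the preimage of the unit under a monoid morphism is a submonoid. For the group statement, I would observe that $V_+$ is a $\petitbinfini$ subalgebra of $V$ (the bracket restricts, again by homogeneity) and is connected, hence $0$-bounded; moreover the product it inherits via Theorem \ref{theo31} agrees with the restriction of $\lozenge$ from $\overline{V}$, because for $x\in\overline{V_+}$ the exponential $e^x$ is the same element whether formed in $S(V_+)$ or in $S(V)$. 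Theorem \ref{theo31} then identifies $(\overline{V_+},\lozenge)$ with the monoid of characters of $(S(V_+^*),m,\Delta_*)$ and of $(T(V_+^*),m_{conc},\Delta_*)$, and the remark following Theorem \ref{theo31} shows this monoid is a group because $V_+$ is connected.

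For part 2, the forward implication is formal: a monoid morphism sends units to units, so if $x$ is a unit of $\overline{V}$ then $p(x)=x_0$ is a unit of $V_0$. For the converse I would argue as follows. If $x_0$ is a unit of $V_0$, then because $V_0$ is a submonoid its inverse $x_0^{-1}$ computed in $V_0$ is already a two-sided inverse in $\overline{V}$, so $x_0$ is a unit of $\overline{V}$. Applying $p$ gives $p(x_0^{-1}\lozenge x)=x_0^{-1}\lozenge x_0=0$, so $x_0^{-1}\lozenge x$ lies in $\overline{V_+}=p^{-1}(0)$; since $\overline{V_+}$ is a group by part 1, this element is a unit of $\overline{V}$. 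Finally associativity gives $x=x_0\lozenge(x_0^{-1}\lozenge x)$, a product of two units of $\overline{V}$, hence a unit.

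The only genuine subtlety, and the step I would treat most carefully, is the homogeneity of $\lfloor-,-\rfloor$ together with the bookkeeping that makes $p$ a morphism and guarantees the relevant sums are finite; once this is secured, the remainder is a clean diagram chase exploiting that $V_0$ is a submonoid and $\overline{V_+}$ a group. I would also verify the compatibility of the two a priori distinct products $\lozenge$ on $\overline{V_+}$, which reduces precisely to the remark just made about $e^x$ lying in the subalgebra $S(V_+)$.
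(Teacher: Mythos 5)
Your proof is correct and follows essentially the same route as the paper: the paper's argument also rests on the canonical projection onto $V_0$ being a monoid morphism and, for the converse in part 2, on multiplying $x$ by the inverse of $x_0$ to land in the group $\overline{V_+}$. You merely supply the details (homogeneity of the bracket, finiteness via $0$-boundedness, compatibility of the two products on $\overline{V_+}$) that the paper dismisses as immediate.
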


\begin{proof} 1. Immediate.

2. $\Longrightarrow$. The canonical projection on $V_0$ is a monoid morphism from $\overline{V}$ to $V_0$, which implies the first point.\\
$\Longleftarrow$. Let $y_0$ be the inverse of $x_0$ in $V_0$. We put $y=x\lozenge y_0$. Then:
$$y_0=(\lfloor e^x,e^{y_0}\rfloor)_0=\lfloor e^{x_0},e^{y_0}\rfloor=x_0\lozenge y_0=0.$$
So $y\in \overline{V}_+$, so is a unit of $\overline{V}$; by composition, $x$ is a unit of $\overline{V}$. \end{proof}

\subsection{pre-Lie modules}

\begin{defi}
Let $(V,\bullet)$ be a pre-Lie algebra. A pre-Lie module over $V$ is a vector space $M$, with a map:
$$\leftarrowtail:\left\{ \begin{array}{rcl}
M\otimes V&\longrightarrow&M\\
m\otimes x&\longrightarrow&m\leftarrowtail x,
\end{array}\right.$$
such that for all $m\in M$, $x_1,x_2 \in V$:
\begin{align*}
(m\leftarrowtail a_1)\leftarrowtail a_2-(m\leftarrowtail a_2)\leftarrowtail a_1
&=m\leftarrowtail(a_1\bullet a_2-a_2\bullet a_1).
\end{align*}\end{defi}

\textbf{Example.} If $V$ is a pre-Lie algebra, $(V,\bullet)$ is a pre-Lie module over itself.\\

\textbf{Remarks.} \begin{enumerate}
\item A pre-Lie module over $V$ is a module over the Lie algebra associated to $V$, so is a module over the Hopf algebra
$(S(V),*,\Delta)$, where $*=\phi_V(\bullet)$. The action is given in the following way:
\begin{itemize}
\item For all $m\in M$, $m\leftarrowtail 1=m$.
\item For all $m\in M$, $x_1,\ldots,x_k \in V$:
\begin{align*}
m\leftarrowtail x_1\ldots x_k&=(m\leftarrowtail x_1\ldots x_{k-1})\leftarrowtail x_k-\sum_{i=1}^{k-1}m\leftarrowtail (x_1\ldots (x_i\bullet x_k)\ldots x_{k-1}).
\end{align*}\end{itemize}
\item Let $V$ be a brace algebra and $M$ be a brace module over $V$. Then $V$ is also a pre-Lie algebra, with $\bullet=\langle-,-\rangle_{1,1}$.
Moreover, for all $m\in M$, $x_1,x_2\in V$:
\begin{align*}
m\leftarrow (x_1*x_2)&=m\leftarrow(x_1x_2+x_2x_2+x_1\bullet x_2)=(m\leftarrow x_1)\leftarrow x_2,\\
(m\leftarrow x_1)\leftarrow x_2-(m\leftarrow x_2)\leftarrow x_1&=m\leftarrow(x_1\bullet x_2-x_2\bullet x_1).
\end{align*}
So the restriction $\leftarrowtail$ of $\leftarrow$ to $M\otimes V$ makes $M$ a pre-Lie module over $V$. By the isomorphism
between $S(A)$ and $coS(A)$, for all $x_1,\ldots,x_k\in V$, $m\in M$:
\begin{align*}
m\leftarrowtail x_1\ldots x_k&=m\leftarrow (x_1\shuffle \ldots \shuffle x_k).
\end{align*} \end{enumerate}

\begin{defi}
Let $V$ be a $0$-bounded pre-Lie algebra and $M$ be a graded pre-Lie module over $V$. We shall say that $M$ is $0$-bounded if
for all $k,l \geq 0$, there exists $B(k,l)\geq 0$ such that:
$$p>B(k,l)\Longrightarrow M_k\leftarrowtail S^p(V_0)S(V)_l=(0).$$
\end{defi}

Note that if $V$ is connected, then any graded pre-Lie module over $V$ is $0$-bounded, with $B(k,l)=0$ for all $k,l$.

\begin{prop} \label{prop36}
 Let $V$ be a $0$-bounded pre-Lie algebra and $M$ be a $0$-bounded pre-Lie  module over $V$.
$\overline{M}$ is a module over the monoid $(\overline{V},\lozenge)$, with the action defined by:
\begin{align*}
\forall m\in \overline{M},\: \forall x\in \overline{V},\:
m\vartriangleleft x&=x\leftarrowtail e^y.
\end{align*} 
By restriction, it is also a module over the group $(\overline{V_+},\lozenge)$. 
\end{prop}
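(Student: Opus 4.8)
The plan is to interpret the action as $m\vartriangleleft x=m\leftarrowtail e^x$, with $e^x=\sum_{k\geq 0}\frac{1}{k!}x^k\in\overline{S(V)}$, and to derive everything from the fact, recorded in the remark following the definition of pre-Lie modules, that $\leftarrowtail$ makes $M$ a right module over the associative algebra $(S(V),\star)$, where $\star=\phi_V(\bullet)$; that is, $(m\leftarrowtail a)\leftarrowtail b=m\leftarrowtail(a\star b)$ for $a,b\in S(V)$ and $m\leftarrowtail 1=m$. First I would check that $m\vartriangleleft x$ is a well-defined element of $\overline{M}$. Fixing a target degree $n$, the component in $M_n$ of $m\leftarrowtail e^x$ receives contributions only from $m_k\leftarrowtail(S^p(V_0)S(V)_{n-k})$ with $k\leq n$; the number of positive-degree factors of such a monomial is automatically at most $n-k$ (each $V_i$ being finite-dimensional), and by the $0$-boundedness of $M$ the terms vanish as soon as $p>B(k,n-k)$, so the number of degree-$0$ factors is also bounded. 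Hence each homogeneous component is a finite sum and the action lands in $\overline{M}$.

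Next I would verify the two module axioms. The unit of $(\overline{V},\lozenge)$ is $0$, since $0\lozenge y=\lfloor e^0,e^y\rfloor=\lfloor 1,e^y\rfloor=y$ and symmetrically $x\lozenge 0=x$; thus $m\vartriangleleft 0=m\leftarrowtail e^0=m\leftarrowtail 1=m$. For associativity, using that $\leftarrowtail$ is a module action over $(S(V),\star)$ extended to the completions, I compute
\begin{align*}
(m\vartriangleleft x)\vartriangleleft y=(m\leftarrowtail e^x)\leftarrowtail e^y=m\leftarrowtail(e^x\star e^y),
\end{align*}
so the entire statement reduces to the single exponential identity $e^x\star e^y=e^{x\lozenge y}$.

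The heart of the argument is therefore to establish $e^x\star e^y=e^{x\lozenge y}$ in $\overline{S(V)}$. Since $x,y\in\overline{V}$ are primitive and the coproduct $\Delta$ is unchanged, $e^x$ and $e^y$ are group-like for $(\overline{S(V)},\Delta)$; as $(S(V),\star,\Delta)$ is a bialgebra, $\star$ is a coalgebra morphism, so $e^x\star e^y$ is again group-like and hence equal to $e^z$ where $z$ is its primitive part. Projecting onto $V$ gives $z=\pi(e^x\star e^y)=\lfloor e^x,e^y\rfloor=x\lozenge y$ by the definition of $\lozenge$ in theorem \ref{theo31}, whence $e^x\star e^y=e^{x\lozenge y}$ and $(m\vartriangleleft x)\vartriangleleft y=m\leftarrowtail e^{x\lozenge y}=m\vartriangleleft(x\lozenge y)$. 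I expect the main obstacle to lie not in this identity but in the convergence and continuity bookkeeping: one must verify that the infinite expansions of $e^x$, $e^y$ and of $e^x\star e^y$ interact so that the module identity of $(S(V),\star)$ passes to $\overline{M}$ degree by degree, which is precisely what the $0$-boundedness of both $V$ and $M$ (used already in theorem \ref{theo31}) is designed to guarantee. Finally, since $(\overline{V_+},\lozenge)$ is a submonoid (indeed a group) of $(\overline{V},\lozenge)$, the module structure restricts to it, giving the last assertion.
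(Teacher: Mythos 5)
Your proof is correct, but it takes a genuinely different route from the paper's. The paper dualizes: using $0$-boundedness it transposes the $(S(V),\star)$-action on $M$ into a coaction of $(S(V^*),\Delta_*)$ on $M^*$, so that $\overline{M}=(M^*)^\circledast$ becomes a module over the monoid of characters of $S(V^*)$, which Theorem \ref{theo31} identifies with $(\overline{V},\lozenge)$; the verification is then "similar to the proof of theorem \ref{theo31}" and never mentions exponentials on the primal side. You instead stay entirely on the primal side: you invoke the remark that a pre-Lie module is a right module over the Hopf algebra $(S(V),\star)$, extend this degreewise to the completions, and reduce the whole statement to the single identity $e^x\star e^y=e^{x\lozenge y}$, which you prove by the clean group-like argument ($e^x$, $e^y$ group-like, $\star$ a coalgebra morphism, so $e^x\star e^y=e^z$ with $z=\pi(e^x\star e^y)=\lfloor e^x,e^y\rfloor=x\lozenge y$ in characteristic zero). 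What your approach buys is that it isolates and makes explicit the exponential identity, which the paper leaves implicit inside the character computation of Theorem \ref{theo31}, and it is self-contained modulo the finiteness bookkeeping; what the paper's approach buys is that it recycles Theorem \ref{theo31} wholesale and hides the completion issues for $S(V)$ inside the duality $(M^*)^\circledast=\overline{M}$, whereas you must (and do) argue directly that each homogeneous component of $m\leftarrowtail e^x$ and of $e^x\star e^y$ is a finite sum, using the $0$-boundedness of both $V$ and $M$. Your identification of the unit of $(\overline{V},\lozenge)$ as $0$ and the restriction to the group $(\overline{V_+},\lozenge)$ are both fine.
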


\begin{proof} By transposition of the action of $(S(V);*)$ on $M$, we obtain thanks to the $0$-bounded condition a
coaction of $(S(V^*),\Delta_*)$ on $M^*$; consequently, the dual of $M^*$, identified with $\overline{M}$, 
becomes a module over the monoid of characters of $(S(V^*),m,\Delta_*)$, identified with $(\overline{V},\lozenge)$.
The end of the proof is similar to the proof of theorem \ref{theo31}. \end{proof}

\chapter{Brace and pre-Lie structures on operads}

\section*{Introduction}

We now study the brace and pre-Lie structure on an operad $\bfP$ induced by the operadic composition
(proposition \ref{prop37} and corollary \ref{cor38}).
We have seen in the preceding chapter that these structures imply a product on $T(\bfP)$ making it a graded, non connected
dendriform Hopf algebra, named $\bfD_\bfP$ (proposition \ref{prop40}). 
By the $0$-boundedness condition, we construct a dual bialgebra $\bfD_\bfP^*$.
Considering a connected Hopf subalgebra of $\bfD_\bfP$, we obtain a graded, connected Hopf algebra on $T(\bfP_+)$,
named $\bfB_\bfP$, and its graded dual $\bfB^*_\bfP$, as a quotient of $\bfD_\bfP^*$.\\

Using the pre-Lie product induced by the brace structure, we obtain a graded, non connected Hopf algebra $D_\bfP$, which underlying coalgebra
is $S(coinv \bfP)$ with its usual coproduct; a graded, connected Hopf algebra $B_\bfP$, which underlying coalgebra is $S(coinv\bfP_+)$
with its usual coproduct; and bialgebras $D_\bfP^*$ and $B_\bfP^*$, in duality with the preceding ones. 
All these objects admit colored versions by any vector space $V$,
see propositions \ref{prop42}, \ref{prop43} and \ref{defi44}.
As $D_\bfP^*$ and $B_\bfP^*$ are commutative, they can be seen as coordinates bialgebras of a monoid:
these monoids are described in proposition \ref{prop54}, with the help of the pre-Lie product of $\bfP$, as in theorem \ref{theo31};
they appeared in \cite{ChapotonLivernet}.

A specially interesting case is obtained by operads $\bfP$ equipped with an operad morphism
$\theta_\bfP:\petitbinfini\longrightarrow \bfP$. In this case, for any vector space $V$, the coalgebra $S(F_\bfP(V))$,
where $F_\bfP(V)$ is the free $\bfP$-algebra generated by $V$, becomes a graded, connected Hopf algebra denoted by $A_\bfP(V)$.
We prove in theorem \ref{theo46} that $A_\bfP(V)$ and $D_\bfP(V)$ are bialgebras in interaction (definition \ref{defi41}),
that is to say that $A_\bfP(V)$ is a bialgebra in the categroy of $D_\bfP(V)$-modules;
dually, $A_\bfP^*(V)$ and $D_\bfP^*(V)$ are cointeracting bialgebras, in the sense of \cite{Manchon2},
that is to say that $A_\bfP^*(V)$ is a Hopf algebra in the category of $D_\bfP^*(V)$-comodules.

\section{Definition}

\begin{prop} \label{prop37}
Let $\bfP$ be a non-$\Sigma$ operad. We define a brace structure on $\displaystyle \bfP=\bigoplus_{n\geq 1} \bfP(n)$ by:
\begin{align*}
\forall p \in \bfP(n),p_1,\ldots,p_k\in \bfP,\: \langle p,p_1\ldots p_k\rangle=
\sum_{1\leq i_1<\ldots <i_k\leq n} p\circ_{i_1,\ldots,i_k}(p_1,\ldots,p_k).
\end{align*}
It is graded, putting the elements of $\bfP(n)$ homogeneous of degree $n-1$, and $0$-bounded. \end{prop}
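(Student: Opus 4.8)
The plan is to verify in turn the three assertions: that $\langle-,-\rangle$ satisfies the defining axioms of a brace algebra (Definition \ref{defi12}), that it is homogeneous for the grading $\deg p=n-1$ on $\bfP(n)$, and that it is $0$-bounded in the sense of Definition \ref{defi16}. The unit axiom is immediate, since for $k=0$ the bracket $\langle p,1\rangle$ reduces to the single term $p\circ(I,\ldots,I)=p$. The substance lies in relation (\ref{EQ4}). Fixing $p\in\bfP(n)$, elements $q_1,\ldots,q_k\in\bfP$ and a word $w=r_1\ldots r_m$, I would expand both sides as sums of iterated partial compositions in $\bfP$ and match them term by term, the matching being forced by the associativity axiom of the operad.

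On the left-hand side, $\langle p,q_1\ldots q_k\rangle$ is the sum over increasing tuples $i_1<\ldots<i_k$ of the operations $P=p\circ_{i_1,\ldots,i_k}(q_1,\ldots,q_k)\in\bfP(N)$, whose $N$ inputs split into $n$ consecutive blocks indexed by the inputs of $p$: the block at $i_l$ carries the inputs of $q_l$, the remaining blocks being singletons. Applying $\langle-,r_1\ldots r_m\rangle$ grafts $r_1,\ldots,r_m$ at an increasing $m$-tuple of inputs of $P$; by associativity, the $r$'s falling in the block of $q_l$ may equally be substituted into $q_l$ first and the result then plugged into $p$, while an $r$ in a singleton block is substituted directly into $p$. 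Thus every left-hand summand has the shape $p\circ(a_1,\ldots,a_n)$, with $a_{i_l}$ a partial composition of $q_l$ with some of the $r$'s, and each remaining $a_j$ equal to $I$ or to a single $r_j$. On the right-hand side, a decomposition $w=w_1\ldots w_{2k+1}$ feeds the even block $w_{2l}$ into $q_l$ through $\langle q_l,w_{2l}\rangle$ and keeps the odd blocks $w_1,w_3,\ldots$ as letters grafted directly, after which $\langle p,-\rangle$ grafts the resulting word at an increasing tuple of inputs of $p$. Since the $q_l$-blocks occur in $[N]$ in the same left-to-right order as the indices $i_l$, the $r$'s grafted before the first $q$, between consecutive $q$'s, and after the last $q$ form exactly such a consecutive decomposition of $w$; this yields a bijection under which the two families of summands agree, once associativity is invoked to identify the two nested compositions. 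This establishes (\ref{EQ4}).

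For the grading, if $p\in\bfP(n)$ and $p_l\in\bfP(b_l)$ then $p\circ_{i_1,\ldots,i_k}(p_1,\ldots,p_k)\in\bfP(n-k+\sum_l b_l)$, of degree $n-k+\sum_l b_l-1=(n-1)+\sum_l(b_l-1)=\deg p+\sum_l\deg p_l$, so $\langle-,-\rangle$ adds degrees, as required of a graded brace algebra. For $0$-boundedness, recall that a brace bracket has a nonzero component $\langle-,-\rangle_{k,l}$ only for $k\le 1$, so for $k\ge 2$ the vanishing condition of Definition \ref{defi16} is automatic. For $k=1$, writing the first argument $x$ of degree $m$ (hence of arity $m+1$), the bracket $\langle x,y_1\ldots y_l\rangle$ is a sum over increasing $l$-tuples of the $m+1$ inputs of $x$, so it vanishes whenever $l>m+1$; as the number of degree-$0$ factors among the $y_j$ (those in $\bfP(1)$) is at most $l$, the bracket is zero as soon as $\#\{j\mid q_j=0\}\ge m+2$. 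Since $\#\{i\mid p_i=0\}\le 1$ when $k=1$, the choice $B(m,n)=m+2$ ensures that whenever the total count exceeds $B(m,n)$ one has $\#\{j\mid q_j=0\}\ge m+2$ and the bracket vanishes, fulfilling Definition \ref{defi16}.

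The main obstacle is the term-by-term matching in (\ref{EQ4}): the two sides coincide only after applying the operad's associativity axiom to identify ``graft the $r$'s into the already-substituted copy of $q_l$'' with ``substitute into $p$ the operation $q_l$ with those $r$'s grafted'', and after checking that the increasing-position conventions on both sides produce the same consecutive splitting of $w$. Once this bijection is in place, the grading and $0$-boundedness are routine.
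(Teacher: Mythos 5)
Your proof is correct and follows essentially the same route as the paper: the brace relation is verified by expanding both sides into iterated partial compositions and matching them via the associativity axiom of the operad (your block-decomposition bijection is exactly the paper's displayed chain of equalities), and the grading and $0$-boundedness computations coincide, up to your taking $B(m,n)=m+2$ where the paper takes $m+1$ — both are valid since Definition \ref{defi16} only requires the existence of some bound.
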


\begin{proof} Let $p,p_1,\ldots,p_k,q_1,\ldots,q_l \in \bfP$. Then, using the associativity of the operadic composition:
\begin{align*}
\langle \langle p,p_1\ldots p_k\rangle,q_1\ldots q_l\rangle&=\sum \langle p\circ (I,\ldots,p_1,\ldots,p_k,\ldots,I),q_1\ldots q_l\rangle\\
&=\sum (p\circ (I,\ldots,p_1,\ldots,p_k,\ldots,I))\circ (I,\ldots,q_1,\ldots,q_l,\ldots,I)\\
&=\sum p\circ (I,\ldots, q_1,\ldots,q_{i_1},p_1\circ(I,\ldots,q_{i_1+1},\ldots,q_{i_1+i_2},\ldots,I),\ldots,\\
&\hspace{2cm} p_k\circ(I,\ldots,q_{i_1+\ldots+i_{2k-1}+1},\ldots,q_{i_1+\ldots+i_{2k}},\ldots,I),\ldots, \\
&\hspace{2cm}q_{i_1+\ldots+i_{2k+1}},\ldots,q_l,\ldots,I)\\
&=\sum_{q_1\ldots q_l=Q_1\ldots Q_{2k+1}}\langle p, Q_1\langle p_1,Q_2\rangle Q_3\ldots Q_{2k-1}\langle p_k,Q_{2k}\rangle Q_{2k+1}\rangle.
\end{align*}
So $\langle-,-\rangle$ is a brace structure on $\bfP$. \\

Let $p,p_1,\ldots,p_k$ in $\bfP$, homogeneous of respective degrees $n$, $n_1,\ldots, n_k$. Then $p\in \bfP(n+1)$ and $p_i \in \bfP(n_i+1)$ for all $i$.
Then $\langle p,p_1 \ldots p_k\rangle$ is a linear span of element of $\bfP(m)$, with:
\begin{align*}
m&=n+1-k+n_1+1+\ldots+n_k+1=n+n_1+\ldots+n_k+1.
\end{align*} 
So $\langle p,p_1\ldots p_k\rangle$ is homogeneous of degree $n+n_1+\ldots+n_k$: the brace algebra $\bfP$ is graded. \\

Let $m, n\geq 1$. If $p \in \bfP(m+1)$, and if $k>m+1$, then $\langle p,p_1\ldots p_k\rangle=0$ for all $p_1,\ldots,p_k \in \bfP$.
So $\bfP$ is $0$-bounded, with $B(m, n)=m+1$. \end{proof}\\

\textbf{Remark.} Consequently, $\bfP(1)$ is a brace subalgebra of $\bfP$. For all $p,p_1,\ldots,p_k \in\bfP(1)$,
$$\langle p,p_1\ldots p_k\rangle=\begin{cases}
p\circ p_1 \mbox{ if }k=1,\\
0\mbox{ if }k\geq 2.
\end{cases}$$
So the brace algebra $\bfP(1)$ is the associative algebra $(\bfP(1),\circ)$. \\

 By the morphism from $\prelie$ to $\brace$, we immediately obtain:

\begin{cor}\label{cor38}
Let $\bfP$ be a non-$\Sigma$ operad. It is a graded pre-Lie algebra, with:
\begin{align*}
\forall p\in \bfP(n),\: q\in \bfP,\: p\bullet q=\langle p,q\rangle=\sum_{i=1}^n p\circ_i q.
\end{align*}
Its $\petitbinfini$ brackets are given by:
\begin{align*}
\forall p\in \bfP(n),\: p_1,\ldots,p_k \in \bfP,\: \lfloor p,p_1\ldots p_k\rfloor
&=\sum_{\substack{1\leq i_1,\ldots ,i_k\leq n,\\ \mbox{\scriptsize all distincts}}} p\circ_{i_1,\ldots,i_k}(p_1,\ldots,p_k).
\end{align*}\end{cor}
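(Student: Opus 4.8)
The plan is to deduce both statements from Proposition \ref{prop37}, which already equips $\bfP$ with a graded, $0$-bounded brace structure, together with the two operad morphisms $\prelie\to\brace$ and $\Phi:\petitbinfini\to\binfini$ recorded in the commutative diagram of operads. The key observation is that the pre-Lie and $\petitbinfini$ structures on $\bfP$ are not new data: they are obtained from the brace (hence $\binfini$) structure by restriction along these morphisms. So everything reduces to expanding the brace formula of Proposition \ref{prop37} and reindexing.

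First I would treat the pre-Lie product. Since a brace algebra is a pre-Lie algebra via $\prelie\to\brace$, with $\bullet$ acting as $\langle-,-\rangle_{1,1}$, the product $p\bullet q$ is exactly $\langle p,q\rangle$. Taking $k=1$ in the brace formula of Proposition \ref{prop37} gives
$$\langle p,q\rangle=\sum_{1\leq i_1\leq n}p\circ_{i_1}(q)=\sum_{i=1}^n p\circ_i q,$$
which is the announced expression. Gradedness is inherited from that of the brace algebra: for $p\in\bfP(n)$ and $q\in\bfP(m)$ each $p\circ_i q$ lies in $\bfP(n+m-1)$, of degree $(n-1)+(m-1)$.

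Next I would compute the $\petitbinfini$ brackets using the passage from $\binfini$ to $\petitbinfini$, which on $\bfP$ reads $\lfloor x_1\ldots x_a,y_1\ldots y_b\rfloor=\langle x_1\shuffle\cdots\shuffle x_a,\,y_1\shuffle\cdots\shuffle y_b\rangle$. Because all higher brace brackets $\langle-,-\rangle_{k,l}$ with $k\geq 2$ vanish on $\bfP$, the first slot contributes only through a single letter, so
$$\lfloor p,p_1\ldots p_k\rfloor=\langle p,\,p_1\shuffle\cdots\shuffle p_k\rangle=\sum_{\sigma\in\mathfrak{S}_k}\langle p,\,p_{\sigma(1)}\ldots p_{\sigma(k)}\rangle.$$
Substituting the brace formula then yields a double sum over $\sigma\in\mathfrak{S}_k$ and increasing sequences $1\leq j_1<\cdots<j_k\leq n$ of the terms $p\circ_{j_1,\ldots,j_k}(p_{\sigma(1)},\ldots,p_{\sigma(k)})$.

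Finally I would carry out the only genuinely combinatorial step, the reindexing that turns this double sum into a single sum over ordered tuples of distinct indices. The map sending a pair $(\sigma,\,j_1<\cdots<j_k)$ to the tuple $(i_1,\ldots,i_k)$ defined by $i_{\sigma(m)}=j_m$ is a bijection onto the $k$-tuples of pairwise distinct elements of $[n]$; its inverse sorts the support $\{i_1,\ldots,i_k\}$ into $j_1<\cdots<j_k$ and reads off $\sigma$. Under this bijection one has $p\circ_{j_1,\ldots,j_k}(p_{\sigma(1)},\ldots,p_{\sigma(k)})=p\circ_{i_1,\ldots,i_k}(p_1,\ldots,p_k)$, since both place $p_l$ at position $i_l$, which gives the claimed formula. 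I expect this reindexing to be the only delicate point: not deep, but requiring care with the variance so that ``choose an increasing support, then a permutation of the labels'' matches ``choose an ordered distinct tuple'' correctly.
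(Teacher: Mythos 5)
Your proposal is correct and follows essentially the same route as the paper: the pre-Lie product is the $k=1$ case of the brace formula of Proposition \ref{prop37}, and the $\petitbinfini$ bracket is computed via $\lfloor p,p_1\ldots p_k\rfloor=\langle p,p_1\shuffle\cdots\shuffle p_k\rangle=\sum_{\sigma\in\mathfrak{S}_k}\langle p,p_{\sigma(1)}\ldots p_{\sigma(k)}\rangle$, followed by the reindexing from pairs (increasing support, permutation) to ordered tuples of distinct indices. The paper leaves that last bijection implicit; you spell it out correctly, with the right variance $i_{\sigma(m)}=j_m$.
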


\begin{proof} Indeed,  $\displaystyle \lfloor p,p_1\ldots p_k\rfloor=\langle p,p_1\shuffle \ldots \shuffle p_k \rangle 
=\sum_{\sigma \in \mathfrak{S}_k}\langle p,p_{\sigma(1)}\ldots p_{\sigma(k)}\rangle$. \end{proof}

\begin{cor}
Let $\bfP$ be an operad. Then $coinv\bfP$ is a graded pre-Lie algebra, quotient of $\bfP$.
\end{cor}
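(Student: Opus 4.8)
The plan is to show that the graded pre-Lie product $\bullet$ of $\bfP$ furnished by Corollary \ref{cor38} descends to the quotient $coinv\bfP$. Write $\pi:\bfP\longrightarrow coinv\bfP$ for the canonical projection and $N=\ker\pi=Vect(p-p^\sigma\mid p\in\bfP(n),\:\sigma\in\mathfrak{S}_n)$. Since $\bfP$ is already pre-Lie and $\pi$ is a graded linear surjection, it suffices to prove the two inclusions $N\bullet\bfP\subseteq N$ and $\bfP\bullet N\subseteq N$: then $\bullet$ induces a bilinear product on $coinv\bfP$, $\pi$ becomes a morphism, the pre-Lie identity (a universally quantified polynomial identity) is inherited by surjectivity of $\pi$, and the grading descends because each $\mathfrak{S}_n$ acts inside $\bfP(n)$, hence preserves degree.

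The key tool is the compatibility between the symmetric group action and the operadic composition recalled in the first chapter: for $p\in\bfP(n)$, $p_i\in\bfP(k_i)$, $\sigma\in\mathfrak{S}_n$ and $\sigma_i\in\mathfrak{S}_{k_i}$, one has $p^\sigma\circ(p_1^{\sigma_1},\ldots,p_n^{\sigma_n})=(p\circ(p_{\sigma^{-1}(1)},\ldots,p_{\sigma^{-1}(n)}))^{\sigma'}$ for a suitable $\sigma'$. First I would treat the left argument: for $p\in\bfP(n)$, $q\in\bfP$ and $\sigma\in\mathfrak{S}_n$, applying this identity with all inner permutations equal to the identity and $q$ placed in position $i$ (all other slots filled by $I$) gives $p^\sigma\circ_i q=(p\circ_{\sigma(i)}q)^{\sigma'_i}$ for some $\sigma'_i$. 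Summing over $i$ and reducing modulo $N$, each term satisfies $(p\circ_{\sigma(i)}q)^{\sigma'_i}\equiv p\circ_{\sigma(i)}q$, and as $i$ runs over $[n]$ so does $\sigma(i)$; hence
$$p^\sigma\bullet q=\sum_{i=1}^n p^\sigma\circ_i q\equiv\sum_{i=1}^n p\circ_{\sigma(i)}q=\sum_{j=1}^n p\circ_j q=p\bullet q\pmod N,$$
so $(p-p^\sigma)\bullet q\in N$, giving $N\bullet\bfP\subseteq N$.

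For the right argument, I would apply the same compatibility identity with outer permutation the identity, inner permutation $\tau$ on the factor $q$ sitting in position $i$ and identities elsewhere, obtaining $p\circ_i q^\tau=(p\circ_i q)^{\sigma'}$ for each $i$. Summing over $i$ and reducing modulo $N$ yields $p\bullet q^\tau\equiv p\bullet q$, so $\bfP\bullet N\subseteq N$. The only delicate point is the bookkeeping in the first step, namely tracking which permutation $\sigma'$ arises; but since everything is computed modulo $N$, the precise $\sigma'$ is irrelevant—only the reindexing $i\mapsto\sigma(i)$ of the summation matters—so no explicit description of $\sigma'$ is needed. Combining the two inclusions produces a well-defined graded pre-Lie product on $coinv\bfP$, exhibiting it as a pre-Lie quotient of $\bfP$.
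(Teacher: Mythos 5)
Your proof is correct and follows essentially the same route as the paper: both show that $Vect(p-p^\sigma)$ is a two-sided pre-Lie ideal by using the equivariance of the operadic composition to write $p^\sigma\circ_i q=(p\circ_{\sigma(i)}q)^{\sigma'_i}$ and $p\circ_i q^\tau=(p\circ_i q)^{\sigma''_i}$, then reindexing the sum and discarding the unknown permutations modulo the ideal. The extra remarks on inheriting the pre-Lie identity and the grading are standard and consistent with what the paper leaves implicit.
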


\begin{proof} Let us prove that $I=Vect(p-p^\sigma\mid p\in \bfP(n), \sigma \in \mathfrak{S}_n)$ is a pre-Lie ideal. Let $p\in \bfP(n)$,
$\sigma \in \mathfrak{S}_n$ and $q\in \bfP(m)$. There exist permutations $\sigma'_i$ such that:
\begin{align*}
(p^\sigma-p)*q&=\sum_{i=1}^n p^\sigma \circ_i q-p\circ_i q\\
&=\sum_{i=1}^n (p\circ_{\sigma(i)} q)^{\sigma'_i}-\sum_{i=1}^n p\circ_i q\\
&=\sum_{i=1}^n (p\circ_i q)^{\sigma'_{\sigma^{-1}(i)}}-p\circ_i q\in I.
\end{align*}
So $I$ is a right pre-Lie ideal. There exists permutations $\sigma''_i$ such that:
\begin{align*}
q*(p^\sigma-p)&=\sum_{i=1}^m (q\circ_i p^\sigma-q\circ_ip)=\sum_{i=1} \left((q\circ_i p)^{\sigma''_i} -q\circ_i p\right) \in I.
\end{align*}
So $I$ is also a left pre-Lie ideal. \end{proof}\\

By theorems \ref{theo13} and \ref{theo20}, and by proposition \ref{prop24}:

\begin{prop}\label{prop40}
Let $\bfP$ be an operad.
\begin{enumerate}
\item The brace structure on $\bfP$ induces a product $*=\prec+\succ$, making $\bfD_\bfP=(T(\bfP),*,\Delta_{dec})$ a dendriform bialgebra.
The graded, connected Hopf subalgebra $T(\bfP_+)$ is denoted by $\bfB_\bfP$. 
\item The pre-Lie product on $\bfP$ induces products $*$ making $(S(\bfP),*,\Delta)$, $(S(\bfP_+),*,\Delta)$,
$D_\bfP=(S(coinv\bfP),*,\Delta)$ and $B_\bfP=(S(coinv\bfP_+),*,\Delta)$ bialgebras. 
\item  There is a commutative diagram of bialgebras:
$$\xymatrix{&\bfD_\bfP\ar@{->>}[rd]&\\
\bfB_\bfP\ar@{^(->}[ru]\ar@{->>}[rd]&&D_\bfP\\
&B_\bfP\ar@{^(->}[ru]&}$$
\end{enumerate}\end{prop}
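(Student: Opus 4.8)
The plan is to assemble Proposition~\ref{prop40} from the structural results already established, the three items being three applications of those theorems together with some bookkeeping of gradings and functoriality.

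For item~1, I would begin from Proposition~\ref{prop37}, which equips $\bfP$ with a graded brace structure. A brace algebra is in particular a $\binfini$ algebra, so Theorem~\ref{theo2} makes $(T(\bfP),*,\Delta_{dec})$ a bialgebra and Theorem~\ref{theo13} refines the product into a dendriform one $*=\prec+\succ$; this is $\bfD_\bfP$. Since $\bfP(n)$ sits in degree $n-1$, the augmentation ideal $\bfP_+=\bigoplus_{n\geq 2}\bfP(n)$ is exactly the positive part and is a brace subalgebra, so $T(\bfP_+)$ is a sub-bialgebra which is graded and connected, hence the Hopf algebra $\bfB_\bfP$.

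For item~2, Corollary~\ref{cor38} gives $\bfP$ a graded pre-Lie product, and by Proposition~\ref{prop24} a pre-Lie algebra is a $\petitbinfini$ algebra through the Oudom-Guin construction; Theorem~\ref{theo20} then yields the bialgebra $(S(\bfP),\star,\Delta)$ and, restricting to $\bfP_+$, the bialgebra $S(\bfP_+)$. Using the corollary that $coinv\bfP$ is a pre-Lie quotient of $\bfP$, functoriality of the Oudom-Guin construction produces the quotient bialgebras $D_\bfP=(S(coinv\bfP),\star,\Delta)$ and $B_\bfP=(S(coinv\bfP_+),\star,\Delta)$. For item~3 I would first record the genuinely available morphisms: the identification $coS(\bfP)\cong S(\bfP)$ established earlier realises $(S(\bfP),\star,\Delta)$ as the sub-bialgebra of $\bfD_\bfP$ generated by $\bfP$, giving $S(\bfP)\hookrightarrow\bfD_\bfP$, while the pre-Lie quotient $\bfP\twoheadrightarrow coinv\bfP$ induces $S(\bfP)\twoheadrightarrow D_\bfP$; the analogous maps on augmentation ideals give $S(\bfP_+)\hookrightarrow\bfB_\bfP$ and $S(\bfP_+)\twoheadrightarrow B_\bfP$, and $\bfP_+\hookrightarrow\bfP$, $coinv\bfP_+\hookrightarrow coinv\bfP$ give $\bfB_\bfP\hookrightarrow\bfD_\bfP$ and $B_\bfP\hookrightarrow D_\bfP$. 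The commutative diagram is then obtained by inserting $S(\bfP)$ and $S(\bfP_+)$ as intermediate objects; commutativity is checked on the generators $\bfP$, where every arrow reduces to $\bfP\twoheadrightarrow coinv\bfP$ or $\bfP_+\hookrightarrow\bfP$, and follows from functoriality of $T$, $S$ and $coS$.

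The main obstacle is the passage between the non-commutative algebra $\bfD_\bfP$ and the commutative algebra $D_\bfP$. Because the deconcatenation coproduct of $\bfD_\bfP$ is not cocommutative whereas that of $D_\bfP$ is, there is no graded coalgebra map $T(\bfP)\to S(coinv\bfP)$ restricting to $\bfP\twoheadrightarrow coinv\bfP$ on primitives; this is exactly why the symmetric models $S(\bfP),S(\bfP_+)$ are needed as intermediaries, the key point being that $S(\bfP)$ appears at once as the cocommutative sub-bialgebra $coS(\bfP)$ of $\bfD_\bfP$ and as the source of the quotient onto $D_\bfP$. Once this routing is fixed, checking that each map respects both products and coproducts is routine: on the commutative side it is functoriality of Oudom-Guin, and all remaining compatibilities are inherited from Theorems~\ref{theo2}, \ref{theo13} and~\ref{theo20}.
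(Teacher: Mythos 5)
Your proposal is correct and follows essentially the same route as the paper, whose entire proof of this proposition consists of citing Theorems \ref{theo13} and \ref{theo20} and Proposition \ref{prop24} — precisely the results you assemble, together with Proposition \ref{prop37}, Corollary \ref{cor38} and the corollary that $coinv\bfP$ is a pre-Lie quotient. Your additional care in item 3 — routing the passage from $\bfD_\bfP$ to $D_\bfP$ through $coS(\bfP)\cong S(\bfP)$ rather than positing a direct coalgebra surjection $T(\bfP)\to S(coinv\bfP)$ — matches how the paper's introduction and summary chapters actually draw this diagram, and is indeed necessary, since no graded coalgebra map from $(T(\bfP),\Delta_{dec})$ to $(S(coinv\bfP),\Delta)$ extending $\bfP\twoheadrightarrow coinv\bfP$ exists.
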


\textbf{Examples.} Let $p_1\in \bfP(n_1)$, $p_2\in \bfP(n_2)$, $q_1,q_2\in \bfP$. In $\bfD_\bfP$:
\begin{align*}
p_1\prec q_1&=p_1q_1+\sum_{1\leq i\leq n_1}p_1 \circ_i q_1,\\
p_1\succ q_1&=q_1p_1,\\
p_1\prec q_1q_2&=p_1q_1q_2+\sum_{1\leq i \leq n_1} (p_1\circ_i q_1)q_2+\sum_{1\leq i<j\leq n_1} p_1\circ_{i,j}(q_1,q_2),\\
p_1\succ q_1q_2&=q_1p_1q_2+q_1q_2p_1+\sum_{1\leq i \leq n_1} q_1(p_1\circ_i q_2),\\
p_1p_2\prec q_1&=p_1p_2q_1+p_1q_1p_2+\sum_{1\leq i\leq n_1} (p_1\circ_iq_1)p_2+\sum_{1\leq i\leq n_2}p_1(p_2\circ_i q_1),\\
p_1p_2\succ q_1&=q_1p_1p_2.
\end{align*}
In $S(\bfP)$:
\begin{align*}
p_1*q_1&=p_1q_1+\sum_{1\leq i\leq n_1}p_1 \circ_i q_1,\\
p_1*q_1q_2&=p_1q_1q_2
+\sum_{1\leq i \leq n_1} (p_1\circ_i q_1)q_2+\sum_{1\leq i \leq n_1} q_1(p_1\circ_i q_2)+\sum_{1\leq i\neq j\leq n_1} p_1\circ_{i,j}(q_1,q_2),\\
p_1p_2*q_1&=p_1p_2q_1+\sum_{1\leq i\leq n_1} (p_1\circ_iq_1)p_2+\sum_{1\leq i\leq n_2}p_1(p_2\circ_i q_1).
\end{align*}

\section{Interacting bialgebras from operads}

\subsection{Bialgebras in interaction}

\begin{defi}\label{defi41}
Let $A$ and $B$ be two bialgebras.
\begin{enumerate}
\item We shall say that $A$ and $B$ are in interaction if $A$ is a $B$-module-bialgebra, or equivalently if $A$
is a bialgebra in the category of $B$-modules, that is to say:
\begin{itemize}
\item $B$ is acting on $A$, via a map $\leftarrowtail:A\otimes B\longrightarrow A$.
\item  $A$ is a bialgebra in the category of $B$-modules, that is to say:
\begin{itemize}
\item For all $b\in B$, $1_A\leftarrowtail b=\epsilon(b) 1_A$.
\item For all $a\in A$, $b\in B$, $\varepsilon(a\leftarrowtail b)=\varepsilon(a)\varepsilon(b)$.
\item For all $a_1,a_2\in A$, $b\in B$, or, $(a_1a_2)\leftarrowtail b=m\left((a_1\otimes a_2)\leftarrowtail \Delta(b)\right)$ or,
with Sweedler's notation, $(a_1a_2)\leftarrowtail b=(a_1\leftarrowtail b^{(1)})(a_2\leftarrowtail b^{(2)})$.
\item For all $a\in A$, $b\in B$, $\Delta(a\leftarrowtail b)=\Delta(a)\leftarrowtail \Delta(b)$ or, with Sweedler's notation,
$\Delta(a\leftarrowtail b)=a^{(1)}\leftarrowtail b^{(1)} \otimes a^{(2)}\leftarrowtail b^{(2)}$.
\end{itemize}\end{itemize}
\item We shall say that $A$ and $B$ are in cointeraction if if $A$ is a $B$-comodule-bialgebra, or equivalently if $A$
is a bialgebra in the category of $B$-comodules, that is to say:
\begin{itemize}
\item $B$ is coacting on $A$, via a map $\rho:\left\{\begin{array}{rcl}
A&\longrightarrow&A\otimes B \\
a&\longrightarrow&\rho(a)=a_1\otimes a_0.
\end{array}\right.$
\item $A$ is a bialgebra in the category of $B$-comodules, that is to say:
\begin{itemize}
\item $\rho(1_A)=1_A\otimes 1_B$.
\item $m_{2,4}^3\circ (\rho\otimes \rho)\circ \Delta=(\Delta\otimes Id)\circ \rho$, where:
$$m_{2,4}^3:\left\{\begin{array}{rcl}
A\otimes B\otimes A\otimes B&\longrightarrow&A\otimes A\otimes B\\
a_1\otimes b_1\otimes a_2\otimes b_2&\longrightarrow&a_1\otimes a_2\otimes b_1b_2.
\end{array}\right.$$
Equivalenlty, for all $a\in A$:
$$(a^{(1)})_1 \otimes (a^{(2)})_1\otimes (a^{(1)})_0 (a^{(2)})_0=(a_1)^{(1)}\otimes (a_1)^{(2)}\otimes a_0.$$
\item For all $a,b\in A$, $\rho(ab)=\rho(a)\rho(b)$.
\item For all $a\in A$, $(\varepsilon_A\otimes Id)\circ \rho(a)=\varepsilon_A(a)1_B$.
\end{itemize}\end{itemize}\end{enumerate}\end{defi}

\textbf{Remark.} If $A$ and $B$ are in interaction, the action map $\leftarrowtail$ is a coalgebra morphism; 
if $A$ and $B$ are in cointeraction, the coaction map $\rho$ is an algebra morphism. \\

For examples and applications of cointeracting bialgebras, see
\cite{ManchonCalaque,ManchonFoissyFauvet2,FoissyEhrhart,FoissyChrom}.

\subsection{Bialgebras in interaction from operads}

\begin{prop}\label{prop42}\begin{enumerate}
\item Let $V$ be a vector space. We define the operad $\bfC_V$ by:
\begin{itemize}
\item For all $n\geq 1$, $\bfC_V(n)=End_\K(V,V^{\otimes n})$.
\item For all $f\in \bfC_V(m)$, $g\in \bfC_V(n)$ and $1\leq i\leq m$:
\begin{align*}
f\circ_i g&=(Id^{\otimes(i-1)} \otimes g\otimes Id^{\otimes (n-i)})\circ f \in \bfC_V(m+n-1).
\end{align*}
The unit is $Id_V$.
\item For all $f\in \bfC_V(n)$, $\sigma \in \mathfrak{S}_n$, and $x\in V$, if $f(x)=x_1\ldots x_n$:
\begin{align*}
f^\sigma(x)&=x_{\sigma(1)}\ldots x_{\sigma(n)}.
\end{align*}\end{itemize}
\item The tensor algebra $T(V)$ is a brace module over the brace algebra $(\bfC_V, \langle-,-\rangle)$ with,
for all $x_1,\ldots,x_n \in V$, $f_1,\ldots f_k \in \bfC_V$:
\begin{align*}
x_1\ldots x_n \leftarrow f_1\ldots f_k
&=\sum_{1\leq i_1<\ldots<i_k\leq n} x_1\ldots x_{i_1-1} f_1(x_{i_1})x_{i_1+1}\ldots x_{i_k-1}f_k(x_{i_k})x_{i_k+1}\ldots x_n. 
\end{align*}
Putting the elements of $V^{\otimes n}$ homogeneous of degree $n$, $T(V)$ is a graded brace module over $\bfC_V$.
\end{enumerate}\end{prop}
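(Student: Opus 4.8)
The plan is to establish the two halves of the proposition separately: first that $\bfC_V$ is a genuine operad, and then that the substitution map $\leftarrow$ makes $T(V)$ a graded brace module.

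For the first half, I would check the operad axioms for $\bfC_V$ directly, exactly as one does for the endomorphism operad $\bfL_V$. Writing $\circ_i$ for the partial composition $f\circ_i g=(Id^{\otimes(i-1)}\otimes g\otimes Id^{\otimes(m-i)})\circ f$, the associativity of the operadic composition reduces to two familiar facts: composition of linear maps is associative and the functor $\otimes$ is functorial, so that inserting $g$ into the $i$-th output slot of $f$ and then $h$ into the $j$-th output slot of $g$ coincides with inserting $g\circ_j h$ into the $i$-th slot of $f$, while insertions into two distinct output slots commute. The unit axioms $Id_V\circ f=f$ and $f\circ(Id_V,\ldots,Id_V)=f$ are immediate, and the compatibility of the action $f^\sigma(x)=x_{\sigma(1)}\ldots x_{\sigma(n)}$ with composition is a routine check. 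Once $\bfC_V$ is known to be an operad, proposition \ref{prop37} applies and endows $(\bfC_V,\langle-,-\rangle)$ with the stated graded brace algebra structure.

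For the second half, the key observation, which I would isolate as a lemma, is that the substitution action is governed by the very same operadic composition as the brace bracket on $\bfC_V$. Concretely, a word $x_1\ldots x_n\in V^{\otimes n}$ may be viewed as a constant map $\K\to V^{\otimes n}$, i.e. as an arity-$n$ operation with no inputs; then substituting $f\in\bfC_V(m)$ at position $i$ and afterwards substituting $g$ into the $a$-th factor of $f(x_i)$ produces exactly the word obtained by substituting $f\circ_a g$ at position $i$. This is immediate from the definition of $\circ_a$ in $\bfC_V$, and substitutions at two distinct letters commute. In other words, $\leftarrow$ is to the word $x_1\ldots x_n$ what the brace bracket $\langle p,-\rangle$ is to an operation $p$, with $p\circ_{i_1,\ldots,i_k}(\cdots)$ replaced by substitution into the chosen letters.

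Granting this, the brace module axioms follow. The unit axiom $x_1\ldots x_n\leftarrow 1=x_1\ldots x_n$ is the $k=0$ term and is immediate. For the main axiom (\ref{EQ6}), I would expand $(X\leftarrow f_1\ldots f_k)\leftarrow g_1\ldots g_l$ as a double sum over the choices of insertion positions, distinguishing whether each $g$ lands on an original letter of $X$ or inside some $f_j(x_{i_j})$, and then regroup using the compatibility above; this is word-for-word the computation carried out in the proof of proposition \ref{prop37}, now with the outer operation $p$ replaced by the constant $X$. The regrouped expression is precisely $X\leftarrow(f_1\ldots f_k * g_1\ldots g_l)$, where $*$ is the product associated to the brace algebra $\bfC_V$ (theorem \ref{theo2}), given explicitly by theorem \ref{theo13}; this is exactly (\ref{EQ6}). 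Finally, since $f_j\in\bfC_V(m_j)$ contributes length $m_j$ in place of one letter, $X\leftarrow f_1\ldots f_k$ has length $n-k+\sum_j m_j=\deg(X)+\deg(f_1\ldots f_k)$, so the action is homogeneous and $T(V)$ is graded. The main obstacle is purely bookkeeping: verifying that the proposition \ref{prop37} computation really does transfer, i.e. that the constant word behaves as an operadic element under substitution. Should the transfer feel delicate, an alternative is to mimic theorem \ref{theo2} instead, showing that $\leftarrow$ is a morphism of connected coalgebras and that the two maps $(X\otimes F\otimes G)\mapsto(X\leftarrow F)\leftarrow G$ and $(X\otimes F\otimes G)\mapsto X\leftarrow(F*G)$ agree after projecting to $V$, so that lemma \ref{lem3} forces them to be equal.
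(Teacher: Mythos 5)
Your proof is correct and is consistent with the paper's treatment: the paper in fact leaves the verification that $\bfC_V$ is an operad and that $T(V)$ is a brace module entirely to the reader, and only carries out the grading computation, which you reproduce identically (you also silently correct the index typo $Id^{\otimes(n-i)}$ to $Id^{\otimes(m-i)}$ in the definition of $\circ_i$). Your reduction of the brace-module axiom to the associativity computation of proposition \ref{prop37}, by viewing a word $x_1\ldots x_n$ as an arity-$n$ operation with no inputs so that substitution into letters plays the role of partial composition, is the natural way to supply the omitted details, and the fallback via lemma \ref{lem3} is also sound.
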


\begin{proof} We leave to the reader the proof that $\bfC_V$ is an operad and $T(V)$ is a brace module over $\bfC_V$. 
Let $x \in V$, homogeneous of degree $n$, and $f_1,\ldots, f_k \in \bfC_V$, homogeneous of respective degree $n_1,\ldots, n_k$.
Then for all $i$, $f_i \in End_\K(V,V^{\otimes (n_i+1)})$, so:
\begin{align*}
x\leftarrow f_1\ldots f_k\in V^{\otimes (n-k+n_1+1+\ldots+n_k+1)}=V^{\otimes (n+n_1+\ldots+n_k)},
\end{align*}
so it is homogeneous of degree $n+n_1+\ldots+n_k$: $T(V)$ is a graded brace module. \end{proof}\\

\textbf{Remark.} If $V$ is finite-dimensional, via the transposition, $\bfC_V$ is isomorphic to $\bfL_{V^*}$.
More generally, the transposition defines an injective operad morphism from $\bfC_V$ to $\bfL_{V^*}$.

\begin{prop}\label{prop43}
Let $\bfP$ be an operad and $V$ be a vector space. 
\begin{enumerate}
\item The following space is a graded brace module over the brace algebra associated to the operad $\bfP \otimes \bfC_V$:
$$M=\bigoplus_{n=1}^\infty \bfP(n)\otimes V^{\otimes n}.$$
\item By restriction, $M$ is a pre-Lie module on $\bfP\otimes \bfC_V$. 
This structure induces a graded pre-Lie $coinv(\bfP\otimes \bfC_V)$-module structure over the vector space $F_\bfP(V)$, 
such that, for all $p\in \bfP(k)$, for all $x_1,\ldots,x_k \in V$, for all $q\in \bfP(n)$, $f\in End_\K(V,V^{\otimes n})$:
\begin{align*}
p.(x_1\ldots x_k) \leftarrowtail \overline{q\otimes f}&=\sum_{i=1}^kp\circ_i q. (x_1\ldots x_{i-1}f(x_i)x_{i+1}\ldots x_n).
\end{align*} \end{enumerate}\end{prop}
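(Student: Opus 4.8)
The plan is to endow $M$ with exactly the ``choose increasing positions, then graft and substitute'' operation that defines the brace structure of an operad in Proposition \ref{prop37}, using the $\bfP$-composition on the operad factor and the $\bfC_V$-action of Proposition \ref{prop42} on the letters, and then to restrict to single insertions and push everything down to coinvariants. Concretely, for $p\otimes u\in \bfP(m)\otimes V^{\otimes m}$ with $u=x_1\ldots x_m$, and for $q_1\otimes f_1,\ldots,q_k\otimes f_k$ with $q_l\otimes f_l\in\bfP(n_l)\otimes\bfC_V(n_l)$, I would set
$$(p\otimes u)\leftarrow (q_1\otimes f_1)\ldots(q_k\otimes f_k)=\sum_{1\leq i_1<\ldots<i_k\leq m}\bigl(p\circ_{i_1,\ldots,i_k}(q_1,\ldots,q_k)\bigr)\otimes\bigl(x_1\ldots f_1(x_{i_1})\ldots f_k(x_{i_k})\ldots x_m\bigr).$$
Since $f_l(x_{i_l})\in V^{\otimes n_l}$ occupies the same slot $i_l$ into which $q_l$ is grafted, the two tensor factors keep matching arity $m-k+n_1+\ldots+n_k$, so the output lies in $M$; with $\bfP(n)$ and $\bfC_V(n)$ each in degree $n-1$ and $V^{\otimes n}$ in degree $n$ as in Propositions \ref{prop37} and \ref{prop42}, the action is homogeneous. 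This is literally the brace operation of the arity-wise (Hadamard) tensor operad $\bfP\otimes\bfC_V$, with the $\bfC_V$-factor made to act on the word $u$ rather than to compose with itself.

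Next I would check the brace-module axiom (\ref{EQ6}), and here I would not recompute from scratch: the identity is the diagonal combination of two things already proved. Grafting on the $\bfP$-factor satisfies the brace identity by associativity of operadic composition, exactly as in the proof of Proposition \ref{prop37}; letter-substitution on the $V^{\otimes m}$-factor satisfies the brace-module identity by Proposition \ref{prop42}. Both are indexed by the same data, namely an increasing choice of positions refined by the cuts $w=w_1\ldots w_{2k+1}$, so the two verifications run in lockstep and their tensor product is precisely the required identity for $M$. This gives part 1. Restricting $\leftarrow$ to words of length one, the general remark that a brace module is in particular a pre-Lie module over $\langle-,-\rangle_{1,1}$ yields the pre-Lie module structure
$$(p\otimes u)\leftarrowtail(q\otimes f)=\sum_{i=1}^m (p\circ_i q)\otimes\bigl(x_1\ldots x_{i-1}f(x_i)x_{i+1}\ldots x_m\bigr),$$
the single-position case of the formula above.

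The real work, and the main obstacle, is the descent to coinvariants. One observes that $coinv\,M=\bigoplus_n\bfP(n)\otimes_{\mathfrak{S}_n}V^{\otimes n}=F_\bfP(V)$, the coinvariants being taken for the diagonal $\mathfrak{S}_n$-action (the $\bfP$-action twisted by permutation of the letters), while $coinv(\bfP\otimes\bfC_V)$ is the pre-Lie quotient supplied by the corollary that $coinv$ of an operad is a pre-Lie quotient. I must verify that $\leftarrowtail$ descends in both arguments, i.e.\ that $z\leftarrowtail r\equiv z^\sigma\leftarrowtail r$ and $z\leftarrowtail r\equiv z\leftarrowtail r^\tau$ modulo the coinvariant relations. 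The delicate case is invariance under $q\otimes f\mapsto q^\tau\otimes f^\tau$: by the $\mathbb{S}$-module compatibility of the composition, $p\circ_i q^\tau=(p\circ_i q)^{\tau'}$, where $\tau'$ permutes exactly the block of $n$ slots occupied by $q$, while at the same time $f^\tau(x_i)$ is $f(x_i)$ with those same $n$ letters permuted by $\tau$. Thus the operadic twist $\tau'$ on the $\bfP$-factor is, in $F_\bfP(V)$, absorbed by the very permutation of the $V$-letters that it induces, which is exactly the permutation produced by $f^\tau$; the two twists therefore agree modulo the coinvariant relations. Invariance under the diagonal twist of $p\otimes u$ is handled identically, as in the proof of the corollary on $coinv\,\bfP$. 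Under the identification $p\otimes u\leftrightarrow p.(u)$, the descended action is the asserted formula, and gradedness is inherited from $M$; this is where I expect the bookkeeping to require the most care, precisely because it is the coincidence of the two $\mathfrak{S}_n$-actions that forces the passage to the \emph{diagonal} coinvariants of $\bfP\otimes\bfC_V$.
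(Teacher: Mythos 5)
Your proposal is correct and follows essentially the same route as the paper: the brace module structure on $M$ is obtained by running the $\bfP$-grafting and the $\bfC_V$-letter-substitution over the same choice of positions, the pre-Lie action is its restriction to length-one words, and the descent to $F_\bfP(V)$ and to $coinv(\bfP\otimes\bfC_V)$ rests on the same compensation $p\circ_i q^\tau=(p\circ_i q)^{\tau'}$ with $(f^\tau)^{\tau^{-1}}=f$ that the paper computes. The only difference is presentational: you spell out the general $k$-fold brace formula and the "lockstep" verification of the brace-module axiom, which the paper leaves to the reader.
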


\begin{proof} 1. As $\bfP$ is a $\N$-graded brace module over $\bfP$ and $V$ is a $\N$-graded brace module over the brace algebra $\bfC_V$,
$M$ is a graded brace module over $\bfP \otimes \bfC_V$. Moreover,  for all $p\in \bfP(k)$,
for all $x_1,\ldots,x_k \in V$, for all $q\in \bfP(n)$, $f\in End_\K(V,V^{\otimes n})$:
\begin{align*}
p\otimes x_1\ldots x_k \leftarrowtail q\otimes f&=\sum_{i=1}^k p\circ_i q\otimes (x_1\ldots x_{i-1}f(x_i)x_{i+1}\ldots x_n).
\end{align*}

2. Let  $p\in \bfP(k)$, $x_1,\ldots,x_k \in V$, $\sigma \in \mathfrak{S}_k$, $q\in \bfP(n)$, $f\in End_\K(V,V^{\otimes n})$.
There exist permutations $\sigma_i$, $\sigma'_j$ such that:
\begin{align*}
(p\otimes x_1\ldots x_k)^\sigma\leftarrowtail q \otimes f&=p^\sigma \otimes x_{\sigma(1)}\ldots x_{\sigma(k)}\leftarrowtail p\otimes f\\
&=\sum_{i=1}^k p^\sigma \circ_i q\otimes x_{\sigma(1)}\ldots f(x_{\sigma(i)})\ldots x_{\sigma(n)}\\
&=\sum_{i=1}^k (p\circ_{\sigma(i)} q \otimes x_1\ldots f(x_{\sigma(i)})\ldots x_n)^{\sigma_i}\\
&=\sum_{j=1}^k (p\circ_j q \otimes x_1\ldots f(x_j)\ldots x_n)^{\sigma'_j},
\end{align*}
so:
\begin{align*}
&((p\otimes x_1\ldots x_k)^\sigma-p\otimes x_1\ldots x_k)\leftarrowtail q\otimes f\\
&=\sum_{j=1}^k  (p\circ_j q \otimes x_1\ldots f(x_j)\ldots x_n)^{\sigma'_j}-
p\circ_j q \otimes x_1\ldots f(x_j)\ldots x_n.
\end{align*}
The pre-Lie action of $\bfP\otimes \bfC_V$ induces a pre-Lie action on the quotient of $M$ by the ideal $I$:
\begin{align*}
I&=Vect((p\otimes x_1\ldots x_k)^\sigma-p\otimes x_1\ldots x_k\mid k\geq 1, \: p\in \bfP(k), \: x_1,\ldots,x_k\in V,\: \sigma \in \mathfrak{S}_k)\\
&=Vect(p^\sigma\otimes x_{\sigma(1)}\ldots x_{\sigma(k)}-p\otimes x_1\ldots x_k
\mid k\geq 1, \: p\in \bfP(k), \: x_1,\ldots,x_k\in V,\: \sigma \in \mathfrak{S}_k)\\
&=Vect(p^\sigma\otimes x_1\ldots x_k-p\otimes x_{\sigma^{-1}(1)}\ldots x_{\sigma^{-1}(k)}
\mid k\geq 1, \: p\in \bfP(k), \: x_1,\ldots,x_k\in V,\: \sigma \in \mathfrak{S}_k).
\end{align*}
Note that the quotient $M/I=F_\bfP(V)$. 

Let  $p\in \bfP(k)$, $x_1,\ldots,x_k \in V$, $q\in \bfP(n)$, $f\in End_\K(V,V^{\otimes n})$,  $\sigma \in \mathfrak{S}_n$.
\begin{align*}
p.x_1\ldots x_k\leftarrowtail (q\otimes f)^\sigma&=\sum_{i=1}^k p\circ i q^\sigma.(x_1\ldots f^\sigma(x_i)\ldots x_k)\\
&=\sum_{i=1}^k p\circ i q.(x_1\ldots (f^\sigma)^{\sigma^-1}(x_i)\ldots x_k)\\
&=\sum_{i=1}^k p\circ i q.(x_1\ldots f(x_i)\ldots x_k)\\
&=p.x_1\ldots x_k\leftarrowtail q\otimes f.
\end{align*}
so the action of $\bfP\otimes \bfC_V$ on $F_\bfP(V)$ induces an action of $coinv(\bfP\otimes \bfC_V)$ on $F_\bfP(V)$. \end{proof}

\begin{defi}\label{defi44}
We put:
\begin{align*}
D_\bfP(V)&=D_{\bfP\otimes \bfC_V}=(S(coinv(\bfP\otimes \bfC_V)),*,\Delta),\\
B_\bfP(V)&=B_{\bfP\otimes \bfC_V}=(S(coinv(\bfP\otimes \bfC_V)_+),*,\Delta),\\
A_\bfP(V)&=S(F_\bfP(V)). 
\end{align*}\end{defi}

Note that if $V$ is one-dimensional, then $D_\bfP(V)$, respectively $B_\bfP(V)$, is isomorphic to $D_\bfP$, respectively to $B_\bfP$.

\begin{lemma}\label{lem45}
\begin{enumerate}
\item The pre-Lie action of $coinv(\bfP\otimes \bfC_V)$ on $F_\bfP(V)$ is extended to $A_\bfP(V)$:
\begin{align*}
\forall v_1,\ldots,v_k \in F_\bfP(V),\: \forall q \in coinv(\bfP \otimes \bfC_V),\:
v_1\ldots v_k\leftarrowtail q=\sum_{i=1}^k v_1\ldots (v_i \leftarrowtail q)\ldots v_k.
\end{align*}
This induces an action of $D_\bfP(V)$ on $A_\bfP(V)$, such that:
\begin{itemize}
\item For all $a\in A_\bfP(V)$, $b\in D_\bfP(V)$, $\Delta(a\leftarrow b)=a^{(1)}\leftarrow b^{(1)}\otimes a^{(2)}\leftarrow b^{(2)}$.
\item For all $b\in D_\bfP(V)$, $1\leftarrowtail b=\varepsilon(b)$.
\item For all $a_1,a_2 \in A_\bfP(V)$, $b\in D_\bfP(V)$, $a_1a_2\leftarrowtail b=(a_1\leftarrowtail b^{(1)})(a_2\leftarrowtail b^{(2)})$.
\end{itemize}
In other words, $(A_\bfP(V),m,\Delta)$ is a Hopf algebra in the category of $D_\bfP(V)$-modules. 
\item For all $p\in \bfP(n)$, $v_1,\ldots,v_n \in F_\bfP(V)$, $Q\in D_\bfP(V)$:
\begin{align*}
p.(v_1,\ldots,v_n)\leftarrowtail Q=p.(v_1\leftarrowtail Q^{(1)},\ldots,v_n\leftarrowtail Q^{(n)}).
\end{align*}
In other words, $F_\bfP(V)$ is a $\bfP$-algebra in the category of $D_\bfP(V)$-modules.
\end{enumerate}\end{lemma}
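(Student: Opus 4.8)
The plan is to reduce both assertions to two facts: that a cocommutative bialgebra acts diagonally on the symmetric algebra built on any of its modules, and a single identity expressing operadic associativity. Throughout I would set $\mathfrak{g}=coinv(\bfP\otimes\bfC_V)$, so that $D_\bfP(V)=(S(\mathfrak{g}),*,\Delta)$ is a cocommutative bialgebra, the elements of $\mathfrak{g}\subseteq S(\mathfrak{g})$ being primitive. By proposition \ref{prop43}(2) the space $F_\bfP(V)$ is a pre-Lie module over $\mathfrak{g}$, hence, by the correspondence between pre-Lie modules and modules over the associated Oudom--Guin Hopf algebra recalled in the pre-Lie modules subsection, a module over $D_\bfP(V)$.

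For the first point I would define the action of $D_\bfP(V)$ on $A_\bfP(V)=S(F_\bfP(V))$ diagonally: writing $\Delta^{(k-1)}(Q)=Q^{(1)}\otimes\cdots\otimes Q^{(k)}$, set
\[
v_1\ldots v_k\leftarrowtail Q=\sum (v_1\leftarrowtail Q^{(1)})\ldots(v_k\leftarrowtail Q^{(k)}).
\]
Cocommutativity of $D_\bfP(V)$ makes $\Delta^{(k-1)}(Q)$ symmetric, so this is well defined on $S^k(F_\bfP(V))$, and it is a module action because $\Delta^{(k-1)}$ is a morphism for $*$. On a primitive $q\in\mathfrak{g}$ one has $\Delta^{(k-1)}(q)=\sum_i 1\otimes\cdots\otimes q\otimes\cdots\otimes 1$, which recovers the announced Leibniz formula $v_1\ldots v_k\leftarrowtail q=\sum_i v_1\ldots(v_i\leftarrowtail q)\ldots v_k$. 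The Hopf-algebra-in-modules axioms are then checked on generators: multiplicativity $a_1a_2\leftarrowtail b=(a_1\leftarrowtail b^{(1)})(a_2\leftarrowtail b^{(2)})$ is immediate from coassociativity and the diagonal definition; the unit and counit conditions reduce to $1\leftarrowtail q=0=\varepsilon(q)$; and comultiplicativity $\Delta(a\leftarrowtail b)=a^{(1)}\leftarrowtail b^{(1)}\otimes a^{(2)}\leftarrowtail b^{(2)}$ holds because each primitive $q$ maps $F_\bfP(V)$ into itself, so the derivation $-\leftarrowtail q$ is simultaneously a coderivation of $S(F_\bfP(V))$. Both sides of each identity being multiplicative in $a$ and compatible with $*$ in $b$, agreement on the generators $v\in F_\bfP(V)$ and $q\in\mathfrak{g}$ forces equality everywhere.

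The heart of the second point is the Leibniz identity for a single generator $q=\overline{q_0\otimes f}\in\mathfrak{g}$:
\[
p.(v_1,\ldots,v_n)\leftarrowtail q=\sum_{i=1}^n p.(v_1,\ldots,v_i\leftarrowtail q,\ldots,v_n).
\]
I would prove it by writing $v_j=\overline{r_j\otimes w_j}$, so that $p.(v_1,\ldots,v_n)=\overline{(p\circ(r_1,\ldots,r_n))\otimes w_1\ldots w_n}$, and expanding the left-hand side with the explicit module formula of proposition \ref{prop43}, namely a sum over all insertion positions $s$ in the word $w_1\ldots w_n$. Each $s$ lies in a unique block $j$ at a local position $t$, and the associativity axiom of the operad gives $(p\circ(r_1,\ldots,r_n))\circ_s q_0=p\circ(r_1,\ldots,r_j\circ_t q_0,\ldots,r_n)$, with $f$ applied to the $t$-th letter of $w_j$; regrouping the single sum over $s$ as a double sum over the pairs $(j,t)$ turns it into $\sum_j p.(v_1,\ldots,v_j\leftarrowtail q,\ldots,v_n)$. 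This operadic bookkeeping is the step I expect to be the main obstacle; everything else is formal.

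Finally I would promote this identity to an arbitrary $Q\in D_\bfP(V)$ by a subalgebra argument. Let $\Omega$ be the set of $Q$ such that $p.(v_1,\ldots,v_n)\leftarrowtail Q=p.(v_1\leftarrowtail Q^{(1)},\ldots,v_n\leftarrowtail Q^{(n)})$ for all $p$ and all $v_j$. Then $1\in\Omega$ trivially and, by the previous step, $\mathfrak{g}\subseteq\Omega$. Using $(a\leftarrowtail Q)\leftarrowtail Q'=a\leftarrowtail(Q*Q')$ for the module structure together with the fact that $\Delta^{(n-1)}$ is an algebra morphism for $*$, a direct computation shows that $\Omega$ is stable under $*$; since $D_\bfP(V)=(S(\mathfrak{g}),*)$ is generated as an algebra by $\mathfrak{g}$, this gives $\Omega=D_\bfP(V)$, which is precisely the statement that $F_\bfP(V)$ is a $\bfP$-algebra in the category of $D_\bfP(V)$-modules.
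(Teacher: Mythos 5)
Your proposal is correct and follows essentially the same route as the paper: extend the pre-Lie action to $S(F_\bfP(V))$ by the Leibniz/diagonal rule and verify the Hopf-in-modules axioms by a subalgebra argument reducing to generators, then prove the single-generator identity of part 2 via operadic associativity after writing each $v_j$ as $p_j.(x_{j,1},\ldots,x_{j,l_j})$, and finally extend to all of $D_\bfP(V)$ by showing the set of $Q$ satisfying the identity is a subalgebra containing $coinv(\bfP\otimes\bfC_V)$. The paper's sets $X$ and $C$ play exactly the roles of your generator-reduction and your $\Omega$.
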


\begin{proof} 1. We consider:
\begin{align*}
X&=\{a\in A_\bfP(V)\mid \forall b\in D_\bfP(V),\: \Delta(a\leftarrowtail b)=a^{(1)}\leftarrowtail b^{(1)}\otimes a^{(2)}\leftarrowtail b^{(2)}\}.
\end{align*}
Firstly, $1\in X$: 
\begin{align*}
\forall b\in D_\bfP(V), \: \Delta(1\leftarrowtail b)&=\varepsilon(b)1\otimes 1
=\varepsilon(b^{(1)})\varepsilon(b^{(2)})1\otimes 1=1\leftarrowtail b^{(1)}\otimes 1\leftarrowtail b^{(2)}.
\end{align*}
Let $a_1,a_2 \in X$. For all $b\in D_\bfP(V)$, by the cocommutativity of $D_\bfP(V)$:
\begin{align*}
\Delta((a_1a_2)\leftarrowtail b)&=\Delta((a_1\leftarrowtail b^{(1)})(a_2\leftarrowtail b^{(2)}))\\
&=(a_1^{(1)} \leftarrowtail b^{(1)})(a_2^{(1)}\leftarrowtail b^{(3)})\otimes (a_1^{(2)} \leftarrowtail b^{(2)})(a_2^{(2)}\leftarrowtail b^{(4)})\\
&=(a_1^{(1)} \leftarrowtail b^{(1)})(a_2^{(1)}\leftarrowtail b^{(2)})\otimes (a_1^{(2)} \leftarrowtail b^{(3)})(a_2^{(2)}\leftarrowtail b^{(4)})\\
&=((a_1^{(1)} a_2^{(1)})\leftarrowtail b^{(1)})\otimes ((a_1^{(2)} a_2^{(2)})\leftarrowtail b^{(2)})\\
&=((a_1a_2)^{(1)}\leftarrowtail b^{(1)})\otimes ((a_1a_2)^{(2)}\leftarrowtail b^{(2)}).
\end{align*}
So $X$ is a subalgebra of $A_\bfP(V)$ for its usual product. Let $a\in F_\bfP(V)$ and $b\in D_\bfP(V)$. Then
$a\leftarrowtail b \in F_\bfP(V)$, so:
\begin{align*}
\Delta(a\leftarrowtail b)&=a\leftarrowtail b\otimes 1+1\otimes a\leftarrowtail b\\
&=a\leftarrowtail b^{(1)}\otimes \varepsilon(b^{(2)})1+\varepsilon(b^{(1)})1\otimes a\leftarrowtail b^{(2)}\\
&=a\leftarrowtail b^{(1)}\otimes 1\leftarrowtail b^{(2)}+1\leftarrowtail b^{(1)}\otimes a\leftarrowtail b^{(2)},
\end{align*}
so $a\in X$. As $X$ is a subalgebra containing $F_\bfP(V)$, it is  equal to $A_\bfP(V)$: $A_\bfP(V)$ is a coalgebra in the category of $D_\bfP(V)$-modules.\\

2. As this pre-Lie action comes from a brace action, if $p\in \bfP(n)$, $x_1,\ldots,x_n \in V$,
$q_1\otimes f_1,\ldots,q_k \otimes f_k \in \bfP\otimes \bfC_V$:
\begin{align*}
p.x_1\ldots x_n \leftarrowtail \overline{q_1\otimes f_1}\ldots \overline{q_k\otimes f_k}
&=\sum_{\substack{1\leq i_1,\ldots, i_k \leq n,\\\mbox{\scriptsize all distinct}}}
p\circ_{i_1,\ldots,i_k}(q_1,\ldots,q_k). (x_1\ldots f_1(x_{i_1})\ldots f_k(x_{i_k})\ldots x_n).
\end{align*}

Let us consider:
$$C=\left\{Q\in D_\bfP(V)\mid \begin{array}{c}
\forall p\in \bfP(n),\:v_1,\ldots,v_n \in F_\bfP(V),\\
p.(v_1,\ldots,v_n)\leftarrowtail Q=p.(v_1\leftarrowtail Q^{(1)},\ldots,v_n\leftarrowtail Q^{(n)})
\end{array}\right\}.$$
Obviously, $1\in C$. Let us take $Q_1,Q_2\in C$. For all $p\in \bfP(n)$, $v_1,\ldots,v_n \in F_\bfP(V)$:
\begin{align*}
p.(v_1,\ldots,v_n)\leftarrowtail Q_1*Q_2&=(p.(v_1,\ldots,v_n)\leftarrowtail Q_1)\leftarrowtail Q_2\\
&=p.((v_1\leftarrowtail Q_1^{(1)})\leftarrowtail Q_2^{(1)},\ldots,(v_n\leftarrowtail Q_1^{(n)})\leftarrowtail Q_2^{(n)})\\
&=p.(v_1\leftarrowtail (Q_1^{(1)}*Q_2^{(1)}),\ldots,v_n\leftarrowtail (Q_1^{(n)}*Q_2^{(n)}))\\
&=p.(v_1\leftarrowtail (Q_1*Q_2)^{(1)},\ldots,v_n\leftarrowtail (Q_1*Q_2)^{(n)}).
\end{align*}
So $Q_1*Q_2\in C$: $C$ is a subalgebra of $D_\bfP(V)$. 

Let us take $p\in \bfP(n)$ and $v_i=p_i.(x_{i,1},\ldots,x_{i,l_i})\in F_\bfP(V)$ for all $1\leq i\leq n$. If $q\otimes f\in \bfP\otimes \bfC_V$,
by the associativity of the operadic composition:
\begin{align*}
p.(v_1,\ldots,v_n)\leftarrowtail \overline{q\otimes f}&=
p\circ(p_1,\ldots,p_n).(x_{1,1},\ldots,x_{n,l_n})\leftarrowtail \overline{q\otimes f}\\
&=\sum_{i=1}^n \sum_{j=1}^{l_i}
p\circ(p_1,\ldots,p_i \circ_j q,\ldots,p_n).(x_{1,1},\ldots, f(x_{i,j}),\ldots,x_{n,l_n})\\
&=\sum_{i=1}^n p.(v_1,\ldots,v_i\leftarrowtail \overline{q\otimes f},\ldots, v_n). 
\end{align*}
So $coinv(\bfP\otimes \bfC_V) \subseteq C$. As $coinv(\bfP\otimes \bfC_V)$ generates $D_\bfP(V)$, $C=D_\bfP(V)$. \end{proof}

\begin{theo}\label{theo46}
Let $\theta_\bfP:\petitbinfini\longrightarrow \bfP$ be an operad morphism.
Any $\bfP$-algebra is also $\petitbinfini$, and we denote:
$$\star=\phi_{F_\bfP(V)}(\theta_\bfP(\lfloor-,-\rfloor)).$$
Then $(A_\bfP(V),\star,\Delta)$ and $D_\bfP(V)$ are two bialgebras in interaction.
\end{theo}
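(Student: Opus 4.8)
The plan is to lean on Lemma~\ref{lem45}, which already does the heavy lifting. That lemma produces an action $\leftarrowtail$ of $D_\bfP(V)$ on $A_\bfP(V)=S(F_\bfP(V))$ satisfying $1\leftarrowtail b=\varepsilon(b)1$ and $\Delta(a\leftarrowtail b)=a^{(1)}\leftarrowtail b^{(1)}\otimes a^{(2)}\leftarrowtail b^{(2)}$; since the action preserves the grading of $S(F_\bfP(V))$ by symmetric powers and sends the augmentation ideal into itself, it also satisfies $\varepsilon(a\leftarrowtail b)=\varepsilon(a)\varepsilon(b)$. These are three of the four axioms of Definition~\ref{defi41}(1), and none of them involves the product. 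Moreover $(A_\bfP(V),\star,\Delta)$ is a bialgebra by Theorem~\ref{theo20}, and $D_\bfP(V)$ is a bialgebra by Proposition~\ref{prop40}. Hence the only point left to check is the compatibility of $\leftarrowtail$ with the new product $\star$, namely
\[
(a_1\star a_2)\leftarrowtail b=(a_1\leftarrowtail b^{(1)})\star (a_2\leftarrowtail b^{(2)})
\]
for all $a_1,a_2\in A_\bfP(V)$ and $b\in D_\bfP(V)$.

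I would first reduce this to a statement about the $\petitbinfini$-bracket by coalgebra rigidity. Regarded as functions of $a_1\otimes a_2\otimes b$, both sides are coalgebra morphisms from the connected coalgebra $A_\bfP(V)\otimes A_\bfP(V)\otimes D_\bfP(V)$ to $(S(F_\bfP(V)),\Delta)$, because they are built out of $\star$, $\leftarrowtail$ and the coproduct of $D_\bfP(V)$, each of which is a coalgebra morphism ($\star$ by Theorem~\ref{theo20}, $\leftarrowtail$ by the coproduct axiom of Lemma~\ref{lem45}). By Lemma~\ref{lem21} it therefore suffices to prove the equality after applying the projection $\pi$ onto $F_\bfP(V)$. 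Using $\pi(u\star v)=\lfloor u,v\rfloor$ and the fact that $\leftarrowtail$ preserves the symmetric grading, the desired identity becomes
\[
\lfloor a_1,a_2\rfloor\leftarrowtail b=\lfloor a_1\leftarrowtail b^{(1)},a_2\leftarrowtail b^{(2)}\rfloor,
\]
an equality in $F_\bfP(V)$, where on the left $\leftarrowtail$ is the $D_\bfP(V)$-action on $F_\bfP(V)$.

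To prove this last identity I would unwind the bracket through $\theta_\bfP$. Writing $a_1=u_1\ldots u_k$, $a_2=w_1\ldots w_l$ and $p=\theta_\bfP(\lfloor-,-\rfloor_{k,l})\in\bfP(k+l)$, we have $\lfloor a_1,a_2\rfloor=p.(u_1,\ldots,u_k,w_1,\ldots,w_l)$. Lemma~\ref{lem45}(2), asserting that $\bfP$-operations are morphisms of $D_\bfP(V)$-modules, gives $\lfloor a_1,a_2\rfloor\leftarrowtail b=p.(u_1\leftarrowtail b^{(1)},\ldots,u_k\leftarrowtail b^{(k)},w_1\leftarrowtail b^{(k+1)},\ldots,w_l\leftarrowtail b^{(k+l)})$, with the $(k+l)$-fold coproduct of $b$. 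On the other side, the module-algebra identity of Lemma~\ref{lem45}(1) expands $a_1\leftarrowtail b^{(1)}$ and $a_2\leftarrowtail b^{(2)}$ as products of the $k$, resp. $l$, factors $u_i\leftarrowtail(b^{(1)})^{(i)}$ and $w_j\leftarrowtail(b^{(2)})^{(j)}$; inserting these into the bracket produces the same operation $p.(\ldots)$, but with $b$ split first into two pieces and then into $k$, resp. $l$, pieces. Coassociativity of the coproduct of $D_\bfP(V)$ identifies the two splittings of $b$ into $k+l$ parts, so the two expressions coincide.

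The only real obstacle is this final bookkeeping: reconciling the single $(k+l)$-fold splitting of $b$ coming from Lemma~\ref{lem45}(2) with the nested two-then-refined splitting coming from the module-algebra identity. This is precisely what coassociativity resolves; everything else is a formal consequence of Lemma~\ref{lem45}, Theorem~\ref{theo20}, and the rigidity Lemma~\ref{lem21}. Together these verify the four axioms of Definition~\ref{defi41}(1), so $A_\bfP(V)$ is a bialgebra in the category of $D_\bfP(V)$-modules, i.e. $(A_\bfP(V),\star,\Delta)$ and $D_\bfP(V)$ are in interaction.
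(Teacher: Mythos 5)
Your proposal is correct and follows essentially the same route as the paper: both reduce the compatibility $(a_1\star a_2)\leftarrowtail b=(a_1\leftarrowtail b^{(1)})\star(a_2\leftarrowtail b^{(2)})$ to the bracket identity $\lfloor a_1,a_2\rfloor\leftarrowtail b=\lfloor a_1\leftarrowtail b^{(1)},a_2\leftarrowtail b^{(2)}\rfloor$ by observing that both sides are coalgebra morphisms into the connected coalgebra $S(F_\bfP(V))$ and invoking Lemma~\ref{lem21}, then conclude via Lemma~\ref{lem45}(2) applied to $q=\sum\theta_\bfP(\lfloor-,-\rfloor_{k,l})$. The only cosmetic difference is that you spell out the counit axiom and the coassociativity bookkeeping more explicitly (and you should note, as the paper does, that cocommutativity of $D_\bfP(V)$ is what makes $\Phi_2$ a coalgebra morphism), but the substance is identical.
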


\begin{proof} We already proved that $(A_\bfP(V),m,\Delta)$ is a Hopf algebra in the category of $D_\bfP(V)$-modules.
Let us consider the two following maps:
\begin{align*}
\Phi_1&:\left\{\begin{array}{rcl}
A_\bfP(V)\otimes A_\bfP(V)\otimes D_\bfP(V)&\longrightarrow&A_\bfP(V)\\
a_1\otimes a_2\otimes b&\longrightarrow&(a_1*a_2)\leftarrowtail b,
\end{array}\right.\\ \\
\Phi_2&:\left\{\begin{array}{rcl}
A_\bfP(V)\otimes A_\bfP(V)\otimes D_\bfP(V)&\longrightarrow&A_\bfP(V)\\
a_1\otimes a_2\otimes b&\longrightarrow&(a_1\leftarrowtail b^{(1)})*(a_2\leftarrowtail b^{(2)}).
\end{array}\right.&
\end{align*}
For all $a_1,a_2\in A_\bfP(V)$, $b\in D_\bfP(V)$:
\begin{align*}
\Delta\circ \Phi_1(a_1\otimes a_2\otimes b)&=(a_1^{(1)}*a_2^{(1)})\leftarrowtail b^{(1)}\otimes (a_1^{(2)}*a_2^{(2)})\leftarrowtail b^{(2)}\\
&=(\Phi_1\otimes \Phi_1)\circ \Delta(a_1\otimes a_2\otimes b),\\ \\
\Delta\circ \Phi_2(a_1\otimes a_2\otimes b)&=(a_1^{(1)} \leftarrowtail b^{(1)})*(a_2^{(1)}\leftarrowtail b^{(3)})
\otimes (a_1^{(2)} \leftarrowtail b^{(2)})*(a_2^{(2)}\leftarrowtail b^{(4)})\\
&=(a_1^{(1)} \leftarrowtail b^{(1)})*(a_2^{(1)}\leftarrowtail b^{(2)})\otimes (a_1^{(2)} \leftarrowtail b^{(3)})*(a_2^{(2)}\leftarrowtail b^{(4)})\\
&=(\Phi_2\otimes \Phi_2)\circ \Delta(a_1\otimes a_2\otimes b).
\end{align*}
So both $\Phi_1$ and $\Phi_2$ are coalgebra morphisms. In order to prove that their equality,  by lemma \ref{lem21}, it is enough to prove that 
$\pi\circ \Phi_1=\pi\circ \Phi_2$, where $\pi$ is the canonical projection on $F_\bfP(V)$ in $A_\bfP(V)$.
As for all $k$, $S^k(F_\bfP(V))\leftarrowtail D_\bfP(V)\subseteq S^k(F_\bfP(V))$:
\begin{align*}
\pi\circ \Phi_1(a_1\otimes a_2\otimes b)&=\pi((a_1*a_2)\leftarrowtail b)=\pi(a_1*a_2)\leftarrowtail b=\lfloor a_1,a_2\rfloor \leftarrowtail b.
\end{align*}
The $\petitbinfini$ structure is induced by $\theta_\bfP$: denoting $q_{k,l}=\theta_\bfP(\lfloor-,-\rfloor_{k,l})$ and $\displaystyle q=\sum_{k,l\geq 0}q_{k,l}$,
for all $a_1,a_2\in A_\bfP(V)$, $\lfloor a_1,a_2\rfloor=q.(a_1,a_2)$. By lemma \ref{lem45}, for all $b\in D_\bfP(V)$:
\begin{align*}
\lfloor a_1,a_2\rfloor \leftarrowtail b&=q.(a_1,a_2)\leftarrowtail b\\
&=q.(a_1\leftarrowtail b^{(1)},a_2\leftarrowtail b^{(2)})\\
&=\lfloor a_1\leftarrowtail b^{(1)}, a_2\leftarrowtail b^{(2)}\rfloor\\
&=\pi \circ \Phi_2(a_1\otimes a_2\otimes b).
\end{align*}
As a conclusion, $\Phi_1=\Phi_2$, and $A_\bfP(V)$ is a bialgebra in the category of $D_\bfP(V)$-modules. \end{proof}

\subsection{First dual construction}

We assume now that for all $n\geq 1$, $\bfP$ is finite-dimensional. The composition can be seen as a map:
$$\circ:\bigoplus_{n\geq 1} \bfP(n)\otimes \bfP^{\otimes n}\longrightarrow \bfP.$$
Moreover, for any $n\geq 1$:
$$\circ^{-1}(\bfP(n))=\bigoplus_{p=1}^n \bigoplus_{k_1+\ldots+k_p=n} \bfP(p)\otimes \bfP(k_1)\ldots \bfP(k_p).$$
By duality, we obtain a map $\delta:\bfP^*\longrightarrow (\bfP\otimes T(\bfP))^*$, such that for any $n\geq 1$:
$$\delta(\bfP(n))\subseteq \bigoplus_{p=1}^n \bigoplus_{k_1+\ldots+k_p=n} \bfP^*(p)\otimes \bfP^*(k_1)\ldots \bfP^*(k_p).$$
So $\delta(\bfP^*)\subseteq \bfP^*\otimes T(\bfP^*)$.

\begin{prop}\begin{enumerate}
\item We define a coproduct $\Delta_*$ on $T(\bfP^*)$ as the unique algebra morphism (for the concatenation product $m_{conc}$) such that
for all $f\in \bfP^*$:
$$\Delta_*(f)=\delta(f).$$
Then $\bfD^*_\bfP=(T(\bfP^*),m_{conc},\Delta_*)$ is a bialgebra. It is graded, the elements of $\bfP^*(n)$ being homogeneous of degree $n-1$ for all $n\geq 1$.
\item There exists a nondegenerate pairing $\ll-,-\gg:T(\bfP^*)\otimes T(\bfP)\longrightarrow \K$, such that
for all $F,G \in T(\bfP^*)$, for all $X,Y\in T(\bfP)$:
\begin{align*}
\ll1,X\gg&=\varepsilon(X),&\ll F\otimes G,\Delta(X)\gg&=\ll FG,X\gg,\\
\ll F,1\gg&=\varepsilon(F),&\ll\Delta_*(F),X\otimes Y\gg&=\ll F,X*Y\gg.
\end{align*}
In other words, $\ll-,-\gg$ is a Hopf pairing between $\bfD_\bfP^*$ and $\bfD_\bfP$.
\end{enumerate}\end{prop}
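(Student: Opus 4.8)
My plan is to obtain both parts by transposing the structure maps of the operad, running the argument in parallel to the dual construction of proposition \ref{prop17} (which applies since, by proposition \ref{prop37}, $\bfP$ is a graded $0$-bounded $\binfini$ algebra). For part (1), the inclusion $\delta(\bfP^*)\subseteq \bfP^*\otimes T(\bfP^*)$ established just before the statement shows that the algebra morphism $\Delta_*$ extending $\delta$ is well-defined on the free associative algebra $(T(\bfP^*),m_{conc})$, and the compatibility of $m_{conc}$ with $\Delta_*$ is then automatic. I would take the counit to be the algebra morphism $\varepsilon$ determined by $\varepsilon(g)=g(I)$ for $g\in\bfP^*$ (evaluation at the operadic unit) and verify $(\varepsilon\otimes Id)\circ\Delta_*=Id=(Id\otimes\varepsilon)\circ\Delta_*$ on the generators $\bfP^*$, where these reduce precisely to the unit axioms $I\circ p=p$ and $p\circ(I,\ldots,I)=p$. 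The grading by $\deg \bfP^*(n)=n-1$ is immediate from the arity bookkeeping already used to control $\delta$.

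The central point of part (1) is coassociativity of $\Delta_*$, and here I would argue that it is simply the transpose of the associativity axiom of $\bfP$. Both $(\Delta_*\otimes Id)\circ\Delta_*$ and $(Id\otimes\Delta_*)\circ\Delta_*$ are algebra morphisms from $T(\bfP^*)$ to $T(\bfP^*)^{\otimes 3}$, so it suffices to check they agree on generators $f\in\bfP^*$; there, paired against a nested triple of inputs, the two composites read off the two iterated compositions $(p\circ(p_1,\ldots,p_n))\circ(p_{1,1},\ldots)$ and $p\circ(p_1\circ(p_{1,1},\ldots),\ldots)$, which coincide by the operad axiom. This yields the bialgebra $\bfD_\bfP^*$ without any reference to a pairing.

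For part (2) I would build $\ll-,-\gg$ as in proposition \ref{prop17}: start from the degree-wise pairing $\ll f,x\gg=f(x)$ between $\bfP^*$ and $\bfP$, impose $\ll 1,X\gg=\varepsilon(X)$ and $\ll F,1\gg=\varepsilon(F)$, and extend so that $\ll F\otimes G,\Delta_{dec}(X)\gg=\ll FG,X\gg$, which reduces pairing $F=f_1\ldots f_k$ against $X$ to pairing $f_1\otimes\cdots\otimes f_k$ against $\Delta_{dec}^{(k-1)}(X)$. The remaining identity $\ll\Delta_*(F),X\otimes Y\gg=\ll F,X*Y\gg$ I would prove by the subalgebra trick of proposition \ref{prop17}: the set of $F$ satisfying it is a subalgebra of $(T(\bfP^*),m_{conc})$, and it contains every generator $f$ because, $f$ detecting only the length-one component of $X*Y$, one has $\ll f,X*Y\gg=\ll f,\pi(X*Y)\gg=\ll f,\langle X,Y\rangle\gg$, which must be matched against $\ll\delta(f),X\otimes Y\gg$. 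Associativity of $*$ together with non-degeneracy then re-derives coassociativity, giving an independent check of part (1).

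The hardest step will be exactly this matching on generators, together with non-degeneracy, and the difficulty is structural: because the operadic unit $I\in\bfP(1)$ is homogeneous of degree $0$, the homogeneous components of $\bfD_\bfP$ and $\bfD_\bfP^*$ are infinite-dimensional, so $\ll-,-\gg$ cannot be the naive dual-basis pairing and the usual "graded dual of a connected bialgebra" shortcut is unavailable. The composition-based $\Delta_*$ fills every slot, padding the empty ones with $I$, whereas the brace bracket $\langle-,-\rangle$ governing $*$ records only the filled slots; reconciling the two amounts to checking that the pairing renders the dual $I^*$ of the unit transparent (forcing $\ll f,1\gg=f(I)$ for $f\in\bfP^*(1)$), which is precisely the pairing-level incarnation of $p\circ(I,\ldots,I)=p$. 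Non-degeneracy I would then extract from a triangularity argument in each fixed degree, using the finite-dimensionality of the $\bfP(n)$ and the finiteness of $\circ^{-1}(\bfP(n))$, the very bound that placed $\delta$ in $\bfP^*\otimes T(\bfP^*)$.
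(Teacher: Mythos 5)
Your proposal is correct and, for the substantive part (2), follows essentially the paper's own route: the same subalgebra trick reduces the Hopf-pairing identity to generators $f\in\bfP^*$, and the same case analysis on the length of $X$ (with the length-$1$ case being the real content, where $p\circ(I,\ldots,I)=p$ reconciles the unit-padded composition against the brace product) finishes the argument. The one packaging difference is how the pairing itself is built: the paper adjoins the completion element $J=1+I+I^2+\cdots$ and sets $\ll F,x_1\ldots x_l\gg=\ll F,Jx_1J\ldots Jx_lJ\gg'$, whereas you define $\ll f_1\ldots f_k,X\gg=\ll f_1\otimes\cdots\otimes f_k,\Delta_{dec}^{(k-1)}(X)\gg$; these give the same pairing, since splitting $X$ into $k$ blocks and evaluating $f_i$ at $I$ on empty blocks is exactly the $J$-padding. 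For this to be pinned down you should state explicitly the remaining base case $\ll f,Y\gg=0$ for $f\in\bfP^*$ and $lg(Y)\geq 2$ (your "degree-wise pairing" leaves this implicit, and it is what makes $f$ see only $\pi(X*Y)$), and note that consistency of the recursion with multiplicativity is guaranteed by coassociativity of $\Delta_{dec}$. Where you genuinely diverge is part (1): you prove coassociativity of $\Delta_*$ directly, as the transpose of the operad associativity axiom checked on generators against the (left-nondegenerate) naive pairing with $\bfP\otimes T(\bfP)\otimes T(\bfP)$, while the paper deduces it afterwards from associativity of $*$ and nondegeneracy of the Hopf pairing. Your route is more elementary and makes part (1) independent of part (2), at the cost of a small separate nondegeneracy check; both are valid, and your identification of the counit as $f\mapsto f(I)$ agrees with $\ll F,1\gg=\varepsilon(F)$ in the paper's construction.
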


\begin{proof} The following map is an algebra isomorphism:
$$\left\{\begin{array}{rcl}
T(\bfP^*_+)&\longrightarrow&T(\bfP^*)/I_0\\
f_1\ldots f_k&\longrightarrow&\overline{f_1\ldots f_k}.
\end{array}\right.$$

We define a first pairing between $T(\bfP^*)$ and $T(\bfP)$ by:
$$\forall x_1,\ldots,x_k\in \bfP,\: \forall f_1,\ldots,f_l \in \bfP^*,\:
\ll f_1\ldots f_l,x_1\ldots x_k\gg'=\begin{cases}
0\mbox{ if }k\neq l,\\
f_1(x_1)\ldots f_k(x_k)\mbox{ if }k=l.
\end{cases}$$
We shall need the completion:
$$\overline{T(\bfP)}=\prod_{k=0}^\infty \bfP^{\otimes k}.$$
We can extend the concatenation product, deconcatenation coproduct and the pairing as maps:
\begin{align*}
m_{conc}&:\overline{T(\bfP)}\otimes \overline{T(\bfP)}\longrightarrow\overline{T(\bfP)},\\
\Delta_{dec}&:\overline{T(\bfP)}\longrightarrow \overline{T(\bfP)\otimes T(\bfP)},\\
\ll-,-\gg'&:T(\bfP^*)\otimes \overline{T(\bfP)}\longrightarrow \K.
\end{align*}
We put:
$$J=\sum_{n=0}^\infty I^n=\frac{1}{1-I} \in \overline{T(\bfP)}.$$
Then:
$$\Delta(J)=\sum_{n=0}^\infty \sum_{k+l=n} I^k \otimes I^l=\sum_{k,l\geq 0} I^k \otimes I^l=J\otimes J.$$
This implies that the following map is a coalgebra morphism:
$$\phi:\left\{\begin{array}{rcl}
T(\bfP_+)&\longrightarrow&\overline{T(\bfP)}\\
x_1\ldots x_k&\longrightarrow&Jx_1J\ldots J x_k J.
\end{array} \right.$$
We now define the pairing $\ll-,-\gg$:
$$\forall F\in T(\bfP^*),\:X\in T(\bfP), \: \ll F,X\gg=\ll F,\phi(X)\gg'.$$
If $l>k$, $\ll f_1\ldots f_k,x_1\ldots x_k\gg=0$. If $k=l$:
\begin{align*}
\ll f_1\ldots f_k,x_1\ldots x_k\gg&=\ll f_1\ldots f_k,Jx_1J\ldots Jx_kJ\gg'\\
&=\ll f_1\ldots f_k,x_1\ldots x_k\gg'+0\\
&=f_1(x_1)\ldots f_k(x_k).
\end{align*}
By a triangularity argument, this pairing is non degenerate.

Let $X=x_1\ldots x_k \in T(\bfP)$. 
\begin{align*}
\ll 1,X\gg&=\begin{cases}
0\mbox{ if }k\geq 1\\
\ll 1,J\gg'\mbox{ if }k=0
\end{cases}\\
&=\begin{cases}
0\mbox{ if }k\geq 1\\
1\mbox{ if }k=0
\end{cases}\\
&=\varepsilon(X).
\end{align*}

Let $F,G\in T(\bfP^*)$, and $X\in T(\bfP)$.
\begin{align*}
\ll F\otimes G,\Delta(X)\gg&=\ll F\otimes G,(\phi\otimes \phi)\circ \Delta(X) \gg'\\
&=\ll F\otimes G,\Delta \circ \phi(X)\gg'\\
&=\ll FG,\phi(X)\gg'\\
&=\ll FG,X\gg.
\end{align*}

Let us consider now:
$$A=\{F\in T(\bfP^*)\mid \forall X,Y\in T(\bfP),\: \ll \Delta_*(F), X\otimes Y\gg=\ll F,X*Y\gg\}.$$
For all $X,Y\in T(\bfP)$:
$$\ll 1,X*Y\gg=\varepsilon(X*Y)=\varepsilon(X)\varepsilon(Y)=\ll 1\otimes 1,X\otimes Y\gg,$$
so $1\in A$. Let $F,G \in A$. For all $X,Y \in T(\bfP)$:
\begin{align*}
\ll \Delta_*(FG), X\otimes Y\gg&=\ll \Delta_*(F)\Delta_*(G),X\otimes Y\gg\\
&=\ll \Delta_*(F)\otimes \Delta_*(G),X^{(1)}\otimes Y^{(1)}\otimes X^{(2)}\otimes Y^{(2)}\gg\\
&=\ll F\otimes G,X^{(1)}*Y^{(1)}\otimes X^{(2)}*Y^{(2)}\gg\\
&=\ll F\otimes G,(X*Y)^{(1)}\otimes (X*Y)^{(2)}\gg\\
&=\ll FG,X*Y\gg.
\end{align*}
So $A$ is a subalgebra of $T(\bfP^*)$. In order to prove that $A=T(\bfP^*)$, it is now enough to prove that $\bfP^*\subseteq A$.
Let $f\in \bfP^*$, $X=x_1\ldots x_k$, $y=y_1\ldots y_l \in T(\bfP)$. 

\begin{itemize}
\item Let us assume that $k\geq 2$. As $\Delta_*(\bfP^*)\subseteq \bfP^*\otimes T(\bfP^*)$,
$\ll \Delta_*(f),X\otimes Y\gg=0$. Moreover:
$$\left(\bigoplus_{n\geq 2} \bfP^{\otimes n}\right)*T(\bfP)\subseteq \bigoplus_{n\geq 2} \bfP^{\otimes n},$$
so $\ll f,X*Y\gg=0$.
\item Let us assume that $k=0$. Then:
\begin{align*}
\ll \Delta_*(f),1\otimes Y\gg&=\ll \Delta_*(f),J\otimes \phi(Y)\gg'\\
&=\ll \delta(f), I\otimes \phi(Y)\gg'+0\\
&=\ll f,I\circ \phi(Y)\gg'\\
&=\ll f,\phi(Y)\gg'\\
&=\ll f,1*Y\gg.
\end{align*}
\item Let us finally assume that $k=1$. Then:
\begin{align*}
\ll \Delta_*(f),1\otimes Y\gg&=\ll \delta(f),x_1\otimes Jy_1J\ldots J y_lJ\gg'\\
&=\ll f,x_1\circ (Jy_1J\ldots Jy_lJ)\gg'\\
&=\ll f, \langle x_1,y_1\ldots y_l\rangle\gg'.
\end{align*}
Moreover:
\begin{align*}
\ll f,x_1*y_1\ldots y_l\gg&=\ll f,\langle x_1,y_1\ldots y_l\rangle \gg+\mbox{terms $\ll f,z_1\ldots z_l\gg'$, $l\geq 2$}\\
&=\ll f,\langle x_1,y_1\ldots y_l\rangle \gg+0\\
&=\ll f,\langle x_1,y_1\ldots y_l\rangle \gg'.
\end{align*}\end{itemize}
Finally, $A$ is equal to $T(\bfP^*)$: $\ll-,-\gg$ is a Hopf pairing.

Let $F\in T(\bfP^*)$, $X,Y,Z\in T(\bfP)$.
\begin{align*}
\ll (\Delta_*\otimes Id)\circ \Delta_*(F),X\otimes Y\otimes Z\gg&=\ll F,(X*Y)*Z\gg\\
&=\ll F,X*(Y*Z)\gg\\
&=\ll (Id \otimes \Delta_*)\circ \Delta_*(F),X\otimes Y\otimes Z\gg.
\end{align*}
As the pairing is nondegenerate, $\Delta_*$ is coassociative.  \end{proof}\\

\textbf{Remarks.} \begin{enumerate}
\item $\bfP^*(0)$ is a subcoalgebra of $\bfD_\bfP^*$; it is the dual of the algebra $(\bfP(0),\circ)$.
Its counit is denoted by $\varepsilon_0$; for all $f\in \bfP^*(0)$, $\varepsilon_0(f)=f(I)$.
\item In general, $\bfD_\bfP^*$ is not a Hopf algebra. Let us take the example where $\bfP(1)=Vect(I)$. We define $X\in \bfP^*(1)$ by
$X(I)=1$. Then $\Delta_*(X)=X\otimes X$. As $X$ has no inverse, $\bfD_\bfP^*$ is not a Hopf algebra.
\end{enumerate}

\begin{cor}
The abelianized algebra $S(\bfP^*)$ of $\bfD_\bfP^*$ inherits a coproduct $\Delta_*$, making it a bialgebra.
Moreover, $D_\bfP=S(inv\bfP^*)$ is a subbialgebra of $S(\bfP^*)$.
\end{cor}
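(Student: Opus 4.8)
The plan is to read off both assertions from the dual constructions of Chapter~2, applied to $\bfP$ viewed as a $\petitbinfini$ algebra through Corollary~\ref{cor38}, and from the surjection of $\bfP$ onto its coinvariants. For the first assertion I would argue as follows. By Proposition~\ref{prop37} the brace algebra $\bfP$ is graded and $0$-bounded, hence by Corollary~\ref{cor38} it is also a graded, $0$-bounded $\petitbinfini$ algebra for the induced pre-Lie bracket. The dual construction for $\petitbinfini$ algebras (the analogue of Proposition~\ref{prop17}) then produces the graded bialgebra $(S(\bfP^*),m,\Delta_*)$, and by the remark following it, this bialgebra is precisely the abelianization of $(T(\bfP^*),m_{conc},\Delta_*)=\bfD_\bfP^*$: indeed both coproducts are the algebra morphisms extending one and the same map $\delta$ dual to the operadic composition, so the canonical projection $T(\bfP^*)\to S(\bfP^*)$ intertwines them and is a bialgebra morphism. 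Thus $S(\bfP^*)$ inherits $\Delta_*$ from $\bfD_\bfP^*$ and is a bialgebra.

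For the second assertion, recall that the corollary preceding Proposition~\ref{prop40} shows that the canonical projection $q:\bfP\to coinv\bfP$ is a morphism of graded pre-Lie algebras. Since the $\petitbinfini$ brackets of a pre-Lie algebra are recovered inductively from its pre-Lie product (the Oudom--Guin recursion recalled after Proposition~\ref{prop24}), $q$ is in fact a morphism of $\petitbinfini$ algebras, so the induced map $S(q):S(\bfP)\to S(coinv\bfP)$ satisfies $q(\lfloor X,Y\rfloor_\bfP)=\lfloor S(q)X,S(q)Y\rfloor_{coinv\bfP}$ for all $X,Y\in S(\bfP)$. As each $\bfP(n)$ is finite-dimensional, the pairing between $\bfP^*$ and $\bfP$ identifies $(coinv\bfP)^*$ with $inv\bfP^*$, so the dual construction applied to the $0$-bounded pre-Lie algebra $coinv\bfP$ yields $D_\bfP^*=(S(inv\bfP^*),m,\Delta_*)$, the graded dual of $D_\bfP=(S(coinv\bfP),*,\Delta)$.

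It remains to see that $S(inv\bfP^*)$ is a subbialgebra of $S(\bfP^*)$. It is visibly a subalgebra, and as $\Delta_*$ is an algebra morphism it suffices to check $\Delta_*(inv\bfP^*)\subseteq S(inv\bfP^*)\otimes S(inv\bfP^*)$ on the generators $inv\bfP^*$. Write $R=Vect(p-p^\sigma\mid p\in\bfP,\sigma)$, so that $inv\bfP^*=R^\perp$ and $coinv\bfP=\bfP/R$; degreewise the pairing identifies $S(inv\bfP^*)$ with the annihilator of the ideal $\mathcal{I}=\ker(S(q))$, and hence $S(inv\bfP^*)\otimes S(inv\bfP^*)$ with the annihilator of $\mathcal{I}\otimes S(\bfP)+S(\bfP)\otimes\mathcal{I}$. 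So I only have to show, for $f\in inv\bfP^*$ and $X,Y\in S(\bfP)$ with $X\in\mathcal{I}$ or $Y\in\mathcal{I}$, that $\ll\Delta_*(f),X\otimes Y\gg=0$. By the defining relation of $\Delta_*$,
\begin{align*}
\ll\Delta_*(f),X\otimes Y\gg=\ll f,\lfloor X,Y\rfloor_\bfP\gg,
\end{align*}
and if, say, $X\in\mathcal{I}=\ker(S(q))$ then $q(\lfloor X,Y\rfloor_\bfP)=\lfloor S(q)X,S(q)Y\rfloor_{coinv\bfP}=0$, so $\lfloor X,Y\rfloor_\bfP\in\ker(q)=R$ and $\ll f,\lfloor X,Y\rfloor_\bfP\gg=f(\lfloor X,Y\rfloor_\bfP)=0$ since $f\in R^\perp$; the case $Y\in\mathcal{I}$ is identical by bilinearity of the bracket. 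Therefore $\Delta_*(f)\in S(inv\bfP^*)\otimes S(inv\bfP^*)$, so $S(inv\bfP^*)$ is a subbialgebra of $S(\bfP^*)$, namely the announced $D_\bfP^*$.

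The only delicate point is the claim that $q$ respects the higher $\petitbinfini$ brackets $\lfloor-,-\rfloor_{k,l}$ occurring in $\Delta_*$, and not merely the pre-Lie product $\lfloor-,-\rfloor_{1,1}$. This is exactly what the Oudom--Guin description guarantees, since on a pre-Lie algebra those brackets are iterated expressions in the pre-Lie product alone; it is nonetheless the step that must be invoked with care.
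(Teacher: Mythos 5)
There is a genuine gap: you prove the statement for the wrong coproduct. The corollary sits inside the \emph{first} dual construction, where $\Delta_*$ on $T(\bfP^*)$ is the algebra morphism extending the map $\delta$ dual to the full operadic composition $\circ:\bigoplus\bfP(n)\otimes\bfP^{\otimes n}\to\bfP$. The coproduct produced by the $\petitbinfini$ dual construction of chapter 2, which you invoke, is dual to the symmetrized bracket $\lfloor-,-\rfloor$; in the notation of the "second dual construction" this is $\Delta'_*$, and the paper states explicitly that it is \emph{not} equal to $\Delta_*$ (compare $\Delta_*(e_1)=e_1\otimes e_1$ with $\Delta'_*(e_1)=e_1\otimes 1+1\otimes e_1+e_1\otimes e_1$ in $D^*_\com$). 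Your assertion that "both coproducts are the algebra morphisms extending one and the same map $\delta$" is therefore false: for $f\in\bfP^*(n)$ the composition-dual $\delta(f)$ has components with exactly $p$ factors in the second leg over a first-leg factor of arity $p$, while the bracket-dual has components with $k\le p$ factors (units $I$ being inserted in the partial compositions). In particular your central identity $\ll\Delta_*(f),X\otimes Y\gg=\ll f,\lfloor X,Y\rfloor_\bfP\gg$ is the \emph{defining} relation of $\Delta'_*$ and fails for $\Delta_*$ (take $f=e_1^*$, $X=Y=1$). The two bialgebras are isomorphic via $\psi_\bfP$ ($f\mapsto f-\varepsilon_0(f)1$ on $\bfP^*(1)$), an isomorphism which does preserve $S(inv\bfP^*)$, so your argument could in principle be transported; but you assert equality rather than invoking that (later) isomorphism.

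The distinction matters because the statement for $\Delta_*$ is more delicate than your duality argument suggests. The relation you would actually have to use is $\ll\delta(f),p\otimes p_1\ldots p_n\gg'=f(p\circ(p_1,\ldots,p_n))$, and for $f\in inv\bfP^*$ and a \emph{fixed} tuple $(p_1,\ldots,p_n)$ the element $(p-p^\sigma)\circ(p_1,\ldots,p_n)$ is not killed by $f$: equivariance gives $f(p^\sigma\circ(p_1,\ldots,p_n))=f(p\circ(p_{\sigma^{-1}(1)},\ldots,p_{\sigma^{-1}(n)}))$, which differs from $f(p\circ(p_1,\ldots,p_n))$ by a permutation of the arguments. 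This is exactly why $\Delta_*(f)$ lands only in $T(inv\bfP^*)\otimes T(inv\bfP^*)+T(\bfP^*)\otimes I_{ab}$, and why one must pass to the abelianization in the right-hand tensor leg before obtaining a subcoalgebra; the paper's proof is precisely this computation. (Your first paragraph has the same defect; note that the first assertion needs no such machinery anyway, since the commutators generate a coideal in any bialgebra, so the abelianization of $\bfD_\bfP^*$ is automatically a bialgebra.)
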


\begin{proof} We denote by $I_{ab}$ the ideal of $T(\bfP^*)$ generated by all the commutators. Then $S(\bfP^*)=T(\bfP^*)/I_{ab}$.
Let $f\in inv\bfP^*$. We denote by $f^{(1)}\otimes f^{(2)}_1\ldots f^{(2)}_n$ the component of $\Delta_*(f)$
belonging to $\bfP^*(n)\otimes \bfP^*(k_1)\ldots \bfP^*(k_n)$. Let $\sigma \in \mathfrak{S}_n$, $\sigma_i \in \mathfrak{S}_{k_i}$.
There exists $\tau \in \mathfrak{S}_{k_1+\ldots+k_n}$ such that for all $p\in \bfP(n)$, $p_i\in \bfP(k_i)$:
\begin{align*}
&\ll (f^{(1)})^\sigma\otimes (f^{(2)}_1)^{\sigma_1}\ldots (f^{(2)}_n)^{\sigma_n},p\otimes p_1\ldots p_n\gg\\
&=\ll (f^{(1)})^\sigma\otimes (f^{(2)}_1)^{\sigma_1}\ldots (f^{(2)}_n)^{\sigma_n},p\otimes p_1\ldots p_n\gg'\\
&=\ll f^{(1)}\otimes f^{(2)}_1\ldots f^{(2)}_n, p^{\sigma^{-1}}\otimes p_1^{\sigma_1^{-1}}\ldots p_n^{\sigma_n^{-1}}\gg'\\
&=\ll f^{(1)}\otimes f^{(2)}_1\ldots f^{(2)}_n, p^{\sigma^{-1}}\otimes p_1^{\sigma_1^{-1}}\ldots p_n^{\sigma_n^{-1}}\gg'\\
&=\ll f, p^{\sigma^{-1}}\circ(p_1^{\sigma_1^{-1}},\ldots, p_n^{\sigma_n^{-1}}) \gg'\\
&=\ll f, (p\circ (p_{\sigma(1)},\ldots, p_{\sigma(n)}))^\tau\gg'\\
&=\ll f^{\tau^{-1}}, p\circ (p_{\sigma(1)},\ldots, p_{\sigma(n)})\gg'\\
&=\ll f, p\circ (p_{\sigma(1)},\ldots, p_{\sigma(n)})\gg'\\
&=\ll f^{(1)}\otimes f^{(2)}_{\sigma^{-1}(1)}\ldots f^{(2)}_{\sigma^{-1}(n)},p\otimes p_1\ldots p_n\gg.
\end{align*}
This implies that $\Delta_*(f) \in T(inv\bfP^*)\otimes T(inv\bfP^*)+T(\bfP^*) \otimes I_{ab}$.
So, in the quotient $S(\bfP^*)$, $\Delta_*(f)\in S(inv\bfP^*)\otimes S(inv\bfP^*)$, so $S(inv\bfP^*)$ is a subbialgebra of $S(\bfP^*)$. \end{proof}

\begin{cor}\begin{enumerate}
\item Let $I_0$ be the ideal of $\bfD_\bfP^*$ generated by the elements $f-\varepsilon_0(f)1$, $f\in \bfP^*(1)$.
This is a biideal; the quotient $\bfB^*_\bfP=\bfD_\bfP^*/I_0$ is a graded, connected Hopf algebra, and its graded dual is $(T(\bfP_+),*,\Delta)$.
\item Let $J_0$ be the ideal of $S(\bfP^*)$ generated by the elements $f-\varepsilon_0(f)1$, $f\in \bfP^*(1)$.
This is a biideal; the quotient $S(\bfP^*)/J_0$ is a graded, connected Hopf algebra, and its graded dual is $(S(\bfP_+),*,\Delta)$.
\item $B^*_\bfP=D_\bfP^*/J_0\cap D_\bfP^*$ is a graded, connected Hopf subalgebra of $S(\bfP^*)/J_0$, and its graded dual is $B_\bfP$.
\end{enumerate}\end{cor}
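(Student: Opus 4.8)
The plan is to treat the three parts by a single mechanism, the one already used for corollary \ref{cor18}: exhibit each quotient as the graded dual, under the nondegenerate Hopf pairing $\ll-,-\gg$, of a graded connected Hopf subalgebra (of $\bfD_\bfP$ in part 1, of $S(\bfP)$ in part 2, of $D_\bfP$ in part 3). The only new feature, compared with corollary \ref{cor18}, is that the degree-$0$ component $\bfP^*(1)$ now carries the coalgebra structure dual to the unital associative algebra $(\bfP(1),\circ)$, with counit $\varepsilon_0(f)=f(I)$; this is exactly what forces the generators of $I_0$ and $J_0$ to be the shifted elements $f-\varepsilon_0(f)1$ rather than the $f$ themselves.

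For part 1 I would prove the set equality $I_0=\bfB_\bfP^\perp$, where $\bfB_\bfP^\perp$ is the orthogonal of $\bfB_\bfP=(T(\bfP_+),*,\Delta_{dec})$ for $\ll-,-\gg$. Since $\bfB_\bfP$ is a graded connected Hopf subalgebra of $\bfD_\bfP$ by proposition \ref{prop40}, its orthogonal is automatically a biideal, so this single equality both shows that $I_0$ is a biideal and identifies the graded dual. The inclusion $I_0\subseteq\bfB_\bfP^\perp$ is the triangularity computation: for $x_1\ldots x_k\in\bfP_+^{\otimes k}$ the pairing $\ll f,x_1\ldots x_k\gg$ vanishes when $k\geq 2$ (length reasons), equals $f(x_1)=0$ when $k=1$ (as $f\in\bfP^*(1)$ annihilates $\bfP_+$), and $\ll f-\varepsilon_0(f)1,1\gg=\varepsilon_0(f)-\varepsilon_0(f)=0$. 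For the reverse inclusion I would use the algebra isomorphism $T(\bfP^*_+)\xrightarrow{\sim}T(\bfP^*)/I_0$ recorded at the start of the proof of the preceding proposition: through it the pairing induces a pairing between $T(\bfP^*)/I_0$ and $T(\bfP_+)$ which is triangular with respect to word length with diagonal $f_1(y_1)\cdots f_k(y_k)$ (the unit-insertions $J=\sum_n I^n$ being killed by $\bfP_+^*$), hence nondegenerate; nondegeneracy in the first variable forces $\bfB_\bfP^\perp\subseteq I_0$. Then $\bfB_\bfP^*=\bfD_\bfP^*/I_0$ is the graded dual of the graded connected Hopf algebra $\bfB_\bfP$, so it is itself graded, connected, and Hopf.

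Parts 2 and 3 I would deduce by abelianizing and restricting. For part 2, note $S(\bfP^*)=\bfD_\bfP^*/I_{ab}$ and $S(\bfP^*)/J_0=\bfD_\bfP^*/(I_0+I_{ab})$; as $I_0$ and the commutator ideal $I_{ab}$ are both biideals so is their sum, and the quotient is graded with one-dimensional degree-$0$ part, hence graded connected Hopf. Repeating the orthogonality argument with the nondegenerate Hopf pairing between $S(\bfP^*)$ and $S(\bfP)$ gives $J_0=S(\bfP_+)^\perp$, and $S(\bfP_+)$ is a graded connected Hopf subalgebra of $S(\bfP)$ by proposition \ref{prop40}, so the graded dual is $(S(\bfP_+),*,\Delta)$. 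For part 3, the preceding corollary identifies $D_\bfP^*=S(inv\bfP^*)$ with a subbialgebra of $S(\bfP^*)$ dual to $D_\bfP=S(coinv\bfP)$; the inclusion descends to an injection $B_\bfP^*=D_\bfP^*/(J_0\cap D_\bfP^*)\hookrightarrow S(\bfP^*)/J_0$ whose image is a subbialgebra, again graded with trivial degree-$0$ part, hence a graded connected Hopf subalgebra. Restricting the pairing to the Hopf subalgebra $B_\bfP=S(coinv\bfP_+)\subseteq D_\bfP$ of proposition \ref{prop40} then shows $B_\bfP^*$ is the graded dual of $B_\bfP$.

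The main obstacle is the equality $I_0=\bfB_\bfP^\perp$ together with its abelianized and restricted analogues: both inclusions rest on the triangularity of $\ll-,-\gg$, which is obscured by the insertions of the group-like $J=\sum_n I^n$ coming from the unit of $\bfP(1)$. Getting the bookkeeping of these unit-insertions right, and in particular checking that the $\varepsilon_0$-shift in the generators is exactly what cancels the group-like contribution, is the one genuinely delicate point; once it is settled, gradedness, connectedness and the Hopf property follow formally, and all three dualities drop out of the nondegeneracy of the pairing.
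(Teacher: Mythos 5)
Your proposal is correct and follows essentially the same route as the paper: the same nondegenerate Hopf pairing $\ll-,-\gg$, the same case-by-case vanishing of $\ll f-\varepsilon_0(f)1,x_1\ldots x_k\gg$ on $T(\bfP_+)$, the same algebra isomorphism $T(\bfP^*_+)\cong T(\bfP^*)/I_0$ with the triangularity argument for nondegeneracy, and the same abelianization and restriction to $inv\bfP^*$ for parts 2 and 3. The only organizational difference is that you deduce the biideal property of $I_0$ from the equality $I_0=\bfB_\bfP^\perp$ (which, in this infinite-dimensional setting, silently uses that $(\bfB_\bfP\otimes\bfB_\bfP)^\perp=\bfB_\bfP^\perp\otimes\bfD_\bfP^*+\bfD_\bfP^*\otimes\bfB_\bfP^\perp$, valid here degree by degree since the homogeneous components are finite-dimensional), whereas the paper verifies it directly by computing $\Delta_*(f-\varepsilon_0(f)1)=f^{(1)}\otimes(f^{(2)}-\varepsilon_0(f^{(2)})1)+(f-\varepsilon_0(f)1)\otimes 1$.
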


\begin{proof} 1. Firstly, observe that for all $f\in I_0$:
\begin{align*}
\Delta(f-\varepsilon_0(f)1)&=f^{(1)}\otimes f^{(2)}-\varepsilon_0(f)1\otimes 1\\
&=f^{(1)}\otimes (f^{(2)}-\varepsilon_0(f^{(2)})1)+\varepsilon_0(f^{(2)})f^{(1)}\otimes 1-\varepsilon_0(f)1\otimes 1\\
&=f^{(1)}\otimes (f^{(2)}-\varepsilon_0(f^{(2)})1)+f\otimes 1-\varepsilon_0(f)1\otimes 1\\
&=f^{(1)}\otimes (f^{(2)}-\varepsilon_0(f^{(2)})1)+(f-\varepsilon_0(f)1)\otimes 1 \in I_0\otimes \bfD_\bfP^*+\bfD_\bfP^*\otimes I_0.
\end{align*}
So $I_0$ is a biideal of $\bfD_\bfP^*$. Moreover, $\bfD_\bfP^*/I_0$ is isomorphic to the algebra $T(\bfP^*_+)$ via the morphism:
$$\left\{\begin{array}{rcl}
T(\bfP^*_+)&\longrightarrow&\bfD_\bfP^*/I_0\\
f_1\ldots f_k&\longrightarrow&\overline{f_1\ldots f_k}.
\end{array}\right.$$

Let $f\in \bfP_0$. If $x_1,\ldots,x_k \in \bfP_+$:
\begin{align*}
\ll f-\varepsilon_0(f)1,x_1\ldots x_k\gg&=\ll f-\varepsilon_0(f)1,Jx_1J\ldots J x_k J\gg'\\
&=\begin{cases}
0\mbox{ if }k\geq 2,\\
\ll f,x_1\gg' \mbox{ if }k=1,\\
\ll f,I\gg'-\varepsilon_0(f)\ll 1,1\gg'\mbox{ if }k=0;
\end{cases}\\
&=\begin{cases}
0\mbox{ if }k\geq 2,\\
0 \mbox{ if }k=1,\\
\varepsilon_0(f)-\varepsilon_0(f)=0\mbox{ if }k=0;
\end{cases}\\
&=0.
\end{align*}
So the pairing $\ll-,-\gg$ induces a Hopf pairing between $\bfD_\bfP^*/I_0$ and $\bfB_\bfP$, which is a Hopf subalgebra of $(T(\bfP),*\Delta)$.
Moreover, for all $f_1,\ldots,f_k\in \bfP^*_+$, $x_1,\ldots,x_k\in \bfP^*$:
\begin{align*}
\ll \overline{f_1\ldots f_k},x_1\ldots x_l\gg&=\ll f_1\ldots f_k, x_1\ldots x_l\gg\\
&=\ll f_1\ldots f_k,Jx_2J\ldots Jx_lJ\gg\\
&=\begin{cases}
0\mbox{ if }k>l \mbox{ (as $\ll f_i,I\gg'=0$ for all $i$)},\\
0\mbox{ if }k<l,\\
f_1(x_1)\ldots f_k(x_k)\mbox{ if }k=l.
\end{cases}
\end{align*}
Hence, we have a nondegenerate Hopf pairing between $\bfD_\bfP^*/I_0$ and $\bfB_\bfP$. It is clearly homogeneous, so
$\bfD_\bfP^*/I_0$ is isomorphic to the graded dual of $\bfB_\bfP$.\\

2. Taking the abelianization of $\bfD_\bfP^*/I_0$, we obtain the graded, connected Hopf algebra $S(\bfP^*)/J_0$.
Its graded dual is isomorphic to the largest cocommutative Hopf subalgebra of $T(\bfP_+)$, that is to say $coS(\bfP_+)$,
or up to an isomorphism $S(\bfP_+)$.\\

3. This is implied by the second point, noticing that the dual of $coinv\bfP(n)$ is $inv\bfP^*(n)$ for all $n$. \end{proof}

\begin{defi}For any finite-dimensional vector space $V$, we put:
\begin{align*}
\bfD_\bfP^*(V)&=\bfD^*_{\bfP\otimes \bfC_V},&D^*_\bfP(V)&=D^*_{\bfP\otimes \bfC_V},\\
\bfB^*_\bfP(V)&=\bfB^*_{\bfP\otimes \bfC_V},&B^*_\bfP(V)&=B^*_{\bfP\otimes \bfC_V}.
\end{align*}\end{defi}

\textbf{Remark.} If $V$ is one-dimensional:
\begin{align*}
\bfD_\bfP^*(V)&\approx\bfD_\bfP^*,&D_\bfP(V)&\approx D_\bfP,\\
\bfB^*_\bfP(V)&\approx \bfB^*_\bfP,&B^*_\bfP(V)&\approx B^*_\bfP.
\end{align*}

Let $\theta:\petitbinfini\longrightarrow \bfP$ be an operad morphism. Then $A_\bfP(V)$ is a graded Hopf algebra in the category of 
$B_\bfP(V)$-modules. Let us consider its graded dual $A^*_\bfP(V)$. Using the pairing $\ll-,-\gg$, one can define a coaction
of $D_\bfP(V)$ over the graded dual $A^*_\bfP(V)$, which we can quotient to obtain a coaction of $B_\bfP(V)$. We obtain:

\begin{cor} \label{cor51}
Let $V$ be a finite-dimensional space and $\theta:\petitbinfini\longrightarrow \bfP$ be an operad morphism.
Then $A^*_\bfP(V)$ and $D^*_\bfP(V)$ are in cointeraction, via the transposition $\rho$ of the action of $D_\bfP(V)$ over $A_\bfP(V)$.
\end{cor}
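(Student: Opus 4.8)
The plan is to realise $\rho$ as the transpose of the $D_\bfP(V)$-action $\leftarrowtail$ on $A_\bfP(V)$ supplied by Theorem \ref{theo46}, and then to obtain the four axioms of Definition \ref{defi41}(2), one by one, as the transposes of the four interaction axioms of Definition \ref{defi41}(1) already established in Lemma \ref{lem45} and Theorem \ref{theo46}. Since $V$ is finite-dimensional, each $\bfC_V(n)=End_\K(V,V^{\otimes n})$ is finite-dimensional, hence so is every homogeneous component of $\bfP\otimes\bfC_V$, of $D_\bfP(V)$, and of the graded connected Hopf algebra $A_\bfP(V)$, so their graded duals behave well. Using the Hopf pairing $\ll-,-\gg$ between $D_\bfP^*(V)$ and $D_\bfP(V)$ constructed above and the graded-duality pairing $\langle-,-\rangle$ between $A_\bfP^*(V)$ and $A_\bfP(V)$, I would define $\rho\colon A_\bfP^*(V)\to A_\bfP^*(V)\otimes D_\bfP^*(V)$ by
$$\langle\rho(\phi),a\otimes q\rangle=\langle\phi,a\leftarrowtail q\rangle,\qquad \phi\in A_\bfP^*(V),\ a\in A_\bfP(V),\ q\in D_\bfP(V).$$

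The first genuine step is to show that this formula produces an element of the honest tensor product $A_\bfP^*(V)\otimes D_\bfP^*(V)$, and not merely a functional on $A_\bfP(V)\otimes D_\bfP(V)$. Here I would invoke the gradings of Propositions \ref{prop37} and \ref{prop43}: the pre-Lie module structure is graded with $\deg(a\leftarrowtail q)=\deg(a)+\deg(q)$, so for $\phi$ homogeneous of degree $N$ only pairs with $\deg(a)+\deg(q)=N$ contribute. As all degrees are nonnegative this forces $\deg(a)\le N$ and $\deg(q)\le N$, which bounds the $A_\bfP^*(V)$-support; the only remaining danger is the infinite-dimensional degree-zero part $S(coinv(\bfP\otimes\bfC_V)_0)$ of $D_\bfP(V)$, and this is controlled exactly as in the construction of $D_\bfP^*(V)$ by the $0$-bounded condition (Proposition \ref{prop17} and its $\petitbinfini$-analogue, $0$-boundedness being inherited from Proposition \ref{prop37}). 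Thus $\rho(\phi)$ has finite support on each side and lands in $A_\bfP^*(V)\otimes D_\bfP^*(V)$.

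With $\rho$ well defined, each cointeraction axiom follows by pairing both sides against test tensors $a\otimes q$ (resp.\ $a_1\otimes a_2\otimes q$) and transporting coproducts through the pairings, which are (co)multiplicative by construction of $D_\bfP^*(V)$. Explicitly: the multiplicativity $\rho(\phi\psi)=\rho(\phi)\rho(\psi)$ is the transpose of the coalgebra compatibility $\Delta(a\leftarrowtail b)=a^{(1)}\leftarrowtail b^{(1)}\otimes a^{(2)}\leftarrowtail b^{(2)}$; the identity $m_{2,4}^3\circ(\rho\otimes\rho)\circ\Delta=(\Delta\otimes Id)\circ\rho$ is the transpose of $(a_1a_2)\leftarrowtail b=(a_1\leftarrowtail b^{(1)})(a_2\leftarrowtail b^{(2)})$, where cocommutativity of $D_\bfP(V)$ is used to match the middle factors; the counit condition $(\varepsilon\otimes Id)\circ\rho(\phi)=\varepsilon(\phi)1_B$ transposes $1_A\leftarrowtail b=\varepsilon(b)1_A$; and $\rho(1)=1\otimes 1$ transposes $\varepsilon(a\leftarrowtail b)=\varepsilon(a)\varepsilon(b)$. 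Each is a short computation of the form $\langle\mathrm{LHS},a\otimes q\rangle=\langle\mathrm{RHS},a\otimes q\rangle$, closed by nondegeneracy of the two pairings.

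The step I expect to be the main obstacle is precisely the finiteness in the second paragraph: guaranteeing that the transpose of $\leftarrowtail$ factors through $A_\bfP^*(V)\otimes D_\bfP^*(V)$ rather than through a completion of it. This is where $0$-boundedness is truly indispensable, since the degree-zero component of $D_\bfP(V)$ is infinite-dimensional. Once this is secured, the axiom verifications are purely formal transpositions requiring nothing beyond the pairing identities, and together they state exactly that $A_\bfP^*(V)$ is a bialgebra in the category of $D_\bfP^*(V)$-comodules, i.e.\ that $(A_\bfP^*(V),D_\bfP^*(V))$ is a pair of cointeracting bialgebras; at the level of characters this recovers the announced action of the monoid $(M_\bfP^D,\lozenge)$.
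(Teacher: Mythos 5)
Your proposal is correct and follows the same route as the paper: the paper obtains the corollary precisely by transposing the action of $D_\bfP(V)$ on $A_\bfP(V)$ through the pairing $\ll-,-\gg$ and the graded duality, exactly as you do. Your additional care about why the transpose lands in the honest tensor product $A^*_\bfP(V)\otimes D^*_\bfP(V)$ (via the grading of the module structure and the $0$-boundedness controlling the degree-zero part) makes explicit the one point the paper leaves implicit, and the axiom verifications by pairing against test tensors are the intended formal dualization.
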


\subsection{Second dual construction}

As $\bfP$ is a $0$-bounded brace algebra, 
there exists a second coproduct $\Delta'_*$ on $T(\bfP^*)$, induced by the brace product, as defined in proposition \ref{prop17}.
It is a different from $\Delta_*$ (see section \ref{sect411} for an example), but there is a bialgebra isomorphism:

\begin{prop} \label{prop52}
The following map is a bialgebra isomorphism:
$$\Psi_\bfP:\left\{\begin{array}{rcl}
\bfD_\bfP^*=(T(\bfP^*),m_{conc},\Delta_*)&\longrightarrow&\bfD'_\bfP=(T(\bfP^*),m_{conc},\Delta'_*)\\
f\in \bfP_0^*&\longrightarrow&f-\varepsilon_0(f)1,\\
f\in \bfP_+^*&\longrightarrow&f.
\end{array}\right.$$
\end{prop}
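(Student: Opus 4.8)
The plan is to separate the (easy) algebra part from the (substantive) coalgebra part, and to read the coproduct compatibility off the two Hopf pairings from the first dual construction.

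First I would observe that $\Psi_\bfP$ is an algebra isomorphism for $m_{conc}$. By construction it is the unique algebra morphism extending the prescribed linear map on the generators $\bfP^*$, so multiplicativity is automatic; and it is bijective because the algebra morphism sending $f\in\bfP_0^*$ to $f+\varepsilon_0(f)1$ and fixing $\bfP_+^*$ agrees with $\Psi_\bfP^{-1}$ on generators, hence is a two-sided inverse. It also preserves the grading, since $f$ and $\varepsilon_0(f)1$ are both homogeneous of degree $0$. Thus everything reduces to showing that $\Psi_\bfP$ intertwines $\Delta_*$ and $\Delta'_*$.

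For this I would use that $\Delta_*$ and $\Delta'_*$ are dual to the \emph{same} product $*$ on $T(\bfP)$, but through the two pairings $\ll-,-\gg$ and $\ll-,-\gg'$, which are related by the coalgebra morphism $\phi$ (with $\phi(x_1\ldots x_k)=Jx_1J\ldots Jx_kJ$, $J=\sum_n I^n$) through $\ll F,X\gg=\ll F,\phi(X)\gg'$. The key step is to identify $\Psi_\bfP$, up to its degree-zero normalization, with the transpose of $\phi$ relative to $\ll-,-\gg'$, that is, to establish the pairing identity $\ll\Psi_\bfP(F),X\gg'=\ll F,X\gg$ for all $F\in T(\bfP^*)$, $X\in T(\bfP)$. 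Because $\Psi_\bfP$ is multiplicative and $\phi$ comultiplicative, it suffices to check this for $F\in\bfP^*$: only the components of $\phi(X)$ of length $\le 1$ can contribute, namely $x$ for $X=x$ and the degree-one term $I$ of $\phi(1)=J$ for $X=1$, which is exactly the data recorded by the $\varepsilon_0$-term in the definition of $\Psi_\bfP$.

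Granting this transpose identity, the coproduct compatibility follows in two lines from non-degeneracy of $\ll-,-\gg'$: for all $X,Y\in T(\bfP)$,
\begin{align*}
\ll\Delta'_*(\Psi_\bfP(F)),X\otimes Y\gg'&=\ll\Psi_\bfP(F),X*Y\gg'=\ll F,X*Y\gg\\
&=\ll\Delta_*(F),X\otimes Y\gg=\ll(\Psi_\bfP\otimes\Psi_\bfP)\Delta_*(F),X\otimes Y\gg',
\end{align*}
where the first and third equalities are the defining properties of $\Delta'_*$ and $\Delta_*$, the second is the transpose identity, and the last applies it in each tensor factor. Hence $\Delta'_*\circ\Psi_\bfP=(\Psi_\bfP\otimes\Psi_\bfP)\circ\Delta_*$, so $\Psi_\bfP$ is a bialgebra isomorphism. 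The hard part will be the transpose identity $\ll\Psi_\bfP(F),X\gg'=\ll F,X\gg$ for all of $T(\bfP)$ rather than on generators: this requires working in the completion $\overline{T(\bfP)}$ where $\phi$ and $J$ live, where $0$-boundedness of $\bfP$ ensures every pairing in sight is a finite sum, and the only genuinely delicate point is pinning down the degree-zero sign, which is forced by the degree-one contribution $I$ of $\phi(1)=J$.
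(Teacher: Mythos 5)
Your argument is essentially the paper's own proof: the paper likewise reduces everything to the intertwining identity $\ll\Psi_\bfP(F),X\gg'=\ll F,X\gg$, establishes it on the generators $\bfP^*$ and extends it multiplicatively (both pairings convert $m_{conc}$ into $\Delta_{dec}$ of the second argument), then concludes by sandwiching between the defining properties of $\Delta_*$ and $\Delta'_*$ and invoking non-degeneracy of $\ll-,-\gg'$. The one point you deferred --- the degree-zero normalization --- is genuinely where care is needed: since $\ll f,1\gg=\ll f,J\gg'=f(I)=\varepsilon_0(f)$ whereas $\ll f-\varepsilon_0(f)1,1\gg'=-\varepsilon_0(f)$, the intertwining identity actually forces $f\mapsto f+\varepsilon_0(f)1$ on $\bfP_0^*$ (the map as displayed in the statement is its inverse, going from $\bfD'_\bfP$ to $\bfD_\bfP^*$), as one can confirm on the $\com$ example where $e_1$ is group-like for $\Delta_*$ while $e_1+1$, not $e_1-1$, is group-like for $\Delta'_*$.
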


\begin{proof} The coproducts $\Delta_*'$  and $\Delta_*$ are defined by:
\begin{align*}
\forall F\in \bfP^*,\: \forall X,Y\in \bfP,\: \ll \Delta_*'(F),X\otimes Y\gg'&=\ll F,X*Y\gg',\\
\ll \Delta_*(F),X\otimes Y\gg'&=\ll F,X*Y\gg.
\end{align*}
Let us first prove that for all $F\in T(\bfP^*)$, $X\in T(\bfP)$:
$$\ll \Psi_\bfP(F),X\gg'=\ll F,X\gg.$$
Let:
$$A=\{F\in T(\bfP^*)\mid \forall X\in T(\bfP),\: \ll \Psi_\bfP(F),X\gg'=\ll F,X\gg\}.$$
Clearly, $1\in A$. Let $F,G\in A$. For all $X\in T(\bfP)$:
\begin{align*}
\ll \Psi_\bfP(FG),X\gg'&=\ll\Psi_\bfP(F)\Psi_\bfP(G),X\gg'\\
&=\ll \Psi_\bfP(F)\otimes \Psi_\bfP(G),\Delta(X)\gg'\\
&=\ll F\otimes G,\Delta(X)\gg\\
&=\ll FG,X\gg.
\end{align*}
So $A$ is a subalgebra of $T(\bfP^*)$. Let us take $f\in \bfP^*$. For all $x_1,\ldots, x_k \in \bfP$:
\begin{align*}
\ll \Psi_\bfP(f),x_1\ldots x_k\gg'&=\ll f,x_1\ldots x_k\gg'-\varepsilon_0(f)\varepsilon(x_1\ldots x_k)\\
&=\begin{cases}
f(x_1)\mbox{ if }k=1,\\
0\mbox{ if }k\geq 2,\\
\varepsilon(f)-\varepsilon_0(f)\mbox{ if} k=0
\end{cases}\\
&=\begin{cases}
f(x_1)\mbox{ if }k=1,\\
0\mbox{ otherwise}
\end{cases}\\
&=\ll f,x_1\ldots x_k\gg.
\end{align*}
Consequently, $\bfP^*\subseteq A$, so $A=T(\bfP^*)$.\\

Let $F \in T(\bfP^*)$. For all $X,Y \in T(\bfP)$:
\begin{align*}
\ll (\Psi_\bfP\otimes \Psi_\bfP)\circ \Delta_*(F),X\otimes Y\gg'&=\ll \Delta_*(F),X\otimes Y\gg\\
&=\ll F,X*G\gg\\
&=\ll \Psi_\bfP(F),X*G\gg'\\
&=\ll \Delta_*'\circ \Psi_\bfP(F),X\otimes Y\gg'.
\end{align*}
As the pairing $\ll-,-\gg'$ is non degenerate, $(\Psi_\bfP\otimes \Psi_\bfP)\circ \Delta_*=\Delta_*'\circ \Psi_\bfP$. \end{proof}\\

Considering the abelianization:

\begin{cor} \label{cor53}
The following map is a bialgebra isomorphism:
$$\psi_\bfP:\left\{\begin{array}{rcl}
D^*_\bfP=(S(\bfP^*),m,\Delta_*)&\longrightarrow&D'_\bfP=(S(\bfP^*),m,\Delta'_*)\\
f\in \bfP_0^*&\longrightarrow&f-\varepsilon_0(f)1,\\
f\in \bfP_+^*&\longrightarrow&f.
\end{array}\right.$$
\end{cor}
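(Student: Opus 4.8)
The plan is to realize $\psi_\bfP$ as the abelianization of the bialgebra isomorphism $\Psi_\bfP$ of proposition \ref{prop52}, so that essentially no new computation is needed. Write $\varpi:T(\bfP^*)\longrightarrow S(\bfP^*)=T(\bfP^*)/I_{ab}$ for the canonical projection onto the abelianization, where $I_{ab}$ is the two-sided ideal generated by the commutators. Two facts are available from the preceding results: first, by the corollary establishing that the abelianized algebra $S(\bfP^*)$ of $\bfD_\bfP^*$ inherits $\Delta_*$, the map $\varpi$ is a bialgebra morphism from $\bfD_\bfP^*$ to $D^*_\bfP$, i.e. $\Delta_*\circ \varpi=(\varpi\otimes\varpi)\circ\Delta_*$; second, since $\bfP$ is a $0$-bounded brace, hence $0$-bounded $\binfini$, algebra, the brace-induced coproduct $\Delta'_*$ also descends to $S(\bfP^*)$, so $\varpi$ is likewise a bialgebra morphism from $\bfD'_\bfP$ to $D'_\bfP$, i.e. $\Delta'_*\circ\varpi=(\varpi\otimes\varpi)\circ\Delta'_*$. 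These two descent statements are precisely what makes $D^*_\bfP$ and $D'_\bfP$ bialgebras in the first place.

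First I would check that $\Psi_\bfP$ descends to the quotient. Both $\bfD_\bfP^*$ and $\bfD'_\bfP$ have the same underlying algebra $(T(\bfP^*),m_{conc})$, and $\Psi_\bfP$ is an algebra automorphism of it; being multiplicative it sends each commutator $[a,b]$ to $[\Psi_\bfP(a),\Psi_\bfP(b)]$ and hence maps $I_{ab}$ into $I_{ab}$. Applying the same argument to $\Psi_\bfP^{-1}$ gives $\Psi_\bfP(I_{ab})=I_{ab}$, so $\Psi_\bfP$ induces an algebra automorphism $\psi_\bfP$ of $S(\bfP^*)$ characterized by $\psi_\bfP\circ\varpi=\varpi\circ\Psi_\bfP$. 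Reading this relation on the generators $f\in\bfP^*$ reproduces the stated formula: $\psi_\bfP(f)=f-\varepsilon_0(f)1$ for $f\in\bfP_0^*$ and $\psi_\bfP(f)=f$ for $f\in\bfP_+^*$. Bijectivity of $\psi_\bfP$ is then immediate, its inverse being the automorphism induced by $\Psi_\bfP^{-1}$.

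It remains to verify that $\psi_\bfP$ intertwines the two coproducts, and here I would use surjectivity of $\varpi$ to transport the corresponding identity for $\Psi_\bfP$. Computing on the image $\varpi(T(\bfP^*))=S(\bfP^*)$,
\begin{align*}
(\psi_\bfP\otimes\psi_\bfP)\circ\Delta_*\circ\varpi&=(\psi_\bfP\otimes\psi_\bfP)\circ(\varpi\otimes\varpi)\circ\Delta_*
=(\varpi\otimes\varpi)\circ(\Psi_\bfP\otimes\Psi_\bfP)\circ\Delta_*\\
&=(\varpi\otimes\varpi)\circ\Delta'_*\circ\Psi_\bfP
=\Delta'_*\circ\varpi\circ\Psi_\bfP=\Delta'_*\circ\psi_\bfP\circ\varpi,
\end{align*}
where the third equality is the coproduct compatibility of $\Psi_\bfP$ from proposition \ref{prop52} and the remaining ones are the two descent identities above. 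Since $\varpi$ is onto, $(\psi_\bfP\otimes\psi_\bfP)\circ\Delta_*=\Delta'_*\circ\psi_\bfP$, so $\psi_\bfP$ is a coalgebra morphism and therefore a bialgebra isomorphism.

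The main obstacle is not the diagram chase, which is purely formal once the vertical maps are in place, but making sure that $\Delta'_*$ genuinely passes to the abelianization: unlike $\Delta_*$, whose descent was proved directly in the corollary on the abelianized algebra $S(\bfP^*)$, the coproduct $\Delta'_*$ must be handled through the identification of $D'_\bfP$ as the abelianization of $\bfD'_\bfP$ for a $0$-bounded $\binfini$ algebra. Once both instances of $\varpi$ are recognized as bialgebra morphisms, functoriality of abelianization does the rest.
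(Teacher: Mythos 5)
Your proof is correct and follows exactly the route the paper intends: the paper's entire justification for this corollary is the phrase ``Considering the abelianization,'' and your argument is a careful fleshing-out of that, using the descent of $\Delta_*$ to $S(\bfP^*)$ from the earlier corollary and the descent of $\Delta'_*$ from the remark identifying $(S(V^*),m,\Delta_*)$ as the abelianization of $(T(V^*),m_{conc},\Delta_*)$ for a $0$-bounded $\petitbinfini$ algebra. Nothing is missing; if anything, your write-up supplies more detail than the paper does.
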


\section{Associated groups and monoids}

\begin{prop}\label{prop54}\begin{enumerate}
\item \begin{enumerate}
\item The monoid  of characters of both $\bfD'_\bfP$ and $(\bfD'_\bfP)_{ab}$ 
is identified with $(\overline{P},\lozenge')$, where for all $x=\sum x_n \in \overline{\bfP}$, $y\in \overline{\bfP}$:
$$x\lozenge' y=y+\sum_{n\geq 1}\sum_{1\leq i_1<\ldots<i_k\leq n}
x_n \circ_{i_1,\ldots,i_k}(y,\ldots,y).$$
\item The monoid of characters of both $\bfD^*_\bfP$ and $(\bfD^*_\bfP)_{ab}$ is identified with $(\overline{P},\lozenge)$, 
where for all $x=\sum x_n \in \overline{\bfP}$, $y\in \overline{\bfP}$:
$$x\lozenge y=\sum_{n\geq 1} x_n \circ_{1,\ldots, n}(y,\ldots,y).$$
\item The monoids $(\overline{P},\lozenge')$ and $(\overline{P},\lozenge)$ are isomorphic.
\item $M^D_\bfP=\overline{coinv\bfP}$ is a quotient of the monoids $(\overline{\bfP},\lozenge)$, and $(\overline{\bfP},\lozenge')$,
and is identified with the monoid of characters of $D^*_\bfP$.
\end{enumerate}
\item \begin{enumerate}
\item The group of characters of both $\bfB_\bfP^*$ and $(\bfB_\bfP^*)_{ab}$ is identified with $(\overline{\bfP_+},\lozenge)$,
where for all $x=\sum x_n \in \overline{\bfP}$, $y\in \overline{\bfP}$:
$$x\lozenge y=y+\sum_{n\geq 1} x_n\circ(I+y,\ldots,I+y).$$
\item $G_\bfP^B=\overline{coinv\bfP_+}$ is a quotient group of $(\overline{\bfP_+},\lozenge)$, identified with the group of characters of $B^*_\bfP$.
\end{enumerate}\end{enumerate}\end{prop}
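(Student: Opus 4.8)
The plan is to reduce every assertion to the monoid constructions of Chapter~2, chiefly theorem~\ref{theo31} and corollary~\ref{cor32}, applied to the pre-Lie (hence $\petitbinfini$) algebra $\bfP$, which is $0$-bounded by proposition~\ref{prop37} and corollary~\ref{cor38}. A preliminary remark will be used throughout: any character takes values in the commutative field $\K$ and therefore annihilates all commutators, so $\bfD'_\bfP$ and its abelianization $(\bfD'_\bfP)_{ab}=D'_\bfP$ carry the same monoid of characters, and likewise for $(\bfD^*_\bfP,D^*_\bfP)$ and $(\bfB^*_\bfP,B^*_\bfP)$. For $1(a)$, recall that $D'_\bfP=(S(\bfP^*),m,\Delta'_*)$ is exactly the bialgebra attached by theorem~\ref{theo31} to the $0$-bounded $\petitbinfini$ structure of $\bfP$; since $\bfP$ is pre-Lie, corollary~\ref{cor32}(1) identifies its character monoid with $(\overline{\bfP},\lozenge')$ where $x\lozenge' y=y+x\bullet e^y$. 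It then remains to expand $x\bullet e^y=\lfloor x,e^y\rfloor$ via corollary~\ref{cor38}: writing $x=\sum_n x_n$ and using that, because all inserted arguments equal $y$, the $k!$ orderings of a fixed $k$-subset give the same composition, one gets $\tfrac1{k!}\lfloor x_n,y^k\rfloor=\sum_{1\le i_1<\dots<i_k\le n}x_n\circ_{i_1,\dots,i_k}(y,\dots,y)$. Summing over $k\ge 0$ (the empty subset reproducing $x_n$) yields the stated formula for $\lozenge'$.

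For $1(b)$ I would compute the convolution product on $\mathrm{Char}(\bfD^*_\bfP)$ directly. A character of $(T(\bfP^*),m_{conc})$ is an algebra morphism, hence determined by its restriction to $\bfP^*$, giving a bijection $\mathrm{Char}(\bfD^*_\bfP)\cong\overline{\bfP}$, $x\leftrightarrow\chi_x$ with $\chi_x(f)=f(x)$. Since $\Delta_*$ restricted to $\bfP^*$ is the transpose $\delta$ of the operadic composition $\circ$, for $f\in\bfP^*$ one obtains $(\chi_x\otimes\chi_y)(\Delta_*(f))=f\bigl(\sum_{m\ge1}x_m\circ(y,\dots,y)\bigr)$, whence $\chi_x*\chi_y=\chi_{x\lozenge y}$ with $x\lozenge y=\sum_{m\ge1}x_m\circ_{1,\dots,m}(y,\dots,y)$, as claimed. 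For $1(c)$, the monoid isomorphism is transported from the bialgebra isomorphism $\Psi_\bfP$ of proposition~\ref{prop52} (respectively $\psi_\bfP$ of corollary~\ref{cor53}): taking characters gives an isomorphism between $(\overline{\bfP},\lozenge)$ and $(\overline{\bfP},\lozenge')$, explicitly the shift acting only on the $\bfP(1)$-component through $f\mapsto f-\varepsilon_0(f)1$.

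For $1(d)$, the canonical surjection $\bfP\twoheadrightarrow coinv\bfP$ is a morphism of pre-Lie (hence $\petitbinfini$) algebras by the corollary following corollary~\ref{cor38}, so it induces a surjection of the associated character monoids; applying corollary~\ref{cor32}(1) to the $0$-bounded pre-Lie algebra $coinv\bfP$, together with the fact that $D^*_\bfP=S(inv\bfP^*)$ is the graded dual of $D_\bfP=S(coinv\bfP)$, identifies $M^D_\bfP$ with $(\overline{coinv\bfP},\lozenge)$. For Part~2 I would use that $\bfB^*_\bfP=\bfD^*_\bfP/I_0$ and $B^*_\bfP$ are graded connected, hence Hopf algebras, so their characters form groups. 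The characters of $\bfB^*_\bfP$ are precisely those $\chi_x$ of $\bfD^*_\bfP$ vanishing on $I_0$, i.e. with $\chi_x(f)=f(I)$ for all $f\in\bfP^*(1)$; this is the submonoid $\{x:x_0=0,\ x_1=I\}=\{I+z:z\in\overline{\bfP_+}\}$ of $(\overline{\bfP},\lozenge)$. Substituting $x=I+z$, $y=I+w$ into $\lozenge$ and using $I\circ(I+w)=I+w$ gives the group law $z\lozenge w=w+\sum_{n\ge1}z_n\circ(I+w,\dots,I+w)$ on $\overline{\bfP_+}$; part $2(b)$ then follows from the pre-Lie surjection $\bfP_+\twoheadrightarrow coinv\bfP_+$ exactly as in $1(d)$, with $B^*_\bfP$ dual to $B_\bfP=S(coinv\bfP_+)$.

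The conceptual skeleton is light, since theorem~\ref{theo31} and corollary~\ref{cor32} already supply the monoid and the passages to abelianizations and to connected quotients are formal. I expect the main obstacle to be the bookkeeping in $1(b)$: one must track carefully how $\Delta_*$, defined through the transpose $\delta$ of $\circ$ and the pairing $\ll-,-\gg$, reorganizes the homogeneous components of $x$ and the $m$ copies of $y$ so as to produce exactly $\sum_m x_m\circ(y,\dots,y)$ with no spurious unit contribution, together with the parallel verification in Part~2 that restricting to $\{x_1=I\}$ is precisely the substitution $y\mapsto I+y$ and is closed under $\lozenge$. These index-level checks are where the stated formulas could be thrown off.
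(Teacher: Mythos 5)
Your proof is correct and follows essentially the same route as the paper's: corollary \ref{cor32} applied to the $0$-bounded pre-Lie (hence $\petitbinfini$) structure of $\bfP$ for 1(a), a direct evaluation of the convolution of characters against the transposed composition $\delta$ for 1(b), transport of characters through the isomorphism $\Psi_\bfP$ of proposition \ref{prop52} for 1(c), and the duality between $inv\bfP^*$ and $coinv\bfP$ for 1(d) and 2(b). The only variation is in 2(a), where you obtain the group law on $\overline{\bfP_+}$ by restricting $(\overline{\bfP},\lozenge)$ to the characters annihilating $I_0$, i.e.\ to $\{I+z\mid z\in \overline{\bfP_+}\}$, whereas the paper reapplies corollary \ref{cor32} directly to the connected algebra $\bfP_+$; both yield the same formula, and your version is consistent with the remark following the proposition which embeds $G_\bfP^B$ into $(M_\bfP^D,\lozenge)$ by $x\mapsto I+x$.
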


\begin{proof}
1. We use the description of the monoid of characters of $(\bfD'_\bfP)_{ab}=(S(\bfP^*),*,\Delta_*)$ from corollary \ref{cor32};
its product $\lozenge'$ is given by:
\begin{align*}
x\lozenge' y&=y+x\bullet e^y\\
&=y+\sum_{n\geq 1}\sum_{\substack{1\leq i_1,\ldots,i_k\leq n,\\ \mbox{\scriptsize all distincts},\\ j_1,\ldots,j_k\geq 0}}
\prod_{l=0}^\infty \frac{1}{\sharp\{p\mid j_p=l\}!}x\circ_{i_1,\ldots, i_k}(y_{j_1},\ldots,y_{j_k})\\
&=y+\sum_{n\geq 1}\sum_{\substack{1\leq i_1<\ldots<i_k\leq n,\\ j_1,\ldots,j_k\geq 0}}x\circ_{i_1,\ldots, i_k}(y_{j_1},\ldots,y_{j_k})\\
&=y+\sum_{n\geq 1}\sum_{1\leq i_1<\ldots<i_k\leq n}x_n \circ_{i_1,\ldots,i_k}(y,\ldots,y).
\end{align*}

For all $f\in \bfP^*$, we shall use the following notation for the transposition of the operadic composition:
$$\delta(f)=\sum_n f_n^{(1)}\otimes f_1^{(2)}\ldots f_n^{(2)}.$$
We identify the monoids of characters of $\bfD_\bfP$ with $\overline{\bfP}$ by the map:
$$\phi:\left\{\begin{array}{rcl}
\overline{\bfP}&\longrightarrow&M_\bfP^D\\
x&\longrightarrow&\left\{\begin{array}{rcl}
\bfD_\bfP&\longrightarrow&\K\\
f_1\ldots f_k&\longrightarrow&f_1(x)\ldots f_k(x).
\end{array}\right.\end{array}\right.$$
Let $x,y\in \overline{\bfP}$. For all $f\in \bfP^*$:
\begin{align*}
\phi_x*\phi_y(f)&=(\phi_x\otimes \phi_y)\circ \Delta_*(f)\\
&=(\phi_x\otimes \phi_y)\circ \delta_(f)\\
&=\sum_n f^{(1)}_n(x) f^{(2)}_1(y)\ldots f^{(2)}_n(y)\\
&=\sum_n f^{(1)}_n(x_n) f^{(2)}_1(y)\ldots f^{(2)}_n(y)\\
&=f(x_n\circ (y,\ldots,y))\\
&=\phi_{x\lozenge y}(f).
\end{align*}
As $\bfP^*$ generates $\bfD^*_\bfP$, $\phi_x*\phi_y=\phi_{x\lozenge y}$.
As $\bfD'_\bfP$ and $\bfD_\bfP^*$ are isomorphic, their monoids of characters are isomorphic. 
As $D_\bfP^*=S(inv\bfP^*)$ is a subbialgebra of $(\bfD_\bfP)_{ab}=S(\bfP^*)$, and as the graded dual of $inv\bfP^*$
is identified with $coinv\bfP$, the group of characters of $D_\bfP^*$ is identified with $\overline{coinv\bfP}$,
quotient of $(\overline{\bfP},\lozenge)$.  \\
  
 2. We use corollary \ref{cor32}. We obtain:
\begin{align*}
x\lozenge y&=y+x\bullet e^y\\
&=y+\sum_{n=1}^\infty \sum_{\substack{1\leq i_1,\ldots,i_k\leq n,\\ \mbox{\scriptsize all distincts},\\ j_1,\ldots,j_k\geq 1}}
\prod_{l=0}^\infty \frac{1}{\sharp\{p\mid j_p=l\}!}x\circ_{i_1,\ldots, i_k}(y_{j_1},\ldots,y_{j_k})\\
&=y+\sum_{n\geq 1}\sum_{\substack{1\leq i_1<\ldots<i_k\leq n,\\ j_1,\ldots,j_k\geq 1}}x\circ_{i_1,\ldots, i_k}(y_{j_1},\ldots,y_{j_k})\\
&=y+\sum_{n\geq 1}\sum_{1\leq i_1<\ldots<i_k\leq n}x_n \circ_{i_1,\ldots,i_k}(y,\ldots,y)\\
&=y+\sum_{n\geq 1} x\circ (I+y,\ldots,I+y).
\end{align*}
As $B_\bfP^*=S(inv\bfP^*_+)$ is a Hopf subalgebra of $(\bfB^*_\bfP)_{ab}=S(\bfP^*_+)$, and as the graded dual of $inv\bfP_+^*$
is identified with $coinv\bfP_+$, the group of characters of $B_\bfP^*$ is identified with $\overline{coinv\bfP_+}$,
quotient of $(\overline{\bfP_+},\lozenge)$. \end{proof}\\

\textbf{Remarks.} \begin{enumerate}
\item $G_\bfP^B$ can be seen a submonoid of $(M_\bfP^D,\lozenge)$ and $(M_\bfP^D,\lozenge')$, via the injections:
\begin{align*}
&\left\{\begin{array}{rcl}
G_\bfP^B&\longrightarrow&(M_\bfP^D,\lozenge)\\
x&\longrightarrow&I+x,
\end{array}\right.&
&\left\{\begin{array}{rcl}
G_\bfP^B&\longrightarrow&(M_\bfP^D,\lozenge')\\
x&\longrightarrow&x.
\end{array}\right.\end{align*}
\item Using the isomorphism $\psi_\bfP$, we obtain an explict isomorphism of monoids:
\begin{align*}
\left\{\begin{array}{rcl}
(M_\bfP^D,\lozenge')&\longrightarrow&(M_\bfP^D,\lozenge)\\
x&\longrightarrow&x+I,
\end{array}\right.&
&\left\{\begin{array}{rcl}
(M_\bfP^D,\lozenge)&\longrightarrow&(M_\bfP^D,\lozenge')\\
x&\longrightarrow&x-I.
\end{array}\right. \end{align*}

\end{enumerate}

\begin{defi} 
Let $V$ be a vector space and $\bfP$ be an operad. We put:
\begin{align*}
M^D_\bfP(V)&=M^D_{\bfP\otimes \bfC_V},&G^B_\bfP(V)&=G^B_{\bfP\otimes \bfC_V}.
\end{align*}\end{defi}

Let $\theta_\bfP:\petitbinfini\longrightarrow\bfP$ be an operad morphism. We obtain:
\begin{itemize}
\item A group structure on $G^A_\bfP(V)=\overline{F_\bfP(V)}$, given by:
$$\forall x,y\in G^A_\bfP(V),\: x\blacklozenge y=\lfloor e^x,e^y\rfloor.$$
In particular, if the morphism $\theta_\bfP$ is trivial, that is to say:
$$\theta_\bfP(\lfloor-,-\rfloor_{k,l})
=\begin{cases} 
I\mbox{ if }(k,l)=(1,0)\mbox{ or }(0,1),\\
0\mbox{ otherwise},
\end{cases}$$
then $x\blacklozenge y=x+y$.
\item The monoid $M^D_\bfP=(\overline{coinv(\bfP\otimes \bfC_V)},\lozenge)$ and the group
$G^B_\bfP=(\overline{coinv(\bfP\otimes \bfC_V)_+},\lozenge)$.
\item By theorem \ref{theo46} and proposition \ref{prop36}, there exists right actions $\vartriangleleft$ of $(M^D_\bfP(V),\diamond)$ 
and  $\vartriangleleft'$ of $(M^D_\bfP(V),\diamond')$ on $G^A_\bfP(V)$ by group endomorphisms; 
by restriction, there exists a right action $\vartriangleleft$ of $G^B_\bfP(V)$ on $G^A_\bfP(V)$ by group automorphisms.
\end{itemize}
Let us first describe these two actions.

\begin{prop}
The vector spaces $M^D_\bfP(V)$ and $\overline{F_\bfP(V)}\otimes V^*$ are canonically isomorphic.
For all $x=\sum p_n.(x_1,\ldots,x_n) \in G^A_\bfP(V)$, for all $y=q\otimes f \in M^D_\bfP(V)$, with $q\in \overline{F_\bfP(V)}$ and $f\in V^*$:
\begin{align*}
x\vartriangleleft' y&=\sum_{n\geq 1} \sum_{1\leq i_1<\ldots <i_k \leq n}
f(x_{i_1})\ldots f(x_{i_k}) p_n.(x_1,\ldots, x_{i_1-1},q,x_{i_1+1},\ldots,x_{i_k-1},q,x_{i_k+1},\ldots,x_n),\\
x\vartriangleleft y&=\sum_{n\geq 1} f(x_1)\ldots f(x_n) p.(q,\ldots,q).
\end{align*}
\end{prop}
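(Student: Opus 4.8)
The plan is to prove the three assertions in order: first the identification of vector spaces, then the formula for $\vartriangleleft'$, and finally the one for $\vartriangleleft$. First I would start from $M^D_\bfP(V)=\overline{coinv(\bfP\otimes \bfC_V)}$ (Definition \ref{defi44}) and compute the coinvariants degree by degree. Since $V$ is finite-dimensional, $\bfC_V(n)=End_\K(V,V^{\otimes n})\cong V^*\otimes V^{\otimes n}$, and by the description of the $\mathfrak{S}_n$-action in Proposition \ref{prop42} the symmetric group acts trivially on the $V^*$-factor and by permutation of the tensorands on $V^{\otimes n}$. Hence $(\bfP\otimes\bfC_V)(n)\cong V^*\otimes(\bfP(n)\otimes V^{\otimes n})$ as $\mathfrak{S}_n$-modules, so taking coinvariants gives $coinv(\bfP\otimes\bfC_V)(n)\cong V^*\otimes(\bfP(n)\otimes_{\mathfrak{S}_n}V^{\otimes n})$, the second factor being the degree-$n$ part of $F_\bfP(V)$. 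Taking the product over $n$ and pulling the finite-dimensional factor $V^*$ out of the product yields the canonical isomorphism $M^D_\bfP(V)\cong \overline{F_\bfP(V)}\otimes V^*$. Explicitly, a class $\overline{q\otimes g}$ with $q\in\bfP(m)$ and $g=f\otimes(w_1\ldots w_m)$ (that is, $g(v)=f(v)\,w_1\ldots w_m$) corresponds to $q.(w_1,\ldots,w_m)\otimes f$; this is the substitution that will convert the operadic insertions of Proposition \ref{prop43} into leaf-replacements in $F_\bfP(V)$.

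The action $\vartriangleleft'$ is the one attached to the monoid $(M^D_\bfP(V),\lozenge')$, and by Proposition \ref{prop36} (together with Corollary \ref{cor32}, since $\lozenge'$ is exactly the pre-Lie product $x\lozenge y=y+x\bullet e^y$) it is the exponential pre-Lie action $x\vartriangleleft' y=x\leftarrowtail e^y$. I would expand $e^y=\sum_{k\ge 0}\frac1{k!}y^k$ in the completed symmetric algebra and evaluate each term $x\leftarrowtail y^k$ using Lemma \ref{lem45}(2): with all factors equal to $y=\overline{q\otimes f}$ one obtains a sum over ordered tuples of distinct positions $(i_1,\ldots,i_k)$, which equals $k!$ times the sum over subsets $i_1<\ldots<i_k$. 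Combined with the identification above (so that $p\circ_{i_1,\ldots,i_k}(q,\ldots,q).(x_1\ldots f(x_{i_1})\ldots f(x_{i_k})\ldots x_n)$ becomes $f(x_{i_1})\ldots f(x_{i_k})\,p_n.(x_1,\ldots,q,\ldots,q,\ldots,x_n)$ via the $\bfP$-algebra composition axiom), the factor $k!$ cancels the $\frac1{k!}$ coming from $e^y$ and produces exactly the announced formula for $x\vartriangleleft' y$.

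For $\vartriangleleft$, attached to the monoid $(M^D_\bfP(V),\lozenge)$ of characters of $\bfD^*_\bfP$ through Theorem \ref{theo46} and Corollary \ref{cor51}, I would argue at the level of characters as in the proofs of Theorem \ref{theo31} and Proposition \ref{prop54}. The key point is that $\lozenge$ is governed by the transpose of the \emph{full} operadic composition $\circ:\bfP(n)\otimes\bfP^{\otimes n}\to\bfP$, in which $y$ is substituted in all $n$ slots simultaneously; transporting this to the module $M=\bigoplus_n\bfP(n)\otimes V^{\otimes n}$ and pairing against the grouplike element $e^x$ replaces every leaf $x_i$ of $p_n.(x_1,\ldots,x_n)$ by $q$, the $\bfC_V$-part contributing the scalar $f(x_i)$ at each leaf, which reproduces $x\vartriangleleft y=\sum_{n\ge1}f(x_1)\ldots f(x_n)\,p.(q,\ldots,q)$. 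I expect this last step to be the main obstacle: one must carefully reconcile the two combinatorial regimes — selection of a subset of positions for $\vartriangleleft'$ versus occupation of all positions for $\vartriangleleft$ — with the two monoid structures $\lozenge'$ and $\lozenge$, verify that the $\mathfrak{S}_n$-coinvariant identification of the first paragraph is compatible with the operadic composition so that all insertions land correctly in $F_\bfP(V)$, and control the factorial bookkeeping hidden in the exponentials $e^x$ and $e^y$.
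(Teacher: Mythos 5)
Your identification of $M^D_\bfP(V)$ with $\overline{F_\bfP(V)}\otimes V^*$ and your derivation of the formula for $\vartriangleleft'$ are exactly the paper's argument (the paper is terser: it simply writes $coinv(\bfP\otimes\bfC_V)(n)=(\bfP(n)\otimes_{\mathfrak{S}_n}V^{\otimes n})\otimes V^*$ and invokes proposition \ref{prop36} for $x\vartriangleleft' y=x\leftarrowtail e^y$, leaving the $k!$-cancellation and the use of the explicit brace-action formula from the proof of lemma \ref{lem45} implicit). Where you diverge is the second formula: you propose to compute $\vartriangleleft$ directly by dualizing the full operadic composition at the level of characters, whereas the paper deduces it from the first formula by transporting along the isomorphism $D_\bfP(V)\cong D'_\bfP(V)$ of corollary \ref{cor53}, which on monoids is $y\mapsto y+I$, so that $x\vartriangleleft y=x\vartriangleleft'(y-I)$; since $I$ corresponds to $\sum_j x_j\otimes\epsilon_j$ under the identification, inserting $y-I$ at a subset of slots and summing collapses by inclusion--exclusion to ``insert $q$ at every slot'', which is the announced formula. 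Your direct route is viable, but it is the point you rightly flag as the obstacle: one must work with the pairing $\ll-,-\gg$ built from the series $J=\sum I^n$ rather than the naive pairing $\ll-,-\gg'$, and it is precisely the $J$-insertions that reconcile the ``all slots occupied'' regime of $\lozenge$ with the ``subset of slots'' regime of $\lozenge'$. The paper's detour through $y\mapsto y+I$ buys you exactly this reconciliation for free, once the first formula is established; your route would instead have to redo, for the module, the character computation of proposition \ref{prop54}. Either way the statement is proved, so the proposal is sound, with the caveat that its last third is a plan rather than a completed argument.
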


\begin{proof} We naturally identify $End_\K(V,V^{\otimes n} )$ with $V^{\otimes n}\otimes V^*$. For all $n\geq 1$:
$$coinv\bfP\otimes \bfC_V(n)=\bfP(n)\otimes_{\mathfrak{S}_n} (V^{\otimes n}\otimes V^*)
=(\bfP(n)\otimes_{\mathfrak{S}_n} V^{\otimes n})\otimes V^*=F_\bfP(V)(n)\otimes V^*,$$
so:
$$M^D_\bfP(V)=\prod_{n\geq 1} F_\bfP(V)(n)\otimes V^*=
\left(\prod_{n\geq 1} F_\bfP(V)(n)\right)\otimes V^*=\overline{F_\bfP(V)}\otimes V^*.$$
The first formula comes from proposition \ref{prop36}, as $x\vartriangleleft' y=x\leftarrowtail e^y$.
The second formula is obtained by the application of the isomorphism between $D_\bfP(V)$ and $D'_\bfP(V)$, inducing the isomorphism
between $(M^D_\bfP(V),\lozenge')$ and $(M^D_\bfP(V),\lozenge)$. \end{proof}\\

The graduation of $F_\bfP(V)$ induces a distance $d$ on $F_\bfP(V)$: denoting $val$ the valutation associated to this graduation,
$$\forall x,y\in F_\bfP(V),\: d(x,y)=2^{-val(x-y)}.$$

\begin{prop}\label{prop57}
For all $y\in M^D_\bfP(V)$, we consider:
$$\phi_y:\left\{\begin{array}{rcl}
\overline{F_\bfP(V)}&\longrightarrow&\overline{F_\bfP(V)}\\
x&\longrightarrow&\phi_y(x)=x\vartriangleleft y.
\end{array}\right.$$
Then $\phi_y$ is a continuous endomorphism of $\bfP$-algebras. Moreover:
$$\forall y,z\in M^D_\bfP(V),\:\phi_y\circ \phi_z=\phi_{z\lozenge y}.$$
\end{prop}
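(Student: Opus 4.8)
The plan is to verify the three assertions — that $\phi_y$ is a $\bfP$-algebra endomorphism, that it is continuous, and that $\phi_y\circ\phi_z=\phi_{z\lozenge y}$ — in that order, leaning on the structural results already established rather than on direct manipulation of the explicit formulas. For the endomorphism property, the key observation is that $\phi_y(x)=x\vartriangleleft y=x\leftarrowtail e^y$ is, by proposition \ref{prop36}, nothing but the action of the group element $e^y$ of the completed enveloping algebra on $\overline{F_\bfP(V)}$. By lemma \ref{lem45}(2), $F_\bfP(V)$ is a $\bfP$-algebra in the category of $D_\bfP(V)$-modules, meaning that for $p\in\bfP(n)$ and $v_1,\ldots,v_n\in F_\bfP(V)$,
$$p.(v_1,\ldots,v_n)\leftarrowtail Q=p.(v_1\leftarrowtail Q^{(1)},\ldots,v_n\leftarrowtail Q^{(n)}).$$
Applying this with $Q=e^y$ and using that $e^y$ is group-like, so $\Delta(e^y)=e^y\otimes\cdots\otimes e^y$, gives $\phi_y(p.(v_1,\ldots,v_n))=p.(\phi_y(v_1),\ldots,\phi_y(v_n))$ after passing to the completion. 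So $\phi_y$ respects the $\bfP$-algebra operations; this is the heart of the first claim and it is essentially a corollary of the already-proved lemma.

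For continuity, I would argue with the graduation: since $\bfC_V$ and $\bfP$ are graded and the action $\leftarrowtail$ raises or preserves degree in the appropriate sense (each homogeneous component of $e^y$ acts by the operadic composition, which is graded), $\phi_y$ sends $F_\bfP(V)(n)$ into $\prod_{m\geq n}F_\bfP(V)(m)$, hence does not decrease valuation. Thus $val(\phi_y(x)-\phi_y(x'))\geq val(x-x')$, and with $d(x,x')=2^{-val(x-x')}$ this yields $d(\phi_y(x),\phi_y(x'))\leq d(x,x')$, so $\phi_y$ is (Lipschitz, hence) continuous for the distance $d$. This step is routine once the grading behavior of the action is recorded.

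The composition identity is where the bookkeeping concentrates, and I expect it to be the main obstacle, chiefly because of the order reversal $\phi_y\circ\phi_z=\phi_{z\lozenge y}$. The clean route is again through proposition \ref{prop36} and theorem \ref{theo31}: the assignment $y\mapsto\phi_y$ is the action of $\overline{V}$ (here $\overline{coinv(\bfP\otimes\bfC_V)}$) on $\overline{F_\bfP(V)}$ coming from transposing the $D_\bfP(V)$-coaction on the dual. Since $(m\leftarrowtail u)\leftarrowtail v=m\leftarrowtail(u*v)$ for the enveloping-algebra product $*$, one gets $\phi_z\circ\phi_y$ corresponding to $e^z*e^y$ on the module side, while the monoid law $\lozenge$ on $M^D_\bfP(V)$ was defined in theorem \ref{theo31} precisely by $z\lozenge y=\lfloor e^z,e^y\rfloor=\pi(e^z*e^y)$, i.e. $\varphi_z*\varphi_y=\varphi_{z\lozenge y}$ on characters. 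The transposition that identifies characters with elements of $\overline{V}$ reverses the order of composition of the corresponding endomorphisms — a right module over $(T(V^*),*)$ transposes to a left action, or equivalently the endomorphisms compose contravariantly — which is exactly why $\phi_y\circ\phi_z=\phi_{z\lozenge y}$ rather than $\phi_{y\lozenge z}$. I would make this precise by writing both sides as evaluations of $\varphi$ against the relevant group-like elements and invoking the associativity $(m\leftarrowtail e^z)\leftarrowtail e^y=m\leftarrowtail(e^z*e^y)$ together with the defining formula for $\lozenge$; checking that the order comes out correctly, and that one may legitimately work in the completion where $e^y$ lives, is the only genuinely delicate point.
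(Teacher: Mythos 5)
Your proposal is correct and follows essentially the same route as the paper: the morphism property comes from lemma \ref{lem45} applied to the group-like element $e^y$, continuity from the fact that the action does not decrease the valuation, and the composition law from the right-module identity $(x\vartriangleleft z)\vartriangleleft y=x\vartriangleleft(z\lozenge y)$ supplied by proposition \ref{prop36} and theorem \ref{theo31}. (One intermediate sentence asserts that $\phi_z\circ\phi_y$ corresponds to $e^z*e^y$, where the identity $(m\leftarrowtail e^z)\leftarrowtail e^y=m\leftarrowtail(e^z*e^y)$ actually gives $\phi_y\circ\phi_z$; your final stated conclusion $\phi_y\circ\phi_z=\phi_{z\lozenge y}$ is nevertheless the correct one.)
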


\begin{proof}   For all $x\in G^A_\bfP(V)$, $y\in M^D_\bfP(V)$, $val(x\vartriangleleft y)\geq val(x)$, so $\phi_y$ is continuous. \\

Un to the isomorphism between $(M^D_\bfP(V),\diamond)$ and $(M^D_\bfP(V),\diamond')$, we work with the action $\vartriangleleft'$.
For all $y\in M^D_\bfP(V)$, $x\in \overline{F_\bfP(V)}$, we put $\phi_y(x)=x\vartriangleleft y$.
By lemma \ref{lem45}, for all $p\in \bfP(n)$, for all $v_1,\ldots,v_n \in F_\bfP(V)$, as $y$ is primitive, $e^y$ is a group-like element and:
\begin{align*}
\phi'_y(p.(v_1,\ldots,v_n))&=p.(v_1,\ldots,b_n)\leftarrowtail e^y\\
&=p.(v_1\leftarrowtail e^y,\ldots,v_n\leftarrowtail e^y)\\
&=p.(\phi'_y(v_1),\ldots,\phi'_y(v_n)).
\end{align*}
By continuity of $\phi'_y$, this is still true if $v_1,\ldots,v_n \in \overline{F_\bfP(V)}$, so $\phi'_y$ is indeed a continuous morphism
of $\bfP$-algebras. Up to an automorphism, this is also the case for $\phi_y$.\\

For all $x \in G^A_\bfP(V)$, $y,z\in M^D_\bfP(V)$:
\begin{align*}
\phi_y\circ \phi_z(x)&=(x\vartriangleleft z)\vartriangleleft y=x\vartriangleleft (z\lozenge y)=\phi_{z\lozenge y}(x),
\end{align*}
so $\phi_y\circ \phi_z=\phi_{z\lozenge y}$. \end{proof}\\

\textbf{Notations.} \begin{enumerate}
\item We denote by $\calM_\bfP(V)$ the monoid of continuous $\bfP$-algebra endomorphisms of $\overline{F_\bfP(V)}$
and by $\calG_\bfP(V)$ the group of continuous $\bfP$-algebra automorphisms of $\overline{F_\bfP(V)}$.
\item We put:
$$\overline{F_\bfP(V)}_2=\prod_{n\geq 2} \bfP(n). V^{\otimes n}.$$
If $\phi\in \calM_\bfP(V)$, then $\phi(\overline{F_\bfP(V)}_2)\subseteq \overline{F_\bfP(V)}_2$, so $\phi$ induces a map:
$$\phi':\left\{\begin{array}{rcl}
V=\overline{F_\bfP(V)}/\overline{F_\bfP(V)}_2&\longrightarrow&\overline{F_\bfP(V)}/\overline{F_\bfP(V)}_2\\
\overline{v}&\longrightarrow&\overline{\phi(v)}.
\end{array}\right.$$
We obtain in this way a monoid morphism $\varpi:\calM_\bfP(V)\longrightarrow End_\K(V)$,
and by restriction a group morphism $\varpi:\calG_\bfP(V)\longrightarrow GL(V)$.
The kernel of this morphism is denoted by $\calG_\bfP^{(1)}(V)$: this is the group of continuous automorphisms of $\overline{F_\bfP(V)}$
tangent to the identity. 
\item If $\varphi \in GL(V)$, let $\iota(\varphi)$ be the unique continuous endomorphism of $\bfP$-algebras of $\overline{F_\bfP(V)}$
such that $\iota(\varphi)(x)=\varphi(x)$ for all $x\in V$. Then $\iota:GL(V)\longrightarrow \calG_\bfP(V)$ is a group morphism,
such that $\varpi \circ \iota=Id_{GL(V)}$. So:
$$\calG_\bfP(V)=\calG_\bfP^{(1)}(V)\rtimes GL(V).$$
\end{enumerate}

\begin{prop}\label{prop58}
\begin{enumerate}
\item The morphism $\phi$ is an anti-isomorphism from $M^D_\bfP(V)$ to $\calM_\bfP(V)$.
\item Its restriction to $G^D_\bfP(V)$ is an anti-isomorphism from $G^B_\bfP(V)$ to $\calG^{(1)}_\bfP(V)$.
\end{enumerate} \end{prop}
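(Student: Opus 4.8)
The plan is to extract from Proposition \ref{prop57} everything except bijectivity: that proposition already shows that each $\phi_y$ is a continuous $\bfP$-algebra endomorphism (so $\phi$ indeed maps into $\calM_\bfP(V)$) and that $\phi_y\circ\phi_z=\phi_{z\lozenge y}$ (so $\phi$ is an anti-homomorphism of monoids). Thus the whole content of part (1) is to check that $\phi$ is a bijection, and I would do this by transporting $\phi$ to a tautological identification.

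First I would identify $\calM_\bfP(V)$ with $\mathrm{Hom}_\K(V,\overline{F_\bfP(V)})$ via restriction to the generating space $V=F_\bfP(V)(1)$. Injectivity of $\psi\mapsto\psi_{|V}$ is clear since $V$ generates $F_\bfP(V)$ as a $\bfP$-algebra and $\psi$ is continuous; for surjectivity I would invoke the universal property of $F_\bfP(V)$ in completed form. Given a linear map $g\colon V\to\overline{F_\bfP(V)}$, the assignment $p.(v_1,\ldots,v_n)\mapsto p.(g(v_1),\ldots,g(v_n))$ defines a $\bfP$-algebra morphism; because applying $p\in\bfP(n)$ to $n$ elements of degree $\geq 1$ yields an element of degree $\geq n$, this morphism never lowers degree, hence extends continuously to $\overline{F_\bfP(V)}$, giving an element of $\calM_\bfP(V)$ restricting to $g$. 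So restriction to $V$ is a bijection $\calM_\bfP(V)\simeq\mathrm{Hom}_\K(V,\overline{F_\bfP(V)})$.

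Next I would match this with $M^D_\bfP(V)$. By the proposition preceding Proposition \ref{prop57}, $M^D_\bfP(V)=\overline{F_\bfP(V)}\otimes V^*=\mathrm{Hom}_\K(V,\overline{F_\bfP(V)})$ (here $V$ is finite-dimensional), and its action formula gives, for $y=q\otimes f$ and $v\in V$, that $\phi_y(v)=f(v)\,q$; thus $\phi_{y\,|V}$ is exactly the linear map attached to $y$ under this identification. Consequently the composite $y\mapsto\phi_y\mapsto\phi_{y\,|V}$ is the tautological identification $M^D_\bfP(V)=\mathrm{Hom}_\K(V,\overline{F_\bfP(V)})$, so $\phi$ is bijective and, together with Proposition \ref{prop57}, an anti-isomorphism. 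This settles (1).

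For (2) I would track the arity-one (linear) part. Under $M^D_\bfP(V)=\prod_{n\geq 1}F_\bfP(V)(n)\otimes V^*$, the arity-one component is $F_\bfP(V)(1)\otimes V^*=\mathrm{End}_\K(V)$, and it coincides with $\varpi(\phi_y)$, the map induced by $\phi_y$ on $V=\overline{F_\bfP(V)}/\overline{F_\bfP(V)}_2$. The operad unit $I$ corresponds to $\mathrm{id}_V$, and $G^B_\bfP(V)=\overline{coinv(\bfP\otimes\bfC_V)_+}$ sits in arities $\geq 2$; hence the embedding $G^B_\bfP(V)\hookrightarrow(M^D_\bfP(V),\lozenge)$, $x\mapsto I+x$, of Proposition \ref{prop54} has image exactly $\{y:\ \varpi(\phi_y)=\mathrm{id}_V\}$. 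Composing this monoid embedding with the anti-isomorphism $\phi$, I get an injective anti-homomorphism $G^B_\bfP(V)\to\calM_\bfP(V)$ with image $\{\psi\in\calM_\bfP(V):\varpi(\psi)=\mathrm{id}_V\}$. It remains to identify this image with $\calG_\bfP^{(1)}(V)$: any such $\psi$ is a $\bfP$-algebra morphism with $\psi=\mathrm{id}+N$, $N$ strictly degree-raising, so $\psi$ is invertible with continuous inverse $\sum_{k\geq 0}(-N)^k$ (alternatively, invertibility is automatic since $G^B_\bfP(V)$ is a group by Proposition \ref{prop54} and $\phi_I=\mathrm{id}$), and the inverse of a bijective $\bfP$-algebra morphism is again one; thus $\psi\in\calG_\bfP^{(1)}(V)$, while conversely every element of $\calG_\bfP^{(1)}(V)$ clearly satisfies $\varpi=\mathrm{id}_V$. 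Hence $\phi$ restricts to an anti-isomorphism $G^B_\bfP(V)\to\calG_\bfP^{(1)}(V)$.

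The main obstacle I anticipate is the completed universal property used in the second paragraph: making precise that degree-raising simultaneously guarantees the convergence needed to extend $g$ to the completion and the continuity of that extension, so that restriction to $V$ is genuinely a bijection $\calM_\bfP(V)\simeq\mathrm{Hom}_\K(V,\overline{F_\bfP(V)})$. Once $\phi$ is transported to the tautological identification, both parts are essentially bookkeeping with the arity grading.
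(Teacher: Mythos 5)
Your proof is correct and follows essentially the same route as the paper: use Proposition \ref{prop57} for the anti-homomorphism and endomorphism properties, reduce bijectivity to restriction to $V$ under the identification $M^D_\bfP(V)=\overline{F_\bfP(V)}\otimes V^*$, and read off part (2) from the arity-one component. The only place you work harder than necessary is surjectivity: the paper sidesteps the completed universal property you flag as the main obstacle by setting $y=\sum \phi(x_i)\otimes x_i^*$ for a given $\phi\in\calM_\bfP(V)$ and observing that $\phi_y$ is \emph{already known} to be a continuous $\bfP$-algebra endomorphism agreeing with $\phi$ on $V$, hence equal to it --- so only injectivity of restriction-to-$V$ is needed, a fact your composite argument delivers anyway.
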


\begin{proof} By proposition \ref{prop57}, $\phi$ is antimorphism from $M^B_\bfP(V)$ to $\calM_\bfP(V)$.
Let $y=q\otimes f \in M^D_\bfP(V)$, with $q\in \overline{F_\bfP(V)}$ and $f\in V^*$. For all $x\in V$,
$$\phi_y(x)=f(x)q,$$
so $\phi$ is injective. Let $\phi\in \calM_\bfP(V)$. We fix a basis $(x_i)_{i\in I}$ of $V$. For all $i\in I$,
we put $q_i=\phi(x_i)$, and $y=\sum q_j\otimes x_j^*$. For all $i\in I$:
$$\phi_y(x_i)=\sum_j x_j^*(x_i) q_j=q_i=\phi(x_i).$$
As both $\phi_y$ and $\phi$ are continuous morphisms of $\bfP$-algebras, they are equal, so $\phi$ is surjective.

Moreover, for all $y=q\otimes f\in M^D_\bfP(V)$:
\begin{align*}
\phi_y\in \calG^{(1)}_\bfP(V)&\Longleftrightarrow \forall x\in V,\: \phi_y(x)-x \in \overline{F_\bfP(V)}_2\\
&\Longleftrightarrow \forall x\in V, f(x)q-x\in \overline{F_\bfP(V)}_2\\
&\Longleftrightarrow y_1=Id_V\\
&\Longleftrightarrow y\in G^B_\bfP(V).
\end{align*}
So $\phi$ induces an anti-isomorphism from $G^B_\bfP(V)$ to $\calG^{(1)}_\bfP(V)$. \end{proof}

\section{Set bases}

In numerous cases, there exists,  for all $n \geq 1$, a basis $\calB_n$ of $\bfP(n)$ such that for for all $x\in \calB_n$, for all $\sigma\in \mathfrak{S}_n$,
$x^\sigma \in \calB_n$. We shall say in this case that the basis $\calB=\bigsqcup \calB_n$ of $\bfP$ is a set basis, and we shall identify 
the vector spaces $\bfP$ and $\bfP^*$ through the map:
$$\left\{\begin{array}{rcl}
\bfP&\longrightarrow&\bfP^*\\
x\in \calB&\longrightarrow&x^*\in \calB^*.
\end{array}\right.$$
We do not expect these bases to be stable under the operadic composition (we shall not always work here with set operads).
For example, $(e_n)_{n\geq 1}$ and $(\sigma)_{\sigma \in \sqcup \mathfrak{S}_n}$ are set bases of respectively $\com$ and $\ass$.\\

Let us consider a set basis $\calB$ of $\bfP$.
\begin{itemize}
\item For all $n\geq 1$, we denote by $\calO_n$ the set of orbits of the action of $\mathfrak{S}_n$ on $\calB_n$.
This is the set of isoclasses of elements of $\calB_n$.
\item For all $x\in \calB_n$, its orbit will be denoted by $\widehat{x}\in \calO_n$.
\item For all $\omega \in \calO_n$, we denote by $s_\omega$ the quotient cardinal of $N!/|\omega|$ (number of symmetries of $\omega$).
\end{itemize}
Let us fixe a system $(x_\omega)_{\omega \in \calO_n}$ of representants of the orbits. For all $\omega \in \calO_n$, we denote by
$\overline{\omega}$ the class of $x_\omega$ in $coinv\bfP(n)$: this does not depend of the choice of $x_\omega$, and 
$(\overline{\omega})_{\omega \in \calO_n}$ is a basis of $coinv(\bfP)$.
We denote by $\calB^*_n$ the dual basis of $\bfP^*(n)$. For all $\omega \in \calO_n$, we put:
$$f_\omega=\sum_{\sigma \in \mathfrak{S}_n}  (x_\omega^{\sigma})^*=s_\omega \sum_{x\in \omega} x^*.$$
Then $(f_\omega)_{\omega \in \sqcup \calO_n}$ is a basis of $inv\bfP^*$.
For all $\omega$, $\omega'\in \calO_n$:
$$f_\omega(\overline{\omega'})=\delta_{\omega,\omega'}  s_\omega.$$

We now fix a finite-dimensional vector space $V$. Let us choose a basis $(X_1,\ldots,X_N)$ of $V$ and let us denote by
$(\epsilon_1,\ldots,\epsilon_N)$ the dual basis of $V^*$. 
A basis of $\bfP\otimes V^{\otimes n}$ is given by $(x\otimes X_{i_1}\ldots X_{i_n})_{b\in \calB_n,1 \leq i_1,\ldots, i_n\leq N}$,
which can be seen as the set of elements of $\calB_n$ decorated by elements of $[N]$. The action of the symmetric group is given by:
$$(x\otimes X_{i_1}\ldots X_{i_n})^\sigma=x^\sigma\otimes X_{i_{\sigma(1)}}\ldots  X_{i_{\sigma(n}},$$
so this is also a set basis. The set of orbits of this action is interpreted as the set of isoclasses of elements of $\calB_n$ decorated by $[N]$.
It is a basis of the homogeneous component of degree $n$ of $F_\bfP(V)$.\\

A basis of $End_\K(V,V^{\otimes n})$ is given by 
$(\epsilon_j X_{i_1}\ldots X_{i_n})_{1\leq i_1,\ldots,i_n,j\leq N}$, where for all $v\in V$:
$$\epsilon_j X_{i_1}\ldots X_{i_n}(v)=\epsilon_j(v)X_{i_1}\ldots X_{i_n}.$$
A basis of $\bfP \otimes \bfC_V(n)$ is given by  $(x\otimes \epsilon_jX_{i_1}\ldots X_{i_n})_{b\in \calB_n,1 \leq j, i_1,\ldots, i_n\leq N}$.
The action of the symmetric group is given by:
$$(x\otimes \epsilon_j X_{i_1}\ldots X_{i_n})^\sigma=x^\sigma\otimes \epsilon_jX_{i_{\sigma(1)}}\ldots  X_{i_{\sigma(n}},$$
so it is a set basis. Moreover, the orbits of this action can be seen as pairs $(\omega,j)$, where $\omega$ is an isoclasse of elements
of $\calB$ decorated by $[N]$ and $j\in [N]$.

\chapter{Examples and applications}

Let us now give examples of these constructions. We start with classical operads $\com$, $\ass$ and $\prelie$. 
We obtain that $A^*_\com$ is the coordinate Hopf algebra of the group $G=(\K[[X]]_+,+)$ and
$B^*_\com$ is the coordinate bialgebra of the Faà di Bruno monoid of formal continuous maps $M=(\K[[X]]_+,\circ)$;
the coaction of $B^*_\com$ and $A^*_\com$ corresponds to the action of $M$ on $G$ by composition.
Similarly, for $\ass$, we obtain groups and monoids of non-commutative formal series.
Moreover, $A^*_\prelie$ is the Connes-Kreimer Hopf algebra \cite{Connes,Hoffman,Foissy1,Panaite}, 
and $A_\prelie$ is the Grossman-Larson Hopf algebra \cite{Grossman1,Grossman2,Grossman3}, both of them based on trees;
$D^*_\prelie$ is another bialgebra of rooted trees, whose coproduct is given by extraction-contraction operations,
defined in \cite{ManchonCalaque}, as well as the coaction of $D^*_\prelie$ on $A^*_\prelie$.\\

We then introduce an operadic structure on Feynman graphs, inducing operadic structures on other combinatorial objects
as simple graphs, simple graphs without cycle, posets. All these operads give pairs of (co)-interacting bialgebras,
as well as non-commutative versions of them; we recover in particular in this process the bialgebras on graphs without cycle of \cite{Manchon2},
or the bialgebras of quasiposets used in \cite{FoissyEhrhart}.

\section{Operads $\com$,  $\ass$ and $\prelie$}

\subsection{The operads $\com$ and $\ass$}

\label{sect411}
The brace and pre-Lie structures of $\com$ are given by:
\begin{align*}
\langle e_i, e_{j_1}\ldots e_{j_k}\rangle&=\binom{i+1}{k}e_{i+j_1+\ldots+j_k},&e_i\bullet e_j&=(i+1)e_{i+1}.
\end{align*}
Let us fix the vector space $V=(X_1,\ldots,X_N)$. We denote by $(\epsilon_i)_{i\geq 1}$ the dual basis of $(X_1,\ldots,X_N)$. 
We consider the morphism $\theta_\com:\petitbinfini\longrightarrow\com$ given in section \ref{sect223}. 
Then:
$$A_\com(V)=S(F_\com(V))=S(\K[X_1,\ldots,X_N]_+),$$
with the quasi-shuffle product $*$ induced by the product of $\K[X_1,\ldots,X_N]_+$.
For all $\alpha \in \N^N$, we put $X_\alpha=X_1^{\alpha_1}\ldots X_N^{\alpha_N}$. Then, for example, if $\alpha,\beta,\gamma,\delta \in \N^N-\{0\}$:
\begin{align*}
X_\alpha*X_\beta&=X_\alpha X_\beta+X_{\alpha+\beta},\\
X_\alpha*X_\beta X_\Gamma&=X_\alpha X_\beta X_\gamma+X_{\alpha+\beta}X_\gamma+X_\alpha X_{\beta+\gamma},\\
X_\alpha X_\beta*X_\gamma X_\delta&=X_\alpha X_\beta X_\gamma X_\delta+X_{\alpha+\gamma}X_\beta X_\gamma
+X_{\alpha+\delta}X_\beta X_\gamma\\
&+X_\alpha X_{\beta+\gamma}X_\delta+X_\alpha X_{\beta+\delta}X_\gamma+X_{\alpha+\gamma}X_{\beta+\delta}+X_{\alpha+\delta}X_{\beta+\gamma}.
\end{align*}

Dually, $A^*_\com(V)$ is identified with $ S(\K[X_1,\ldots,X_N]_+)$ as an algebra. Its coproduct $\Delta_*$ is given by:
$$\forall \alpha \in \N^N,\:\Delta(X_\alpha)=\sum_{\alpha=\beta+\gamma} X_\beta\otimes X_\gamma.$$ 
Its group of characters is isomorphic to:
$$(\{1+P(X_1,\ldots,X_N)\mid P\in \K[[X_1,\ldots,X_N]]_+\},\cdot).$$
Moreover, $B^*_\com(V)$ is the Faà di Bruno Hopf algebra on $N$ variables, which group of characters is the 
group of formal diffeomorphisms of $\K[[X_1,\ldots,X_N]]$ which are tangent to the identity, that is to say:
$$(\{(X_i+P(X_1\ldots,X_N))_{i\in [N]}\mid P_i(X_1,\ldots,X_N)\in \K[[X_1,\ldots,X_N]]_{\geq 2}\},\circ),$$
where $\K[[X_1,\ldots,X_N]]_{\geq 2}$ is the subspace of formal series in $\K[[X_1,\ldots, X_N]]$ of valuation $\geq 2$.\\

Here are examples of coproducts $\Delta_*$ and $\Delta_*'$ on $D^*_\com$:
\begin{align*}
\Delta'_*(e_1)&=e_1\otimes 1+1\otimes e_1+e_1\otimes e_1,&\Delta_*(e_1)&=e_1\otimes e_1,\\
\Delta'_*(e_2)&=e_2\otimes 1+1\otimes e_2+e_2\otimes e_1e_1+e_1\otimes e_2+2e_2\otimes e_1,&
\Delta_*(e_2)&=e_2\otimes e_1e_1+e_1\otimes e_2.
\end{align*}

Let us describe $\bfD^*_\ass$.

\begin{defi}
Let $\sigma \in \mathfrak{S}_n$.
\begin{enumerate}
\item We shall write $I_1\sqcup \ldots \sqcup I_k=_\sigma [n]$ if:
\begin{itemize}
\item For all $p\in [k]$, both $I_p$ and $\sigma(I_p)$ are intervals of $[n]$.
\item $[n]=I_1\sqcup \ldots \sqcup I_k$.
\item For all $1\leq p<q\leq k$, for all $i\in I_p$, $j\in I_q$, $i<j$.
\end{itemize} 
\item Let us assume that $I_1\sqcup \ldots \sqcup I_k=_\sigma [n]$. As $\sigma(I_1),\ldots,\sigma(I_k)$ are intervals, there exists a unique 
permutation $\tau\in \mathfrak{S}_k$ such that
$\sigma(I_{\tau(1)})\sqcup \ldots \sqcup \sigma(I_{\tau(k)})=_{\sigma^{-1}} [n]$.
We denote $\sigma/(I_1,\ldots,I_k)=\tau^{-1}$.
\end{enumerate}\end{defi}

The bialgebra $\bfD^*_\ass$ is freely generated by the set $\displaystyle \bigsqcup_{n\geq 1} \mathfrak{S}_n$. For all permutation $\sigma\in \mathfrak{S}_n$,
$$\Delta_*(\sigma)=\sum_{I_1\sqcup \ldots \sqcup I_k=_\sigma [n]}
\sigma/(I_1,\ldots,I_k)\otimes Std(\sigma_{\mid I_1}) \ldots Std(\sigma_{\mid I_k}),$$
where $Std$ is the usual standardization of permutations. For example:
\begin{align*}
\Delta_*((1))&=(1)\otimes (1),\\
\Delta_*((12))&=(12)\otimes (1)(1)+(1)\otimes (12),\\
\Delta_*((21))&=(21)\otimes (1)(1)+(1)\otimes (21),\\
\Delta_*((123))&=(123)\otimes (1)(1)(1)+(1)\otimes (123)+(12)\otimes (12)(1)+(12)\otimes (1)(12),\\
\Delta_*((132))&=(132)\otimes (1)(1)(1)+(1)\otimes (132)+(12)\otimes (1)(21),\\
\Delta_*((213))&=(213)\otimes (1)(1)(1)+(1)\otimes (213)+(12)\otimes (21)(1),\\
\Delta_*((231))&=(231)\otimes (1)(1)(1)+(1)\otimes (231)+(21)\otimes (12)(1),\\
\Delta_*((312))&=(312)\otimes (1)(1)(1)+(1)\otimes (312)+(21)\otimes (1)(12),\\
\Delta_*((321))&=(321)\otimes (1)(1)(1)+(1)\otimes (321)+(21)\otimes (21)(1)+(21)\otimes (1)(21).
\end{align*}

Let us consider the vector space
$V=Vect(X_1,\ldots,X_N)$. Then $D^*_\ass(V)$ is generated by the elements $(X_{i_1}\ldots X_{i_k}\epsilon_j)_{k\geq 1, i_1,\ldots,i_k,j\in [N]}$. 
For all word $w$ in letters $X_1,\ldots,X_N$, for all $i\in [N]$:
$$\Delta_*(w\epsilon_i)=\sum_{k\geq 0}\sum_{\substack{w=u_0v_1u_1\ldots v_ku_k,\\ i_1,\ldots,i_k \in [n]}}
u_0X_{i_1}u_1\ldots u_{k-1} X_{i_k}u_k\epsilon_i\otimes (v_1\epsilon_{i_1})\ldots(v_k\epsilon_k).$$
Note that the abelianization of $\bfD^*_\com(V)$ is $D^*_\ass(V)$.\\

\textbf{Examples.} In  $\bfD^*_\com(V)$ or in $D^*_\ass(V)$, if $i,j,k,l\in [N]$:
\begin{align*}
\Delta_*(X_i\epsilon_j)&=\sum_{p=1}^N X_p\epsilon_j\otimes X_i\epsilon_p,\\
\Delta_*(X_iX_j\epsilon_k)&=\sum_{p=1}^N X_p\epsilon_k\otimes X_iX_j\epsilon_p+\sum_{p,q=1}^N
X_pX_q\epsilon_k\otimes (X_i\epsilon_p)(X_j\epsilon_q),\\
\Delta_*(X_iX_jX_k\epsilon_l)&=\sum_{p=1}^N X_p\epsilon_l\otimes X_iX_jX_k\epsilon_p+\sum_{p,q=1}^N
X_pX_q\epsilon_l\otimes (X_iX_j\epsilon_p)(X_k\epsilon_q)\\
&+\sum_{p,q=1}^NX_pX_q\epsilon_l\otimes (X_i\epsilon_p)(X_jX_k\epsilon_q)
+\sum_{p,q,r=1}^NX_pX_qX_r\epsilon_l\otimes (X_i\epsilon_p)(X_j\epsilon_q)(X_k\epsilon_r).
\end{align*}
 In order to obtain the Hopf algebra $\bfB^*_\ass(V)$, we quotient by the relations $X_i\epsilon_j=\delta_{i,j}1$. The coproduct becomes:
\begin{align*}
\Delta_*(X_iX_j\epsilon_k)&=1\otimes X_iX_j\epsilon_p+X_iX_j\epsilon_k\otimes 1,\\
\Delta_*(X_iX_jX_k\epsilon_l)&=1\otimes X_iX_jX_k\epsilon_l+\sum_{p=1}^NX_pX_k\epsilon_l\otimes X_iX_j\epsilon_p
+\sum_{q=1}^NX_iX_q\epsilon_l\otimes X_jX_k\epsilon_q+X_iX_jX_k\epsilon_l\otimes 1.
\end{align*}
 
We consider the morphism $\theta_\ass:\petitbinfini\longrightarrow \ass$ defined in section \ref{sect223}.  
The Hopf algebra $A^*_\ass(V)$ is generated by the elements $(X_{i_1}\ldots X_{i_k})_{k\geq 1, i_1,\ldots,i_k\in [N]}$.
For all word $w$ in letters $X_1,\ldots,X_N$, for all $i\in [N]$:
$$\Delta_\star(w)=w\otimes 1+1\otimes w+\sum_{w=uv,\: u,v\neq \emptyset}u\otimes v.$$
The coaction of $D^*_\ass(V)$ over $A^*_\ass(V)$ is given by:
$$\rho(w)=\sum_{k\geq 0}\sum_{\substack{w=u_0v_1u_1\ldots v_ku_k,\\ i_1,\ldots,i_k \in [n]}}
u_0X_{i_1}u_1\ldots u_{k-1} X_{i_k}u_k\otimes (v_1\epsilon_{i_1})\cdot \ldots \cdot (v_k\epsilon_k).$$
For example:
\begin{align*}
\rho(X_i)&=\sum_{p=1}^N X_p\otimes X_i\epsilon_p,\\
\rho(X_iX_j)&=\sum_{p=1}^N X_p\otimes X_iX_j\epsilon_p+\sum_{p,q=1}^N
X_pX_q\otimes (X_i\epsilon_p)(X_j\epsilon_q),\\
\rho(X_iX_jX_k)&=\sum_{p=1}^N X_p\otimes X_iX_jX_k\epsilon_p+\sum_{p,q=1}^NX_pX_q\otimes (X_iX_j\epsilon_p)(X_k\epsilon_q)\\
&+\sum_{p,q=1}^NX_pX_q\otimes (X_i\epsilon_p)(X_jX_k\epsilon_q)+\sum_{p,q,r=1}^NX_pX_qX_r\otimes (X_i\epsilon_p)(X_j\epsilon_q)(X_k\epsilon_r).
\end{align*}
Here are examples of the coaction of $B^*_\ass(V)$:
\begin{align*}
\rho(X_i)&=X_i\otimes 1,\\
\rho(X_iX_j)&=X_iX_j\otimes 1+\sum_{p=1}^N X_p\otimes X_i\epsilon_p,\\
\rho(X_iX_jX_k)&=X_iX_jX_k\otimes 1+\sum_{p=1}^NX_pX_k\otimes X_iX_j\epsilon_p+\sum_{q=1}^NX_iX_q\otimes X_jX_k\epsilon_q
+\sum_{p=1}^N X_p\otimes X_iX_jX_k\epsilon_p.
\end{align*}

\subsection{The operad $\prelie$}

We now consider the operad $\prelie$, as described in \cite{Chapoton1,Chapoton2}. This operad comes from an operadic species; 
for all finite set $A$, $\prelie(A)$ is the vector space generated by rooted trees whose set of vertices is $A$. For example:
\begin{align*}
\prelie(\{1\})&=Vect(\tdun{$1$}),\\
\prelie(\{1,2\})&=Vect(\tddeux{$1$}{$2$},\tddeux{$2$}{$1$}),\\
\prelie(\{1,2,3\})&=Vect(\tdtroisdeux{$1$}{$2$}{$3$},\tdtroisdeux{$1$}{$3$}{$2$},\tdtroisdeux{$2$}{$1$}{$3$},
\tdtroisdeux{$2$}{$3$}{$1$},\tdtroisdeux{$3$}{$1$}{$2$},\tdtroisdeux{$3$}{$2$}{$1$},
\tdtroisun{$1$}{$3$}{$2$},\tdtroisun{$2$}{$3$}{$1$},\tdtroisun{$3$}{$2$}{$1$}).
\end{align*}
The composition is given by insertion at vertices in all possible ways. For example:
\begin{align*}
\tddeux{$1$}{$2$}\circ_1 \tddeux{$3$}{$4$}&=\tdtroisun{$3$}{$2$}{$4$}+\tdtroisdeux{$3$}{$4$}{$2$},&
\tddeux{$1$}{$2$}\circ_2 \tddeux{$3$}{$4$}&=\tdtroisdeux{$1$}{$3$}{$4$}.
\end{align*}

The morphism $\theta_\prelie:\petitbinfini\longrightarrow \prelie$ is described in section \ref{sect223}.\\

Let us fix $V=Vect(X_1,\ldots,X_N)$.
\begin{itemize}
\item A basis of $A_\prelie(V)$  is given by forests of rooted trees decorated by $[N]$;
in particular, if $i\in [N]$, $X_i$ is identified with $\tdun{$i$}$. The product is given by graftings; for example,
if $i,j,k \in [N]$:
\begin{align*}
\tdun{$i$}*\tdun{$j$}&=\tdun{$i$}\tdun{$j$}+\tddeux{$i$}{$j$},\\
\tdun{$i$}*\tddeux{$j$}{$k$}&=\tdun{$i$}\tddeux{$j$}{$k$}+\tdtroisdeux{$i$}{$j$}{$k$},&
\tddeux{$i$}{$j$}*\tdun{$k$}&=\tddeux{$i$}{$j$}\tdun{$k$}+\tdtroisun{$i$}{$k$}{$j$}+\tdtroisdeux{$i$}{$j$}{$k$},\\
\tdun{$i$}\tdun{$j$}*\tdun{$k$}&=\tdun{$i$}\tdun{$j$}\tdun{$k$}+\tddeux{$i$}{$k$}\tdun{$j$}+\tdun{$i$}\tddeux{$j$}{$k$},&
\tdun{$i$}*\tdun{$j$}\tdun{$k$}&=\tdun{$i$}\tdun{$j$}\tdun{$k$}+\tddeux{$i$}{$j$}\tdun{$k$}+\tddeux{$i$}{$k$}\tdun{$j$}+\tdtroisun{$i$}{$k$}{$j$}.
\end{align*}
In other terms, this is the Grossman-Larson Hopf algebra of decorated rooted trees \cite{Grossman1,Grossman2,Grossman3}. 
Its dual is (the coopposite of)  the Connes-Kreimer Hopf algebra of decorated rooted trees \cite{Connes,Foissy1,Panaite,Hoffman}, 
which coproduct is given by admissible cuts. If $i,j,k\in [N]$:
\begin{align*}
\Delta_\star(\tdun{$i$})&=\tdun{$i$}\otimes 1+1\otimes \tdun{$i$},\\
\Delta_\star(\tddeux{$i$}{$j$})&=\tddeux{$i$}{$j$}\otimes 1+1\otimes \tddeux{$i$}{$j$}+\tdun{$i$}\otimes \tdun{$j$},\\
\Delta_\star(\tdtroisun{$i$}{$k$}{$j$})&=\tdtroisun{$i$}{$k$}{$j$}\otimes 1+1\otimes \tdtroisun{$i$}{$k$}{$j$}+
\tddeux{$i$}{$j$}\otimes \tdun{$k$}+\tddeux{$i$}{$k$}\otimes \tdun{$j$}+\tdun{$i$}\otimes \tdun{$j$}\tdun{$k$},\\
\Delta_\star(\tdtroisdeux{$i$}{$j$}{$k$})&=\tdtroisdeux{$i$}{$j$}{$k$}\otimes 1+1\otimes \tdtroisdeux{$i$}{$j$}{$k$}+
\tddeux{$i$}{$j$}\otimes \tdun{$k$}+\tdun{$i$}\otimes \tddeux{$j$}{$k$}.
\end{align*}
\item A basis of $B_\prelie(V)$ is given by forests of pairs $(t,j)$, where $t$ is a rooted tree decorated by $[N]$ and $j\in [N]$.
The underlying pre-Lie product $\bullet$ is given by insertion at a vertex, as the operadic composition is. For example, if $i,j,k\in [N]$ and $N\geq 2$:
\begin{align*}
(\tddeux{$1$}{$2$},i)\bullet (\tddeux{$j$}{$k$},1)&=(\tdtroisun{$j$}{$2$}{$k$},i)+(\tdtroisdeux{$j$}{$k$}{$2$},i),&
(\tddeux{$1$}{$2$},i)\bullet (\tddeux{$j$}{$k$},2)&=(\tdtroisdeux{$1$}{$j$}{$k$},i),\\
(\tddeux{$1$}{$1$},i)\bullet (\tddeux{$j$}{$k$},1)&=(\tdtroisun{$j$}{$1$}{$k$},i)+(\tdtroisdeux{$j$}{$k$}{$1$},i)+(\tdtroisdeux{$1$}{$j$}{$k$},i),&
(\tddeux{$1$}{$1$},i)\bullet (\tddeux{$j$}{$k$},2)&=0.
\end{align*}
\item The bialgebra $D^*_\prelie(V)$ has the same basis. Its coproduct is given by extraction-contraction of subtrees. For example, 
in the non decorated case (or equivalently if $N=1$):
\begin{align*}
\Delta_*(\tun)&=\tun\otimes \tun,\\
\Delta_*(\tdeux)&=\tdeux\otimes \tun \tun+\tun \otimes \tdeux,\\
\Delta_*(\ttroisun)&=\ttroisun\otimes \tun\tun\tun+\tdeux\otimes \tdeux\tun+\tdeux\otimes \tun\tdeux+\tun \otimes \ttroisun,\\
\Delta_*(\ttroisdeux)&=\ttroisdeux\otimes \tun\tun\tun+\tdeux\otimes \tdeux \tun+\tdeux\otimes \tun \tdeux+\tun \otimes \ttroisdeux.
\end{align*}
This is the extraction-contraction coproduct of \cite{ManchonCalaque}.
More generally, in $D^*_\prelie(V)$, if $a,b,c,d\in [N]$: 
\begin{align*}
\Delta_*((\tdun{$a$},d))&=\sum_{p=1}^N (\tdun{$p$},d)\otimes (\tdun{$a$},p),\\
\Delta_*((\tddeux{$a$}{$b$},d))&=\sum_{p,q=1}^N (\tddeux{$p$}{$q$},d)\otimes (\tdun{$a$},p)(\tdun{$b$},q)
+\sum_{p=1}^N (\tdun{$p$},d) \otimes (\tddeux{$a$}{$b$},p),\\
\Delta_*((\tdtroisun{$a$}{$c$}{$b$},d))&=\sum_{p,q,r=1}^N (\tdtroisun{$p$}{$r$}{$q$},d)\otimes (\tdun{$a$},p)(\tdun{$b$},q)(\tdun{$c$},r)
+\sum_{p,q=1}^N (\tddeux{$p$}{$q$},d) \otimes (\tddeux{$a$}{$b$},p)(\tdun{$c$},q)\\
&+\sum_{p,q=1}^N (\tddeux{$p$}{$q$},d) \otimes (\tddeux{$a$}{$c$},p)(\tdun{$b$},q)
+\sum_{p=1}^N (\tdun{$p$},d)\otimes (\tdtroisun{$a$}{$c$}{$b$},p),\\
\Delta_*((\tdtroisdeux{$a$}{$b$}{$c$},d))&=\sum_{p,q,r=1}^N (\tdtroisdeux{$p$}{$q$}{$r$},d)\otimes (\tdun{$a$},p)(\tdun{$b$},q)(\tdun{$c$},r)
+\sum_{p,q=1}^N (\tddeux{$p$}{$q$},d) \otimes (\tddeux{$a$}{$b$},p)(\tdun{$c$},q)\\
&+\sum_{p,q=1}^N (\tddeux{$p$}{$q$},d) \otimes(\tdun{$a$},q) (\tddeux{$b$}{$c$},p)
+\sum_{p=1}^N (\tdun{$p$},d)\otimes (\tdtroisdeux{$a$}{$b$}{$c$},p).
\end{align*}
After taking the quotient by $\tdun{$i$}\epsilon_j X-\delta_{i,j}1$, in $B^*_\prelie(V)$:
\begin{align*}
\Delta_*((\tddeux{$a$}{$b$},d))&= (\tddeux{$a$}{$b$},d)\otimes 1+1\otimes (\tddeux{$a$}{$b$},d),\\
\Delta_*((\tdtroisun{$a$}{$c$}{$b$},d))&=(\tdtroisun{$a$}{$b$}{$c$},d)\otimes 1
+\sum_{p=1}^N (\tddeux{$p$}{$c$},d) \otimes (\tddeux{$a$}{$b$},p)+\sum_{p=1}^N (\tddeux{$p$}{$b$},d) \otimes (\tddeux{$a$}{$c$},p)
+1\otimes (\tdtroisun{$a$}{$c$}{$b$},d),\\
\Delta_*((\tdtroisdeux{$a$}{$b$}{$c$},d))&=(\tdtroisdeux{$a$}{$b$}{$c$},d)\otimes 1+\sum_{p=1}^N (\tddeux{$p$}{$c$},d) \otimes (\tddeux{$a$}{$b$},p)
+\sum_{q=1}^N (\tddeux{$a$}{$q$},d) \otimes (\tddeux{$b$}{$c$},p)+1\otimes (\tdtroisdeux{$a$}{$b$}{$c$},d).
\end{align*}
The coaction of $D^*_\prelie(V)$ over $A^*_\prelie(V)$ is given in a similar way. For example:
\begin{align*}
\rho(\tdun{$a$})&=\sum_{p=1}^N \tdun{$p$}\otimes (\tdun{$a$},p),\\
\rho(\tddeux{$a$}{$b$})&=\sum_{p,q=1}^N \tddeux{$p$}{$q$}\otimes (\tdun{$a$},p)(\tdun{$b$},q)
+\sum_{p=1}^N \tdun{$p$} \otimes (\tddeux{$a$}{$b$},p),\\
\rho(\tdtroisun{$a$}{$c$}{$b$})&=\sum_{p,q,r=1}^N \tdtroisun{$p$}{$r$}{$q$}\otimes (\tdun{$a$},p)(\tdun{$b$},q)(\tdun{$c$},r)
+\sum_{p,q=1}^N \tddeux{$p$}{$q$} \otimes (\tddeux{$a$}{$b$},p)(\tdun{$c$},q)\\
&+\sum_{p,q=1}^N \tddeux{$p$}{$q$} \otimes (\tddeux{$a$}{$c$},p)(\tdun{$b$},q)
+\sum_{p=1}^N \tdun{$p$}\otimes (\tdtroisun{$a$}{$c$}{$b$},p),\\
\rho(\tdtroisdeux{$a$}{$b$}{$c$})&=\sum_{p,q,r=1}^N \tdtroisdeux{$p$}{$q$}{$r$}\otimes (\tdun{$a$},p)(\tdun{$b$},q)(\tdun{$c$},r)
+\sum_{p,q=1}^N \tddeux{$p$}{$q$} \otimes (\tddeux{$a$}{$b$},p)(\tdun{$c$},q)\\
&+\sum_{p,q=1}^N \tddeux{$p$}{$q$} \otimes(\tdun{$a$},q) (\tddeux{$b$}{$c$},p)
+\sum_{p=1}^N \tdun{$p$}\otimes (\tdtroisdeux{$a$}{$b$}{$c$},p).
\end{align*}\end{itemize}

\section{Feynman Graphs}

\subsection{Oriented Feynman graphs}

We shall use the following formalism for oriented Feynman graphs :

\begin{defi}\label{defi60}
\begin{enumerate}
\item A Feynman graph is a family
$$\Gamma=(V(\Gamma),Int(\Gamma), OutExt(\Gamma), InExt(\Gamma),S_\Gamma,T_\Gamma),$$
where:
\begin{itemize}
\item $V(\Gamma)$ is a finite, non-empty set, called the set of vertices of $\Gamma$.
\item $Int(\Gamma)$ is a finite set, called the set of internal edges of $\Gamma$.
\item $OutExt(\Gamma)$ is a finite set, called the set of external outgoing edges of $\Gamma$.
\item $InExt(\Gamma)$ is a finite set, called the set of internal ingoing edges of $\Gamma$.
\item $S_\Gamma:Int(\Gamma)\sqcup OutExt(\Gamma)\longrightarrow V(\Gamma)$ is the source map.
\item $T_\Gamma:Int(\Gamma)\sqcup InExt(\Gamma)\longrightarrow V(\Gamma)$ is the target map.
\end{itemize}
\item Let $\Gamma$ and $\Gamma'$ be two Feynman graphs. We shall say that $\Gamma$ and $\Gamma'$ are equivalent if the following conditions hold:
\begin{itemize}
\item $V(\Gamma)=V(\Gamma')$.
\item There exist bijections: 
\begin{align*}
\phi_{Int}&:Int(\Gamma)\longrightarrow Int(\Gamma'),\\
\phi_{OutExt}&:OutExt(\Gamma)\longrightarrow OutExt(\Gamma'),\\
\phi_{InExt}&:InExt(\Gamma)\longrightarrow IntExt(\Gamma'),
\end{align*}
such that:
\begin{align*}
\forall e\in Int(\Gamma),\: S_{\Gamma}(e)&=S_{\Gamma'}\circ \phi_{Int}(e)\mbox{ and }T_{\Gamma}(e)=T_{\Gamma'}\circ \phi_{Int}(e),\\
\forall e\in OutExt(\Gamma),\: S_{\Gamma}(e)&=S_{\Gamma'}\circ \phi_{OutExt}(e),\\
\forall e\in InExt(\Gamma),\: T_{\Gamma}(e)&=T_{\Gamma'}\circ \phi_{InExt}(e).
\end{align*} \end{itemize}
Roughly speaking, two Feynman graphs $\Gamma$ and $\Gamma'$ are equivalent if one obtains $\Gamma$ from $\Gamma'$ by changing the names of
its edges.
\item Let $A$ be a finite set. The set of all equivalence classes of Feynman graphs $\Gamma$ such that $V(\Gamma)$ is denoted by $\fg(A)$
and the space generated by $\fg(A)$ will be denoted by $\bffg(A)$.
\end{enumerate}\end{defi}

We shall work only with equivalence classes of Feynman graphs, which we now simply call Feynman graphs.\\

\textbf{Remarks.} \begin{enumerate}
\item $Int(\Gamma)$, $OutExt(\Gamma)$ or $InExt(\Gamma)$ may be empty.
\item Restricting to $Int(\Gamma)$, Feynman graphs are also oriented graphs, possibly with multiple edges and loops.
\end{enumerate}

We shall represent Feynman graphs by a diagram, such as:
$$\xymatrix{&&&\\
&&\rond{3}\ar@<1ex>[u] \ar[u] \ar[lld]\\
\rond{1}\ar[rr]&&\rond{2}\ar@<1ex>[ll] \ar[u] \ar@<1ex>[u] \ar@<1ex>[d]\\
\ar[u]&&\ar[u]}$$

\begin{defi}
Let $\Gamma$ be a Feynman graph, and $I\subseteq V(\Gamma)$, non-empty. 
\begin{enumerate}
\item (Extraction). We define the Feynman graph $\Gamma_{\mid I}$ by:
\begin{align*}
V(\Gamma_{\mid I})&=I,\\
Int(\Gamma_{\mid I})&=\{e\in Int(\Gamma)\mid S_\Gamma(e) \in I,\: T_\Gamma(e)\in I\},\\
OutExt(\Gamma_{\mid I})&=\{e\in Int(\Gamma)\mid S_\Gamma(e) \in I,\: T_\Gamma(e)\notin I\}
\sqcup \{e\in OutExt(\Gamma)\mid S_\Gamma(e)\in I\},\\
InExt(\Gamma_{\mid I})&=\{e\in Int(\Gamma)\mid S_\Gamma(e) \notin I,\: T_\Gamma(e)\in I\}
\sqcup \{e\in InExt(\Gamma)\mid T_\Gamma(e)\in I\}.
\end{align*}
For all $e\in Int(\Gamma_{\mid I}) \sqcup OutExt(\Gamma_{\mid I})$, for all $f\in Int(\Gamma_{\mid I}) \sqcup IntExt(\Gamma_{\mid I})$: 
\begin{align*}
S_{\Gamma_{\mid I}}(e)&=S_\Gamma(e),&T_{\Gamma_{\mid I}}(f)&=T_\Gamma(f).
\end{align*}
Roughly speaking, $\Gamma_{\mid I}$ is the Feynman graph obtained by taking all the vertices in $I$ and the half edges attacted to them.
\item (Contraction). Let $b\notin V(\Gamma)$. We define the Feynman graph $\Gamma/I\rightarrow b$ by:
\begin{align*}
V(\Gamma/I\rightarrow b)&=(V(\Gamma) \setminus I)\sqcup\{b\},&OutExt(\Gamma/I\rightarrow b)&=OutExt(\Gamma),\\
Int(\Gamma/I\rightarrow b)&=Int(\Gamma) \setminus Int(\Gamma_{\mid I}),&InExt(\Gamma/I\rightarrow b)&=InExt(\Gamma).
\end{align*}
For all $e\in Int(\Gamma/I\rightarrow b)\sqcup OutExt(\Gamma/I\rightarrow b)$:
$$S_{\Gamma/I\rightarrow b}(e)=\begin{cases}
S_\Gamma(e)\mbox{ if }S_\Gamma(e)\notin I,\\
b\mbox{ if }otherwise.
\end{cases}$$
For all $e\in Int(\Gamma/I\rightarrow b)\sqcup InExt(\Gamma/I\rightarrow b)$:
$$T_{\Gamma/I\rightarrow b}(e)=\begin{cases}
T_\Gamma(e)\mbox{ if }T_\Gamma(e)\notin I,\\
b\mbox{ if }otherwise.
\end{cases}$$
Roughly speaking, $\Gamma/I\rightarrow b$ is obtained by contracting all the vertices of $I$ and the internal edges between them to a single vertex $b$.
\item We shall say that $I$ is $\Gamma$-convex if for any oriented path $x\rightarrow y_1\rightarrow\ldots \rightarrow y_k\rightarrow z$ in $\Gamma$:
$$x,z\in I \Longrightarrow y_1,\ldots,y_k\in I.$$
\end{enumerate}\end{defi}

\subsection{Lemmas on extraction-contraction}

\begin{lemma}\label{lem62}
Let $\Gamma$ be a Feynman graph, $I_a,I_b\subseteq V(\Gamma)$, non-empty and disjoint.
\begin{enumerate}
\item $(\Gamma/I_a\rightarrow a)_{\mid I_b}=\Gamma_{\mid I_b}$.
\item $(\Gamma/I_a\rightarrow a)/I_b\rightarrow b=(\Gamma/I_b\rightarrow b)/I_a\rightarrow a$.
\item The following conditions are equivalent:
\begin{enumerate}
\item $I_a$ is $\Gamma$-convex and $I_b$ is $(\Gamma/I_a\rightarrow a)$-convex.
\item $I_b$ is $\Gamma$-convex and $I_a$ is $(\Gamma/I_b\rightarrow b)$-convex.
\end{enumerate}\end{enumerate}\end{lemma}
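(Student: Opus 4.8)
The plan is to treat the three assertions in order, the first two being routine consequences of the disjointness of $I_a$ and $I_b$, and the third carrying the real content. For assertion 1 I would compare the six pieces of data defining $(\Gamma/I_a\rightarrow a)_{\mid I_b}$ and $\Gamma_{\mid I_b}$ one by one. Since $I_b\cap I_a=\emptyset$, every vertex of $I_b$ survives the contraction $\Gamma/I_a\rightarrow a$ unchanged, and for any edge with an endpoint in $I_b$ the relevant source or target never lies in $I_a$, so it is untouched by the contraction; an edge of $\Gamma$ landing in $I_a$ simply counts as an edge "not ending in $I_b$", exactly as in $\Gamma$. This matches $V$, $Int$, $OutExt$, $InExt$ and both structure maps. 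For assertion 2 the vertex sets and the external edge sets are manifestly symmetric in $(I_a,a)$ and $(I_b,b)$, so the only point is the internal edges and the two maps. Here I would invoke assertion 1: $Int((\Gamma/I_a\rightarrow a)/I_b\rightarrow b)=Int(\Gamma/I_a\rightarrow a)\setminus Int((\Gamma/I_a\rightarrow a)_{\mid I_b})=Int(\Gamma)\setminus(Int(\Gamma_{\mid I_a})\sqcup Int(\Gamma_{\mid I_b}))$, which is symmetric, while the composed source map sends $e$ to $a$ if $S_\Gamma(e)\in I_a$, to $b$ if $S_\Gamma(e)\in I_b$, and to $S_\Gamma(e)$ otherwise (likewise for the target), again symmetric.

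Assertion 3 is the heart. By exchanging the roles of $(I_a,a)$ and $(I_b,b)$, the implication $(a)\Rightarrow(b)$ turns into $(b)\Rightarrow(a)$, so it suffices to prove $(a)\Rightarrow(b)$. Throughout I would use the reachability form of convexity, namely that $I$ is $\Gamma$-convex iff $x,z\in I$ together with $x\rightsquigarrow y\rightsquigarrow z$ forces $y\in I$, and three elementary transport facts: every oriented path of $\Gamma$ projects to an oriented path of $\Gamma/I\rightarrow c$; a path of $\Gamma/I\rightarrow c$ avoiding $c$ lifts to a path of $\Gamma$ avoiding $I$; and for $x,z\notin I$ one has $x\rightsquigarrow c\rightsquigarrow z$ in $\Gamma/I\rightarrow c$ iff there exist $w,w'\in I$ with $x\rightsquigarrow w$ and $w'\rightsquigarrow z$ in $\Gamma$.

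The argument then splits into two steps. Step (i), that $I_b$ is $\Gamma$-convex: given $x,z\in I_b$ and $y$ with $x\rightsquigarrow y\rightsquigarrow z$ in $\Gamma$, project to $\Gamma/I_a\rightarrow a$; if $y\in I_a$ it becomes $a$, yielding $x\rightsquigarrow a\rightsquigarrow z$ with $x,z\in I_b$, and convexity of $I_b$ in $\Gamma/I_a\rightarrow a$ would force $a\in I_b$, absurd, so $y\notin I_a$ and the projected reachabilities give $y\in I_b$. Step (ii), that $I_a$ is $(\Gamma/I_b\rightarrow b)$-convex: take $x,z\in I_a$ and $y$ between them in $\Gamma/I_b\rightarrow b$. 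First, $b$ cannot be between $x$ and $z$, for $x\rightsquigarrow b\rightsquigarrow z$ would give, by the third transport fact, vertices $w,w'\in I_b$ with $x\rightsquigarrow w$ and $w'\rightsquigarrow z$ in $\Gamma$; projecting to $\Gamma/I_a\rightarrow a$ and using $x,z\in I_a$ produces $w'\rightsquigarrow a\rightsquigarrow w$, i.e. $a$ between $w,w'\in I_b$, contradicting convexity of $I_b$ in $\Gamma/I_a\rightarrow a$. Hence any $y\neq b$ between $x$ and $z$ has both halves $x\rightsquigarrow y$ and $y\rightsquigarrow z$ avoiding $b$ (a half through $b$ would place $b$ between $x$ and $z$), so they lift to $\Gamma$, and $\Gamma$-convexity of $I_a$ gives $y\in I_a$. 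This proves $(a)\Rightarrow(b)$, and the symmetry above finishes.

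The main obstacle is step (ii): one must feed the hypothesis about $I_b$ inside $\Gamma/I_a\rightarrow a$ into a statement about a detour through $b$ inside $\Gamma/I_b\rightarrow b$, by re-expressing such a detour as a detour through $a$ after projecting. The delicate points are the bookkeeping of the three transport facts (so that reachability is preserved, not merely walks) and the verification that $a$ really occurs as a non-endpoint vertex between $w$ and $w'$; both rely precisely on $I_a\cap I_b=\emptyset$ and on $a,b\notin V(\Gamma)$, so that $a\neq w,w'$ and $a\notin I_b$.
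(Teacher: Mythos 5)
Your proposal is correct and follows essentially the same route as the paper: parts 1 and 2 by direct comparison of the defining data, and part 3 reduced by the $a\leftrightarrow b$ symmetry to one implication, with the same two steps — projecting a $\Gamma$-path to $\Gamma/I_a\rightarrow a$ to get convexity of $I_b$, and ruling out a detour through $b$ by converting it into an occurrence of $a$ strictly between two vertices of $I_b$ in $\Gamma/I_a\rightarrow a$. The only cosmetic differences are that you route part 2 through part 1 and isolate the path-transport facts as explicit lemmas, where the paper argues inline with the smallest/greatest index where $b$ occurs.
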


\begin{proof} 1. Let us put $\Gamma'=(\Gamma/I_a\rightarrow a)_{\mid I_b}$. Then:
\begin{align*}
V(\Gamma')&=I_b,\\
Int(\Gamma')&=\{e\in Int(\Gamma) \mid S_\Gamma(e)\in I_b, \: T_\Gamma(e)\in I_b\},\\
OutExt(\Gamma')&=\{e\in Int(\Gamma) \mid S_\Gamma(e)\in I_b, \: T_\Gamma(e)\notin I_b\}
\sqcup \{e\in OutExt(\Gamma)\mid S_\Gamma(e)\in I_b\},\\
InExt(\Gamma')&=\{e\in Int(\Gamma) \mid S_\Gamma(e)\notin I_b, \: T_\Gamma(e)\in I_b\}
\sqcup \{e\in OutExt(\Gamma)\mid T_\Gamma(e)\in I_b\}.
\end{align*}
For all $e\in Int(\Gamma')\sqcup OutExt(\Gamma')$, $S_{\Gamma'}(e)=S_\Gamma(e)$.
For all $e\in Int(\Gamma')\sqcup InExt(\Gamma')$, $T_{\Gamma'}(e)=T_\Gamma(e)$. So $\Gamma'=\Gamma_{\mid I_b}$.\\

2. Let us put $\Gamma''=(\Gamma/I_a\rightarrow a)/I_b\rightarrow b$. Then:
\begin{align*}
V(\gamma'')&=V(\gamma) \sqcup \{a,\b\}\setminus (I_a\sqcup I_b),\\
Int(\Gamma'')&=\{e\in Int(\Gamma)\mid S_\Gamma(e)\notin I_a\sqcup I_b \mbox{ or }T_\Gamma(e)\notin I_a\sqcup I_b\},\\
OutExt(\Gamma'')&=OutExt(\Gamma),\\
InExt(\Gamma'')&=InExt(\Gamma).
\end{align*}
For all $e\in Int(\Gamma'')\sqcup OutExt(\Gamma'')$:
$$S_{\Gamma''}(e)=\begin{cases}
a\mbox{ if }S_\Gamma(e)\in I_a,\\
b\mbox{ if }S_\Gamma(e)\in I_b,\\
S_\Gamma(e)\mbox{ otherwise}.
\end{cases}$$
For all $e\in Int(\Gamma'')\sqcup IntExt(\Gamma'')$:
$$T_{\Gamma''}(e)=\begin{cases}
a\mbox{ if }T_\Gamma(e)\in I_a,\\
b\mbox{ if }T_\Gamma(e)\in I_b,\\
T_\Gamma(e)\mbox{ otherwise}.
\end{cases}$$
By symmetry between $a$ and $b$, $\Gamma''=(\Gamma/I_b\rightarrow b)/I_a\rightarrow a$.\\

3. $\Longrightarrow$. Let us assume that $x\rightarrow y_1\rightarrow\ldots \rightarrow y_k\rightarrow z$ in $\Gamma$,
with $x,z\in I_b$. For all $y\in V(\Gamma)$, we put:
$$\overline{y}=\begin{cases}
a\mbox{ if }y\in I_a,\\
y\mbox{ otherwise}.
\end{cases}$$
Then $x\rightarrow \overline{y_1}\rightarrow\ldots \rightarrow \overline{y_k}\rightarrow z$ in $\Gamma/I_a\rightarrow a$.
As $I_b$ is $(\Gamma/I_a\rightarrow a)$-convex, all the $\overline{y_i}$ belong to $I_b$, so are different from $a$:
hence, $y_1,\ldots,y_k \in I_b$.

Let us assume $x\rightarrow y_1\rightarrow\ldots \rightarrow y_k\rightarrow z$ in $\Gamma/I_b\rightarrow b$, with $x,z\in I_a$.
If at least one of the $y_p$ is equal to $b$, let us consider the smallest index $i$ such that $x_i=b$ and the greatest index $j$ such that $x_j=b$.
There exists $y'_i,y''_j \in I_b$, such that $x\rightarrow y_1\rightarrow \ldots \rightarrow y_{i-1}\rightarrow y'_i$ and
$y''_j\rightarrow y_{j+1}\rightarrow \ldots \rightarrow z$ in $\Gamma$. As a consequence, in $\Gamma/I_a\rightarrow a$:
$$y''_j\rightarrow y_{j+1}\rightarrow\ldots \rightarrow y_k \rightarrow a\rightarrow y_1\rightarrow\ldots \rightarrow y_{i-1}\rightarrow y''_i.$$
As $I_b$ is $(\Gamma/I_a\rightarrow a)$-convex, $a\in I_b$, which is absurd. So none of the $y_p$ is equal to $b$, which implies that
$x\rightarrow y_1\rightarrow\ldots \rightarrow y_k\rightarrow z$ in $\Gamma$. As $I_a$ is $\Gamma$-convex,
$y_1,\ldots,y_p\in I_a$.\\

$\Longleftarrow$: by symmetry between $a$ and $b$. \end{proof}

\begin{lemma} \label{lem63}
Let $\Gamma$ be a Feynman graph, and $I_a\subseteq I_b \subseteq V(G)$ be non-empty sets.
\begin{enumerate}
\item $(\Gamma/I_a\rightarrow a)/(I_b\sqcup\{a\}\setminus I_a)\rightarrow b=\Gamma/I_b\rightarrow b$.
\item $(\Gamma/I_a \rightarrow a)_{\mid I_b \sqcup\{a\}\setminus I_a}=(\Gamma_{\mid I_b})/I_a\rightarrow a$.
\item $(\Gamma_{\mid I_b})_{\mid I_a}=\Gamma_{\mid I_a}$.
\item The following conditions are equivalent:
\begin{enumerate}
\item $I_b$ is $\Gamma$-convex and $I_a$ is $\Gamma_{\mid I_b}$-convex.
\item $I_a$ is $\Gamma$-convex and $I_b \sqcup \{a\}\setminus I_a$ is $\Gamma/I_a\rightarrow a$-convex.
\end{enumerate}\end{enumerate}\end{lemma}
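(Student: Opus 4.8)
The plan is to treat this exactly as Lemma \ref{lem62} was treated: the three graph identities in points 1, 2 and 3 are proved by unwinding the definitions of extraction and contraction and comparing the two sides set by set (vertices, internal edges, outgoing and ingoing external edges, source and target maps), while point 4 is the genuine content and is handled by arguments on oriented paths. Throughout I would use the standing hypothesis $I_a\subseteq I_b$, which is precisely what makes the bookkeeping collapse.

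For point 3, writing $\Gamma'=(\Gamma_{\mid I_b})_{\mid I_a}$, the equality $V(\Gamma')=I_a$ is immediate, and an edge $e\in Int(\Gamma)$ lies in $Int(\Gamma')$ iff $S_\Gamma(e),T_\Gamma(e)\in I_a$ (the intermediate condition "$\in I_b$" being automatic since $I_a\subseteq I_b$), so $Int(\Gamma')=Int(\Gamma_{\mid I_a})$. For the external edges I would check that an internal edge of $\Gamma$ cut by the second extraction, merged with an edge cut by the first extraction whose source is still in $I_a$, reassemble to exactly $\{e:S_\Gamma(e)\in I_a,\ T_\Gamma(e)\notin I_a\}$ for $OutExt$, and symmetrically for $InExt$; again $I_a\subseteq I_b$ is used to merge the cases. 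Point 1 is the analogous computation for contractions: setting $\Gamma_1=\Gamma/I_a\to a$, one finds $S_{\Gamma_1}(e)\in I_b\sqcup\{a\}\setminus I_a$ iff $S_\Gamma(e)\in I_b$ (and likewise for targets), from which the vertex set, the source and target maps, and the equality $Int(\Gamma_1)\setminus Int((\Gamma_1)_{\mid I_b\sqcup\{a\}\setminus I_a})=Int(\Gamma)\setminus Int(\Gamma_{\mid I_b})$ all drop out, the last step using that "both endpoints in $I_a$" implies "both in $I_b$". Point 2 is proved the same way, computing both sides to be "the subgraph induced on $I_b$, with $I_a$ contracted"; it is convenient to invoke point 3 to identify $Int((\Gamma_{\mid I_b})_{\mid I_a})$ with $Int(\Gamma_{\mid I_a})$ when handling the right-hand side.

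The heart is point 4, and the key structural remark is that $\Gamma_1=\Gamma/I_a\to a$ has no edge $a\to a$ (such an edge would be internal to $I_a$, hence removed by the contraction), so on any oriented path in $\Gamma_1$ the vertex $a$ never occurs twice in a row, any maximal run of non-$a$ vertices is a genuine oriented path of $\Gamma$ inside $V(\Gamma)\setminus I_a$, and each arrow $u\to a$ (resp. $a\to v$) of $\Gamma_1$ lifts to a $\Gamma$-edge $u\to p$ (resp. $q\to v$) with $p,q\in I_a\subseteq I_b$. With this in hand, to prove (a)$\Rightarrow$(b) I would first show $I_a$ is $\Gamma$-convex by noting that a $\Gamma$-path between points of $I_a\subseteq I_b$ stays in $I_b$ by convexity of $I_b$, hence is a path of $\Gamma_{\mid I_b}$, whose interior then lies in $I_a$ by convexity of $I_a$ in $\Gamma_{\mid I_b}$; and I would show $I_b\sqcup\{a\}\setminus I_a$ is $\Gamma_1$-convex by contradiction, taking a $\Gamma_1$-path between points of this set with an interior vertex $y\in V(\Gamma)\setminus I_b$, walking left and right to the nearest vertex in $I_b\cup\{a\}$ and lifting via the remark, which produces an honest $\Gamma$-path from a vertex of $I_b$ to a vertex of $I_b$ through $y\notin I_b$, contradicting convexity of $I_b$. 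For (b)$\Rightarrow$(a), convexity of $I_a$ in $\Gamma_{\mid I_b}$ is immediate from convexity of $I_a$ in $\Gamma$ (a path of $\Gamma_{\mid I_b}$ is a path of $\Gamma$), while convexity of $I_b$ in $\Gamma$ follows by projecting a $\Gamma$-path to $\Gamma_1$ via $w\mapsto a$ for $w\in I_a$ and $w\mapsto w$ otherwise (collapsing the resulting consecutive $a$'s): the endpoints land in $I_b\sqcup\{a\}\setminus I_a$, so convexity of that set in $\Gamma_1$ forces every $\overline{y_i}$, hence every $y_i$, into $I_b$.

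The main obstacle is the careful handling in point 4 of the contracted vertex $a$: one must consistently lift arrows through $a$ to honest $\Gamma$-edges into and out of $I_a$ and exploit $I_a\subseteq I_b$ so that these lifted endpoints lie in $I_b$, using the "no $a\to a$ edge" observation to guarantee that the lifting is well defined all along a path. The identities in points 1--3 are routine but must be carried out precisely enough to be reused (point 2 relies on point 3), and each convexity transfer in point 4 uses exactly one of the two hypotheses in each direction, so the accounting of which hypothesis is invoked where is the part demanding the most attention.
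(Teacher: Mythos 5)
Your proposal is correct and follows essentially the same route as the paper: points 1--3 by direct comparison of the vertex/edge data of both sides, and point 4 by lifting paths through the contracted vertex $a$ (using $I_a\subseteq I_b$ to land in $I_b$) in one direction and projecting $\Gamma$-paths to $\Gamma/I_a\rightarrow a$ in the other. Your explicit remarks that $\Gamma/I_a\rightarrow a$ has no edge $a\rightarrow a$ and that consecutive $a$'s must be collapsed when projecting are welcome refinements of details the paper leaves implicit, but they do not change the argument.
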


\begin{proof} 1. Let $\Gamma'=(\Gamma/I_a\rightarrow a)/ (I_b\sqcup\{a\}\setminus I_a)\rightarrow b$. Then:
\begin{align*}
V(\Gamma')&=V(\Gamma)\sqcup\{a\}\setminus I_b,\\
Int(\Gamma')&=\{e\in Int(\Gamma)\mid S_\Gamma(e)\notin I_b\mbox{ or } T_\Gamma(e)\notin I_b\},\\
OutExt(\Gamma')&=OutExt(\Gamma),\\
InExt(\Gamma')&=InExt(\Gamma).
\end{align*}
For all $e\in Int(\Gamma')\sqcup OutExt(\Gamma')$:
$$S_{\Gamma'}(e)=\begin{cases}
S_\Gamma(e)\mbox{ if } S_\Gamma(e)\notin I_b,\\
b\mbox{ otherwise}.
\end{cases}$$
For all $e\in Int(\Gamma')\sqcup InExt(\Gamma')$:
$$T_{\Gamma'}(e)=\begin{cases}
T_\Gamma(e)\mbox{ if } T_\Gamma(e)\notin I_b,\\
b\mbox{ otherwise}.
\end{cases}$$
So $\Gamma'=\Gamma/I_b\rightarrow b$.\\

2. Let $\Gamma''=(\Gamma/I_a \rightarrow a)_{\mid I_b \sqcup\{a\}\setminus I_a}$. Then:
\begin{align*}
V(\Gamma'')&=I_b\sqcup\{a\}\setminus I_a,\\
Int(\Gamma'')&=\{e\in Int(\Gamma)\mid S_\Gamma(e)\in I_b\mbox{ and } T_\Gamma(e)\in I_b\}\\
&\hspace{1cm}\setminus\{e\in Int(\Gamma)\mid S_\Gamma(e)\in I_a\mbox{ and } T_\Gamma(e)\in I_a\},\\
OutExt(\Gamma'')&=\{e\in OutExt(\Gamma)\mid S_\Gamma(e)\in I_b\}\sqcup
\{e\in Int(\Gamma)\mid S_\Gamma(e)\in I_b \mbox{ and } T_\Gamma(e)\notin I_b\},\\
InExt(\Gamma'')&=\{e\in InExt(\Gamma)\mid T_\Gamma(e)\in I_b\}\sqcup
\{e\in Int(\Gamma)\mid S_\Gamma(e)\notin I_b \mbox{ and } T_\Gamma(e)\in I_b\}.
\end{align*}
For all $e\in Int(\Gamma'')\sqcup OutExt(\Gamma'')$:
$$S_{\Gamma''}(e)=\begin{cases}
S_\Gamma(e)\mbox{ if } S_\Gamma(e)\in I_b\setminus I_a,\\
a\mbox{ otherwise}.
\end{cases}$$
For all $e\in Int(\Gamma'')\sqcup InExt(\Gamma'')$:
$$T_{\Gamma''}(e)=\begin{cases}
T_\Gamma(e)\mbox{ if } T_\Gamma(e) \in I_b\setminus I_a,\\
a\mbox{ otherwise}.
\end{cases}$$
So $\Gamma''=(\Gamma_{\mid I_b})/I_a\rightarrow a$.\\

3. Immediate.\\

4. $\Longrightarrow$. Let $x\rightarrow y_1\rightarrow\ldots \rightarrow y_k\rightarrow z$ in $\Gamma$, with $x,z\in I_a$.
Then $x,z\in I_b$. As $I_b$ is $\Gamma$-convex, $y_1,\ldots,y_k\in I_b$. As $I_a$ is $\Gamma_{\mid I_b}$-convex, $y_1,\ldots,y_k\in I_a$.

Let $x\rightarrow y_1\rightarrow\ldots \rightarrow y_k\rightarrow z$ in $\Gamma/I_a\rightarrow a$, with $x,z\in I_b\sqcup \{a\}\setminus I_a$.
Let $i_1<\ldots<i_l$ be the indices such that $y_i=a$. There exists elements $y'_{i_p}$, $y''_{i_p} \in I_a$, such that, in $\Gamma$:
$$x\rightarrow \ldots \rightarrow y'_{i_1}, y''_{i_1}\rightarrow y_{i_1+1}\rightarrow \ldots \rightarrow y'_{i_p},y''_{i_p} \rightarrow \ldots \rightarrow z.$$
As $I_a\subseteq I_b$ and $I_b$ is $\Gamma$-convex, all the $y_i$ except the $y_{i_p}$ are elements of $I_b$.
So $y_1,\ldots,y_k \in I_b\sqcup\{a\}\setminus I_a$.

$\Longleftarrow$. Let $x\rightarrow y_1\rightarrow\ldots \rightarrow y_k\rightarrow z$ in $\Gamma$, with $x,z\in I_b$.
For all $y\in V(\Gamma)$, we put:
$$\overline{y}=\begin{cases}
y\mbox{ if }y\notin I_a,\\
a\mbox{ otherwise}.
\end{cases}$$
Then  $\overline{x}\rightarrow \overline{y_1}\rightarrow\ldots \rightarrow \overline{y_k}\rightarrow \overline{z}$ in $\Gamma/I_a\rightarrow a$.
As $I_b\sqcup\{a\}\setminus I_a$ is $\Gamma/I_a\rightarrow a$-convex, $\overline{y_1},\ldots,\overline{y_k}\in I_b\sqcup\{a\}\setminus I_a$,
so $y_1,\ldots,y_k \in I_b$.

Let $x\rightarrow y_1\rightarrow\ldots \rightarrow y_k\rightarrow z$ in $\Gamma_{\mid I_b}$, with $x,z\in I_a$.
Then Let $x\rightarrow y_1\rightarrow\ldots \rightarrow y_k\rightarrow z$ in $\Gamma$; as $I_a$ is $\Gamma$-convex,
$y_1,\ldots,y_k\in I_a$. \end{proof}

\begin{defi}
Let $\Gamma$ be a Feynman graph.
\begin{enumerate}
\item We shall say that $\Gamma$ is \emph{simple} if the two following conditions hold:
\begin{itemize}
\item For all $v,v'\in V(\Gamma)$, there exists at most one internal edge $e$ in $\Gamma$ with $S_\Gamma(e)=v$ and $T_\Gamma(e)=v'$.
\item For all $e\in Int(\Gamma)$, $S_\Gamma(e)\neq T_\Gamma(e)$.
\end{itemize}\end{enumerate}\end{defi}

\begin{lemma} \label{lem65}
Let $\Gamma$ be a Feynman graph, $I\subseteq V(\Gamma)$, non-empty.
\begin{enumerate}
\item If $\Gamma_{\mid I}$ and $\Gamma/I\rightarrow a$ has no oriented cycle, then $\Gamma$ has no oriented cycle and $I$ is $\Gamma$-connex.
\item We assume that $I$ is $\Gamma$-convex. If $\Gamma_{\mid I}$ or $\Gamma/I \rightarrow a$ has an oriented cycle,
then $\Gamma$ has an oriented cycle. 
\end{enumerate}\end{lemma}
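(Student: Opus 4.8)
The plan is to translate everything into statements about directed walks via the vertex map $\kappa:V(\Gamma)\to V(\Gamma/I\to a)$ that collapses $I$ to the single vertex $a$ and fixes every vertex outside $I$. The two structural facts I would record first are: (a) the internal edges of $\Gamma_{\mid I}$ are exactly the internal edges of $\Gamma$ with both endpoints in $I$, so an oriented cycle of $\Gamma_{\mid I}$ is literally an oriented cycle of $\Gamma$; and (b) the internal edges of $\Gamma/I\to a$ are exactly the internal edges of $\Gamma$ with at least one endpoint outside $I$, with $\kappa$ applied to source and target. In particular $\Gamma/I\to a$ has no loop at $a$, since such a loop would come from an edge with both endpoints in $I$, which has been deleted. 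I would also invoke the elementary fact that any closed directed walk using at least one edge contains an oriented cycle (pass to a shortest closed sub-walk).

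For the second assertion, assume $I$ is $\Gamma$-convex. If $\Gamma_{\mid I}$ has an oriented cycle, observation (a) produces one in $\Gamma$ at once. If instead $\Gamma/I\to a$ has an oriented cycle, I claim it cannot pass through $a$: between two consecutive visits to $a$ it reads $a\to w_1\to\cdots\to w_m\to a$ with all $w_i\neq a$ and $m\geq 1$ (no loop at $a$), and by (b) this segment lifts to a path $s\to w_1\to\cdots\to w_m\to t$ in $\Gamma$ with $s,t\in I$ and $w_1,\ldots,w_m\notin I$, contradicting convexity. Hence the cycle avoids $a$ entirely, so all its vertices lie outside $I$, all its edges survive in $\Gamma$, and it is an oriented cycle of $\Gamma$.

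For the first assertion I would argue each conclusion contrapositively. Convexity: if $I$ is not convex there is a path $x\to y_1\to\cdots\to y_k\to z$ in $\Gamma$ with $x,z\in I$ and some $y_i\notin I$; applying $\kappa$ gives a closed walk from $a$ to $a$ through $y_i\neq a$ (the two edges adjacent to $y_i$ survive, as $y_i\notin I$), of positive length, hence containing an oriented cycle of $\Gamma/I\to a$, so acyclicity of the contraction forces $I$ convex. Acyclicity of $\Gamma$: given an oriented cycle $C$ of $\Gamma$, if every vertex of $C$ lies in $I$ then $C$ is a cycle of $\Gamma_{\mid I}$ by (a); otherwise $C$ has a vertex $v\notin I$, and $\kappa(C)$ is a positive-length closed walk through $v\neq a$, hence contains an oriented cycle of $\Gamma/I\to a$. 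In either case one of the two hypotheses is violated, so $\Gamma$ is acyclic.

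The only delicate points, which I would write most carefully, are the lifting step in assertion 2 (matching an edge into, resp. out of, $a$ with a genuine $\Gamma$-edge whose contracted endpoint lies in $I$, using that there is no loop at $a$) and the verification in assertion 1 that the pushed-forward walk has positive length — precisely the place where the presence of a vertex outside $I$, equivalently a surviving edge, is essential. Everything else is the routine edge-by-edge bookkeeping of extraction and contraction already exemplified in Lemmas \ref{lem62} and \ref{lem63}.
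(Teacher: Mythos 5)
Your proof is correct and follows essentially the same route as the paper's: push paths and cycles forward along the contraction (using that the deleted edges are exactly those internal to $I$, so $a$ carries no loop), and lift cycles through $a$ back to $\Gamma$-paths between two vertices of $I$ to contradict convexity. The only cosmetic difference is that in the first assertion you prove acyclicity of $\Gamma$ directly by cases on whether the cycle meets $V(\Gamma)\setminus I$, whereas the paper first establishes convexity and then uses it to reduce to the two extreme cases (cycle entirely inside or entirely outside $I$); both variants are sound.
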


\begin{proof} 1. Let $x\rightarrow y_1\rightarrow \ldots \rightarrow y_k\rightarrow z$ in $\Gamma$, with $x,z\in I$.
We assume that at least one of the $y_i$ is not in $I$. Let $i$ be the smallest index such that $y_i \notin I$
and $j$ be the smallest index greater than $i$ such that $y_j \in I$, with the convention $y_{k+1}=z$.
Then $a\rightarrow y_i\rightarrow \ldots \rightarrow y_{j-1} \rightarrow a$ in $\Gamma/I\rightarrow a$, and $\Gamma/I\rightarrow a$
has an oriented cycle: this is a contradiction, so $y_1,\ldots,y_k \in I$ and $I$ is $\Gamma$-convex.

Let us consider an oriented cycle $x_1\rightarrow \ldots \rightarrow x_k\rightarrow x_1$ in $\Gamma$. If one of the $x_i$ belongs to $I$, 
as $I$ is $\Gamma$-convex, all the $x_i$ belongs to $I$, so $\Gamma_{\mid I}$ has an oriented cycle: this is a contradiction.
Il none of the $x_i$ belong to $I$, they form an oriented cycle in $\Gamma/I\rightarrow a$: this is a contradiction.
As a conclusion, $\Gamma$ has no oriented cycle.\\



2. If $\Gamma_{\mid I}$ has an oriented cycle, obviously $\Gamma$ has an oriented cycle.
Let us assume that $\Gamma/I\rightarrow a$ has an oriented cycle. If this cycle does not contain $a$, then obviously
$\Gamma$ has an oriented cycle. If not, there exists an oriented cycle
$a\rightarrow y_1\rightarrow \ldots \rightarrow y_k\rightarrow a$ in $\Gamma/I\rightarrow a$, with $y_1,\ldots,y_k\notin V(\Gamma)\setminus I$.
Note that $k\geq 1$, by definition of $\Gamma/I\rightarrow a$.
Hence, there exists $x,z\in I$, such that $x\rightarrow y_1\rightarrow \ldots \rightarrow y_k\rightarrow z$ in $\Gamma$:
$I$ is not $\Gamma$-convex. \end{proof}\\

\textbf{Remark.} If $I$ is not $\Gamma$-convex,  $\Gamma$ may have no oriented cycle, whereas $\Gamma/I\rightarrow a$ may have one.
Take for example:
\vspace{.5cm}
$$\Gamma=\xymatrix{\rond{1}\ar[r] \ar@/^2pc/[rr]&\rond{2}\ar[r]&\rond{3}},$$
If $I=\{1,3\}$, then:
$$\Gamma/I\rightarrow a=\xymatrix{\rond{a} \ar@/^1pc/[r]&\rond{2}\ar@/^1pc/[l]}.$$

\subsection{The operad of Feynman graphs}

\begin{prop} \label{prop66}
Let $\Gamma \in \fg(A)$, $\Gamma'\in \fg(B)$, and $b\in A$. We put:
\begin{align*}
\Gamma \nabla_b\Gamma'&=\sum_{\substack{\Gamma''\in \fg(A\sqcup B\setminus \{b\}),\\
\Gamma''_{\mid B}=\Gamma',\: \Gamma''/B\rightarrow b=\Gamma}} \Gamma'',&
\Gamma \circ_b\Gamma'&=\sum_{\substack{\Gamma''\in \fg(A\sqcup B\setminus \{b\}),\\
\Gamma''_{\mid B}=\Gamma',\: \Gamma''/B\rightarrow b=\Gamma,\\ \mbox{\scriptsize $B$ $\Gamma''$-convex}}} \Gamma''.
\end{align*}
For all $\Gamma \in \fg(A)$, $\Gamma'\in \fg(B)$, $\Gamma''\in \fg(C)$, if $b\neq c\in A$:
\begin{align*}
(\Gamma \nabla_b\Gamma')\nabla_c\Gamma''&=(\Gamma \nabla_c\Gamma'')\nabla_b\Gamma',&
(\Gamma \circ_b\Gamma')\circ_c\Gamma''&=(\Gamma \circ_c\Gamma'')\circ_b\Gamma'.
\end{align*}
If $b\in B$ and $c\in C$:
\begin{align*}
(\Gamma \nabla_b\Gamma')\nabla_c\Gamma''&=\Gamma \nabla_b(\Gamma'\nabla_c\Gamma''),&
(\Gamma \circ_b\Gamma')\circ_c\Gamma''&=\Gamma \circ_b(\Gamma'\circ_c\Gamma'').
\end{align*}\end{prop}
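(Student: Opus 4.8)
The plan is to treat all four identities uniformly by recognizing each composite as a sum, \emph{with multiplicity one}, over a single ``total'' Feynman graph $\Xi$ whose vertex set is the full disjoint union (minus the vertices that get contracted), and then to match the defining extraction/contraction conditions (and, for $\circ$, the convexity conditions) imposed by the two sides, using Lemmas \ref{lem62} and \ref{lem63}. First I would record the key multiplicity observation: in a composite such as $(\Gamma\nabla_b\Gamma')\nabla_c\Gamma''$, once the final graph $\Xi$ is fixed, the intermediate graph is forced---it must equal $\Xi/C\to c$ in the disjoint (commutativity) case, or $\Xi_{\mid B\sqcup C\setminus\{c\}}$ in the nested (associativity) case---because contraction and restriction are functions of $\Xi$. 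Hence each $\Xi$ contributes at most once to each side, so it suffices to show that the two sides impose the same system of conditions on $\Xi$.

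For the two commutativity identities, the second insertion takes place at a vertex $c\in A$ distinct from $b$, so the relevant blocks $B$ and $C$ are disjoint and Lemma \ref{lem62} applies. Part (1) turns $(\Xi/C\to c)_{\mid B}$ into $\Xi_{\mid B}$; part (2) makes the iterated contraction $(\Xi/C\to c)/B\to b$ symmetric in the two blocks; and part (3) exchanges the two convexity requirements needed for the $\circ$-version. The resulting condition---$\Xi_{\mid B}=\Gamma'$, $\Xi_{\mid C}=\Gamma''$, and $(\Xi/B\to b)/C\to c=\Gamma$, together with ``$B$ is $\Xi$-convex and $C$ is $(\Xi/B\to b)$-convex'' for $\circ$---is manifestly invariant under swapping $(B,b,\Gamma')$ with $(C,c,\Gamma'')$, which is precisely the content of both commutativity statements.

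For the two associativity identities, the second insertion takes place at a vertex $c$ of $\Gamma'$, so $C\subseteq B\sqcup C\setminus\{c\}$; I would apply Lemma \ref{lem63} with $I_a=C$ (contracted to the reinserted vertex $c$), $I_b=B\sqcup C\setminus\{c\}$ (contracted to $b$), and $I_b\sqcup\{a\}\setminus I_a=B$. Part (1) identifies $(\Xi/C\to c)/B\to b$ with $\Xi/(B\sqcup C\setminus\{c\})\to b$; part (2) identifies $(\Xi/C\to c)_{\mid B}$ with $(\Xi_{\mid B\sqcup C\setminus\{c\}})/C\to c$; part (3) identifies $(\Xi_{\mid B\sqcup C\setminus\{c\}})_{\mid C}$ with $\Xi_{\mid C}$; and part (4) matches the two pairs of convexity conditions in the $\circ$-case. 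Clause by clause, these identifications convert the conditions defining the left-nested composite into those defining the right-nested one, giving equality of the two sums.

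The main obstacle is purely organizational rather than conceptual: keeping the vertex-set bookkeeping straight---tracking which new vertex each contraction produces (the reinserted $b$ or $c$) and verifying in each clause that the hypotheses of the lemmas hold ($I_a\subseteq I_b$ in the nested case, disjointness in the parallel case) so that the correct part of the correct lemma is invoked. Once this dictionary is set up, the geometric substance is entirely carried by Lemmas \ref{lem62} and \ref{lem63}, and the four equalities follow by direct comparison of index sets.
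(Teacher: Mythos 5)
Your proposal is correct and follows essentially the same route as the paper: both sides are rewritten as multiplicity-one sums over a single total graph on $A\sqcup B\sqcup C\setminus\{b,c\}$, and the index-set conditions are matched clause by clause using Lemma \ref{lem62} (with $I_a=B$, $I_b=C$) for the parallel case and Lemma \ref{lem63} (with $I_a=C$, $I_b=B\sqcup C\setminus\{c\}$) for the nested case. The explicit remark that the intermediate graph is determined by the total graph, so each summand occurs exactly once, is left implicit in the paper but is a worthwhile clarification.
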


\begin{proof} If $b\neq c \in A$:
\begin{align*}
(\Gamma \nabla_b\Gamma')\nabla_c\Gamma''&=\sum_{\substack{\Upsilon\in \fg(A\sqcup B\sqcup C\setminus\{b,c\}),\\
\Upsilon_{\mid C}=\Gamma'',\: (\Upsilon/C\rightarrow C)_{\mid B}=\Gamma',\\
(\Upsilon/C\rightarrow c)/B\rightarrow b=\Gamma}} \Upsilon=\sum_{\substack{\Upsilon\in \fg(A\sqcup B\sqcup C\setminus\{b,c\}),\\
\Upsilon_{\mid C}=\Gamma'',\: \Upsilon_{\mid B}=\Gamma',\\
(\Upsilon/B\rightarrow b)/C\rightarrow c=\Gamma}} \Upsilon=(\Gamma \nabla_c\Gamma'')\nabla_b\Gamma';\\ \\
(\Gamma \circ_b\Gamma')\circ_c\Gamma''&=\sum_{\substack{\Upsilon\in \fg(A\sqcup B\sqcup C\setminus\{b,c\}),\\
\Upsilon_{\mid C}=\Gamma'',\: (\Upsilon/C\rightarrow C)_{\mid B}=\Gamma',\\
(\Upsilon/C\rightarrow c)/B\rightarrow b=\Gamma,\\
\mbox{\scriptsize $C$ $\Upsilon$-convex, $B$ $\Upsilon/C\rightarrow c$-convex}}} \Upsilon
=\sum_{\substack{\Upsilon\in \fg(A\sqcup B\sqcup C\setminus\{b,c\}),\\
\Upsilon_{\mid C}=\Gamma'',\: \Upsilon_{\mid B}=\Gamma',\\
(\Upsilon/B\rightarrow b)/C\rightarrow c=\Gamma,\\
\mbox{\scriptsize $B$ $\Upsilon$-convex, $C$ $\Upsilon/B\rightarrow b$-convex}}} \Upsilon=(\Gamma \circ_c\Gamma'')\circ_b\Gamma'.
\end{align*}
We used lemma \ref{lem62}, with $I_a=B$ and $I_b=C$.\\

Let $b\in A$ and $c\in B$.
\begin{align*}
(\Gamma \nabla_b\Gamma')\nabla_c\Gamma''&=\sum_{\substack{\Upsilon \in \fg(A\sqcup B\sqcup C\setminus\{b,c\}),\\
\Upsilon_{\mid C}=\Gamma'',\: (\Upsilon/C\rightarrow c)_{\mid B}=\Gamma'',\\
(\Upsilon/C\rightarrow c)/B\rightarrow b=\Gamma}} \Upsilon
=\sum_{\substack{\Upsilon \in \fg(A\sqcup B\sqcup C\setminus\{b,c\}),\\
(\Upsilon_{\mid B\sqcup C\setminus \{c\}})_{\mid C}=\Gamma'',\\ (\Upsilon_{\mid B\sqcup C\setminus \{c\}})/C\rightarrow c=\Gamma'',\\
\Upsilon/B\sqcup C\setminus \{c\}\rightarrow b=\Gamma}} \Upsilon
=\Gamma \nabla_b(\Gamma'\nabla_c\Gamma'');\\ \\
(\Gamma \circ_b\Gamma')\circ_c\Gamma''&=\sum_{\substack{\Upsilon \in \fg(A\sqcup B\sqcup C\setminus\{b,c\}),\\
\Upsilon_{\mid C}=\Gamma'',\: (\Upsilon/C\rightarrow c)_{\mid B}=\Gamma'',\\
(\Upsilon/C\rightarrow c)/B\rightarrow b=\Gamma,\\
\mbox{\scriptsize $C$ $\Upsilon$-convex, $B$ $\Upsilon_{\mid C}$-convex}}} \Upsilon
=\sum_{\substack{\Upsilon \in \fg(A\sqcup B\sqcup C\setminus\{b,c\}),\\
(\Upsilon_{\mid B\sqcup C\setminus \{c\}})_{\mid C}=\Gamma'',\\ (\Upsilon_{\mid B\sqcup C\setminus \{c\}})/C\rightarrow c=\Gamma'',\\
\Upsilon/B\sqcup C\setminus \{c\}\rightarrow b=\Gamma,\\
\mbox{\scriptsize $B\sqcup C\setminus \{c\}$ $\Upsilon$-convex},\\
\mbox{\scriptsize $C$ $\Upsilon_{\mid B\sqcup C\setminus \{c\}}$-convex}}} \Upsilon
=\Gamma \circ_b(\Gamma'\circ_c\Gamma'').
\end{align*}
 We used lemma \ref{lem63}, with $I_a=C$ and $I_b=B\sqcup C\setminus \{c\}$. \end{proof}\\
 
 Although the associativity of the composition is satisfied, $\bffg$ is not an operad: it contains no unit.
 In order to obtain it, we must take a completion. For all finite set $A$, we put:
$$\overline{\bffg}(A)=\prod_{\Gamma \in \fg(A)} \K\Gamma.$$
It contains $\bffg(A)$. Its elements will be denoted under the form:
$$\sum_{\Gamma \in \fg(A)} a_\Gamma \Gamma.$$

\begin{theo}\label{theo67}
The compositions $\nabla$ and $\circ$ are naturally extended to $\overline{\bffg}$, making it an operad. Its unit is:
$$I=\sum_{\Gamma \in \fg(\{1\})}\Gamma.$$
\end{theo}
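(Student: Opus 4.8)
The plan is to treat $\nabla$ and $\circ$ uniformly and to reduce everything to the extraction--contraction identities of Lemmas \ref{lem62} and \ref{lem63}, already packaged in Proposition \ref{prop66}; the only genuinely new points are the passage to the completion $\overline{\bffg}$ and the two unit axioms, the naturality in the finite set being visible from the definitions.

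First I would make precise the extension of the compositions to $\overline{\bffg}$. The crucial observation is that for a fixed graph $\Gamma'' \in \fg(A \sqcup B \setminus \{b\})$, the two graphs $\Gamma''/B \to b \in \fg(A)$ and $\Gamma''_{\mid B} \in \fg(B)$ are completely determined by $\Gamma''$. Hence $\Gamma''$ occurs in exactly one product $\Gamma \nabla_b \Gamma'$ (resp. $\Gamma \circ_b \Gamma'$), namely the one with $\Gamma = \Gamma''/B \to b$ and $\Gamma' = \Gamma''_{\mid B}$ (and, for $\circ$, only when $B$ is $\Gamma''$-convex). Therefore, for $x = \sum_\Gamma a_\Gamma \Gamma$ and $y = \sum_{\Gamma'} c_{\Gamma'} \Gamma'$ in the completion, the coefficient of each $\Gamma''$ in $x \nabla_b y$ is the single product $a_{\Gamma''/B\to b}\, c_{\Gamma''_{\mid B}}$, and similarly for $\circ$ with the convexity indicator inserted. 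Since each coefficient is a finite (in fact one-term) sum, $\nabla$ and $\circ$ extend to well-defined maps on $\overline{\bffg}$; this is precisely the step forcing the completion, since the unit will be an infinite sum.

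Associativity is then inherited from Proposition \ref{prop66}. For the two cases $b \ne c \in A$ and $b \in B$, $c \in C$, I would check the operad relations coefficient by coefficient: by the same unique-decomposition principle, the coefficient of a graph $\Upsilon \in \fg(A \sqcup B \sqcup C \setminus \{b,c\})$ in either side of each relation is a product of three coefficients indexed by the three pieces of $\Upsilon$ obtained by iterated restriction and contraction, and Lemmas \ref{lem62} and \ref{lem63} (exactly as used in the proof of Proposition \ref{prop66}) state that the two ways of cutting $\Upsilon$ into three pieces coincide, so the two coefficients agree. The equivariance under bijections of the vertex set is immediate, as extraction and contraction are defined purely from the incidence data $S_\Gamma, T_\Gamma$ and commute with relabelling.

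The delicate point, and the one I would spend the most care on, is the unit axiom with $I_a = \sum_{\Gamma \in \fg(\{a\})} \Gamma \in \overline{\bffg}(\{a\})$. The left identity $I_a \circ_a p = p$ is straightforward: for $p = \Gamma \in \fg(A)$, restricting a graph on vertex set $A$ to all of $A$ returns the graph itself, so the only $\Gamma''$ with $\Gamma''_{\mid A} = \Gamma$ is $\Gamma$ (with $A$ trivially $\Gamma$-convex), while $\Gamma''/A \to a$ is a single-vertex graph, automatically a term of $I_a$; hence $I_a \circ_a \Gamma = \Gamma$. The right identity $p \circ_a I_a = p$ is where the infinite sum is essential, since one inserts the whole family of single-vertex graphs at the vertex $a$ and must verify that the resulting coefficients collapse back to $\Gamma$. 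I expect this to be the main obstacle: a single bare vertex cannot serve as a unit, because a vertex of $\Gamma$ carries arbitrarily many half-edges (so the external legs of the inserted vertex must match them) and its contraction discards the internal loop data, so the argument genuinely needs the full sum over $\fg(\{a\})$ together with the reconstruction of a graph from its restriction to, and contraction of, a one-element set. I would carry this out by fixing $\Gamma$, summing the coefficient formula from the first paragraph over all single-vertex graphs $\Delta$, and matching the surviving terms against $\Gamma$; pinning down exactly which $\Gamma''$ survive is the heart of the verification.
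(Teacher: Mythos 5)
Your setup --- the one-term coefficient formula $a_{\Gamma''/B\rightarrow b}\,c_{\Gamma''_{\mid B}}$ for the extension to $\overline{\bffg}$, and associativity read off coefficientwise from Proposition \ref{prop66} (hence from Lemmas \ref{lem62} and \ref{lem63}) --- is exactly the paper's argument, and your treatment of the left unit $I\nabla_1\Gamma'=\Gamma'$ via the triviality of restriction to the full vertex set is also the paper's. The gap is that you stop precisely at the one point where the theorem has content beyond Proposition \ref{prop66}: you announce that ``pinning down exactly which $\Gamma''$ survive is the heart of the verification'' of $\Gamma\nabla_b I=\Gamma$, and then you do not pin them down. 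As written, the two-sided unit property is asserted as a plan, not proved.

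The missing computation is short but is the whole point. Fix $\Gamma\in\fg(A)$, $b\in A$ and $\Gamma'\in\fg(\{b\})$; a graph $\Gamma''$ contributes to $\Gamma\nabla_b\Gamma'$ iff $\Gamma''_{\mid\{b\}}=\Gamma'$ and $\Gamma''/\{b\}\rightarrow b=\Gamma$. The paper's proof consists of the observation that this pair of conditions forces $\Gamma''=\Gamma$ and is satisfiable exactly when the half-edge data of $\Gamma$ at $b$ (i.e.\ $\Gamma_{\mid\{b\}}$) agrees with $\Gamma'$, so that precisely one summand $\Gamma'$ of $I$ yields $\Gamma$ and all others yield $0$, whence $\Gamma\nabla_b I=\Gamma$. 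Your instinct that the loops at the substituted vertex are the delicate point is correct, and is in fact sharper than the condition displayed in the paper (which matches only $OutExt$ and $InExt$): the restriction $\Gamma''_{\mid\{b\}}$ records the loops of $\Gamma''$ at $b$ as internal edges of $\Gamma'$, while the contraction $\Gamma''/\{b\}\rightarrow b$ deletes them, so one must check that the two conditions jointly single out one surviving $\Gamma''$ rather than the whole family of graphs differing from $\Gamma$ only in the loop multiplicity at $b$. A complete proof has to carry out this matching of the internal data explicitly; your proposal correctly isolates the difficulty but leaves it unresolved.
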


\begin{proof}
Let $X=\sum a_\Gamma \Gamma \in \bffg(A)$ and $Y=\sum b_\Gamma \Gamma$ in $\bffg(B)$ and $b \in A$. Then:
\begin{align*}
X\nabla_b Y&=\sum_{\Upsilon \in \fg(A\sqcup B\setminus \{b\})} a_{\Upsilon/ B\rightarrow b} b_{\Upsilon_{\mid B}} \Upsilon,&
X\circ_b Y&=\sum_{\substack{\Upsilon \in \fg(A\sqcup B\setminus \{b\}),\\
\mbox{\scriptsize $B$ $\Upsilon$-convex}}} a_{\Upsilon/ B\rightarrow b} b_{\Upsilon_{\mid B}} \Upsilon.
\end{align*}
These formulas also make sense if $X\in \overline{\bffg}(A)$ and $Y\in \overline{\bffg}(B)$.
Proposition \ref{prop66} implies the associativity of $\nabla$ and $\circ$. \\

Let $\Gamma \in \fg(A)$ and $b\in A$. If $\Gamma' \in \fg(\{b\})$:
$$\Gamma \nabla_b \Gamma'=\begin{cases}
\Gamma\mbox{ if $InExt(\Gamma_{\mid \{b\}})=InExt(\Gamma')$ and $OutExt(\Gamma_{\mid \{b\}})=OutExt(\Gamma')$},\\
0\mbox{ otherwise}.
\end{cases}$$
Summing over all possible $\Gamma'$, $\Gamma \nabla_b I=\Gamma$. Hence, for all $X \in \overline{\bffg}(A)$, $X\nabla_b I=X$.\\

Let $\Gamma \in \fg(\{1\})$ and $\Gamma'\in \fg(A)$. Then:
$$\Gamma \nabla_1\Gamma'=\begin{cases}
\Gamma \mbox{ if $InExt(\Gamma')=InExt(\Gamma)$ and $OutExt(\Gamma)=OutExt(\Gamma')$},\\
0\mbox{ otherwise}.
\end{cases}$$
Summing over all possible $\Gamma$, $I\nabla_1\Gamma'=\Gamma'$. Hence, for all $X\in \overline{\bffg}(A)$, $I\nabla_1 X=X$,
so $I$ is the unit of the operad $(\overline{\bffg},\nabla)$. The proof is similar for $(\overline{\bffg},\circ)$. \end{proof}\\

\textbf{Remark.} The unit of $\overline{\bffg}$ is:
$$I=\sum_{i,j,k\geq 0} \xymatrix{\ar[r]|i&\rond{1}\ar[r]|j \ar@(ul,ur)|k&},$$
where the integers on the edges and half-edges indicate their multiplicity.

\subsection{Suboperads and quotients}

\begin{prop}\begin{enumerate}
\item For all finite space $A$, we denote by $\wcfg(A)$ the set of Feynman graphs $\Gamma \in \fg(A)$ with no oriented cycle.
We also put:
\begin{align*}
\bfwcfg(A)&=\bigoplus_{\Gamma \in \wcfg(A)} \K\Gamma,&
\overline{\bfwcfg}(A)&=\prod_{\Gamma \in \wcfg(A)} \K\Gamma.
\end{align*}
Then $\overline{\bfwcfg}$ is a suboperad of $(\overline{\bffg},\nabla)$ and $(\overline{\bffg},\circ)$. Moreover,
$(\overline{\bfwcfg},\nabla)=(\overline{\bfwcfg},\circ)$.
\item For all finite set $A$, we put:
$$I(A)=\prod_{\Gamma \in \fg(A)\setminus \wcfg(A)} \K\Gamma.$$
Then $I$ is an operadic ideal of $(\overline{\bffg},\circ)$. Moreover, $(\overline{\bffg}/I,\circ)$ is isomorphic
to $(\overline{\bfwcfg},\circ)$. 
\end{enumerate}\end{prop}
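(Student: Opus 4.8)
The plan is to deduce both statements from Lemma \ref{lem65}, using its two parts in complementary roles while keeping careful track of where the $\Gamma''$-convexity condition that distinguishes $\circ$ from $\nabla$ intervenes. Throughout, by bilinearity and continuity (exactly as in the proof of Theorem \ref{theo67}) it suffices to treat the partial compositions on single graphs $\Gamma \in \fg(A)$, $\Gamma' \in \fg(B)$ and then extend to the completions.

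For part 1, I would establish closure and the identity $\nabla=\circ$ simultaneously. Fix $\Gamma \in \wcfg(A)$, $\Gamma' \in \wcfg(B)$ and $b\in A$. Every $\Gamma''$ occurring in $\Gamma\nabla_b\Gamma'$ satisfies $\Gamma''_{\mid B}=\Gamma'$ and $\Gamma''/B\to b=\Gamma$, both acyclic; by Lemma \ref{lem65}(1) each such $\Gamma''$ is itself acyclic and, crucially, $B$ is automatically $\Gamma''$-convex. Hence the index sets defining $\nabla_b$ and $\circ_b$ coincide, giving $\Gamma\nabla_b\Gamma'=\Gamma\circ_b\Gamma'\in\overline{\bfwcfg}(A\sqcup B\setminus\{b\})$. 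This yields closure under both compositions and their equality on $\overline{\bfwcfg}$ at once. It then remains to pin down the unit: the unit $I$ of $\overline{\bffg}$ has loop components, which are cyclic, so I would replace it by its acyclic part $I_{wc}=\sum_{\Gamma\in\wcfg(\{1\})}\Gamma$ and rerun the two unit computations of Theorem \ref{theo67}, observing that for acyclic $\Gamma$ the graphs $\Gamma_{\mid\{b\}}$ and $\Gamma/A\to 1$ are loopless, so the surviving matching term already lies in $\wcfg$.

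For part 2, I would show that $I$ is an operadic ideal of $(\overline{\bffg},\circ)$ via Lemma \ref{lem65}(2). Suppose $\Gamma$ or $\Gamma'$ carries an oriented cycle. Every $\Gamma''$ in $\Gamma\circ_b\Gamma'$ has $B$ being $\Gamma''$-convex (by the definition of $\circ$) together with $\Gamma''_{\mid B}=\Gamma'$ and $\Gamma''/B\to b=\Gamma$, so Lemma \ref{lem65}(2) forces $\Gamma''$ to contain an oriented cycle; thus $\Gamma''\in\fg\setminus\wcfg$ and $\Gamma\circ_b\Gamma'\in I$. Applying this with the cyclic factor in either the inner or the outer slot gives stability of $I$ under all partial compositions, that is, the operadic ideal property. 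Finally, since $\overline{\bffg}(A)=\overline{\bfwcfg}(A)\oplus I(A)$ as vector spaces and, by part 1, $\circ$ restricted to acyclic graphs stays inside $\overline{\bfwcfg}$ (no cyclic term ever appears to be projected away), the canonical projection is an operad morphism whose restriction to $\overline{\bfwcfg}$ is an isomorphism onto $(\overline{\bffg}/I,\circ)$, the induced unit being precisely $I_{wc}$.

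The main obstacle is conceptual rather than computational: getting the role of convexity exactly right in the two opposite directions. For the agreement $\nabla=\circ$ in part 1 one must invoke that convexity is a \emph{consequence} (Lemma \ref{lem65}(1)), whereas for the ideal property in part 2 convexity must be an \emph{assumption} (Lemma \ref{lem65}(2)); this is exactly why $I$ is an ideal for $\circ$ but not for $\nabla$, as the counterexample in the remark following Lemma \ref{lem65} makes clear. A secondary technical point to treat with care is that $\overline{\bfwcfg}$ is a suboperad only once the unit $I$ is replaced by its acyclic part $I_{wc}$, and that all identities should be verified at the level of the completions $\overline{\bffg}$ and $\overline{\bfwcfg}$ rather than on the finite-dimensional spaces $\bffg$ and $\bfwcfg$.
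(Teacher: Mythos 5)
Your proof is correct and follows the same route as the paper, whose entire argument for this proposition is the single sentence that it is a direct consequence of Lemma \ref{lem65}; you have filled in exactly the intended details, using part (1) of that lemma to get both closure and the coincidence of $\nabla$ and $\circ$ on acyclic graphs, and part (2) to get the ideal property for $\circ$. Your additional observation that the unit of $\overline{\bfwcfg}$ must be the acyclic part of the unit of $\overline{\bffg}$ (since the latter contains looped, hence cyclic, one-vertex graphs) is a genuine point of care that the paper leaves implicit.
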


\begin{proof} This is a direct consequence of lemma \ref{lem65}. \end{proof}

\begin{defi}
Let $\Gamma \in \fg(A)$. We define an equivalence relation on $Int(\Gamma)$: 
$$\forall e,f\in Int(\Gamma),\: e\sim f\Longleftrightarrow (S_\Gamma(e),T_\Gamma(e))=(S_\Gamma(f),T_\Gamma(f)).$$
We define a Feynman graph $s(\Gamma)$ by:
\begin{align*}
V(s(\Gamma))&=V(\Gamma),&OutExt(s(\Gamma))&=OutExt(\Gamma),\\
Int(s(\Gamma))&=\{e\in Int(\Gamma)\mid S_\Gamma(e)\neq T_\Gamma(e)\}/\sim,&InExt(s(\Gamma))&=InExt(\Gamma).
\end{align*}
For all $\overline{e}\in Int(s(\Gamma))$:
\begin{align*}
S_{s(\Gamma)}(\overline{e})&=S_\Gamma(e),&T_{s(\Gamma)}(\overline{e})&=T_\Gamma(e).
\end{align*}
For all $e\in OutExt(s(\Gamma))$, for all $f\in InExt(s(\Gamma))$:
\begin{align*}
S_{s(\Gamma)}(e)&=S_\Gamma(e),&T_{s(\Gamma)}(f)&=T_\Gamma(f).
\end{align*}
Roughly speaking, $s(\Gamma)$ is obtained by deleting the loops of $\Gamma$ and reducing multiple edges to single edges.
Note that $s(\Gamma)$ is a simple Feynman graph.
\end{defi}

\begin{prop}
For all finite set $A$, we denote by $\sfg(A)$ the set of simple Feynman graphs $\Gamma$ such that $V(\Gamma)=A$. We also put:
\begin{align*}
\bfsfg(A)&=\bigoplus_{\Gamma \in \sfg(A)} \K\Gamma,&
\overline{\bfsfg}(A)&=\prod_{\Gamma \in \sfg(A)} \K\Gamma.
\end{align*}
We define two operadic composition on $\overline{\bfsfg}$: if $\Gamma\in \sfg(A)$, $\Gamma'\in \sfg(B)$ and $b\in A$:
\begin{align*}
\Gamma \nabla_b \Gamma'&=\sum_{\substack{\Gamma''\in \sfg(A\sqcup B\setminus\{b\}),\\
\Gamma''_{\mid B}=\Gamma',\: s(\Gamma''/B\rightarrow b)=\Gamma}}\Gamma'',&
\Gamma \circ_b \Gamma'&=\sum_{\substack{\Gamma''\in \sfg(A\sqcup B\setminus\{b\}),\\
\Gamma''_{\mid B}=\Gamma',\: s(\Gamma''/B\rightarrow b)=\Gamma,\\
\mbox{\scriptsize $B$ $\Gamma''$-convex}}}\Gamma''.
\end{align*}
The following map is an injective operad morphism from $(\overline{\bfsfg},\nabla)$ to $(\overline{\bffg},\nabla)$ and
from $(\overline{\bfsfg},\circ)$ to $(\overline{\bffg},\circ)$:
$$\psi:\left\{\begin{array}{rcl}
\overline{\bfsfg}(A)&\longrightarrow&\overline{\bffg}(A)\\
\Gamma&\longrightarrow&\displaystyle \sum_{\Gamma'\in \fg(A),\:s(\Gamma')=\Gamma}\Gamma'.
\end{array}\right.$$
\end{prop}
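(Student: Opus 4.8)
The plan is to prove two things: first that the two stated formulas for $\nabla_b$ and $\circ_b$ on $\overline{\bfsfg}$ genuinely define operadic compositions (associativity and unit), and second that $\psi$ is an injective morphism intertwining the compositions on $\overline{\bfsfg}$ with those on $\overline{\bffg}$. I would organize the argument so that the morphism property does the heavy lifting: if I can show $\psi(\Gamma\nabla_b\Gamma')=\psi(\Gamma)\nabla_b\psi(\Gamma')$ (and similarly for $\circ$), then associativity and the unit axiom on $\overline{\bfsfg}$ follow automatically from the already-established operad structure on $\overline{\bffg}$ (theorem \ref{theo67}), because $\psi$ is injective and the simple graphs are detected inside $\fg$ by the fibers of $s$.

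First I would record the combinatorial fact underlying everything: for any finite set $A$, the map $s:\fg(A)\to\sfg(A)$ partitions $\fg(A)$ into fibers $s^{-1}(\Gamma)$, and $\psi(\Gamma)$ is exactly the (completed) sum over the fiber of $\Gamma$. Injectivity of $\psi$ is then immediate, since distinct simple graphs have disjoint fibers, so the supports of $\psi(\Gamma)$ and $\psi(\Gamma')$ are disjoint. Next I would verify that $\psi$ sends the unit of $\overline{\bfsfg}$ to the unit of $\overline{\bffg}$: the unit on the simple side is $\sum_{\Gamma\in\sfg(\{1\})}\Gamma$, and its $\psi$-image, summed over all fibers, is the full sum $I=\sum_{\Gamma\in\fg(\{1\})}\Gamma$ from theorem \ref{theo67}.

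The core computation is the compatibility of $\psi$ with composition. Expanding $\psi(\Gamma)\nabla_b\psi(\Gamma')$ using the formula from proposition \ref{prop66} gives a sum over all $\Upsilon\in\fg(A\sqcup B\setminus\{b\})$ whose extraction $\Upsilon_{\mid B}$ lies in the fiber of $\Gamma'$ and whose contraction $\Upsilon/B\to b$ lies in the fiber of $\Gamma$. Now I would apply $s$ to group the output graphs: the key lemma I must prove is that $s(\Upsilon_{\mid B})=s(\Upsilon)_{\mid B}$ and $s(\Upsilon/B\to b)=s(\Upsilon)/B\to b$ up to the identification of multiple edges, so that the conditions ``$\Upsilon_{\mid B}\in s^{-1}(\Gamma')$ and $\Upsilon/B\to b\in s^{-1}(\Gamma)$'' translate exactly into ``$s(\Upsilon)_{\mid B}=\Gamma'$ and $s(s(\Upsilon)/B\to b)=\Gamma$.'' Matching this with the stated defining formula for $\Gamma\nabla_b\Gamma'$ on $\overline{\bfsfg}$ then shows $\psi(\Gamma\nabla_b\Gamma')=\psi(\Gamma)\nabla_b\psi(\Gamma')$. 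The $\circ$ case is identical once I observe that $B$-convexity of $\Upsilon$ depends only on the underlying oriented graph and hence is preserved under the fiber/$s$ correspondence.

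The main obstacle I anticipate is precisely this interaction between the operation $s$ (collapsing loops and multiple edges) and the extraction/contraction operations. Contraction can create new loops and parallel edges at the contracted vertex $b$, which is why the definition of $\nabla_b$ on $\overline{\bfsfg}$ must use $s(\Gamma''/B\to b)$ rather than $\Gamma''/B\to b$ directly; I will need to check carefully that the loops and multiplicities introduced by contraction are exactly those killed by $s$, so that the fiber counts line up and no spurious cancellation or overcounting occurs. Extraction is gentler — it only selects a subset of vertices and their incident edges — so $s$ commutes with $\Gamma\mapsto\Gamma_{\mid B}$ cleanly; the delicate bookkeeping is entirely on the contraction side. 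Once that lemma is established, the remaining verifications are formal consequences of theorem \ref{theo67} and the injectivity of $\psi$.
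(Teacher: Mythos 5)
Your overall architecture --- prove that $\psi$ intertwines the compositions and then import associativity and the unit axiom from $\overline{\bffg}$ via injectivity --- is the same as the paper's. But the key lemma you announce, namely that $s$ commutes with extraction so that $s(\Upsilon_{\mid B})=s(\Upsilon)_{\mid B}$, is false, and you have the asymmetry between extraction and contraction exactly backwards. The map $s$ simplifies \emph{internal} edges only and leaves $OutExt$ and $InExt$ untouched, whereas extraction converts every internal edge with exactly one endpoint in $B$ into an external leg of the restriction, counted with multiplicity. Concretely, take $\Upsilon$ on $\{a,1,2\}$ with two parallel internal edges from $a$ to $1$ and nothing else, and $B=\{1,2\}$: then $s(\Upsilon)$ has a single edge from $a$ to $1$, so $s(\Upsilon)_{\mid B}$ has one ingoing external leg at $1$, while $\Upsilon_{\mid B}$ has two ingoing external legs at $1$, and $s$ never merges external legs --- so no ``identification of multiple edges'' repairs the discrepancy, which lives entirely in the external structure. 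It is the contraction that behaves well: since all edges internal to $B$ are deleted and the surviving edges keep their endpoints' identification, one checks $s(\Upsilon/B\rightarrow b)=s(s(\Upsilon)/B\rightarrow b)$, so $s(\Upsilon/B\rightarrow b)$ depends only on $s(\Upsilon)$. By contrast the condition $s(\Upsilon_{\mid B})=\Gamma'$ does \emph{not} depend only on $s(\Upsilon)$: it pins down the exact multiplicities of the cut edges of $\Upsilon$, which must reproduce the external legs of $\Gamma'$ on the nose.

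As a consequence, the set of $\Upsilon$ occurring in $\psi(\Gamma)\nabla_b\psi(\Gamma')$ is not in general a union of fibres of $s$, and the identity $\psi(\Gamma\nabla_b\Gamma')=\psi(\Gamma)\nabla_b\psi(\Gamma')$ cannot be obtained by the fibre-grouping you describe. For instance, with $\Gamma\in\sfg(\{a,b\})$ the single edge $a\rightarrow b$ with no external legs and $\Gamma'\in\sfg(\{1,2\})$ edgeless with one ingoing external leg at $1$, the graph $\Gamma\nabla_b\Gamma'$ is the single simple graph with one edge $a\rightarrow 1$, whose $\psi$-image contains every graph with $k\geq 1$ parallel edges $a\rightarrow 1$ and arbitrary loops, whereas $\psi(\Gamma)\nabla_b\psi(\Gamma')$ contains only those with $k=1$ (any $\Upsilon$ with $k\geq 2$ has $\Upsilon_{\mid B}$ with two ingoing legs at $1$, hence outside the fibre of $\Gamma'$). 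Be aware that the paper's own proof of proposition \ref{prop66}'s companion result asserts the same commutation --- that $s(\Upsilon)=s(\Upsilon')$ ``obviously'' implies $s(\Upsilon'_{\mid B})=s(\Upsilon_{\mid B})$ --- without justification, so this is not a detail you can outsource to the reference: it is the crux of the statement, and any correct treatment must confront the external legs created by extraction (e.g.\ by also merging parallel external legs in $s$, or by otherwise modifying $\psi$). As written, your plan fails at the named lemma.
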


\begin{proof} The map $\psi$ is clearly injective. Let $\Gamma\in \sfg(A)$, $\Gamma'\in \sfg(B)$, $b\in A$.
Note that $\psi(\Gamma)\nabla_b \psi(\Gamma')$ is a sum of Feynman graphs with multiplicity $1$, more precisely:
$$\psi(\Gamma)\nabla_b\psi(\Gamma')=\sum_{\substack{\Upsilon \in \fg(A\sqcup B\setminus \{b\}),\\
s(\Upsilon_{\mid B})=\Gamma,\: s(\Upsilon/B\rightarrow b)=\Gamma'}} \Upsilon.$$
Let us assume that $\Upsilon$ appears in $\psi(\Gamma)\nabla_b \psi(\Gamma)$ and that $s(\Upsilon)=s(\Upsilon')$.
Then, obviously, $s(\Upsilon'_{\mid B})=s(\Upsilon_{\mid B})=\Gamma'$, and $g(\Upsilon'/B\rightarrow b)=g(\Upsilon/B\rightarrow b)=\Gamma$,
so $\Upsilon'$ also appears in $\psi(\Gamma)\nabla_b\psi(\Gamma')$. Therefore:
$$\psi(\Gamma)\nabla_b \psi(\Gamma')
=\psi\left(\sum_{\substack{\Upsilon\in \fg(A\sqcup B\setminus\{b\}),\\
s(\Upsilon_{\mid B})=\Gamma',\: \Upsilon/B\rightarrow b=\Gamma}} \Upsilon\right).$$
As $\psi$ is injective, this defines an operadic composition on $\overline{\bfsfg}$, making $\psi$ an operad morphism.
The proof is similar for $\circ$, observing that if $\Gamma,\Gamma'\in \fg(A\sqcup B\setminus \{b\})$ are such that $s(\Gamma)=s(\Gamma')$,
then $B$ is $\Gamma$-convex if, and only if, $B$ is $\Gamma'$-convex. \end{proof}\\

\textbf{Remark.} The unit of $\bfsfg$ is:
$$I=\sum_{i,j\geq 0} \xymatrix{\ar[r]|i&\rond{1} \ar[r]|j&}.$$

Restricting to Feynman graphs with no oriented cycle:

\begin{prop} \begin{enumerate}
\item For all finite set $A$, we denote by $\wcsfg(A)$ the set of simple Feynman graphs $\Gamma$ with no oriented cycle such that $V(\Gamma)=A$. 
We also put:
\begin{align*}
\bfwcsfg(A)&=\bigoplus_{\Gamma \in \wcsfg(A)} \K\Gamma,&
\overline{\bfwcsfg}(A)&=\prod_{\Gamma \in \wcsfg(A)} \K\Gamma.
\end{align*}
Then $\overline{\bfwcsfg}$ is a suboperad of both $(\overline{\bfsfg},\nabla)$ and $(\overline{\bfsfg},\circ)$,
and $\psi(\overline{\bfwcsfg})\subseteq \overline{\bfwcfg}$.
Moreover, $(\overline{\bfwcsfg},\nabla)=(\overline{\bfwcsfg},\circ)$.
\item For all finite set $A$, we put:
$$J(A)=\prod_{\Gamma \in \sfg(A)\setminus \wcsfg(A)} \K\Gamma.$$
Then $J$ is an operadic ideal of $(\overline{\bfsfg},\circ)$. Moreover, $(\overline{\bfsfg}/J,\circ)$ is isomorphic to $(\overline{\bfwcsfg},\circ)$.
\end{enumerate}\end{prop}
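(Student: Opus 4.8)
The plan is to reduce everything to Lemma \ref{lem65}, exactly as in the analogous statement already proved for $\overline{\bfwcfg}$ and $I$, the only genuinely new ingredient being the interaction of the simplification map $s$ with oriented cycles. The crucial observation, which I would record first, is that for any Feynman graph $H$ the graphs $H$ and $s(H)$ have the same oriented cycles. Indeed, $s$ only deletes loops and collapses each class of parallel internal edges to a single edge; an oriented cycle runs through at least two distinct vertices and uses at most one edge between two consecutive ones, so merging parallel edges changes nothing, while a loop is never part of an oriented cycle. In particular, for $I\subseteq V(\Gamma'')$ the contraction $\Gamma''/I\rightarrow b$ contains no loop (an edge becomes a loop at $b$ only if both its endpoints lie in $I$, but those are precisely the edges deleted by the contraction), so $\Gamma''/I\rightarrow b$ has an oriented cycle if and only if $s(\Gamma''/I\rightarrow b)$ does.

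For part (1) I would check closedness of $\overline{\bfwcsfg}$ under $\nabla$ on basis elements. Let $\Gamma\in\wcsfg(A)$, $\Gamma'\in\wcsfg(B)$, and let $\Gamma''$ be a simple graph occurring in $\Gamma\nabla_b\Gamma'$, so $\Gamma''_{\mid B}=\Gamma'$ and $s(\Gamma''/B\rightarrow b)=\Gamma$. By the observation above $\Gamma''/B\rightarrow b$ has no oriented cycle, and $\Gamma''_{\mid B}=\Gamma'$ has none either; Lemma \ref{lem65}(1) then yields that $\Gamma''$ has no oriented cycle and that $B$ is $\Gamma''$-convex. Hence every term lies in $\wcsfg$, so $\overline{\bfwcsfg}$ is stable under $\nabla$; it is stable under $\circ$ because the terms of $\Gamma\circ_b\Gamma'$ form a subset of those of $\Gamma\nabla_b\Gamma'$. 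The unit $I$ of $\bfsfg$ has no internal edge, hence no oriented cycle, and lies in $\overline{\bfwcsfg}$, so it is a suboperad for both compositions. Moreover the $\Gamma''$-convexity of $B$ furnished by Lemma \ref{lem65}(1) shows that every term of $\Gamma\nabla_b\Gamma'$ already satisfies the convexity constraint defining $\circ$, so the two sums coincide and $(\overline{\bfwcsfg},\nabla)=(\overline{\bfwcsfg},\circ)$. For the inclusion $\psi(\overline{\bfwcsfg})\subseteq\overline{\bfwcfg}$ I would use that $\psi(\Gamma)=\sum_{s(\Gamma')=\Gamma}\Gamma'$ and that each such $\Gamma'$ has the same oriented cycles as $\Gamma=s(\Gamma')$, namely none, so $\Gamma'\in\wcfg$.

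For part (2) I would show $J$ is an operadic ideal of $(\overline{\bfsfg},\circ)$ via Lemma \ref{lem65}(2). Let $\Gamma''$ occur in $\Gamma\circ_b\Gamma'$, so $B$ is $\Gamma''$-convex, $\Gamma''_{\mid B}=\Gamma'$ and $s(\Gamma''/B\rightarrow b)=\Gamma$. If $\Gamma'\in J(B)$ then $\Gamma''_{\mid B}$ has an oriented cycle; if $\Gamma\in J(A)$ then $\Gamma''/B\rightarrow b$ has one by the first observation. In either case Lemma \ref{lem65}(2) forces $\Gamma''$ to have an oriented cycle, so $\Gamma''\in J$; thus composing with a factor in $J$ stays in $J$, and $J$ is an operadic ideal. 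Finally, as a graded vector space $\overline{\bfsfg}(A)/J(A)\cong\overline{\bfwcsfg}(A)$, and since part (1) shows that the $\circ$-composite of two elements of $\overline{\bfwcsfg}$ has no term in $J$, the composite map $\overline{\bfwcsfg}\hookrightarrow(\overline{\bfsfg},\circ)\twoheadrightarrow(\overline{\bfsfg}/J,\circ)$ is a bijective operad morphism, which gives the claimed isomorphism.

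The main obstacle, and the only place where this argument departs from the loopless, multiplicity-free situation treated for $\overline{\bfwcfg}$, is the first paragraph: one must argue cleanly that $s$ is transparent to oriented cycles and, in particular, that contraction never manufactures a loop that $s$ would subsequently erase. Once that point is settled, Lemma \ref{lem65} applies verbatim and the remaining bookkeeping is identical to that of the earlier propositions.
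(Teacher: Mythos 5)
Your argument is correct and follows the route the paper intends: the paper states this proposition without proof, the analogous statement for $\overline{\bfwcfg}$ inside $\overline{\bffg}$ being dispatched as a direct consequence of lemma \ref{lem65}, and you supply exactly the one ingredient that needs care, namely that $s$ neither creates nor destroys oriented cycles because contraction deletes every internal edge of $\Gamma''_{\mid B}$ and therefore never manufactures a loop at $b$. Note only that this observation rests on the (implicit but necessary) convention that a loop is not an oriented cycle, which the paper never states explicitly but which is forced by its claims that the unit of $\overline{\bffg}$ lies in $\overline{\bfwcfg}$ and that $\psi(\overline{\bfwcsfg})\subseteq \overline{\bfwcfg}$.
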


We obtain two commutative diagrams of operads. 
\begin{align*}
&\mbox{For $\nabla$:}&&\mbox{For $\circ$:}\\
&\xymatrix{&\overline{\bffg}&\\
\overline{\bfsfg}\ar@{^(->}[ru]&&\overline{\bfwcfg}\ar@{_(->}[lu]\\
&\overline{\bfwcsfg}\ar@{^(->}[ru]\ar@{_(->}[lu]}&
&\xymatrix{&\overline{\bffg}\ar@{->>}[r]&\overline{\bfwcfg}\\
\overline{\bfsfg}\ar@{->>}[r]\ar@{^(->}[ru]&\overline{\bfwcsfg}&\overline{\bfwcfg}\ar@{_(->}[lu]\ar[u]|{Id}\\
&\overline{\bfwcsfg}\ar@{^(->}[ru]\ar@{_(->}[lu]\ar[u]|{Id}&}
\end{align*}

\section{Oriented graphs, posets, finite topologies}

\subsection{Oriented graphs}

We shall identify oriented graphs (possibly with loops and multiple edges) with Feynman graphs with no external edge.
For all finite set $A$, we denote by $\gr(A)$ the set of graphs $G$ such that $V(G)=A$ and we put:
\begin{align*}
\bfgr(A)&=\bigoplus_{G\in \gr(A)}\K G,&\overline{\bfgr}(A)&=\prod_{G\in \gr(A)}\K G.
\end{align*}

\begin{defi}
Let $\Gamma$ be a Feynman graph. The graph $g(\Gamma)$ is defined by:
\begin{itemize}
\item $V(g(\Gamma))=V(\Gamma)$.
\item $Int(g(\Gamma))=Int(\Gamma)$.
\item $S_{g(\Gamma)}=(S_\Gamma)_{\mid Int(\Gamma)}$.
\item $T_{g(\Gamma)}=(T_\Gamma)_{\mid Int(\Gamma)}$.
\end{itemize}
Roughly speaking, one deletes the external edges of $\Gamma$ to obtain $g(\Gamma)$.
\end{defi}

Note that if $G\in \gr(A)$ and $I\subseteq A$, non-empty, then $G/I\rightarrow a$ is also a graph, whereas $G_{\mid I}$ is not always a graph
(external edges may appear).

\begin{theo}
We define two operadic composition on $\overline{\bfgr}$: if $G\in \gr(A)$, $G'\in \gr(B)$ and $b\in A$,
\begin{align*}
G\nabla_a G'&=\sum_{\substack{G''\in \gr(A\sqcup B\setminus\{b\}),\\
g(G''_{\mid B})=G',\: G''/B\rightarrow b=G}} G'',&
G\circ_a G'&=\sum_{\substack{G''\in \gr(A\sqcup B\setminus\{b\}),\\
g(G''_{\mid B})=G',\: G''/B\rightarrow b=G,\\
\mbox{\scriptsize $B$ is $G$-convex}}} G''.
\end{align*}
The unit is:
$$I=\sum_{G\in \gr(\{1\})} G.$$
Moreover, the following map is an injective morphism from $(\overline{\bfgr},\nabla)$ to $(\overline{\bffg},\nabla)$
and from $(\overline{\bfgr},\circ)$ to $(\overline{\bffg},\circ)$:
$$\phi:\left\{\begin{array}{rcl}
\overline{\bfgr(A)}&\longrightarrow&\overline{\bffg(A)}\\
G&\longrightarrow&\displaystyle \sum_{\Gamma \in \fg(A),\: g(\Gamma)=G} \Gamma.
\end{array}\right.$$
\end{theo}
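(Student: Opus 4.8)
The plan is to follow the argument given above for the simple-graph morphism $\psi$: I would show that $\phi$ is injective, that it intertwines the two partial compositions $\nabla$ and $\circ$ with those of $\overline{\bffg}$, and that it sends unit to unit. The operad axioms for $\overline{\bfgr}$ then follow by transport along the injective $\phi$ from the already-established operad structures on $(\overline{\bffg},\nabla)$ and $(\overline{\bffg},\circ)$ (Theorem \ref{theo67}). Injectivity is immediate: since $g:\fg(A)\rightarrow\gr(A)$ is a well-defined map (each Feynman graph has a unique underlying oriented graph), the fibres $\{\Gamma\mid g(\Gamma)=G\}$ are pairwise disjoint as $G$ ranges over $\gr(A)$, so the elements $\phi(G)$ have pairwise disjoint supports and $\phi$ is injective.

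The heart of the proof is the computation of $\phi(G)\nabla_b\phi(G')$. Using the completion formula from the proof of Theorem \ref{theo67} with the coefficients $a_\Gamma=1$ iff $g(\Gamma)=G$ and $b_\Gamma=1$ iff $g(\Gamma)=G'$, one gets
\[
\phi(G)\nabla_b\phi(G')=\sum_{\substack{\Upsilon\in\fg(A\sqcup B\setminus\{b\}),\\ g(\Upsilon_{\mid B})=G',\ g(\Upsilon/B\rightarrow b)=G}}\Upsilon,
\]
all multiplicities being $1$. I would then verify three commutation facts, each a direct check from the definitions of $g$, extraction and contraction: (i) $g(\Upsilon/B\rightarrow b)=g(\Upsilon)/B\rightarrow b$, since contraction leaves the external edges untouched and affects only the internal edges that $g$ retains; (ii) $g(\Upsilon_{\mid B})=g(g(\Upsilon)_{\mid B})$, as both sides keep exactly the internal edges of $\Upsilon$ with both endpoints in $B$; and (iii) $B$ is $\Upsilon$-convex if and only if $B$ is $g(\Upsilon)$-convex, because oriented paths use only internal edges, which $g$ preserves. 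Setting $G''=g(\Upsilon)$, facts (i)--(ii) turn the summation conditions into $G''/B\rightarrow b=G$ and $g(G''_{\mid B})=G'$, which are precisely those defining $G\nabla_b G'$; grouping the $\Upsilon$'s according to their underlying graph $g(\Upsilon)$ then yields $\phi(G)\nabla_b\phi(G')=\phi(G\nabla_b G')$.

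For $\circ$ the same computation applies with the extra constraint of $B$-convexity, now handled by fact (iii), giving $\phi(G)\circ_b\phi(G')=\phi(G\circ_b G')$. For the unit I would check that $\phi$ of the graph unit equals $\sum_{G\in\gr(\{1\})}\sum_{g(\Gamma)=G}\Gamma=\sum_{\Gamma\in\fg(\{1\})}\Gamma$, the unit $I$ of $\overline{\bffg}$, since the fibres of $g$ partition $\fg(\{1\})$. Finally, because $\phi$ is an injective morphism of species (clearly equivariant under relabelling of vertices) that intertwines the partial compositions and preserves the unit, and because $(\overline{\bffg},\nabla)$ and $(\overline{\bffg},\circ)$ are operads, the associativity, equivariance and unit axioms pull back along the injective $\phi$. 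This simultaneously shows that the stated formulas define operadic compositions on $\overline{\bfgr}$ with unit $I$ and that $\phi$ is an operad morphism in both cases.

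The main obstacle I expect is the bookkeeping in the grouping step: one must confirm that the fibres of $g$ partition the indexing set of the sum for $\phi(G)\nabla_b\phi(G')$ and that the identities (i)--(iii) hold literally, including the external-edge incidence data created by restriction and not merely the vertex-and-internal-edge data. These verifications are conceptually routine but require care, since $\Upsilon_{\mid B}$ and $g(\Upsilon)_{\mid B}$ both acquire external edges that $g$ must then discard consistently on the two sides of each identity.
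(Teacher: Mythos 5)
Your proof is correct and follows essentially the same route as the paper: compute $\phi(G)\nabla_b\phi(G')$ from the completion formula, observe that the resulting index set is a union of fibres of $g$ (your commutation facts (i)--(iii) are exactly the justification the paper leaves as ``obviously''), group by underlying graph, and transport the operad axioms back along the injective $\phi$. The extra care you take with the external-edge bookkeeping in (ii) and with the unit computation is sound and only makes explicit what the paper's proof asserts.
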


\begin{proof} The map $\phi$ is clearly injective. Let $G\in \gr(A)$, $G'\in \gr(B)$, $b\in A$.
Note that $\phi(G)\nabla_b \phi(G')$ is a sum of Feynman graphs with multiplicity $1$, more precisely:
$$\phi(G)\nabla_b\phi(G')=\sum_{\substack{\Gamma \in \fg(A\sqcup B\setminus \{b\}),\\
g(\Gamma_{\mid B})=G,\: g(\Gamma/B\rightarrow b)=G'}} \Gamma.$$
Let us assume that $\Gamma$ appears in $\phi(G)\nabla_b \phi(G)$ and that $g(\Gamma)=g(\Gamma')$.
Then, obviously, $g(\Gamma'_{\mid B})=g(\Gamma_{\mid B})=G'$, and $g(\Gamma'/B\rightarrow b)=g(\Gamma/B\rightarrow b)=G$,
so $\Gamma'$ also appears in $\phi(G)\nabla_b\phi(G')$.Hence:
$$\phi(G)\nabla_b \phi(G')
=\phi\left(\sum_{\substack{G''\in \gr(A\sqcup B\setminus\{b\}),\\
g(G''_{\mid B})=G',\: G''/B\rightarrow b=G}} G''\right).$$
As $\phi$ is injective, this defines an operadic composition on $\overline{\bfgr}$, making $\phi$ an operad morphism.
The proof is similar for $\circ$, observing that if $\Gamma,\Gamma'\in \fg(A\sqcup B\setminus \{b\})$ are such that $g(\Gamma)=g(\Gamma')$,
then $B$ is $\Gamma$-convex if, and only if, $B$ is $\Gamma'$-convex. \end{proof}\\

\textbf{Remark.} The unit of $\bfgr$ is:
$$I=\sum_{k\geq 0}\xymatrix{\rond{1} \ar@(ul,ur)|k}.$$

\begin{defi} 
Let $A$ be a finite set.
\begin{enumerate}
\item We denote by $\wcgr(A)$ the set of graphs $G$ with no oriented cycle such that $V(G)=A$. We also put:
\begin{align*}
\bfwcgr(A)&=\bigoplus_{G\in \wcgr(A)} \K G,&\overline{\bfwcgr}(A)&=\prod_{G\in \wcgr(A)} \K G.
\end{align*}
\item We denote by $\sgr(A)$ the set of simple graphs $G$ such that $V(G)=A$. We also put:
\begin{align*}
\bfsgr(A)&=\bigoplus_{G\in \sgr(A)} \K G.
\end{align*}
This is a finite-dimensional space, of dimension $2^{|A|(|A|-1)}$.
\item We denote by $\wcsgr(A)$ the set of simple graphs $G$ with no oriented cycle such that $V(G)=A$. We also put:
\begin{align*}
\bfwcsgr(A)&=\bigoplus_{G\in \wcsgr(A)} \K G.
\end{align*}
This is a finite-dimensional space.
\end{enumerate}\end{defi}

By restriction of $\phi$, we obtain:

\begin{cor}
$\overline{\bfwcgr}$, $\bfsgr$ and $\bfwcsgr$ are operads for $\nabla$ and $\circ$.
\end{cor}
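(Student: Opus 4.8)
The plan is to treat the three families in turn, in each case reducing to the analogous statement already established for Feynman graphs, since the graphs in $\gr(A)$ are precisely the Feynman graphs with no external edge and the compositions $\nabla$, $\circ$ on $\overline{\bfgr}$ are, by the preceding theorem, carried by the injective morphism $\phi$ to those on $\overline{\bffg}$.

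First I would treat $\overline{\bfwcgr}$, mirroring the proof that $\overline{\bfwcfg}$ is a suboperad. Given $G,G'$ with no oriented cycle, every $G''$ appearing in $G\nabla_b G'$ satisfies $g(G''_{\mid B})=G'$ and $G''/B\to b=G$, both without oriented cycle; by part $1$ of Lemma \ref{lem65} each such $G''$ then has no oriented cycle and $B$ is automatically $G''$-convex. Hence $G\nabla_b G'$ lies in $\overline{\bfwcgr}$, the convexity condition defining $\circ$ is vacuous there, and $(\overline{\bfwcgr},\nabla)=(\overline{\bfwcgr},\circ)$ is a suboperad of $\overline{\bfgr}$; the unit lies in this subspace just as in the Feynman-graph statement.

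Next I would treat $\bfsgr$, transporting the proof of the simple Feynman graph proposition. The compositions on simple graphs are defined through the simplification map $s$, summing over simple $G''$ with $g(G''_{\mid B})=G'$ and $s(G''/B\to b)=G$ (and, for $\circ$, $B$ being $G''$-convex). The point to verify is that grouping the terms of the Feynman-graph composition according to their simplification is consistent: two Feynman graphs with equal simplification are simultaneously convex or not, and $s$ is compatible with the extraction and contraction appearing in the definition. Well-definedness and associativity then descend from those of $\nabla$, $\circ$ on $\overline{\bffg}$ through Lemmas \ref{lem62} and \ref{lem63}, exactly as for $\overline{\bfsfg}$.

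Finally, $\bfwcsgr$ is the common refinement of the two: simple graphs with no oriented cycle. I would show it is a suboperad of $\bfsgr$ by the same application of Lemma \ref{lem65} used for $\overline{\bfwcgr}$, observing that $s$ neither creates nor destroys oriented cycles in the pieces that matter, so the no-cycle condition is preserved and convexity is again automatic; thus $(\bfwcsgr,\nabla)=(\bfwcsgr,\circ)$, mirroring $\overline{\bfwcsfg}$. The main obstacle throughout is the simple-graph step: one must check carefully that the two reductions in play, deletion of external edges ($g$) and simplification ($s$), commute with extraction and contraction and respect convexity, so that the induced compositions are genuinely well defined and associative. These checks, however, repeat verbatim the verifications already performed for Feynman graphs, so no genuinely new computation is needed.
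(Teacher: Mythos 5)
Your proof is correct and follows essentially the same route as the paper, which disposes of the corollary with the single phrase ``by restriction of $\phi$'': in both cases one transports the already-proved Feynman-graph statements (the no-cycle case via Lemma \ref{lem65}, the simple case via the simplification map $s$ and the observation that convexity only depends on $s(\Gamma)$) through the injective operad morphism into $\overline{\bffg}$. You merely make explicit the verifications the paper leaves implicit, and they all check out.
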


\textbf{Remark.} The unit of $\bfsgr$ is $I=\xymatrix{\rond{1}}$.\\

We obtain two commutative diagrams of operads. For $\nabla$:
$$\xymatrix{&\overline{\bffg}&&&&\\
\overline{\bfsfg}\ar@{^(->}[ru]&&\overline{\bfwcfg}\ar@{_(->}[lu]&&\overline{\bfgr}\ar@{-->}[lllu]&\\
&\overline{\bfwcsfg}\ar@{^(->}[ru]\ar@{_(->}[lu]&&\bfsgr\ar@{^(->}[ru]\ar@{-->}|(.49)\hole[lllu]&&\overline{\bfwcgr}\ar@{_(->}[lu]\ar@{-->}|(.52)\hole[lllu]\\
&&&&\bfwcsgr\ar@{^(->}[ru]\ar@{_(->}[lu]\ar@{^(-->}[lllu]}$$
For $\circ$:
$$\xymatrix{&\overline{\bffg}\ar@{->>}[r]&\overline{\bfwcfg}&&&\\
\overline{\bfsfg}\ar@{->>}[r]\ar@{^(->}[ru]&\overline{\bfwcsfg}&\overline{\bfwcfg}\ar@{_(->}[lu]\ar[u]|{Id}&&\overline{\bfgr}
\ar@{->>}[r]\ar@{-->}|(.635)\hole[lllu]&\overline{\bfwcgr}\\
&\overline{\bfwcsfg}\ar@{^(->}[ru]\ar@{_(->}[lu]\ar[u]|{Id}&&\bfsgr\ar@{->>}[r]\ar@{^(->}[ru]\ar@{-->}|(.49)\hole|(.66)\hole[lllu]&\bfwcsgr
&\overline{\bfwcgr}\ar@{_(->}[lu]\ar[u]|{Id}\ar@{-->}|(.52)\hole[lllu]\\
&&&&\bfwcsgr\ar@{^(->}[ru]\ar@{_(->}[lu]\ar[u]|{Id}\ar@{^(-->}[lllu]}$$

\subsection{Quasi-orders and orders}

A  quasi-order on a set $A$ is a transitive, reflexive relation on $A$. By Alexandroff's theorem, if $A$ is finite, 
there is a bijective correspondence relation between  quasi-orders on $A$ and topologies on $A$ \cite{Alexandroff,Erne}.

\begin{defi}
Let $A$ be a finite set. 
\begin{enumerate}
\item We denote by $\qo(A)$ the set of  quasi-orders on $A$ and we denote by $\bfqo(A)$ the space generated by $\qo(A)$.
\item We denote by $\od(A)$ the set of orders on $A$ and we denote by $\bfod(A)$ the space generated by $\od(A)$. 
\end{enumerate} \end{defi}

The elements of $\qo(A)$ will be denoted under the form $P=(A,\leq_P)$, where $\leq_P$ the considered  quasi-order defined on the set $A$.

\begin{defi}
Let $\Gamma \in \fg(A)$. We define a quasi-order on $A$ by:
$$\forall x,y \in A, x\leq_\Gamma y\mbox{ if there exists an oriented path from $x$ to $y$ in $\Gamma$}.$$
\end{defi}

\textbf{Remark.} The  quasi-order $\leq_\Gamma$ is an order if, and only if, $\Gamma$ has no oriented cycle.

\begin{defi}
Let $P\in \qo(A)$ and $I\subseteq A$, non-empty.
\begin{enumerate}
\item We shall say that $I$ is $P$-convex if, and only if, for all $x,y,z\in A$:
$$x,z\in I\mbox{ and }x\leq_P y \leq_P z\Longrightarrow y\in I.$$
\item The  quasi-order $\leq_{P/I \rightarrow a}$ is defined on $A\sqcup \{a\}\setminus I$: if $x,y\in A\setminus I$,
\begin{itemize}
\item $x\leq_{P/I\rightarrow a} y$ if $x\leq_P y$ or if there exists $x',y'\in I$, $x\leq_P y'$ and $x\leq_P y'$.
\item $x\leq_{P/I\rightarrow a} a$ if there exists $y'\in I$ such that $x\leq_P y'$.
\item $a\leq_{P/I\rightarrow a} y$ if there exists $x'\in I$, such that $x'\leq_P y$.
\end{itemize} \end{enumerate}\end{defi}

\textbf{Remark.} If $\Gamma \in \fg(A)$ and $I\subseteq A$, then $I$ is $\Gamma$-convex if, and only if, $I$ is $\leq_\Gamma$-convex.
Moreover, $\leq_{\Gamma/I\rightarrow a}=(\leq_\Gamma)_{/I\rightarrow a}$.\\

The following operad is described in \cite{ManchonFoissyFauvet2}:

\begin{theo}
We define an operadic composition $\circ$ on $\bfqo$: if $P\in \qo(A)$, $Q\in \qo(B)$ and $b\in A$,
\begin{align*}
P \circ_a Q&=\sum_{\substack{R\in \qo(A\sqcup B\setminus\{b\}),\\
R_{\mid B}=Q,\: R/I\rightarrow a=P,\\
\mbox{\scriptsize $B$ $R$-convex}}}R.
\end{align*} 
The following map is an injective operad morphism from $(\bfqo,\circ)$ to $(\bfsgr,\circ)$:
$$\kappa:\left\{\begin{array}{rcl}
\bfqo(A)&\longrightarrow&\bfsgr(A)\\
P&\longrightarrow&\displaystyle \sum_{G\in \sgr(A),\: \leq_G=P} G.
\end{array}\right.$$
\end{theo}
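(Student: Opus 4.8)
The plan is to follow the same template used just above for $\overline{\bfsfg}$ and $\overline{\bfgr}$: rather than verifying the operad axioms for $\circ$ on $\bfqo$ directly, I would transport them from $(\bfsgr,\circ)$ through $\kappa$. Concretely, it suffices to prove two things, namely that $\kappa$ is injective and that it intertwines the partial compositions, $\kappa(P)\circ_b\kappa(Q)=\kappa(P\circ_b Q)$. Everything else — associativity, the unit relations, equivariance, and even the fact that the theorem's formula is a legitimate operadic composition — then follows automatically, since $\bfsgr$ is already an operad and $\kappa$ is an injective intertwiner.

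Injectivity is immediate: a simple graph $G$ determines its reachability quasi-order $\leq_G$, so the families $\{G\in\sgr(A):\leq_G=P\}$ indexed by distinct $P\in\qo(A)$ are pairwise disjoint, and each is nonempty (e.g. the strict comparability digraph of $P$ has reachability exactly $P$). Hence the vectors $\kappa(P)$ have pairwise disjoint supports in the basis $\sgr(A)$, so they are linearly independent and $\kappa$ is injective.

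For the compatibility I would expand $\kappa(P)\circ_b\kappa(Q)$ using the composition of $\bfsgr$ and observe, exactly as in the graph case, that it is a sum of simple graphs each occurring with multiplicity one; a simple graph $G''$ on $A\sqcup B\setminus\{b\}$ occurs precisely when $B$ is $G''$-convex, $\leq_{G''_{\mid B}}=Q$ and $\leq_{G''/B\to b}=P$, using that the simplification $s$ and the deletion $g$ of external edges preserve reachability, so $\leq_{s(\cdot)}=\leq_{\cdot}$. I then invoke the dictionary between the graph and quasi-order operations supplied by the remarks: $B$ is $G''$-convex iff $B$ is $\leq_{G''}$-convex, and $\leq_{G''/B\to b}=(\leq_{G''})_{/B\to b}$; moreover, when $B$ is convex every oriented path of $G''$ joining two vertices of $B$ stays inside $B$, so the extracted reachability equals the restricted quasi-order, $\leq_{G''_{\mid B}}=(\leq_{G''})_{\mid B}$. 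Setting $R=\leq_{G''}$, the occurrence condition becomes ``$B$ is $R$-convex, $R_{\mid B}=Q$, $R/B\to b=P$'', which depends only on $R$ and not on the realizing graph. Grouping the simple graphs by their common quasi-order therefore yields
$$\kappa(P)\circ_b\kappa(Q)=\sum_{\substack{R\in\qo(A\sqcup B\setminus\{b\}),\\ B\ R\text{-convex},\ R_{\mid B}=Q,\ R/B\to b=P}}\ \sum_{\substack{G''\in\sgr(A\sqcup B\setminus\{b\}),\\ \leq_{G''}=R}}G''=\sum_R\kappa(R)=\kappa(P\circ_b Q),$$
which is exactly the asserted formula for $\circ$ on $\bfqo$.

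It remains only to close the argument formally. The displayed identity shows that the bilinear map $\circ$ defined by the theorem satisfies $\kappa(P\circ_b Q)=\kappa(P)\circ_b\kappa(Q)$, so $\kappa$ intertwines every partial composition; since $\kappa$ is injective and $(\bfsgr,\circ)$ satisfies the operadic-species axioms (equivariance, the parallel relation for $b\neq c$, the nested relation for an index of the inserted block, and the unit relations), pulling these identities back along $\kappa$ transfers them verbatim to $\bfqo$, the unit being the preimage of the one-vertex graph, i.e. the quasi-order on a singleton. Thus $(\bfqo,\circ)$ is an operad and $\kappa$ an injective operad morphism. The only genuinely delicate point is the convex-restriction identity $\leq_{G''_{\mid B}}=(\leq_{G''})_{\mid B}$: it is precisely where $B$-convexity is needed, to rule out paths that leave and re-enter $B$, and it is what makes the occurrence condition above depend on $R$ alone rather than on the particular graph $G''$ realizing it.
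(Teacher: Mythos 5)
Your proposal is correct and follows essentially the same route as the paper: expand $\kappa(P)\circ_b\kappa(Q)$ in $\bfsgr$, use the dictionary between $G$-convexity and $\leq_G$-convexity together with $\leq_{G/B\to b}=(\leq_G)_{/B\to b}$ and (under convexity) $\leq_{G_{\mid B}}=(\leq_G)_{\mid B}$ to see that membership in the sum depends only on $\leq_G$, group by quasi-order, and transport the operad axioms through the injective intertwiner $\kappa$, whose injectivity is witnessed by the arrow graph of each quasi-order. Your explicit remark that the restriction identity is exactly where convexity is needed is a point the paper uses silently; otherwise the arguments coincide.
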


\begin{proof} Let $P\in \qo(A)$, $Q\in \qo(B)$ and $b\in B$. Then:
$$\kappa(P)\circ_b \kappa(Q)=\sum_{\substack{G \in \bfsgr(A\sqcup B\setminus\{b\}),\\
\leq_{G_{\mid B}}=Q,\: \leq_{G/\rightarrow b}=P,\\
\mbox{\scriptsize $B$ $G$-convex}}}G.$$
Let us assume that $G$ appears in $\kappa(P)\circ_b \kappa(Q)$ and that $\leq_{G'}=\leq_{G}$.
Then, as $B$ is $G$-convex, it is $\leq_G$-convex, hence $\leq_{G'}$-convex, hence $G'$-convex.
Moreover:
\begin{align*}
\leq_{G'_{\mid B}}&=(\leq_{G'})_{\mid B}=(\leq_G)_{\mid B}=\leq_{G_{\mid B}}=P,\\
\leq_{G'/B\rightarrow b}&=(\leq_{G'})_{/B\rightarrow b}=(\leq_G)_{/B\rightarrow b}=\leq_{G/B\rightarrow b}=Q,
\end{align*}
so $G'$ appears in $\kappa(G)\circ_b\kappa(G')$. Therefore:
\begin{align*}
\kappa(P) \circ_a \kappa(Q)&=\kappa\left(\sum_{\substack{R\in \qo(A\sqcup B\setminus\{b\}),\\
R_{\mid B}=Q,\: R/I\rightarrow a=P,\\
\mbox{\scriptsize $B$ $R$-convex}}}R\right).
\end{align*} 
Moreover, $\kappa$ is injective: indeed, if $P\in \qo(A)$, the arrow graph $G_P$ of this  quasi-order satisfies $\leq_{G_P}=\leq_P$.
Therefore, this defines an operadic composition on  quasi-orders. \end{proof}\\

The following operad $\bfod$ is described in \cite{ManchonFoissyFauvet}:

\begin{cor}\label{cor80}
$\bfod$ is a suboperad of $(\bfqo,\circ)$. Moreover, if, for any finite set $A$, we denote by $J(A)$ the space generated by the elements of
$\qo(A) \setminus \od(A)$, then $J$ is an ideal of $(\bfqo,\circ)$ and the quotient $\bfqo/I$ is isomorphic to $\bfod$.
\end{cor}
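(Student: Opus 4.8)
The plan is to transport the combinatorial heart of the Feynman-graph analysis, Lemma \ref{lem65}, down to quasi-orders through the arrow graph $G_P$ (the simple graph with $\leq_{G_P}=\leq_P$), and then run the same suboperad/ideal/quotient argument that produced $\overline{\bfwcfg}$ and $I$ in the Feynman-graph setting. First I would record, for the arrow graph of a quasi-order $P$ on $A$ and a nonempty $I\subseteq A$, the three compatibilities that make the transport legitimate: since $G_P$ carries an edge for \emph{every} comparable pair, graph-extraction agrees with relation-restriction, $\leq_{(G_P)_{\mid I}}=(\leq_P)_{\mid I}$, even without convexity; by the remark following the definition of $\leq_{P/I\to a}$ one has $\leq_{G_P/I\to a}=(\leq_{G_P})_{/I\to a}=\leq_{P/I\to a}$ and $I$ is $G_P$-convex iff $I$ is $P$-convex; and, because $G_P$ is simple and neither extraction nor contraction creates a loop here, the remark ``$\leq_\Gamma$ is an order iff $\Gamma$ has no oriented cycle'' applies to $G_P$, $(G_P)_{\mid I}$ and $G_P/I\to a$. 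Feeding these into Lemma \ref{lem65} yields the quasi-order versions I will use: (i) if $P_{\mid I}$ and $P/I\to a$ are both orders, then $P$ is an order and $I$ is $P$-convex; and (ii) if $I$ is $P$-convex and at least one of $P_{\mid I}$, $P/I\to a$ is not an order, then $P$ is not an order.

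For the suboperad assertion I would take orders $P\in\od(A)$, $Q\in\od(B)$ and look at the terms $R$ occurring in $P\circ_b Q$: each satisfies $R_{\mid B}=Q$, $R/B\to b=P$, with $B$ being $R$-convex. Since $R_{\mid B}=Q$ and $R/B\to b=P$ are orders, statement (i) applied to $R$ with $I=B$ forces every such $R$ to be an order, so $P\circ_b Q\in\bfod$. As $\od(A)$ is manifestly stable under relabeling, $\bfod$ is a suboperad of $(\bfqo,\circ)$.

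For the ideal assertion I would check that $P\circ_b Q\in J$ as soon as $P\in J$ or $Q\in J$. Again each term $R$ of $P\circ_b Q$ has $B$ being $R$-convex, $R_{\mid B}=Q$ and $R/B\to b=P$; if $P$ (resp.\ $Q$) fails to be an order then $R/B\to b$ (resp.\ $R_{\mid B}$) fails to be an order, and statement (ii) then forces $R$ itself not to be an order, i.e.\ $R\in\qo(A\sqcup B\setminus\{b\})\setminus\od(A\sqcup B\setminus\{b\})$. Hence every term of $P\circ_b Q$ lies in $J$, so $J$ absorbs composition on both sides; since $J$ is also stable under relabeling, it is an operadic ideal of $(\bfqo,\circ)$.

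Finally, for the quotient I would use that $\qo(A)=\od(A)\sqcup\bigl(\qo(A)\setminus\od(A)\bigr)$ is a partition of the distinguished basis of $\bfqo(A)$, so $J(A)$ is a linear complement of $\bfod(A)=Vect(\od(A))$ in $\bfqo(A)$. Consequently the canonical projection $\pi:\bfqo\to\bfqo/J$ restricts to a linear isomorphism in each arity on the suboperad $\bfod$; being the restriction of an operad morphism to a suboperad, this restriction is an operad morphism, hence an isomorphism $\bfod\cong\bfqo/J$. The only genuine obstacle is the validity of the transport in the first paragraph, and its delicate point is precisely that the arrow graph (unlike an arbitrary realizing graph) already contains all transitivity edges, so that graph-extraction coincides with relation-restriction and Lemma \ref{lem65} can be invoked verbatim; once this is settled, everything else is formal.
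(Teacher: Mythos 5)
Your proof is correct and rests on the same ingredients as the paper's: the order/acyclic-graph correspondence, the remark identifying $P$-convexity and contraction of quasi-orders with their graph counterparts, and Lemma \ref{lem65}. The paper compresses all of this into the single identity $\bfod=\kappa^{-1}(\kappa(\bfqo)\cap \bfwcsgr)$, pulling the suboperad and ideal structure of $\overline{\bfwcsgr}$ inside $(\bfsgr,\circ)$ back along the injective operad morphism $\kappa$; your argument inlines exactly the verifications that this identity encapsulates.
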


\begin{proof} This is implied by $\bfod=\kappa^{-1}(\kappa(\bfqo)\cap \bfwcsgr)$. \end{proof}\\

\textbf{Examples.}  In $\bfod$:
\begin{align*}
\tddeux{$1$}{$2$}\circ_1 \tddeux{$1$}{$2$}&=\tdtroisdeux{$1$}{$2$}{$3$}+\tdtroisun{$1$}{$3$}{$2$}&
\tddeux{$1$}{$2$}\circ_2 \tddeux{$1$}{$2$}&=\tdtroisdeux{$1$}{$2$}{$3$}+\pdtroisun{$3$}{$1$}{$2$}\\
\tddeux{$1$}{$2$}\circ_1 \tddeux{$2$}{$1$}&=\tdtroisdeux{$2$}{$1$}{$3$}+\tdtroisun{$2$}{$3$}{$1$}&
\tddeux{$1$}{$2$}\circ_2 \tddeux{$2$}{$1$}&=\tdtroisdeux{$1$}{$3$}{$2$}+\pdtroisun{$2$}{$1$}{$3$}\\
\tddeux{$1$}{$2$}\circ_1 \tdun{$1$}\tdun{$2$}&=\pdtroisun{$3$}{$1$}{$2$}+\tddeux{$1$}{$3$}\tdun{$2$}+\tdun{$1$}\tddeux{$2$}{$3$}&
\tddeux{$1$}{$2$}\circ_2 \tdun{$1$}\tdun{$2$}&=\tdtroisun{$1$}{$3$}{$2$}+\tddeux{$1$}{$2$}\tdun{$3$}+\tddeux{$1$}{$3$}\tdun{$2$}\\
\tddeux{$1$}{$2$}\circ_1\tdun{$1,2$}\hspace{.25cm}&=\tddeux{$1,2$}{$3$}\hspace{.25cm}&
\tddeux{$1$}{$2$}\circ_2\tdun{$1,2$}\hspace{.25cm}&=\tddeux{$1$}{$2,3$}\hspace{.25cm}\\
\\
\tddeux{$2$}{$1$}\circ_1 \tddeux{$1$}{$2$}&=\pdtroisun{$2$}{$1$}{$3$}+\tdtroisdeux{$3$}{$1$}{$2$}&
\tddeux{$2$}{$1$}\circ_2 \tddeux{$1$}{$2$}&=\tdtroisdeux{$2$}{$3$}{$1$}+\tdtroisun{$2$}{$3$}{$1$}\\
\tddeux{$2$}{$1$}\circ_1 \tddeux{$2$}{$1$}&=\pdtroisun{$1$}{$2$}{$3$}+\tdtroisdeux{$3$}{$2$}{$1$}&
\tddeux{$2$}{$1$}\circ_2 \tddeux{$2$}{$1$}&=\tdtroisdeux{$3$}{$2$}{$1$}+\tdtroisun{$3$}{$2$}{$1$}\\
\tddeux{$2$}{$1$}\circ_1 \tdun{$1$}\tdun{$2$}&=\tdtroisun{$2$}{$3$}{$1$}+\tddeux{$2$}{$1$}\tdun{$3$}+\tdun{$1$}\tddeux{$2$}{$3$}&
\tddeux{$2$}{$1$}\circ_2 \tdun{$1$}\tdun{$2$}&=\pdtroisun{$1$}{$2$}{$3$}+\tddeux{$2$}{$1$}\tdun{$3$}+\tddeux{$3$}{$1$}\tdun{$2$}\\
\tddeux{$2$}{$1$}\circ_1\tdun{$1,2$}\hspace{.25cm}&=\tddeux{$3$}{$1,2$}\hspace{.25cm}&
\tddeux{$2$}{$1$}\circ_2\tdun{$1,2$}\hspace{.25cm}&=\tddeux{$2,3$}{$1$}\hspace{.25cm}\\
\\
\tdun{$1$}\tdun{$2$}\circ_1 \tddeux{$1$}{$2$}&=\tddeux{$1$}{$2$}\tdun{$3$}&
\tdun{$1$}\tdun{$2$}\circ_2 \tddeux{$1$}{$2$}&=\tdun{$1$}\tddeux{$2$}{$3$}\\
\tdun{$1$}\tdun{$2$}\circ_1 \tddeux{$1$}{$2$}&=\tddeux{$2$}{$1$}\tdun{$3$}&
\tdun{$1$}\tdun{$2$}\circ_2 \tddeux{$1$}{$2$}&=\tdun{$1$}\tddeux{$3$}{$2$}\\
\tdun{$1$}\tdun{$2$}\circ_1 \tdun{$1$}\tdun{$2$}&=\tdun{$1$}\tdun{$2$} \tdun{$3$}&
\tdun{$1$}\tdun{$2$}\circ_2 \tdun{$1$}\tdun{$2$}&=\tdun{$1$}\tdun{$2$} \tdun{$3$}\\
\tdun{$1$}\tdun{$2$}\circ_1\tdun{$1,2$}\hspace{.25cm}&=\tdun{$1,2$}\hspace{.25cm}\tdun{$3$}&
\tdun{$1$}\tdun{$2$}\circ_2\tdun{$1,2$}\hspace{.25cm}&=\tdun{$1$}\tdun{$2,3$}\hspace{.25cm}\\
\\
\tdun{$1,2$}\hspace{.25cm}\circ_1 \tddeux{$1$}{$2$}&=0&
\tdun{$1,2$}\hspace{.25cm}\circ_2 \tddeux{$1$}{$2$}&=0\\
\tdun{$1,2$}\hspace{.25cm}\circ_1 \tddeux{$1$}{$2$}&=0&
\tdun{$1,2$}\hspace{.25cm}\circ_2 \tddeux{$1$}{$2$}&=0\\
\tdun{$1,2$}\hspace{.25cm}\circ_1 \tdun{$1,2$}\hspace{.25cm}&=0&
\tdun{$1,2$}\hspace{.25cm}\circ_2 \tdun{$1,2$}\hspace{.25cm}&=0\\
\tdun{$1,2$}\hspace{.25cm}\circ_1\tdun{$1,2$}\hspace{.25cm}&=0&
\tdun{$1,2$}\hspace{.25cm}\circ_2\tdun{$1,2$}\hspace{.25cm}&=0
\end{align*}

We obtain a diagram of operads (for $\circ$):
\begin{align} \label{EQ4.1}
\xymatrix{&\overline{\bffg}\ar@{->>}[r]&\overline{\bfwcfg}&&&&&\\
\overline{\bfsfg}\ar@{->>}[r]\ar@{^(->}[ru]&\overline{\bfwcsfg}&\overline{\bfwcfg}\ar@{_(->}[lu]\ar[u]|{Id}&&\overline{\bfgr}
\ar@{->>}[r]\ar@{-->}|(.635)\hole[lllu]&\overline{\bfwcgr}&&\\
&\overline{\bfwcsfg}\ar@{^(->}[ru]\ar@{_(->}[lu]\ar[u]|{Id}&&\bfsgr\ar@{->>}[r]\ar@{^(->}[ru]\ar@{-->}|(.49)\hole|(.66)\hole[lllu]&\bfwcsgr
&\overline{\bfwcgr}\ar@{_(->}[lu]\ar[u]|{Id}\ar@{-->}|(.52)\hole[lllu]&&\\
&&&&\bfwcsgr\ar@{^(->}[ru]\ar@{_(->}[lu]\ar[u]|{Id}\ar@{^(-->}[lllu]&&\bfqo\ar@{->>}[r]\ar@{-->}|(.497)\hole|(.67)\hole[lllu]&\bfod\\
&&&&&&&\bfod\ar@{_(->}[lu]\ar[u]|{Id}\ar@{^(-->}[lllu]}
\end{align}
By composition, we obtain morphism from $\bfqo$ to several other operads; for example:
\begin{align*}
&\left\{\begin{array}{rcl}
\bfqo(A)&\longrightarrow&\overline{\bfgr}(A)\\
P&\longrightarrow&\displaystyle \sum_{G\in \gr(A),\: \leq_G=P} G,
\end{array}\right.&&\left\{\begin{array}{rcl}
\bfqo(A)&\longrightarrow&\overline{\bfsfg}(A)\\
P&\longrightarrow&\displaystyle \sum_{G\in \sfg(A),\: \leq_G=P} G,
\end{array}\right.\\ \\
&\left\{\begin{array}{rcl}
\bfqo(A)&\longrightarrow&\overline{\bffg}(A)\\
P&\longrightarrow&\displaystyle \sum_{G\in \fg(A),\: \leq_G=P} G.
\end{array}\right.
\end{align*}

\textbf{Remark}. The image of $\kappa$ is not a suboperad of $(\bfsgr,\nabla)$.
For example, let us take $P$ and $Q$ be the  quasi-orders associated to the following graphs:
\begin{align*}
&\xymatrix{\rond{1} \ar@/^/[r]&\rond{b}\ar@/^/[l]};&&\xymatrix{\rond{2}&\rond{3}}.
\end{align*}
Then $G$ appears in $\kappa(P)\nabla_b \kappa(Q)$, and $G'$ not:
\begin{align*}
G&=\xymatrix{\rond{2}\ar[r]&\rond{1} \ar@/^/[r]&\rond{3}\ar@/^/[l]},&
G'&=\xymatrix{\rond{2}\ar[r]\ar@/^2pc/[rr]&\rond{1} \ar@/^/[r]&\rond{3}\ar@/^/[l]},
\end{align*}
although $\leq_G=\leq_{G'}$.

\section{$\petitbinfini$ structures}

\subsection{Operad morphisms and associated products}

\begin{theo}
There exists a unique operad morphism:
$$\left\{\begin{array}{rcl}
\ascom&\longrightarrow&\bfod\\
m&\longrightarrow&\tdun{$1$}\tdun{$2$},\\
\star&\longrightarrow&\tdun{$1$}\tdun{$2$}+\tddeux{$1$}{$2$}.
\end{array}\right.$$
\end{theo}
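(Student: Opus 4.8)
The plan is to invoke the presentation of $\ascom$ recalled in its definition: it is generated by $m,\star\in\ascom(2)$ subject to the three relations $m^{(12)}=m$, $m\circ_1 m=m\circ_2 m$ and $\star\circ_1\star=\star\circ_2\star$. By the universal property of an operad given by generators and relations, a morphism $\ascom\longrightarrow\bfod$ amounts to the choice of two elements of $\bfod(2)$ satisfying these three relations, and such a morphism, if it exists, is unique since $m$ and $\star$ generate $\ascom$. I would therefore first reduce the whole statement to checking that $M:=\tdun{$1$}\tdun{$2$}$ and $S:=\tdun{$1$}\tdun{$2$}+\tddeux{$1$}{$2$}$ obey these relations in $\bfod$; uniqueness then comes for free.

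The relations carried by $m$ are immediate. For $M^{(12)}=M$, I would note that the transposition acts by exchanging the two labels, and that the two-element antichain $\tdun{$1$}\tdun{$2$}$ is fixed by this exchange. For $M\circ_1 M=M\circ_2 M$, the compositions listed above for $\bfod$ give $\tdun{$1$}\tdun{$2$}\circ_1\tdun{$1$}\tdun{$2$}=\tdun{$1$}\tdun{$2$}\tdun{$3$}=\tdun{$1$}\tdun{$2$}\circ_2\tdun{$1$}\tdun{$2$}$, so both sides are the three-element antichain.

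The substance is the associativity of $S$. I would expand $S\circ_1 S$ and $S\circ_2 S$ by bilinearity, each into the four partial compositions of $\tdun{$1$}\tdun{$2$}$ and $\tddeux{$1$}{$2$}$ with one another, and then read off their values from the table of compositions in $\bfod$ established above. After collecting terms, both $S\circ_1 S$ and $S\circ_2 S$ should reduce to the same sum of seven orders on $\{1,2,3\}$, namely
\begin{align*}
&\tdun{$1$}\tdun{$2$}\tdun{$3$}+\tddeux{$1$}{$2$}\tdun{$3$}+\tddeux{$1$}{$3$}\tdun{$2$}+\tdun{$1$}\tddeux{$2$}{$3$}\\
&+\tdtroisdeux{$1$}{$2$}{$3$}+\tdtroisun{$1$}{$3$}{$2$}+\pdtroisun{$3$}{$1$}{$2$},
\end{align*}
from which $\star\circ_1\star=\star\circ_2\star$ follows for the proposed image.

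The only delicate point is this final term-by-term matching: the four summands of $S\circ_1 S$ and the four of $S\circ_2 S$ are individually different — the tabulated values of $\circ_1$ and $\circ_2$ are genuinely asymmetric, and one must keep track of the convexity condition appearing in the definition of the composition $\circ$ on orders — yet their totals coincide. I expect the bookkeeping of which extended orders appear, with the correct multiplicity and orientation, to be the main obstacle; once the binary compositions in $\bfod$ are taken as known, the verification is mechanical, and the existence and uniqueness of the morphism follow immediately from the reduction in the first step.
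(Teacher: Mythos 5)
Your proposal is correct and follows essentially the same route as the paper: the paper likewise reduces the statement to checking the three defining relations of $\ascom$ on the proposed images and verifies $\star\circ_1\star=\star\circ_2\star$ by the same expansion into four partial compositions, arriving at exactly the seven-term sum you state. The only difference is presentational — the paper records the binary composition table for $\bfod$ just before the theorem and cites it, whereas you defer that bookkeeping; the final totals agree.
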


\begin{proof} In $\bfod$:
\begin{align*}
\tdun{$1$}\tdun{$2$}^{(12)}&=\tdun{$1$}\tdun{$2$},\\
\tdun{$1$}\tdun{$2$}\circ_1\tdun{$1$}\tdun{$2$}&=\tdun{$1$}\tdun{$2$}\circ_2\tdun{$1$}\tdun{$2$}=\tdun{$1$}\tdun{$2$}\tdun{$3$},\\
(\tddeux{$1$}{$2$}+\tdun{$1$}\tdun{$2$})\circ_1(\tddeux{$1$}{$2$}+\tdun{$1$}\tdun{$2$})
&=(\tddeux{$1$}{$2$}+\tdun{$1$}\tdun{$2$})\circ_2(\tddeux{$1$}{$2$}+\tdun{$1$}\tdun{$2$})\\
&=\tdtroisdeux{$1$}{$2$}{$3$}+\tdtroisun{$1$}{$3$}{$2$}+\pdtroisun{$3$}{$1$}{$2$}+\tdun{$1$}\tddeux{$2$}{$3$}
+\tddeux{$1$}{$2$}\tdun{$3$}+\tddeux{$1$}{$3$}\tdun{$2$}+\tdun{$1$}\tdun{$2$}\tdun{$3$}.
\end{align*}
So these elements define a morphism from $\ascom$ to $\bfqo$. \end{proof}

By composition, we obtain morphisms from $\ascom$ to any operad of the commutative diagram (\ref{EQ4.1}). 
We always denote by $m$ and $\star$ the image of the two products of $\ascom$ in all these operads.

\begin{defi} Let $A$ a finite set and $I\subseteq A$.
\begin{enumerate}
\item Let $\Gamma \in \fg(A)$. We shall say that $I$ is an \emph{ideal} of $\Gamma$ if for all $x,y\in A$:
$$x\leq_\Gamma y\mbox{ and }x\in I\Longrightarrow y\in I.$$
\item Let $\leq\in \qo(A)$. We shall say that $I$ is an \emph{ideal} of $\leq$ if  for all $x,y\in A$:
$$x\leq y\mbox{ and }x\in I\Longrightarrow y\in I.$$
\end{enumerate}\end{defi}

\begin{prop}
Let $A$ and $B$ be disjoint finite sets. If $\mathcal{P}\in \{\fg,\sfg,\gr,\sgr,\qo\}$, for all $\Gamma \in \mathcal{P}(A)$, $\Gamma' \in \mathcal{P}(B)$:
\begin{align*}
m\circ(\Gamma,\Gamma')&=\Gamma\Gamma',&
\star\circ(\Gamma,\Gamma')&=\sum_{\substack{\Upsilon \in \mathcal{P}(A\sqcup B),\\
\Upsilon_{\mid A}=\Gamma,\: \Upsilon_{\mid B}=\Gamma',\\
\mbox{\scriptsize $B$ ideal of $\Upsilon$}}}\Upsilon.
\end{align*} \end{prop}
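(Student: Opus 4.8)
The plan is to evaluate both compositions straight from the definition of the operadic composition $\circ$ in each $\mathcal{P}$, once the images of $m$ and $\star$ have been made explicit. Because the morphism $\ascom\to\mathcal{P}$ is the composite of $\ascom\to\bfod$ with the structural arrows of diagram (\ref{EQ4.1})---the inclusion $\bfod\hookrightarrow\bfqo$ of Corollary \ref{cor80} followed, when $\mathcal{P}\neq\qo$, by one of the maps $P\mapsto\sum_{G:\leq_G=P}G$---I would first record that in every case
\[
m=\sum_{\substack{\Theta\in\mathcal{P}(\{1,2\})\\ 1,2\text{ incomparable in }\Theta}}\Theta,\qquad
\star=\sum_{\substack{\Theta\in\mathcal{P}(\{1,2\})\\ 2\not\leq_\Theta 1}}\Theta,
\]
all coefficients being $1$ (for $\mathcal{P}=\qo$ these reduce to $\tdun{$1$}\tdun{$2$}$ and $\tdun{$1$}\tdun{$2$}+\tddeux{$1$}{$2$}$). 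Writing the arity-two composition as the iterated partial composition $p\circ(\Gamma,\Gamma')=(p\circ_1\Gamma)\circ_2\Gamma'$, Proposition \ref{prop66} and Lemmas \ref{lem62} and \ref{lem63} let me describe $p\circ(\Gamma,\Gamma')$ as the sum, with coefficient $1$, of those $\Upsilon\in\mathcal{P}(A\sqcup B)$ such that $\Upsilon_{\mid A}=\Gamma$ and $\Upsilon_{\mid B}=\Gamma'$ (after the relevant $g$ or $s$), whose double contraction $(\Upsilon/A\rightarrow 1)/B\rightarrow 2$ is a term of $p$, and for which the convexity conditions built into $\circ$ hold.

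For $m$: a graph $\Theta$ on $\{1,2\}$ has $1,2$ incomparable iff there is no oriented path between the two vertices in either direction. Since contraction preserves reachability between the contracted blocks, and loops and multiplicities are irrelevant to reachability, the double contraction of $\Upsilon$ is of this type iff $\Upsilon$ has no internal edge joining $A$ and $B$. Hence the only surviving $\Upsilon$ is the disjoint juxtaposition, $A$ and $B$ are trivially convex, and $m\circ(\Gamma,\Gamma')=\Gamma\Gamma'$.

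For $\star$: the condition $2\not\leq_\Theta 1$ on the double contraction translates, by the same reachability argument, into the absence of an oriented path from $B$ to $A$ in $\Upsilon$, that is, exactly into ``$B$ is an ideal of $\Upsilon$''. It then remains to check that this ideal condition is precisely what the convexity requirements of $\circ$ impose: if $B$ is an ideal then no oriented path can leave $B$ and return, so $B$ (and likewise $A$) is convex, while conversely every $\Upsilon$ that is counted must have $2\not\leq 1$. Thus $\star\circ(\Gamma,\Gamma')$ is the sum over all $\Upsilon$ with $\Upsilon_{\mid A}=\Gamma$, $\Upsilon_{\mid B}=\Gamma'$ and $B$ an ideal, as claimed; the same computation applies to all five families, the only case-dependent point being the forgetful map in the restriction conditions.

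The main obstacle I expect is bookkeeping rather than conceptual: reconciling the forgetful maps $g$ (for $\gr,\fg$) and $s$ (for $\sfg,\sgr$) with contraction. Concretely, the images $m$ and $\star$ are infinite sums in the graph operads, and one must verify that the loop- and multiplicity-absorption occurring in $\Upsilon/A\rightarrow 1$ and $\Upsilon/B\rightarrow 2$ truncates these sums so that each admissible $\Upsilon$ is produced exactly once; this is where Lemmas \ref{lem62}, \ref{lem63} and \ref{lem65} (controlling convexity and reachability through the two successive contractions) are genuinely needed. The $\fg$ case will require slight extra care, since there $\Upsilon_{\mid A}=\Gamma$ is a literal equality in which the cross edges reappear as external edges already encoded by $\Gamma$ and $\Gamma'$, and one must confirm this is consistent with the reachability description of the ideal condition.
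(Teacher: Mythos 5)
Your proposal is correct and follows essentially the same route as the paper: describe $m$ and $\star$ explicitly as the sums of all graphs on $\{1,2\}$ with the appropriate reachability condition ($1,2$ incomparable, resp.\ $2\not\leq 1$), evaluate $(p\circ_1\Gamma)\circ_2\Gamma'$ directly, and translate the condition on the double contraction into the absence of crossing edges (for $m$) or the ideal condition on $B$ (for $\star$). Your explicit verification that the ideal condition subsumes the convexity requirements of $\circ$ is a point the paper's own (terser) proof leaves implicit, but the argument is the same.
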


\begin{proof} We prove it for $\mathcal{P}=\fg$; the other cases are proves similarly 
In $\overline{\bffg}$, $m$ is the sum over all Feynman graphs $\Gamma$ on $\{1,2\}$, with no internal edge between
$1$ and $2$ and no internal edge between $2$ and $1$.
Then $m\circ_1 \Gamma$ is the sum over all Feynman graphs $\Upsilon$ over $A\sqcup\{2\}$,
with $\Upsilon_{\mid A}=\Gamma$ and no edge between any vertex of $A$ and $2$ and no edge between $2$ and any vertex of $A$.
In other words, $m\circ_1 \Gamma$ is the sum over all Feynman graphs $\Upsilon=\Gamma \Upsilon'$, where $\Upsilon'$ is a Feynman
graph on $\{2\}$. Consequently, $m\circ (\Gamma,\Gamma')=(m\circ_1\Gamma)\circ_2\Gamma'$ is the sum over all Feynman graphs
$\Upsilon$ on $A\sqcup B$, with $\Upsilon_{\mid A}=\Gamma$ and $\Upsilon_{\mid B}=\Gamma'$, and no internal edge
between $A$ and $B$ an no internal edge between $B$ and $A$, that is to say $\Upsilon=\Gamma\Gamma'$.
The proof is similar for $\star$: $\star\circ (\Gamma,\Gamma')$ is the sum over all Feynman graphs $\Upsilon$ on $A\sqcup B$,
such that $\Upsilon_{\mid A}=\Gamma$ and $\Upsilon_{\mid B}=\Gamma'$, and no internal edge between $B$ and $A$.\end{proof}

\subsection{Associated Hopf algebras}

\begin{cor}
The vector space $\bfcfg$ generated by connected Feynman graph is a suboperad of $(\bffg,\nabla)$ and $(\bffg,\circ)$.
\end{cor}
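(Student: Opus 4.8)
The plan is to show that $\bfcfg$ is stable under the partial compositions $\nabla_b$ and $\circ_b$; since $\circ_b$ is obtained from $\nabla_b$ by keeping only the terms in which $B$ is convex, it suffices to treat $\nabla_b$. Recall that a Feynman graph $\Gamma$ is connected exactly when its underlying graph $g(\Gamma)$ (the vertices together with the internal edges, the external edges being discarded) is connected as a non-oriented graph. So, for connected $\Gamma \in \fg(A)$ and $\Gamma' \in \fg(B)$ and $b \in A$, I must prove that every $\Gamma''$ occurring in the sum defining $\Gamma \nabla_b \Gamma'$ is connected; as this sum is finite, the conclusion $\Gamma \nabla_b \Gamma' \in \bfcfg$ follows.

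First I would isolate the only graph-theoretic input, a standard contraction lemma: if $G$ is a non-oriented graph, $S \subseteq V(G)$ is such that the induced subgraph $G[S]$ is connected, and the contraction $G/S$ (collapsing $S$ to a single vertex $\ast$ and deleting the edges internal to $S$) is connected, then $G$ itself is connected. The proof is a walk-lifting argument: given $u,v \in V(G)$, choose a walk between their images in $G/S$; every edge of this walk lifts to an edge of $G$, and whenever the walk passes through $\ast$, entering $S$ at some vertex $s$ and leaving it at some vertex $s'$, the connectedness of $G[S]$ provides a walk from $s$ to $s'$ inside $S$ with which to splice the two pieces. Concatenating yields a walk from $u$ to $v$ in $G$.

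Then I would apply this lemma with $G = g(\Gamma'')$ and $S = B$. The point is to match the extraction and contraction of Feynman graphs with ordinary graph operations on underlying graphs: by the definitions of $\Gamma''_{\mid B}$ and $\Gamma''/B\rightarrow b$, one has $g(\Gamma''_{\mid B}) = g(\Gamma'')[B]$ and $g(\Gamma''/B\rightarrow b) = g(\Gamma'')/B$, the external edges created or preserved by these operations being irrelevant to connectivity. Since $\Gamma''_{\mid B} = \Gamma'$ is connected, $g(\Gamma'')[B]$ is connected; since $\Gamma''/B\rightarrow b = \Gamma$ is connected, $g(\Gamma'')/B$ is connected. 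The lemma gives that $g(\Gamma'')$ is connected, i.e. $\Gamma''$ is connected. Hence $\Gamma\nabla_b\Gamma'$, and a fortiori $\Gamma\circ_b\Gamma'$, are linear combinations of connected Feynman graphs, so $\bfcfg$ is a suboperad of $(\bffg,\nabla)$ and of $(\bffg,\circ)$.

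I expect the substance of the argument to be entirely contained in the contraction lemma; the only thing requiring care is the bookkeeping that identifies $g(\Gamma''_{\mid B})$ and $g(\Gamma''/B\rightarrow b)$ with $g(\Gamma'')[B]$ and $g(\Gamma'')/B$, and in particular the observation that the external half-edges appearing in the extracted graph do not enter into the notion of connectedness. There is no deep obstacle here: once connectedness is taken to be a property of the underlying internal-edge graph, the statement reduces to this elementary lemma.
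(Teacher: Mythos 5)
Your proof is correct and follows essentially the same route as the paper: the whole content is the lemma that if $\Gamma''_{\mid B}$ and $\Gamma''/B\rightarrow b$ are both connected then $\Gamma''$ is connected, applied to each term of $\Gamma\nabla_b\Gamma'$ (the case of $\circ_b$ being a sub-sum). The paper proves that lemma by showing all connected components coincide rather than by your walk-lifting argument, but the two are interchangeable.
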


\begin{proof} Let $\Gamma\in \fg(A)$ and $I\subseteq A$, non-empty. Let us prove that if 
$\Gamma_{\mid I}$ and $\Gamma/I\rightarrow a$ are connected, then  $\Gamma$ is connected.
For all $x\in A$, let us denote by $CC(x)$ the connected component of $x$ in $\Gamma$.
As $\Gamma_{\mid I}$ is connected, for all $x\in I$, $I\subseteq CC(x)$.
Let $x\notin I$. As $\Gamma/I\rightarrow a$ is connected, there exists an non oriented path from $x$ to $a$ in $\Gamma/I\rightarrow a$,
so there exists a non-oriented path from $x$ to a vertex $y\in I$ in $\Gamma$: we obtain that $I\subseteq CC(y)=CC(x)$ for all $x\notin I$.
So $\Gamma$ has only one connected component. \end{proof}\\

Following corollary \ref{cor80}, it is possible to define suboperad of connected objects for all the operads in the commutative diagram.
As the product $m$ is, in all cases, the disjoint union, the morphism from $\petitbinfini$ to any of these operads obtained 
by restriction of the morphism from $\ascom$ takes its image in the suboperad of connected objects. For example:
\begin{align*}
\theta_\bfod(\lfloor-,-\rfloor_{1,1})&=\tddeux{$1$}{$2$},&
\theta_\bfod(\lfloor-,-\rfloor_{1,2})&=\tdtroisun{$1$}{$3$}{$2$},\\
\theta_\bfod(\lfloor-,-\rfloor_{2,1})&=\pdtroisun{$1$}{$2$}{$3$},&
\theta_\bfod(\lfloor-,-\rfloor_{2,2})&=_1^3\hspace{-1.5mm}\pquatresept_2^4+_1^3\hspace{-1.5mm}\pquatresix_2^4
+_1^4\hspace{-1.5mm}\pquatresix_2^3+_1^3\hspace{-1.5mm}\pquatrecinq_2^4+_1^4\hspace{-1.5mm}\pquatrecinq_2^3.
\end{align*}
For all $k,l\geq 1$, $\theta_\bfod(\lfloor-,-\rfloor_{k,l})$ is the sum of all connected bipartite graphs with blocks $\{1,\ldots,k\}$
and $\{k+1,\ldots,k+l\}$. The number of such graphs is given by sequence A227322 of the OEIS.\\

For any vector space $V$, for any $(\bfP,\mathbf{CP})$ in the following set:
$$\left\{\begin{array}{c}
(\bffg,\bfcfg),(\bfwcfg,\bfcwcfg),(\bfsfg,\bfcsfg),(\bfwcsfg,\bfcwcsfg),(\bfgr,\bfcgr),\\
(\bfwcgr,\bfcwcgr),(\bfsgr,\bfcsgr),(\bfwcsgr,\bfcwcsgr),(\bfqo,\bfcqo),(\bfod,\bfcod)
\end{array}\right\},$$
we have:
$$F_\bfP(V)=S(F_{\mathbf{Cp}}(V)).$$
Let us describe the product $\star$ induced by the $\petitbinfini$ structure on all these Hopf algebras. 
We restrict ourselves to $F_\bffg(V)$, the other cases are similar. We fix $V=Vect(X_1,\ldots,X_N)$.
\begin{itemize}
\item As a vector space, $A_\bfcfg(V)=F_\bffg(V)$ is generated by isoclasses $\widehat{\Gamma}$ of Feynman graphs $\Gamma$
whose vertices are decorated by elements of $[N]$. 
\item For any Feynman graph $\Gamma$ whose vertices are decorated by $[N]$:
$$\Delta(\widehat{\Gamma})=\sum_{\Gamma=\Gamma_1\Gamma_2} \widehat{\Gamma_1}\otimes \widehat{\Gamma_2}.$$
\item For any Feynman graphs $\Gamma \in \fg(A)$, $\Gamma'\in \fg(B)$, whose vertices are decorated by $[N]$:
$$\widehat{\Gamma}*\widehat{\Gamma'}=\sum_{\substack{\Gamma'' \in \fg(A\sqcup B),\\
\Gamma''_{\mid A}=\Gamma,\;\Gamma''_{\mid B}=\Gamma',\\
\mbox{\scriptsize $B$ ideal of $\Gamma''$}}} \widehat{\Gamma''}.$$
\end{itemize}
The dual Hopf algebra $A^*_\bfcfg(V)$ has the same basis, up to an identification. Moreover:
\begin{itemize}
\item For any Feynman graph $\Gamma\in \fg(A)$, whose vertices are decorated by $[N]$:
$$\Delta_*(\widehat{\Gamma})=\sum_{\mbox{\scriptsize $I$ ideal of $\Gamma$}}
\widehat{\Gamma_{\mid V(\Gamma)\setminus I}}\otimes \widehat{\Gamma_{\mid I}}.$$
\item For any Feynman graphs $\Gamma,\Gamma'$, whose vertices are decorated by $[N]$:
$$\widehat{\Gamma}.\widehat{\Gamma'}=\widehat{\Gamma\Gamma'}.$$
\end{itemize}

By functoriality, we obtain a diagram of Hopf algebra morphisms:
$$\xymatrix{&A^*_\bfcfg\ar@{->>}[ld]\ar@{->>}[rd]\ar@{-->>}|(.35)\hole[rrrd]&A^*_\bfcwcfg\ar|{Id}[d]\ar@{_(->}[l]&&&\\
A^*_\bfcsfg\ar@{->>}[rd]\ar@{-->>}|(.33)\hole|(.51)\hole[rrrd]&A^*_\bfcwcsfg\ar|{Id}[d]\ar@{_(->}[l]
&A^*_\bfcwcfg\ar@{->>}[ld]\ar@{-->>}|(.49)\hole[rrrd]&&A^*_\bfcgr\ar@{->>}[ld]\ar@{->>}[rd]&A^*_\bfcwcgr\ar|{Id}[d]\ar@{_(->}[l]\\
&A^*_\bfcwcsfg\ar@{-->>}[rrrd]&&A^*_\bfcsgr\ar@{->>}[rd]\ar@{->>}|(.34)\hole[dd]&A^*_\bfcwcsgr\ar|{Id}[d]\ar@{_(->}[l]&A^*_\bfcwcgr\ar@{->>}[ld]\\
&&&&A^*_\bfcwcsgr\ar@{->>}[d]&&\\
&&&A^*_\bfcqo\ar@{->>}[rd]&A^*_\bfcod\ar|{Id}[d]\ar@{_(->}[l]&\\
&&&&A^*_\bfcod&}$$

Here are examples of morphisms in this diagram:
\begin{align*}
&\left\{\begin{array}{rcl}
A^*_\bfcfg(V)&\longrightarrow&A^*_\bfcsfg(V)\\
\widehat{\Gamma}&\longrightarrow&\widehat{s(\Gamma)},
\end{array}\right.&
&\left\{\begin{array}{rcl}
A^*_\bfcfg(V)&\longrightarrow&A^*_\bfcwcfg(V)\\
\widehat{\Gamma}&\longrightarrow&
\begin{cases}
\widehat{\Gamma} \mbox{ if $\Gamma$ has no oriented cycle},\\
0\mbox{ otherwise},\end{cases}
\end{array}\right.\\ \\
&\left\{\begin{array}{rcl}
A^*_\bfcfg(V)&\longrightarrow&A^*_\bfcgr(V)\\
\widehat{\Gamma}&\longrightarrow&\widehat{g(\Gamma)},
\end{array}\right.&
&\left\{\begin{array}{rcl}
A^*_\bfcgr(V)&\longrightarrow&A^*_\bfcqo(V)\\
\widehat{G}&\longrightarrow&\widehat{\leq_G}.
\end{array}\right.&
\end{align*}

Let us describe the bialgebra $D^*_\bffg(V)$. It is generated by pairs $(\widehat{\Gamma},d)$, where $\Gamma$ is a connected 
Feynman graph decorated by $[N]$, and $d\in [N]$. We shall need the following notions: 

\begin{defi}
Let $\Gamma$ be a Feynman graph, with $V(\Gamma)=A$, and let $\{A_1,\ldots,A_k\}$ be a partition of $A$.
\begin{enumerate}
\item We shall say that this partition is $\Gamma$-admissible if:
\begin{itemize}
\item For all $i$, $\Gamma_{\mid A_i}$ is connected.
\item \begin{itemize}
\item $A_1$ is $\Gamma$-convex.
\item $A_2$ is $(\Gamma/A_1\rightarrow 1)$-convex.
\item $\vdots$
\item $A_k$ is $((\ldots(\Gamma/A_1\rightarrow 1)/\ldots)/A_{k-1}\rightarrow (k-1))$-convex.
\end{itemize}
By lemma \ref{lem62}-3, this does not depend on the choice of the order on $A_1,\ldots,A_k$.
\end{itemize}
\item Let us assume that $\{A_1,\ldots,A_k\}$ is $\Gamma$-admissible. Let $D:[k]\longrightarrow [N]$.
\begin{enumerate}
\item We obtain a Feynman graph on $[k]$:
$$\Gamma/\{A_1,\ldots,A_k\}=(\ldots(\Gamma/A_1\rightarrow 1)/\ldots)/A_k\rightarrow k.$$
Its isoclasse does not depend on the order chosen on the partition $A_1\sqcup \ldots \sqcup A_k$ by lemma \ref{lem62}.
If $\Gamma$ is connected, then $\Gamma/\{A_1,\ldots,A_k\}$ is connected. Moreover, $(\Gamma/\{A_1,\ldots,A_k\},D)$ 
is a Feynman graph decorated by $[N]$.
\item $\Gamma_{\mid A_1}\ldots \Gamma_{\mid A_k}$ is a decorated Feynman graph, with $k$ connected components, namely $A_1,\ldots,A_k$.
\end{enumerate}\end{enumerate}\end{defi}

For all connected Feynman graphs $\Gamma$ and $d\in [N]$, in $D^*_\bffg(V)$:
$$\Delta_*((\widehat{\Gamma},d))=\sum_{\substack{\mbox{\scriptsize $\{A_1,\ldots,A_k\}$ $\Gamma$-admissible},\\ D:[k]\longrightarrow N}}
(\widehat{(\Gamma/\{A_1,\ldots,A_k\},D)},d) \otimes (\widehat{(\Gamma_{\mid A_1}},D(1))\ldots (\widehat{\Gamma_{\mid A_k}},D(k)).$$
For any Feynman graph decorated by $[N]$:
 $$\rho(\widehat{\Gamma})=\sum_{\substack{\mbox{\scriptsize $\{A_1,\ldots,A_k\}$ $\Gamma$-admissible},\\ D:[k]\longrightarrow N}}
(\widehat{\Gamma/\{A_1,\ldots,A_k\},D)}\otimes (\widehat{\Gamma_{\mid A_1}},D(1))\ldots(\widehat{\Gamma_{\mid A_k}},D(k)).$$

Similar formulas can be given for the other operads. For example, if $a,b,c,d\in [N]$, in $D^*_\bfqo(V)$:
\begin{align*}
\Delta_*((\tdun{$a$},d))&=\sum_{p=1}^N (\tdun{$p$},d)\otimes (\tdun{$a$},p),\\
\Delta_*((\tddeux{$a$}{$b$},d))&=\sum_{p,q=1}^N (\tddeux{$p$}{$q$},d)\otimes (\tdun{$a$},p)(\tdun{$b$},q)
+\sum_{p=1}^N (\tdun{$p$},d) \otimes (\tddeux{$a$}{$b$},p),\\
\Delta_*((\tdtroisun{$a$}{$c$}{$b$},d))&=\sum_{p,q,r=1}^N (\tdtroisun{$p$}{$r$}{$q$},d)\otimes (\tdun{$a$},p)(\tdun{$b$},q)(\tdun{$c$},r)
+\sum_{p,q=1}^N (\tddeux{$p$}{$q$},d) \otimes (\tddeux{$a$}{$b$},p)(\tdun{$c$},q)\\
&+\sum_{p,q=1}^N (\tddeux{$p$}{$q$},d) \otimes (\tddeux{$a$}{$c$},p)(\tdun{$b$},q)
+\sum_{p=1}^N (\tdun{$p$},d)\otimes (\tdtroisun{$a$}{$c$}{$b$},p),\\
\Delta_*((\pdtroisun{$a$}{$b$}{$c$},d))&=\sum_{p,q,r=1}^N (\pdtroisun{$p$}{$q$}{$r$},d)\otimes (\tdun{$a$},p)(\tdun{$b$},q)(\tdun{$c$},r)
+\sum_{p,q=1}^N (\tddeux{$p$}{$q$},d) \otimes (\tdun{$c$},p)(\tddeux{$b$}{$a$},q)\\
&+\sum_{p,q=1}^N (\tddeux{$p$}{$q$},d) \otimes (\tdun{$b$},p) (\tddeux{$c$}{$a$},q)
+\sum_{p=1}^N (\tdun{$p$},d)\otimes (\pdtroisun{$a$}{$b$}{$c$},p),\\
\Delta_*((\tdtroisdeux{$a$}{$b$}{$c$},d))&=\sum_{p,q,r=1}^N (\tdtroisdeux{$p$}{$q$}{$r$},d)\otimes (\tdun{$a$},p)(\tdun{$b$},q)(\tdun{$c$},r)
+\sum_{p,q=1}^N (\tddeux{$p$}{$q$},d) \otimes (\tddeux{$a$}{$b$},p)(\tdun{$c$},q)\\
&+\sum_{p,q=1}^N (\tddeux{$p$}{$q$},d) \otimes(\tdun{$a$},q) (\tddeux{$b$}{$c$},p)
+\sum_{p=1}^N (\tdun{$p$},d)\otimes (\tdtroisdeux{$a$}{$b$}{$c$},p).
\end{align*}
Note that the subalgebra of $D^*_\bfcqo(V)$ generated by rooted trees is a subbialgebra. This comes from the sujective operad morphism
from $\bfcqo$ to $\prelie$, sending any rooted tree to itself and the other quasi-order to $0$.\\

For the coaction:
\begin{align*}
\rho(\tdun{$a$})&=\sum_{p=1}^N \tdun{$p$}\otimes (\tdun{$a$},p),\\
\rho(\tddeux{$a$}{$b$})&=\sum_{p,q=1}^N \tddeux{$p$}{$q$}\otimes (\tdun{$a$},p)(\tdun{$b$},q)
+\sum_{p=1}^N \tdun{$p$} \otimes (\tddeux{$a$}{$b$},p),\\
\rho(\tdtroisun{$a$}{$c$}{$b$})&=\sum_{p,q,r=1}^N \tdtroisun{$p$}{$r$}{$q$}\otimes (\tdun{$a$},p)(\tdun{$b$},q)(\tdun{$c$},r)
+\sum_{p,q=1}^N \tddeux{$p$}{$q$} \otimes (\tddeux{$a$}{$b$},p)(\tdun{$c$},q)\\
&+\sum_{p,q=1}^N \tddeux{$p$}{$q$} \otimes (\tddeux{$a$}{$c$},p)(\tdun{$b$},q)
+\sum_{p=1}^N \tdun{$p$}\otimes (\tdtroisun{$a$}{$c$}{$b$},p),\\
\rho(\pdtroisun{$a$}{$b$}{$c$})&=\sum_{p,q,r=1}^N \pdtroisun{$p$}{$q$}{$r$}\otimes (\tdun{$a$},p)(\tdun{$b$},q)(\tdun{$c$},r)
+\sum_{p,q=1}^N \tddeux{$p$}{$q$} \otimes (\tdun{$c$},p)(\tddeux{$b$}{$a$},q)\\
&+\sum_{p,q=1}^N \tddeux{$p$}{$q$} \otimes (\tdun{$b$},p) (\tddeux{$c$}{$a$},q)
+\sum_{p=1}^N \tdun{$p$}\otimes (\pdtroisun{$a$}{$b$}{$c$},p),\\
\rho(\tdtroisdeux{$a$}{$b$}{$c$})&=\sum_{p,q,r=1}^N \tdtroisdeux{$p$}{$q$}{$r$}\otimes (\tdun{$a$},p)(\tdun{$b$},q)(\tdun{$c$},r)
+\sum_{p,q=1}^N \tddeux{$p$}{$q$} \otimes (\tddeux{$a$}{$b$},p)(\tdun{$c$},q)\\
&+\sum_{p,q=1}^N \tddeux{$p$}{$q$} \otimes(\tdun{$a$},q) (\tddeux{$b$}{$c$},p)
+\sum_{p=1}^N \tdun{$p$}\otimes (\tdtroisdeux{$a$}{$b$}{$c$},p).
\end{align*}

In order to obtain the coproduct of $B^*_\bfqo(V)$, let us quotient by relations $(\tdun{$i$},j)=\delta_{i,j}$.
In $B^*_\bfcqo(V)$:
\begin{align*}
\Delta_*((\tddeux{$a$}{$b$},d))&= (\tddeux{$a$}{$b$},d)\otimes 1+1\otimes (\tddeux{$a$}{$b$},d),\\
\Delta_*((\tdtroisun{$a$}{$c$}{$b$},d))&=(\tdtroisun{$a$}{$b$}{$c$},d)\otimes 1
+\sum_{p=1}^N (\tddeux{$p$}{$c$},d) \otimes (\tddeux{$a$}{$b$},p)+\sum_{p=1}^N (\tddeux{$p$}{$b$},d) \otimes (\tddeux{$a$}{$c$},p)
+1\otimes (\tdtroisun{$a$}{$c$}{$b$},d),\\
\Delta_*((\pdtroisun{$a$}{$b$}{$c$},d))&=(\pdtroisun{$a$}{$b$}{$c$},d)\otimes 1+\sum_{q=1}^N (\tddeux{$c$}{$q$},d) \otimes (\tddeux{$b$}{$a$},q)
+\sum_{q=1}^N (\tddeux{$b$}{$q$},d) \otimes (\tddeux{$c$}{$a$},q)+1\otimes (\pdtroisun{$a$}{$b$}{$c$},d),\\
\Delta_*((\tdtroisdeux{$a$}{$b$}{$c$},d))&=(\tdtroisdeux{$a$}{$b$}{$c$},d)\otimes 1+\sum_{p=1}^N (\tddeux{$p$}{$c$},d) \otimes (\tddeux{$a$}{$b$},p)
+\sum_{q=1}^N (\tddeux{$a$}{$q$},d) \otimes (\tddeux{$b$}{$c$},p)+1\otimes (\tdtroisdeux{$a$}{$b$}{$c$},d).
\end{align*}

We obtain a commutative diagram of Hopf algebra morphisms:
$$\xymatrix{&D^*_\bfcfg\ar@{->>}[ld]\ar@{->>}[rd]\ar@{-->>}|(.35)\hole[rrrd]&D^*_\bfcwcfg\ar|{Id}[d]\ar@{_(->}[l]&&&\\
D^*_\bfcsfg\ar@{->>}[rd]\ar@{-->>}|(.33)\hole|(.51)\hole[rrrd]&D^*_\bfcwcsfg\ar|{Id}[d]\ar@{_(->}[l]
&D^*_\bfcwcfg\ar@{->>}[ld]\ar@{-->>}|(.49)\hole[rrrd]&&D^*_\bfcgr\ar@{->>}[ld]\ar@{->>}[rd]&D^*_\bfcwcgr\ar|{Id}[d]\ar@{_(->}[l]\\
&D^*_\bfcwcsfg\ar@{-->>}[rrrd]&&D^*_\bfcsgr\ar@{->>}[rd]\ar@{->>}|(.34)\hole[dd]&D^*_\bfcwcsgr\ar|{Id}[d]\ar@{_(->}[l]&D^*_\bfcwcgr\ar@{->>}[ld]\\
&&&&D^*_\bfcwcsgr\ar@{->>}[d]&&\\
&&&D^*_\bfcqo\ar@{->>}[rd]&D^*_\bfcod\ar|{Id}[d]\ar@{_(->}[l]&\\
&&&&D^*_\bfcod&}$$

\textbf{Remarks}. The components of $\bfcfg$ are not finite-dimensional, in order to obtain $D^*_\bfcfg(V)$, we use
the duality between $\bfcfg$ and $\bfcfg$ such that for every Feynman graphs $\Gamma,\Gamma'$:
$$\ll \Gamma,\Gamma'\gg=\delta_{\Gamma,\Gamma'}.$$
The formulas given in the finite-dimensional case also make sense for this duality.
The same remark holds for other operads here appearing, such as $\overline{\bfcgr}$.

\chapter{Summary}

 Let $\bfP$ be an operad, such that $\bfP(0)=(0)$ and, for all $n\geq 1$, $\bfP(n)$ is finite dimensional. 
\begin{enumerate}
\item \begin{enumerate}
\item $\bfP$ is a graded, non connected brace algebra, with a bracket denoted by $\langle-,-\rangle$.
Moreover, $\bfP_+$ is a graded and connected brace subalgebra of $\bfP$.
\item This induces a graded, non connected pre-Lie algebra structure on $\bfP$, which pre-Lie product is denoted by $\bullet$. 
The following diagram of pre-Lie algebras is commutative:
$$\xymatrix{&\bfP\ar@{->>}[rd]&\\
\bfP_+\ar@{->>}[rd] \ar@{^(->}[ru]&&coinv\bfP\\
&coinv\bfP_+ \ar@{^(->}[ru]&}$$
\item This induces a monoid product $\lozenge$ on $\overline{\bfP}$ and a group product $\lozenge$ on $\overline{\bfP_+}$.
The following diagram of monoids is commutative:
$$\xymatrix{&(\overline{\bfP},\lozenge)\ar@{->>}[rd]&\\
(\overline{\bfP_+},\lozenge)\ar@{->>}[rd] \ar@{^(->}[ru]&&(\overline{coinv\bfP},\lozenge)=M_\bfP^D\\
&(\overline{coinv\bfP_+},\lozenge)=G_\bfP^D \ar@{^(->}[ru]&}$$
\end{enumerate}
\item  \begin{enumerate}
\item There exist products $*$, induced by the operadic composition of $\bfP$, 
making the following diagram of graded bialgebras commutative:
$$\xymatrix{D_\bfP=(S(coinv\bfP),*,\Delta)&(S(\bfP),*,\Delta) \ar@{^(->}[r] \ar@{->>}[l]&\bfD_\bfP=(T(\bfP),*,\Delta_{dec})\\
B_\bfP=(S(coinv\bfP_+),*,\Delta)\ar@{^(->}[u] &(S(\bfP_+),*,\Delta)\ar@{^(->}[u] \ar@{^(->}[r] \ar@{->>}[l] 
&\bfB_\bfP=(T(\bfP_+),*,\Delta_{dec})\ar@{^(->}[u]}$$
\item They are all graded; the three bialgebras on the bottow row are graded Hopf algebras.
\end{enumerate}
\item \begin{enumerate}
\item There exist coproducts $\Delta_*$ making the following diagram of graded bialgebras commutative:
$$\xymatrix{D_\bfP^*=(S(inv\bfP^*),m,\Delta_*)\ar@{->>}[d]\ar@{^(->}[r]&(S(\bfP^*),m,\Delta_*)\ar@{->>}[d]
&\bfD_\bfP^*=(T(\bfP^*),m_{conc},\Delta_*)\ar@{->>}[d]\ar@{->>}[l]\\
B^*_\bfP=(S(inv\bfP^*_+),m,\Delta_*)\ar@{^(->}[r]&(S(\bfP^*_+),m,\Delta_*)&\bfB^*_\bfP=(T(\bfP^*_+),m_{conc},\Delta_*)\ar@{->>}[l]}$$
Moreover, $\bfB^*_\bfP$, $(S(\bfP^*_+),m,\Delta_*)$ and $B^*_\bfP$ are graded, connected Hopf algebras, dual to
$\bfB_\bfP$, $(S(\bfP_+),*,\Delta)$ and $B_\bfP$  respectively. 
\item Considering the monoids of characters of these bialgebras, we obtain a commutative diagram of monoids:
$$\xymatrix{M^D_\bfP&(\overline{\bfP},\lozenge)\ar@{=}[r] \ar@{->>}[l]&M^\bfD_\bfP\\
G^B_\bfP\ar@{^(->}[u] &(\overline{\bfP_+},\lozenge)\ar@{^(->}[u] \ar@{=}[r] \ar@{->>}[l] 
&G^\bfB_\bfP\ar@{^(->}[u]}$$
\end{enumerate}
\item Let us consider an operad morphism $\theta_\bfP:\petitbinfini\longrightarrow\bfP$. Let $V$ be a finite-dimensional vector space.
We denote by $\bfC_V$ the operad of morphisms from $V$ to $V^{\otimes n}$.
\begin{enumerate}
\item We put:
\begin{align*}
\bfB_\bfP(V)&=\bfB_{\bfP\otimes \bfC_V},&\bfB_\bfP^*(V)&=\bfB^*_{\bfP\otimes \bfC_V},&
\bfD_\bfP(V)&=\bfD_{\bfP\otimes \bfC_V},&\bfD_\bfP^*(V)&=\bfD^*_{\bfP\otimes \bfC_V},\\
B_\bfP(V)&=B_{\bfP\otimes \bfC_V},&B_\bfP^*(V)&=B^*_{\bfP\otimes \bfC_V},&
D_\bfP(V)&=D_{\bfP\otimes \bfC_V},&D_\bfP^*(V)&=D^*_{\bfP\otimes \bfC_V}.
\end{align*}
\item The morphism $\theta_\bfP$ induces a product $\star$ on $S(F_\bfP(V))$, making 
$(S(F_\bfP(V)),\star,\Delta)$ a graded, connected Hopf algebra, denoted by $A_\bfP(V)$. Its graded dual is denoted by $A^*_\bfP(V)$.
\item $A_\bfP(V)$ is a Hopf algebra in the category of $D_\bfP(V)$-modules; dually, $A_\bfP^*(V)$ is a Hopf algebra in the category of  
$D_\bfP^*(V)$-comodules.
\item The monoid $(M^D_\bfP(V),\lozenge)$ of characters of $D^*_\bfP(V)$ acts by endomorphisms on the group $G^A_\bfP(V)$ of characters
of $A_\bfP^*(V)$, and is isomorphic to the monoid of continuous endomorphisms of the $\bfP$-algebra $\overline{F_\bfP(V)}$.
The group of characters $(G^B_\bfP(V),\lozenge)$ of $B^*_\bfP(V)$ acts by group automorphisms on $G^A_\bfP(V)$,
and is isomorphic to the group of formal diffeomorphisms of $\overline{F_\bfP}(V)$ which are tangent to the identity. 
\end{enumerate}\end{enumerate}

Here are diagrams of the different functors which appear in this text (contravariant functors are represented by dashed arrows).
$$\xymatrix{&&\txt{Operads\\ $\bfP$}\ar[rd] \ar[ld] \ar@{-->}@/^5pc/[dddrr]\ar@{-->}@/_5pc/[dddll]&&\\
&\txt{$0$-bounded brace\\ $(\bfP,\langle-,-\rangle)$}\ar[d]_{Env. dend. alg.}\ar@(dr,dr)@/^9pc/[lddd]^{exp.}&&
\txt{$0$-bounded pre-Lie\\ $(coinv\bfP,\bullet)$}\ar[d]^{Env. alg.}\ar@(dl,dl)@/_10pc/[rddd]_{exp.}&\\
&\txt{Dendriform\\Hopf algebras\\ $\bfD_\bfP$}&&\txt{Cocom.\\ Hopf algebras\\ $D_\bfP$}&\\
\txt{Bialgebras\\ $\bfD_\bfP^*$}\ar@{-->}[d]_{Char.}&&&&\txt{Cocom.\\bialgebras\\ $D_\bfP^*$}\ar@{-->}[d]^{Char.}\\
\txt{Monoids\\ $M_\bfP^\bfD$}&&&&\txt{Monoids\\ $M_\bfP^D$}
}$$
Replacing $\bfP$ by its augmentation ideal $\bfP_+$:
$$\xymatrix{&&\txt{Non unitary\\operads\\ $\bfP_+$}\ar[rd] \ar[ld] \ar@{-->}@/^5pc/[dddrr]\ar@{-->}@/_5pc/[dddll]&&\\
&\txt{connected brace\\ $(\bfP_+,\langle-,-\rangle)$}\ar[d]_{Env. dend. alg.}\ar@(dr,dr)@/^9pc/[lddd]^{exp.}&&
\txt{connected pre-Lie\\ $(coinv\bfP_+,\bullet)$}\ar[d]^{Env. alg.}\ar@(dl,dl)@/_10pc/[rddd]_{exp.}&\\
&\txt{Connected\\dendriform\\Hopf algebras\\ $\bfB_\bfP$}\ar@{<-->}[ld]^{Dual}&&\txt{Cocom.\\connected\\ Hopf algebras\\ $B_\bfP$}\ar@{<-->}[rd]^{Dual}&\\
\txt{Connected\\codenfriform\\Hopf algebras\\ $\bfB_\bfP^*$}\ar@{-->}[d]_{Char.}&&&&
\txt{Cocom.\\connected\\Hopf algebras\\ $B_\bfP^*$}\ar@{-->}[d]^{Char.}\\
\txt{Groups\\ $G_\bfP^\bfB$}&&&&\txt{Groups\\ $G_\bfP^B$}
}$$
For operads with morphisms form $\petitbinfini$, which we here call $\petitbinfini$-operads:
$$\xymatrix{&\txt{$\petitbinfini$-operads\\ $(\bfP,\theta_\bfP)$}\ar@{-->}[rd]\ar[ld]&\\
\txt{Interacting\\bialgebras\\ $(A_\bfP,D_\bfP)$}&&\txt{Cointeracting\\bialgebras\\ $(A_\bfP^*,D_\bfP^*)$}
}$$
Replacing $\bfP$ by $\bfP_+$:
$$\xymatrix{&\txt{Non unitary\\ $\petitbinfini$-operads\\ $(\bfP_+,\theta_\bfP)$}\ar@{-->}[rd]\ar[ld]&\\
\txt{Interacting\\connected\\Hopf algebras\\ $(A_\bfP,B_\bfP)$}\ar@{<-->}[rr]_{Dual}&&\txt{Cointeracting\\connected\\Hopf algebras\\ $(A_\bfP^*,B_\bfP^*)$}
}$$

\bibliographystyle{amsplain}
\bibliography{biblio}
\end{document}